\documentclass[fontsize=10pt,a4paper]{scrartcl}
\global\def\sc{\egroup\textsc\bgroup}

\usepackage{amsmath, amssymb, amsthm, mathabx}
\usepackage{graphicx}
\usepackage{xcolor}
\usepackage{comment}
\usepackage{fullpage}
\usepackage{hyperref}
\usepackage{float}
\usepackage{mdframed}
\usepackage[T1]{fontenc}
\usepackage[utf8]{inputenc}
\usepackage{XCharter}
\usepackage{tikz}
\usepackage{MnSymbol}

\usepackage[justification=justified,
   format=plain]{caption} 

\DeclareOldFontCommand{\bf}{\normalfont\bfseries}{\mathbf}


\newtheorem{theorem}{Theorem}
\newtheorem{lemma}[theorem]{Lemma}

\newtheorem{proposition}[theorem]{Proposition}
\newtheorem{observation}[theorem]{Observation}

\def\cA{\mathcal{A}}
\def\cB{\mathcal{B}}
\def\cR{\mathcal{R}}
\def\cG{\mathcal{G}}
\def\cW{\mathcal{W}}

\def\nm{n-1}

\def\ob{\overline{b}}

\newcommand{\rr}{{R}}
\newcommand{\rc}{\mathcal{R}}
\newcommand{\dc}{\mathcal{D}}

\newcommand{\btri}{\filledmedtriangleleft}

\graphicspath{{figures/}{figures/}}

\newcommand{\wma}{\raisebox{-2pt}{\includegraphics[scale=0.9]{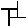}}}
\newcommand{\wmb}{\raisebox{-2pt}{\includegraphics[scale=0.9]{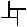}}}
\newcommand{\zwall}{\raisebox{-2pt}{\includegraphics[scale=0.9]{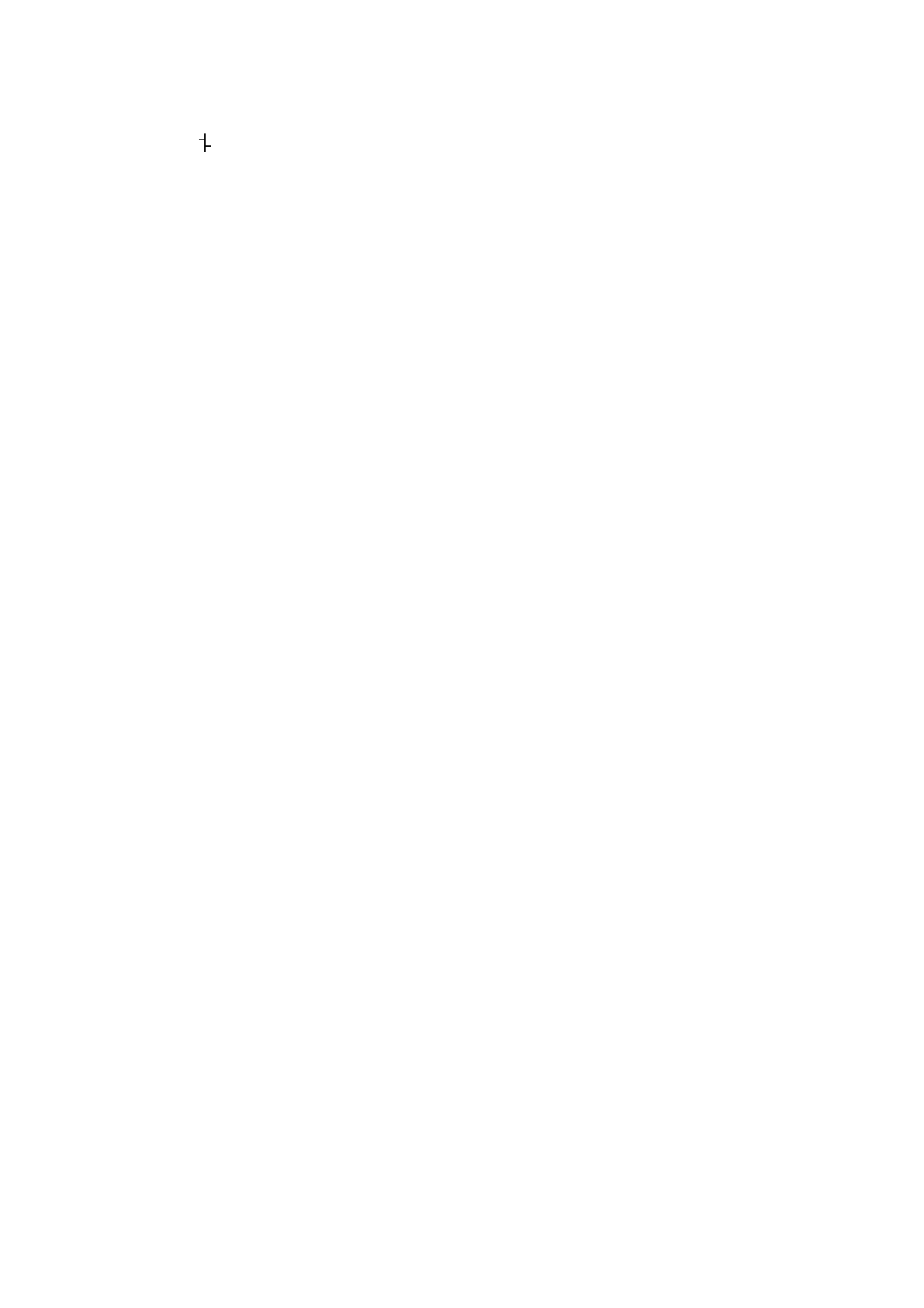}}}
\newcommand{\swall}{\raisebox{-2pt}{\includegraphics[scale=0.9]{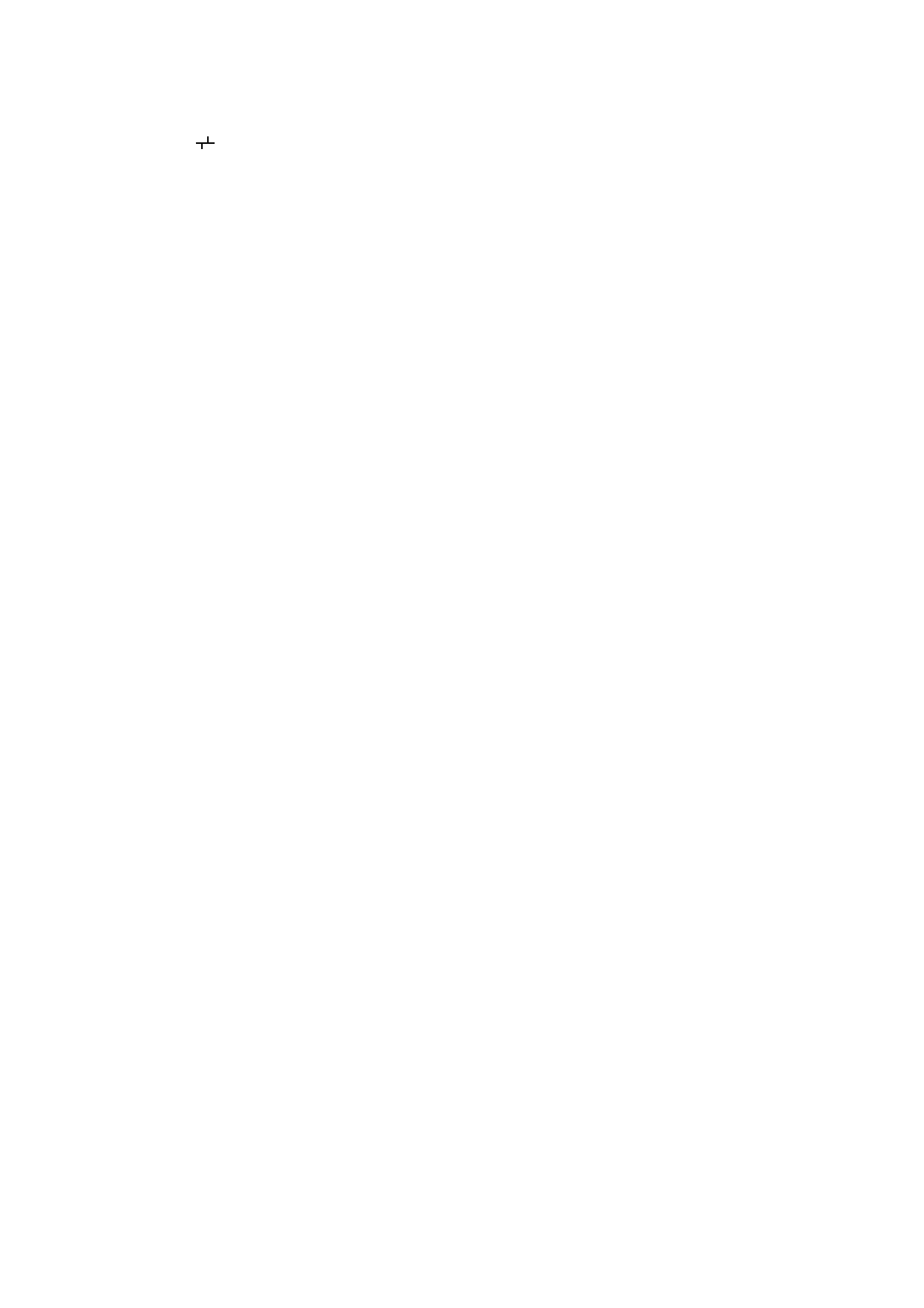}}}
\newcommand{\izwall}{\scalebox{1}[-1]{\raisebox{-10pt}{\includegraphics[scale=0.9]{zwall}}}}
\newcommand{\iswall}{\scalebox{1}[-1]{\raisebox{-6pt}{\includegraphics[scale=0.9]{swall}}}}
\newcommand{\cross}{\raisebox{-2pt}{\includegraphics[scale=0.65]{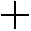}}}

\tikzstyle{wall}=[line width=0.6pt]

\newcommand{\td}{
\begin{tikzpicture}[baseline=0.1mm,scale=0.10]
\protect\draw[wall] (0,1.7)--(2,1.7);
\protect\draw[wall] (1,0)--(1,1.7);
\end{tikzpicture}
}

\newcommand{\tu}{
\begin{tikzpicture}[baseline=-0.1mm,scale=0.10]
\protect\draw[wall] (0,0)--(2,0);
\protect\draw[wall] (1,0)--(1,1.7);
\end{tikzpicture}
}

\newcommand{\tl}{
\begin{tikzpicture}[baseline=0.1mm,scale=0.10]
\protect\draw[wall] (1.7,0)--(1.7,2);
\protect\draw[wall] (0,1)--(1.7,1);
\end{tikzpicture}
}

\newcommand{\tr}{
\begin{tikzpicture}[baseline=0.1mm,scale=0.10]
\protect\draw[wall] (0,0)--(0,2);
\protect\draw[wall] (0,1)--(1.7,1);
\end{tikzpicture}
}

\newcommand{\pe}{
\begin{tikzpicture}[baseline=0.1mm,scale=0.10]
\protect\draw[wall] (0,2)--(2,2);
\protect\draw[wall] (2,2)--(2,0);
\end{tikzpicture}
}

\newcommand{\va}{
\begin{tikzpicture}[baseline=0.1mm,scale=0.10]
\protect\draw[wall] (0,2)--(0,0);
\protect\draw[wall] (0,0)--(2,0);
\end{tikzpicture}
}

\begin{document}
\title{Combinatorics of rec\-tan\-gu\-la\-ti\-ons:\\
  Old and new bijections}
\author{Andrei Asinowski\thanks{Alpen-Adria Universität Klagenfurt, Austria. 
E-mail: \href{andrei.asinowski@aau.at}{andrei.asinowski@aau.at}}, \ 
Jean Cardinal\thanks{Universit\'{e} libre de Bruxelles, Belgium. 
E-mail: \href{jean.cardinal@ulb.be}{jean.cardinal@ulb.be}}, \ 
Stefan Felsner\thanks{Technische Universität Berlin, Germany.
E-mail: \href{felsner@math.tu-berlin.de}{felsner@math.tu-berlin.de}}, \ 
\'{E}ric Fusy\thanks{LIGM, CNRS, Univ. Gustave Eiffel, Marne-la-Vallée, France.
E-mail: \href{eric.fusy@u-pem.fr}{eric.fusy@u-pem.fr}}}
\maketitle
\begin{abstract}
A rec\-tan\-gu\-la\-ti\-on is a decomposition of a rectangle into finitely many rectangles. 
Via na\-tu\-ral equivalence relations, rectangulations can be seen as combinatorial objects with a rich structure,
with links to lattice congruences, flip graphs, polytopes, lattice paths, Hopf algebras, etc. 
In this paper, we first revisit the structure of the respective equivalence classes: 
\emph{weak rec\-tan\-gu\-la\-ti\-ons} that preserve rectangle--segment adjacencies, 
and  \emph{strong rec\-tan\-gu\-la\-ti\-ons} that preserve rectangle--rectangle adjacencies.
We thoroughly investigate 
posets defined by adjacency in rec\-tan\-gu\-la\-ti\-ons 
of both kinds, and unify and simplify known	
bijections between rec\-tan\-gu\-la\-ti\-ons and permutation classes.
This yields a uniform treatment of mappings between permutations and rec\-tan\-gu\-la\-ti\-ons that unifies the results from earlier contributions,
and emphasizes parallelism and differences between the weak and the strong cases. 
Then, we consider the special case of \emph{guillotine rec\-tan\-gu\-la\-ti\-ons},
 and prove that they can be characterized 
--- under all known mappings between permutations and rec\-tan\-gu\-la\-ti\-ons  ---
by avoidance of two mesh patterns 
that correspond to ``windmills'' in rec\-tan\-gu\-la\-ti\-ons. 
This yields new permutation classes in bijection with weak guillotine rec\-tan\-gu\-la\-ti\-ons,
and the first known permutation class in bijection with strong guillotine rec\-tan\-gu\-la\-ti\-ons.
Finally, we address enumerative issues and prove asymptotic bounds for several
families of strong rec\-tan\-gu\-la\-ti\-ons.
\end{abstract}

\tableofcontents

\sloppy

\section{Introduction}

A~\textit{rec\-tan\-gu\-la\-ti\-on} is a decomposition of a rectangle into finitely many interior-disjoint rectangles. 
Rec\-tan\-gu\-la\-ti\-ons constitute a classical topic in mathematical tessellation theory. 
Among the earliest contributions on this topic one finds two papers by 
Abe from early 1930s~\cite{Abe30, Abe32},
and the papers by Brooks, Stone, Smith, and Tutte 
(collectively known as \emph{Blanche Descartes}) on ``squaring the square''~\cite{BlancheD40}, and on partitioning a square into equal-area rectangles~\cite{BlancheD71}. 
In the last decades, many results on rec\-tan\-gu\-la\-ti\-ons have been published in journals and conferences 
 on computational geometry as well as engineering and electronics, due to their being a basic model in 
\emph{floorplanning} --- an essential step in the design of very large scale integrated circuits (VLSI)~\cite{Sherwani, Sait, WCC09}.  
In floorplanning, functional blocks of a circuit, represented by rectangles, have to be packed on a small rectangular area.
The term \emph{floorplan} is therefore often used to designate a rec\-tan\-gu\-la\-ti\-on. 
Rec\-tan\-gu\-la\-ti\-ons have also applications 
in the analysis of geometric algorithms~\cite{Beaumont02, Calheiros03},
in visualization of scientific data 
(for instance treemaps~\cite{BSW02} and cartograms~\cite{KS07,BSV12}), 
in mathematical foundations of architectural design~\cite{Steadman83},
and also appear in visual art --- notably in the work of the Dutch art movement \emph{De Stijl}, 
see Figure~\ref{fig:doesburg}.

\begin{figure}[!h]
  \begin{center}
    \includegraphics[height=.25\textheight]{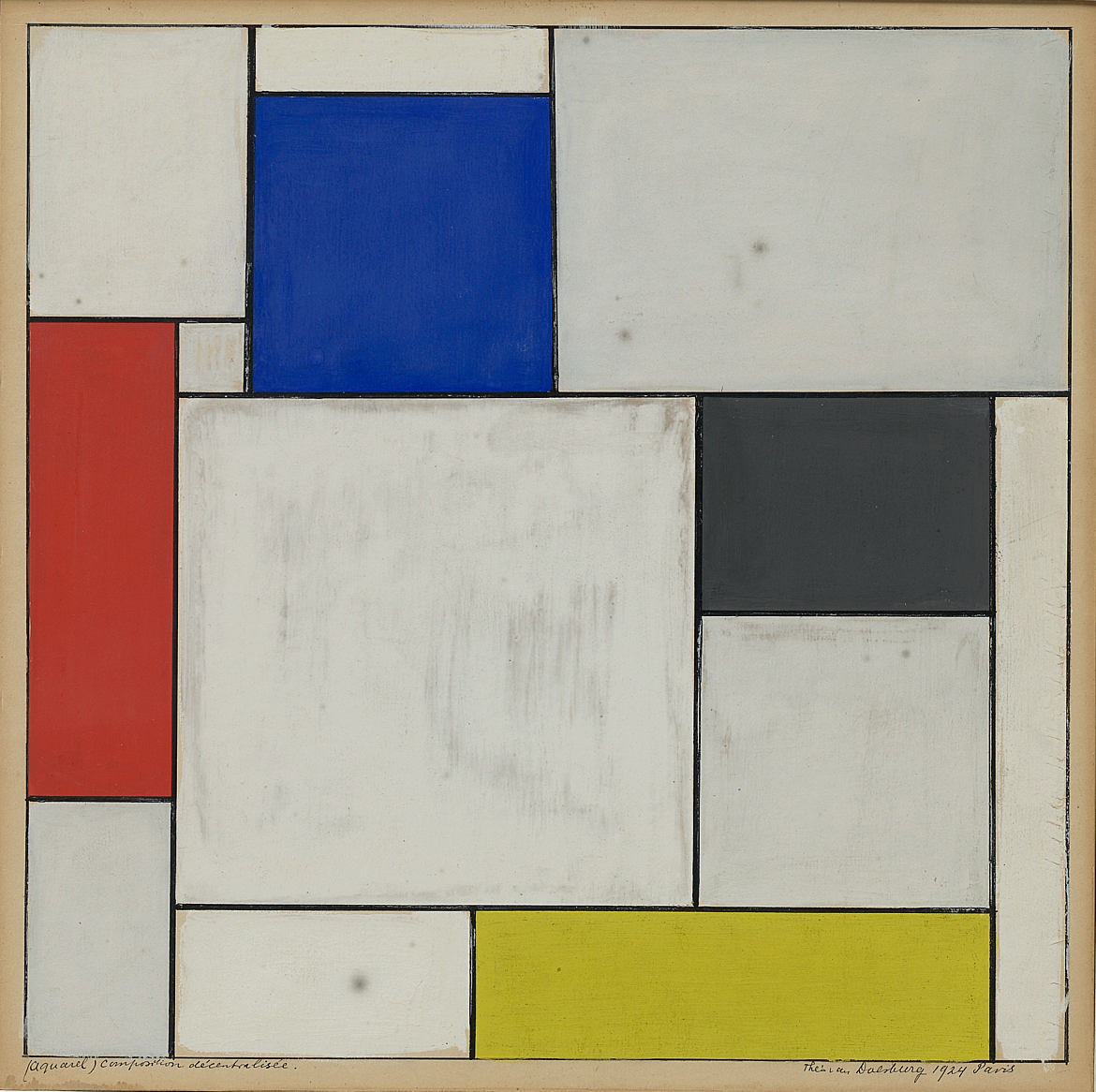}
  \end{center}
  \caption{Artwork \emph{Composition d\'{e}centralis\'{e}e} (1924) by Theo van Doesburg (Dutch, 1883--1931).
    Solomon R.~Guggenheim Museum, New York.}
\label{fig:doesburg}
\end{figure}

We investigate structural properties of rec\-tan\-gu\-la\-ti\-ons, which in
particular means that we are not interested in precise measures of rectangles
but rather in adjacencies between their elements --- rectangles and segments.
In order to treat rec\-tan\-gu\-la\-ti\-ons as combinatorial objects, one can
introduce an equivalence relation formalizing the idea of two
rec\-tan\-gu\-la\-ti\-ons being ``structurally identical''.  There are two
natural equivalence relations of this kind.  The \emph{weak equivalence}
preserves incidence and sidedness between segments and rectangles. The
\emph{strong equivalence} additionally preserves the adjacencies between
rectangles. Precise definitions are given in Section~\ref{sec:equivalences}. 

Many structural investigations of rec\-tan\-gu\-la\-ti\-ons are focused on their bijective representation 
by classes of permutations determined by pattern avoidance~\cite{ABP06,FT07,FFNO11,LR12,R12,ABBMP13,CSS18,M19a,M19b,MM23}. 
For example, \emph{Baxter permutations}, 
defined by avoidance of a certain pair of \emph{vincular patterns} of size $4$,
have been shown to be in a~(size-preserving) bijection with 
\emph{mosaic} or \emph{diagonal} rec\-tan\-gu\-la\-ti\-ons~\cite{ABP06} --- that is,
rec\-tan\-gu\-la\-ti\-ons considered up to the weak equivalence.
This bijection can be restricted to a bijection between \emph{separable permutations}, 
defined by avoidance of the patterns $2413$ and $3142$~\cite{BBL98},
and the so-called \textit{guillotine} rec\-tan\-gu\-la\-ti\-ons (also known as \textit{sliceable} rec\-tan\-gu\-la\-ti\-ons).
These results can be fruitfully compared to a basic result in Catalan combinatorics, namely the bijection between \emph{triangulations} of a~convex polygon and $231$-avoiding permutations~\cite{S15}.

The combinatorics of such families has been analyzed in the framework of \emph{congruences} of the \emph{weak Bruhat order}~\cite{R04}.
The weak Bruhat order is the ordering of the permutations of $[n]$ by inclusion of their set of inversions.
A congruence is an equivalence relation on the elements of a lattice that is consistent with the meet and join operations.
Catalan families and mosaic rec\-tan\-gu\-la\-ti\-ons are both examples of families that define congruences of the weak Bruhat order.
As a consequence, the corresponding families (of pattern-avoiding permutations or tessellations) are ordered by a lattice, defined as the \emph{quotient} of the weak Bruhat order by the congruence.
It was shown by Pilaud and Santos~\cite{PS19} that the cover graphs of these quotients are all skeletons of polytopes, that they called \emph{quotientopes}.
In the case of triangulations and other Catalan objects, the quotient lattice is the well-studied \emph{Tamari lattice}~\cite{ATLRS} and the quotientope is the ubiquitous \emph{associahedron}~\cite{S63,L04,CSZ15}.
The quotientope of mosaic rec\-tan\-gu\-la\-ti\-ons, on the other hand, is known to be a Minkowski sum of two associahedra~\cite{LR12}.

In 2012, Reading~\cite{R12} studied rec\-tan\-gu\-la\-ti\-ons considered up to the strong equivalence.
He showed that, similarly to the weak case, they are bijective to equivalence classes of permutations that form congruence classes and thus 
induce a~quotient of the weak Bruhat order, and also to so-called \emph{2-clumped permutations}.

Subsequently, Meehan~\cite{M19b} analyzed the cover relation in this quotient, yielding a nice \emph{flip graph} on generic rec\-tan\-gu\-la\-ti\-ons.
From Pilaud and Santos~\cite{PS19}, this flip graph is the skeleton of the quotientope of generic rec\-tan\-gu\-la\-ti\-ons.

The main goals that motivated the study presented in this paper were as follows:

\begin{quote}
\begin{enumerate}
\item To develop a uniform treatment of mappings between permutations and rec\-tan\-gu\-la\-ti\-ons that would unify the results from earlier contributions
and emphasize parallelism and differences between the weak and the strong cases.
\item To simplify the description of the bijection between generic rec\-tan\-gu\-la\-ti\-ons and 2-clumped permutations, and give a concise characterization of the corresponding congruence classes of the weak Bruhat order.
\item To find a permutation class in bijection with guillotine generic rec\-tan\-gu\-la\-ti\-ons.
  
\item To address the enumeration of guillotine generic rec\-tan\-gu\-la\-ti\-ons.
Under the weak equivalence, the generating function of all rec\-tan\-gu\-la\-ti\-ons is (non-algebraic) D-finite, while the generating function of guillotine rec\-tan\-gu\-la\-ti\-ons is algebraic. 
Under the strong equivalence, the generating function for all rec\-tan\-gu\-la\-ti\-ons is not D-finite, while the status of the generating function for guillotine rec\-tan\-gu\-la\-ti\-ons is yet to be determined.
  \end{enumerate}
\end{quote}

\paragraph{Our results.}

The first part of our contribution is on the strong equivalence relation and strong rec\-tan\-gu\-la\-ti\-ons.
We define the \emph{strong order} on rectangles of a strong rec\-tan\-gu\-la\-ti\-on, and prove that the linear extensions of this strong 
order form equivalence classes of permutations that are bijective with strong rec\-tan\-gu\-la\-ti\-ons, and are intervals in the weak Bruhat order. 
We naturally derive bijections between strong rec\-tan\-gu\-la\-ti\-ons and, 
respectively, 2-clumped and co-2-clumped permutations --- the minimum and the maximum of the equivalence classes.

The material in this part streamlines and simplifies a number of previous works.
On one hand, Reading~\cite{R12} (see also Meehan~\cite{M19b} and Merino and M\"utze~\cite{MM23}) considers the combinatorics of strong rec\-tan\-gu\-la\-ti\-ons and defines the same permutations-to-rec\-tan\-gu\-la\-ti\-ons mapping as ours.
We present simple incremental algorithms for the forward and backward directions of this mapping that allow for simpler and more direct proofs.
In particular, our forward algorithm yields a simple proof for the description of the flip graph studied by Meehan~\cite{M19b}.
Our definition of the strong poset for the strong equivalence relation between rec\-tan\-gu\-la\-ti\-ons is an analogue of the adjacency poset for weak equivalence defined by Meehan~\cite{M19a}.
Interestingly, it appears that the mapping defined by Reading has already been studied in the form of the ``FT-squeeze'' algorithm, devised by Fujimaki and Takahashi~\cite{FT07,TF08} for VLSI design.
The strong poset that we introduce is equivalent to the ``seagull order'' proposed by Fujimaki and Takahashi as a physical intuition for the FT-squeeze~\cite{FT07}.
It also appears in the guise of the elimination process devised by Takahashi, Fujimaki, and Inoue~\cite{ITF09,TFI09} for giving efficient counting and coding methods on strong rec\-tan\-gu\-la\-ti\-ons. 
Finally, our forward  algorithm is strongly related to a mapping defined by Fran\c con and Viennot~\cite{FV79} for the analysis of permutations parameterized by their number of peaks, valleys, double ascents, and double descents, and can be analyzed within the framework of quadrant walks. 
The connection between these numerous lines of work seems to have \mbox{gone unnoticed so far.}

The second part of our paper is dedicated to guillotine rec\-tan\-gu\-la\-ti\-ons.
We introduce two mesh patterns on permutations 
that can be used for ``encoding'' \textit{windmills} --- certain configurations of segments,
whose occurrence in a rec\-tan\-gu\-la\-ti\-on is equivalent to being non-guillotine.
Combining these mesh patterns with the forbidden patterns of Baxter permutations, we 
obtain new bijections for weak guillotine rec\-tan\-gu\-la\-ti\-ons. 
More interestingly, combining these two mesh patterns with the forbidden patterns for 2-clumped 
permutations, we obtain a bijection between \emph{strong guillotine rec\-tan\-gu\-la\-ti\-ons} (that is, strong equivalence classes of guillotine rec\-tan\-gu\-la\-ti\-ons)  
and a permutation class. This is the first known representation of this family of rec\-tan\-gu\-la\-ti\-ons by a permutation class.

\medskip

The plan of the paper is as follows. 
In Section~\ref{sec:bg}, we give precise definitions of the objects that we study and review basic results.
In particular, the equivalence classes of rec\-tan\-gu\-la\-ti\-ons of the weak and strong equivalence
will be called, respectively, weak and strong rec\-tan\-gu\-la\-ti\-ons.
In Section~\ref{sec:weak}, we review earlier results on weak rec\-tan\-gu\-la\-ti\-ons:
a mapping $\gamma_w$ from permutations to weak rec\-tan\-gu\-la\-ti\-ons, 
\textit{weak posets} as fibers of this mapping, 
the induced bijections between weak rec\-tan\-gu\-la\-ti\-ons and Baxter, twisted Baxter, and co-twisted Baxter permutations, 
as well as the structure of the corresponding weak Bruhat order congruence.
Then, Section~\ref{sec:strong} is devoted to an extensive study of the structure of strong rec\-tan\-gu\-la\-ti\-ons, while emphasizing its parallelism to the weak case: 
a mapping $\gamma_s$ from permutations to strong rec\-tan\-gu\-la\-ti\-ons, 
\textit{strong posets} as fibers of this mapping, 
and the induced bijections between strong rec\-tan\-gu\-la\-ti\-ons and 2-clumped (resp. co-2-clumped) permutations. Moreover, we identify the flip graph on rec\-tan\-gu\-la\-ti\-ons, and we show how to encode rec\-tan\-gu\-la\-ti\-ons (and subfamilies) by quadrant walks, allowing efficient counting. 
Finally, in Section~\ref{sec:guillotine}, we present two mesh patterns $p_1, p_2$
that ``encode'' windmills,
propose novel permutation classes in bijection with weak and strong guillotine rec\-tan\-gu\-la\-ti\-ons, show that the $n$ first terms of the enumerating sequence of strong guillotine rec\-tan\-gu\-la\-ti\-ons can be computed in polynomial time in $n$,  
and provide lower and upper bounds on the number of strong guillotine rec\-tan\-gu\-la\-ti\-ons of size $n$. 
The following table shows a summary of bijections 
between the considered classes of rec\-tan\-gu\-la\-ti\-ons and permutation classes,
along with the references to the sections where these are discussed.

\medskip

\renewcommand{\arraystretch}{1.2}
\hspace{-13pt}
  \begin{tabular}{|c|c|c|}
  \hline
              & Weak equivalence               & Strong equivalence \\
    \hline
    Arbitrary & \textbf{Weak rec\-tan\-gu\-la\-ti\-ons}  & \textbf{Strong rec\-tan\-gu\-la\-ti\-ons} \\
  & Baxter permutations & \\  
    & twisted Baxter permutations            & 2-clumped permutations \\
    & co-twisted Baxter permutations         & co-2-clumped permutations \\
    & \hspace*{\fill} $\longrightarrow$ Section~\ref{sec:baxter} & \hspace*{\fill} $\longrightarrow$ Section~\ref{sec:2clumped} \\
    \hline
    Guillotine & \textbf{Weak guillotine rec\-tan\-gu\-la\-ti\-ons} & \textbf{Strong guillotine rec\-tan\-gu\-la\-ti\-ons} \\
    & separable permutations            &  \\
    & $\{p_1,p_2\}$-avoiding twisted Baxter perm. & $\{p_1,p_2\}$-avoiding 2-clumped permutations \\
    & $\{p_1,p_2\}$-avoiding co-twisted Baxter perm. & $\{p_1,p_2\}$-avoiding co-2-clumped permutations \\
    & \hspace*{\fill} $\longrightarrow$ Sections~\ref{sec:guil} and~\ref{sec:windmill_bij} & \hspace*{\fill} $\longrightarrow$ Section~\ref{sec:windmill_bij} \\
    \hline
  \end{tabular}
\renewcommand{\arraystretch}{1}

\section{Definitions and basics} 
\label{sec:bg}

In this section we present basic notions and definitions used in the paper,
as well as some ``classical'' results.
In this exposition, we mainly follow the works by
Ackerman, Barequet, and Pinter~\cite{ABP06},
Law and Reading~\cite{LR12}, 
Reading~\cite{R12}, 
Cardinal, Sacrist{\'{a}}n, and Silveira~\cite{CSS18},
and 
Merino and M\"utze~\cite{MM23},
with some minor modifications for the sake of uniformity.

\subsection{Rectangulations and their elements}

Let $\rr$ be an axes-aligned rectangle in the plane.
A~\emph{rec\-tan\-gu\-la\-ti\-on} of $\rr$ is a~decomposition (or tiling) 
of~$\rr$ into finitely many interior-disjoint \textit{rectangles}.
The \emph{size} of a rec\-tan\-gu\-la\-ti\-on is the number of rectangles in the decomposition. 
The rec\-tan\-gu\-la\-ti\-on in Figure~\ref{fig:doesburg} is of size~$13$. 
Rec\-tan\-gu\-la\-ti\-ons will generically be denoted by $\rc$, and their size by $n$.

A~\emph{segment} of a rec\-tan\-gu\-la\-ti\-on is 
a maximal straight line segment that consists of one or several sides of 
some 
rectangles
of $\rc$, 
and is not included in one of the sides of~$\rr$. 
A rec\-tan\-gu\-la\-ti\-on is \textit{generic} if there is no point at which four 
rectangles meet.
\textbf{From now on, we assume that all rec\-tan\-gu\-la\-ti\-ons in this paper are generic. 
Thus, intersection of two segments of a rec\-tan\-gu\-la\-ti\-on can form a joint of one of the following shapes:
 $\tu$\hspace{1pt}, $\tr$, $\td$,  $\tl$\hspace{1pt},
but never $\cross$.}
It is easily shown that a~rec\-tan\-gu\-la\-ti\-on of size $n$ has precisely $n-1$ segments.  
The \emph{neighbors} of a segment~$s$ are the segments (perpendicular to~$s$)
 with an endpoint that lies on~$s$.\footnote{In some papers rec\-tan\-gu\-la\-ti\-ons are referred to as \textit{floorplans}, their rectangles as \emph{rooms}, and segments as \emph{walls}.}

We also label the corners of $\rr$, or of any rectangle of $\rc$, by the ordinal directions: NE for top-right, SE for bottom-right, SW for bottom-left, NW for top-left.

\subsection{Weak equivalence and strong equivalence}\label{sec:equivalences}

In order to deal with rec\-tan\-gu\-la\-ti\-ons as combinatorial objects, one has to consider some equivalence relation,
formalizing the idea of rec\-tan\-gu\-la\-ti\-ons ``having the same structure''.
There are two natural ways to do this: the \emph{weak equivalence} that preserves segment--rectangle incidences and sidedness, 
and the \emph{strong equivalence} that additionally preserves rectangle--rectangle adjacencies. 

To give precise definitions, we introduce \emph{left--right} and \emph{above--below} order relations between rectangles of a rec\-tan\-gu\-la\-ti\-on:
\begin{itemize}
\item Rectangle $r$ is \emph{on the left} of rectangle $r'$ 
(equivalently, $r'$ is \emph{on the right} of $r$)
if there is a sequence of rectangles, $r=r_1, r_2, \ldots, r_k=r'$
such that the right side of $r_i$ and the left side of $r_{i+1}$ lie in the same segment for $i=1, 2, \ldots, k-1$.
\item Rectangle $r$ is \emph{below} rectangle $r'$ 
(equivalently, $r'$ is \emph{above} $r$)
if there is a sequence of rectangles, $r=r_1, r_2, \ldots, r_k=r'$
such that the upper side of $r_i$ and the bottom side of $r_{i+1}$ lie in the same segment for $i=1, 2, \ldots, k-1$.
\end{itemize}

Given a rectangle $r$ of $\rc$, one can specify the regions that contain the rectangles 
which are above, below, on the left, or on the right of $r$, as follows.
 The \textit{NE alternating path associated with $r$} is the path
that starts at the NE (top-right) corner of $r$, 
goes first upwards if the NE corner of $r$ has the shape $\tl$\hspace{1pt} 
or rightwards if it has the shape $\td$\hspace{1pt},
and then alternatingly traverses
vertical segments upwards to their upper endpoint then turning rightwards,
and horizontal segments rightwards to their right endpoint and then turning upwards,
until it reaches the NE~corner of $\rc$. 
The NE alternating path associated with $r$ is shown by red in Figure~\ref{fig:abrl}.
One similarly defines SE, SW, and NW alternating paths. 
The four alternating paths split $\rr \setminus \{r\}$ into four regions (some of them can be empty). 
This leads to the following observation.

\begin{observation}
\label{obs:lrab}
Let $r$ be a rectangle in a rec\-tan\-gu\-la\-ti\-on $\rc$.
\begin{enumerate}
\item The rectangles of $\rc$ which lie above, below, on the left, or on the right of $r$
are contained in respective regions of $\rr \setminus \{r\}$ delimited by the alternating paths
(refer to Figure~\ref{fig:abrl}).
\item Every pair of distinct rectangles in a rec\-tan\-gu\-la\-ti\-on is comparable with precisely one of the order relations: either one of them is on the left of the other, or one of them is above the other.
\end{enumerate}
\end{observation}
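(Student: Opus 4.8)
\medskip

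The plan is to prove both statements at once by analysing the four regions into which the NE, SE, SW and NW alternating paths of $r$ divide $\rr\setminus r$, and by showing that each region is precisely the union of the rectangles that stand in the corresponding relation with $r$. First I would record the basic geometry of the alternating paths: each of them runs along segments of $\rc$ and is monotone in both coordinates (the NE path only ever moves up or to the right, the NW path only up or to the left, and so on). Genericity --- the fact that no four rectangles meet at a point, so the intersection of two segments can only be a joint of one of the shapes $\tu$, $\tr$, $\td$, $\tl$ and never $\cross$ --- is used here and repeatedly afterwards: it makes the first step of each path well-defined and, crucially, prevents one segment of $\rc$ from crossing the interior of another. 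From this I would deduce the intuitively clear but slightly delicate fact that the four paths are pairwise non-crossing, that two consecutive ones share exactly one corner of $r$, and that each ends at the corresponding corner of $\rr$; hence, together with $r$ and the sides of $\rr$, they cut $\rr\setminus r$ into four closed regions $\mathsf{Above}(r)$, $\mathsf{Right}(r)$, $\mathsf{Below}(r)$, $\mathsf{Left}(r)$, pairwise meeting only along paths and at corners, with $\mathsf{Above}(r)$ contained in the closed half-plane above the top side of $r$ (and symmetrically for the others). Since the boundary of each region is a union of parts of segments of $\rc$ (together with parts of the sides of $\rr$ and of $r$), and the interior of any rectangle $r'\neq r$ is disjoint from all segments, from the sides of $\rr$, and from $r$, the interior of $r'$ lies entirely inside one of the four open regions; so ``$r'$ lies in $\mathsf{Above}(r)$'' and its analogues are well-defined.

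For Part~1 I would fix the ``above'' relation (the three others being symmetric) and show, by induction on the length $k$ of a witnessing sequence $r=r_1,\dots,r_k=r'$, that $r_i$ lies in $\mathsf{Above}(r)$ for every $i\ge 2$. For $k=2$: the bottom side of $r_2$ lies in the segment $s$ containing the top side of $r$; the part of $s$ directly above $r$ is the top of $r$, and the parts of $s$ to its left and to its right (if present) are traversed by the NW, respectively the NE, alternating path of $r$; hence $s$ lies entirely on the boundary of $\mathsf{Above}(r)$, with $\mathsf{Above}(r)$ locally above it, so $r_2$ is contained in $\mathsf{Above}(r)$. For the inductive step, applying the case $k=2$ to $r_i$ in place of $r$ puts $r_{i+1}$ in $\mathsf{Above}(r_i)$, and one checks --- again using that alternating paths, being monotone staircases along segments, cannot cross --- that $\mathsf{Above}(r_i)\subseteq\mathsf{Above}(r)$ whenever $r_i\subseteq\mathsf{Above}(r)$; so $r_{i+1}$ lies in $\mathsf{Above}(r)$.

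For Part~2 I also need the converse of Part~1: if $r'$ lies in $\mathsf{Right}(r)$ then $r$ is on the left of $r'$ (and symmetrically). I would prove this by induction on the number of rectangles of $\mathsf{Right}(r)$ that are on the left of $r'$ --- a well-founded quantity, since ``on the left of'' is irreflexive and transitive. The left side of $r'$ lies in a vertical segment $v$; a short argument, using that a vertical piece of the NE or SE path of $r$ has $\mathsf{Right}(r)$ entirely on its west side whereas $r'$, having its left side on $v$, sits on the east side, shows that either $v$ is the segment $T$ containing the right side of $r$ --- in which case $r$ is directly on the left of $r'$ --- or $v$ runs through the interior of $\mathsf{Right}(r)$. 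In the latter case, a rectangle $r''$ lying immediately to the west of $v$ along the left side of $r'$ is distinct from $r$ (otherwise $v=T$, a maximal segment being unique), lies in $\mathsf{Right}(r)$, is on the left of $r'$, and is on the left of strictly fewer rectangles of $\mathsf{Right}(r)$ than $r'$ is; by induction $r$ is on the left of $r''$, hence of $r'$. Part~2 now follows: the four regions are pairwise disjoint and, by Part~1, each relation-class of $r$ sits in its own region, so two distinct rectangles stand in \emph{at most} one relation; and since the four regions cover $\rr\setminus r$, the rectangle $r'\neq r$ lies in one of them, say $\mathsf{Right}(r)$, whence by the converse $r$ is on the left of $r'$, so they stand in \emph{at least} one relation.

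The step I expect to be the real obstacle is the planar topology packaged into the first paragraph and reused in Part~1: that the four alternating paths are pairwise non-crossing (so that the four regions genuinely partition $\rr\setminus r$ and are disjoint), and that the alternating paths of a rectangle of $\mathsf{Above}(r)$ stay inside $\overline{\mathsf{Above}(r)}$. This is precisely where genericity is indispensable --- a $\cross$-joint would let two alternating paths cross --- and it is the part I would write out most carefully, either by induction on the size of $\rc$ or by a direct argument tracking, along each path, on which side the relevant region lies.
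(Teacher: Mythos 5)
The paper states this as an \emph{Observation} and supplies no proof at all --- the authors rely on the reader's geometric intuition about the four alternating paths --- so there is no argument of theirs to compare against. Your overall architecture is the natural one and is sound: identify each of the four relation-classes with one of the four regions, prove the containment of Part~1 by induction on the length of a witnessing chain, prove the converse by a second induction, and deduce Part~2 from disjointness plus covering. Part~1, including the non-crossing of two NW paths (resp.\ two NE paths), which you correctly isolate as the place where genericity enters, goes through.

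There is, however, a concrete error in your converse step for Part~2. You assert that a vertical piece of the NE or SE alternating path of $r$ has $\mathsf{Right}(r)$ entirely on its \emph{west} side, and conclude that the segment $v$ carrying the left side of $r'$ is either $T$ or lies in the interior of $\mathsf{Right}(r)$. The orientation is backwards: travelling the NE path from the NE corner of $r$ to the NE corner of $\rr$, the region $\mathsf{Above}(r)$ is on the left of the direction of travel and $\mathsf{Right}(r)$ on the right, so a vertical piece (traversed upwards) has $\mathsf{Above}(r)$ on its west and $\mathsf{Right}(r)$ on its \emph{east}; likewise a vertical piece of the SE path has $\mathsf{Below}(r)$ on its west and $\mathsf{Right}(r)$ on its east. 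So there is a third possibility your dichotomy misses: $v$ may carry a vertical piece of the NE path, with $r'\in\mathsf{Right}(r)$ attached from the east while the rectangle immediately west of $v$ facing $r'$ lies in $\mathsf{Above}(r)$. Concretely, let the NE path go up the segment $T$ containing the right side of $r$, turn right along a horizontal segment $s''$, and turn up again at the right endpoint $p$ of $s''$, which lies on a vertical segment $v\neq T$; a rectangle $r'$ east of $v$ and above the level of $p$ is in $\mathsf{Right}(r)$, but the rectangle west of $v$ facing it sits on top of $s''$ and is in $\mathsf{Above}(r)$. Your designated $r''$ is then not in $\mathsf{Right}(r)$ and the induction stalls. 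The gap is repairable: you only need \emph{some} rectangle of $\mathsf{Right}(r)$ whose right side lies on $v$ (any such rectangle is on the left of $r'$ and has strictly fewer ``left-of'' predecessors in $\mathsf{Right}(r)$). Since the entry point $p$ of the NE path into $v$ is a T-joint, $v$ extends below $p$, and the rectangle abutting $v$ from the west just below $p$ lies in the reflex sector of the path at $p$, i.e.\ in $\mathsf{Right}(r)$; this (or its SE-path mirror) supplies the needed $r''$. But as written the dichotomy and the choice of $r''$ fail, and this is exactly the side-of-the-path bookkeeping you postponed to the end of your sketch.
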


\begin{figure}[!h]
\begin{center}
\includegraphics[scale=0.9]{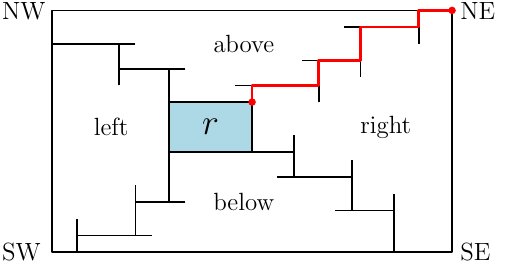} 
\end{center}
\caption{Illustration to Observation~\ref{obs:lrab}: four regions delimited by alternating paths.}
\label{fig:abrl}
\end{figure}

\textbf{Now the two kinds of equivalence of rec\-tan\-gu\-la\-ti\-ons are defined as follows.}
\begin{itemize}
\item Two rec\-tan\-gu\-la\-ti\-ons $\rc_1$ and $\rc_2$ are \emph{weakly equivalent} 
if there is a~(unique) bijection between their rectangles that preserves the left-right and the 
above-below orders. 
\item Two rec\-tan\-gu\-la\-ti\-ons $\rc_1$ and $\rc_2$ are \emph{strongly equivalent} if they are weakly equivalent, 
and the bijection that realizes the weak equivalence also preserves adjacencies between rectangles,
 that is, two rectangles in $\rc_1$ are adjacent if and only if the corresponding rectangles in $\rc_2$ are adjacent.
\end{itemize}

These equivalence relations can be also described in terms of local modifications of rec\-tan\-gu\-la\-ti\-ons.
Two rec\-tan\-gu\-la\-ti\-ons are weakly equivalent if they can be obtained from each other by a sequence
of moves, where each move is shifting some segment (or one of the sides of $\rr$) horizontally 
or vertically,
and accordingly extending or shortening its neighbors,
so that the adjacencies between segments do not change.
To obtain strong equivalence, we restrict the moves so that 
the adjacencies between rectangles are also preserved.

A~\emph{weak rec\-tan\-gu\-la\-ti\-on} is an equivalence class of rec\-tan\-gu\-la\-ti\-ons with respect to the weak equivalence,
and a~\emph{strong rec\-tan\-gu\-la\-ti\-on} is an equivalence class of rec\-tan\-gu\-la\-ti\-ons with respect to the strong equivalence.\footnote{In some earlier papers, for example~\cite{R12, MM23},
weak rec\-tan\-gu\-la\-ti\-ons are called \emph{mosaic rec\-tan\-gu\-la\-ti\-ons},
and strong rec\-tan\-gu\-la\-ti\-ons are referred to just as \emph{generic rec\-tan\-gu\-la\-ti\-ons}.
In~\cite{FT07, TF08},
weak rec\-tan\-gu\-la\-ti\-ons are called \textit{room-to-wall floorplans},
and strong rec\-tan\-gu\-la\-ti\-ons are called \textit{room-to-room floorplans}.}
In Figure~\ref{fig:ex0}, rec\-tan\-gu\-la\-ti\-ons $\mathcal{R}_1$, $\mathcal{R}_2$, and $\mathcal{R}_3$ are weakly equivalent, but only $\mathcal{R}_1$ and $\mathcal{R}_2$ are strongly equivalent.
In other words, here we have two weak rec\-tan\-gu\-la\-ti\-ons (one of them is given by three representatives: $\mathcal{R}_1$, $\mathcal{R}_2$, and $\mathcal{R}_3$),
and three strong rec\-tan\-gu\-la\-ti\-ons (one of them is given by two representatives: $\mathcal{R}_1$ and $\mathcal{R}_2$).
\textbf{Rec\-tan\-gu\-la\-ti\-on~$\mathcal{R}_1$ will be used throughout the paper as a~``running example''
for demonstrating various results.}
\begin{figure}[!h]
\begin{center}
\includegraphics[scale=0.9]{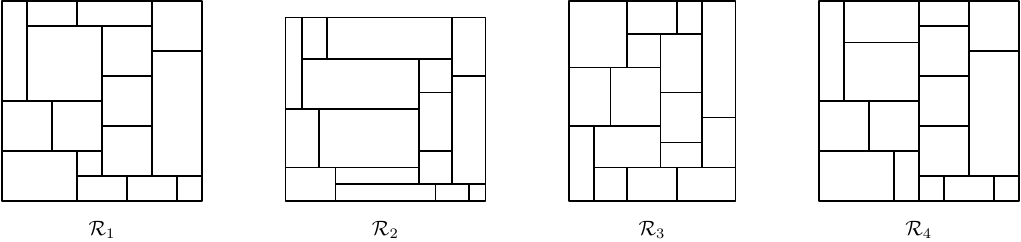} 
\end{center}
\caption{$\mathcal{R}_1$, $\mathcal{R}_2$, and $\mathcal{R}_3$ are weakly equivalent. 
$\mathcal{R}_1$ and $\mathcal{R}_2$ are strongly equivalent. $\mathcal{R}_4$ is guillotine.}
\label{fig:ex0}
\end{figure}

The strong equivalence refines the weak one, 
and thus every weak rec\-tan\-gu\-la\-ti\-on yields one or several strong rec\-tan\-gu\-la\-ti\-ons
by all possible \emph{shuffles} of the neighbors of its segments.
If a segment $s$ has $a$ neighbors on one side and $b$ neighbors on the other side,
then these can be shuffled in $\binom{a+b}{a}$ ways.

\subsection{NW--SE and SW--NE labelings}
\label{sec:labels}

Let $\rc$ be a rec\-tan\-gu\-la\-ti\-on. 
Since, by Observation~\ref{obs:lrab}, the transitive relations ``left'' and ``above'' yield a partition of the edges of the complete graph, their union ``left of, or above'' is a total order of the rectangles.
Hence, we can label the rectangles by the numbers from $1$ to $n$ according to this order.
The rectangle with label $j$ ($1 \leq j \leq n$) will be denoted by $r_j$.
Since $r_1$ contains the NW (top-left) corner of $\rr$ and $r_n$ contains the SE (bottom-right) corner of $\rr$,
we call this labeling \emph{the NW--SE labeling}.
If $j$ is fixed, then 
$r_i$ with $i<j$ are precisely the rectangles above or to the left of $r_j$,
and $r_i$ with $i>j$ are precisely the rectangles below or to the right of $r_j$:
see Figure~\ref{fig:ablr_diag} for a schematic depiction.

\begin{figure}[!h]
\begin{center}
\includegraphics[scale=0.9, page=2]{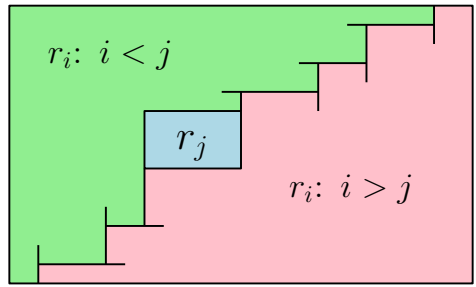} 
\end{center}
\caption{The rectangles $r_i$ with $i>j$ (respectively $i<j$) are located 
to the right or below $r_j$ (respectively to the left or above $r_j$).} 
\label{fig:ablr_diag}
\end{figure}

The NW--SE labeling can also be obtained by the following algorithm.

\begin{mdframed}
\textbf{Algorithm: NW--SE labeling}

\noindent Input: Rec\-tan\-gu\-la\-ti\-on $\rc$.

\noindent Output: The NW--SE labeling of the rectangles of $\rc$.
\begin{enumerate}
\item Label $r_1$ the rectangle that contains the top-left corner of $\rr$. 

\item For $j=2$ to $n$:

Consider the joint of segments at the bottom-right corner of $r_{j-1}$.
\begin{itemize}
\item If its shape is $\tl$, then label $r_{j}$ the leftmost rectangle whose upper side belongs to the same horizontal segment as the bottom side of $r_{j-1}$,
\item If its shape is $\tu$, then label $r_j$ the topmost rectangle whose left side belongs to the same vertical segment as the right side of $r_{j-1}$.
\end{itemize}
\end{enumerate}
\end{mdframed}

\medskip

Similarly, one can define the \textit{SW--NE labeling} in which $i<j$ 
if and only if~$r_i$ is to the left or below~$r_j$. 
It can be obtained by an obvious modification of the algorithm given above. 
Figure~\ref{fig:NWSEorder} shows the rec\-tan\-gu\-la\-ti\-on~$\mathcal{R}_1$ with 
the NW--SE (left) and the SW--NE (right) labelings of its rectangles.
(In this and other examples below, we label the rectangles just $j$ instead of $r_j$).

\begin{figure}[!h]
\begin{center}
\includegraphics[scale=0.82]{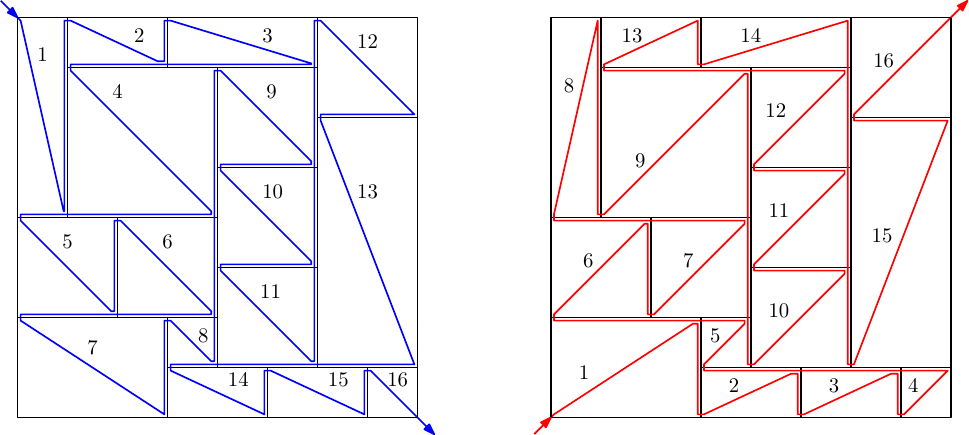} 
\end{center}
\caption{The NW--SE (left) and the SW--NE (right) orderings of the rectangles of $\rc_1$.}
\label{fig:NWSEorder}
\end{figure}

\subsection{Diagonal rectangulations}\label{sec:diag}
\label{sec:diagonal}

A~\emph{diagonal rec\-tan\-gu\-la\-ti\-on} of size $n$ is a rec\-tan\-gu\-la\-ti\-on $\dc$ of size $n$, drawn on an $n\times n$ grid square $S$, 
such that all the segments are drawn along grid lines, and every rectangle of $\dc$ intersects 
the NW--SE diagonal of~$S$.
Diagonal rec\-tan\-gu\-la\-ti\-ons have the following properties (see for example~\cite[Section~5]{LR12}).
\begin{proposition}\label{prop:diag}
\begin{enumerate}
\item[(a)] Every rec\-tan\-gu\-la\-ti\-on $\rc$ is weakly equivalent to a~\emph{unique} diagonal rec\-tan\-gu\-la\-ti\-on
$\dc$, which will be referred to as the \emph{diagonal representative} of $\rc$.
\item[(b)] In a diagonal rec\-tan\-gu\-la\-ti\-on we have the following. 
For every horizontal segment $s$,
all the above neighbors of $s$ occur from the left of all its below neighbors;
and 
for every vertical segment $t$,
all the left neighbors of $t$ occur above all its right neighbors.
\item[(c)] The order in which the NW--SE diagonal of $S$ meets the rectangles of a diagonal rec\-tan\-gu\-la\-ti\-on,
is the NW--SE order.
\end{enumerate}
\end{proposition}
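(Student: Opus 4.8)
The plan is to reduce the whole proposition to one structural fact about the unit cells straddling the diagonal. Given a diagonal rectangulation $\dc$ drawn on $S=[0,n]^2$ with NW--SE diagonal $D=\{(x,y):x+y=n\}$, set, for $j\in\{1,\dots,n\}$, $C_j=[j-1,j]\times[n-j,n-j+1]$, the $j$-th unit grid cell crossed by $D$ (so $C_1$ contains the NW corner and $C_n$ the SE corner of $S$). I would first prove the \emph{cell lemma}: in any diagonal rectangulation, every rectangle contains exactly one cell $C_j$, and in the NW--SE labeling it is $r_j$ that contains $C_j$. Everything then follows quickly: (c) is immediate, (b) is a short computation, and the uniqueness in (a) drops out as well; only the existence part of (a) still needs an honest construction, which I would treat separately.

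To prove the cell lemma, observe that each rectangle of $\dc$ is grid-aligned and is traversed by $D$, hence contains at least one $C_j$; since distinct rectangles are interior-disjoint and there are $n$ rectangles and $n$ cells, each rectangle contains exactly one cell, so we get a bijection rectangles $\leftrightarrow$ cells. It remains to see this bijection respects the labeling. Order the rectangles $R^{(1)},\dots,R^{(n)}$ by the order in which $D$ enters them, so $R^{(k)}\supseteq C_k$; the point is that $R^{(k)}$ is to the left of, or above, $R^{(k')}$ whenever $k<k'$, which by Observation~\ref{obs:lrab}(2) forces $R^{(k)}=r_k$ (two total orders on an $n$-set, one refining the other, must agree). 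Suppose instead that $R^{(k')}$ is to the left of $R^{(k)}$: a chain of adjacent rectangles realizing this forces the left side of $R^{(k)}$ to have abscissa at least that of the right side of $R^{(k')}$; but $C_k\subseteq R^{(k)}$ puts the left side of $R^{(k)}$ at abscissa $\le k-1$, while $C_{k'}\subseteq R^{(k')}$ puts the right side of $R^{(k')}$ at abscissa $\ge k'$, so $k'\le k-1$, contradicting $k<k'$. The case ``$R^{(k')}$ above $R^{(k)}$'' is excluded identically, with ordinates in place of abscissas.

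Given the cell lemma, part (c) is immediate, since $D$ meets $C_1,\dots,C_n$ in this order, hence $r_1,\dots,r_n$ in this order. For (b), let $s$ be a horizontal segment at height $h$ and set $\delta=n-h$. If $V$ is an above-neighbor of $s$ attached at abscissa $\xi$, then the rectangle immediately to the upper right of the foot of $V$ has its bottom side on $s$ and its left side on $V$, so it is contained in $\{x\ge\xi\}\cap\{y\ge h\}$; its cell $C_j$ then satisfies $j-1\ge\xi$ and $n-j\ge h$, giving $\xi\le\delta-1$. Dually, a below-neighbor $W$ attached at abscissa $\zeta$ has immediately to its lower left a rectangle contained in $\{x\le\zeta\}\cap\{y\le h\}$, whose cell $C_m$ satisfies $m\le\zeta$ and $n-m+1\le h$, giving $\zeta\ge\delta+1$. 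Hence every above-neighbor lies strictly left of every below-neighbor, which is the assertion for horizontal segments; for vertical segments I would apply the reflection of $S$ across $D$, which sends diagonal rectangulations to diagonal rectangulations and interchanges ``left'' with ``below''. For the uniqueness in (a): the NW--SE labeling depends only on the weak class of $\rc$, so any two diagonal representatives induce the same matching $r_j\leftrightarrow C_j$; from here one shows the weak class determines the position of every segment --- most cleanly by induction on $n$, noting that in any diagonal representative $r_1$ is a top-left row or top-left column whose extent is forced by the joint at its bottom-right corner, and may be removed together with one grid row and one grid column to pass to size $n-1$.

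The only step requiring real work is existence in (a): producing \emph{some} diagonal rectangulation weakly equivalent to a given $\rc$. I would establish it by inverting the removal above, i.e.\ by induction on $n$: delete the last rectangle $r_n$ (which occupies the SE corner of $\rr$), heal the hole in the canonical way prescribed by the joint at the NW corner of $r_n$, apply the inductive hypothesis on the $(n-1)\times(n-1)$ grid, and then reinsert $r_n$ in the SE corner while enlarging the grid by one row and one column; alternatively one can cite the explicit construction of Law and Reading~\cite{LR12}. The delicate points here --- that the healing is well defined and keeps the rectangulation generic, and that reinsertion reproduces exactly the weak class of $\rc$ --- form the main obstacle; by contrast, the cell lemma, parts (b) and (c), and the uniqueness in (a) are all short consequences once the cell matching is in hand.
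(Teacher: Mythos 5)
The paper itself gives no proof of this proposition --- it is stated with a pointer to \cite[Section~5]{LR12} --- so there is no in-paper argument to compare against and your proposal must be judged on its own terms. Its core, the cell lemma, is correct and is a clean way to organize the whole statement: the coordinate argument is sound (a ``left-of'' chain forces the left side of $R^{(k)}$ to sit at abscissa at least the right side of $R^{(k')}$, while the cells pin those abscissas to $\le k-1$ and $\ge k'$), and the derivations of (c) and of (b) from it are complete --- the inequalities $\xi\le\delta-1$ and $\zeta\ge\delta+1$ check out, and the reflection $(x,y)\mapsto(n-y,n-x)$ correctly transfers the horizontal case to the vertical one. One definitional point you should make explicit: ``traversed by $D$'' must mean that the \emph{interior} of each rectangle meets the diagonal; a grid rectangle touching $D$ only at a corner (bottom-left corner on $D$) contains no cell $C_j$, and your pigeonhole count needs every rectangle to contain at least one cell. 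That is the intended reading of the definition, but as written your first sentence of the cell-lemma proof silently assumes it.

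The genuine gap is in part (a). The existence of a diagonal representative is the substantial half of the proposition, and you leave it as a plan: you name the delicate points (that healing the hole after deleting $r_n$ is well defined and keeps the rectangulation generic, and that reinsertion recovers the weak class) but do not resolve them, and ``alternatively cite Law and Reading'' is an acknowledgement rather than a proof. The uniqueness induction also rests on an unjustified claim: the extent of $r_1$ is \emph{not} forced by the shape of the joint at its bottom-right corner. The cell lemma only forces $r_1$ to be $[0,1]\times[c,n]$ or $[0,b]\times[n-1,n]$ for \emph{some} $c\le n-1$ or $b\ge1$; which case occurs and the value of $c$ (resp.\ $b$) are determined by adjacency data of the weak class --- essentially by how many rectangles are adjacent to $r_1$ from the right versus from below --- and this is what the induction actually has to track when a grid row and column are removed. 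A cleaner finish for uniqueness, staying inside your framework, is to note that the diagonal leaves $r_j$ at the grid point $(j,n-j)$, so each $r_j$ has either its right side at $x=j$ or its bottom side at $y=n-j$, and then to read off the remaining coordinates from the adjacency structure; but as submitted, only (b), (c) and the ``at most one'' half of (a) are proved.
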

Due to property (a), diagonal rec\-tan\-gu\-la\-ti\-ons are frequently considered as canonical representatives
of weak rec\-tan\-gu\-la\-ti\-ons (or sometimes even identified with them). 
Property (b) specifies the unique shuffling of the segments of $\rc$
that its diagonal representative can have.
In other words, it specifies the unique strong rec\-tan\-gu\-la\-ti\-on 
which is weakly equivalent to the given $\rc$
and strongly equivalent to the {diagonal representative} of $\rc$.
Due to property (c), the NW--SE labeling of a diagonal rec\-tan\-gu\-la\-ti\-on is also called
\emph{the diagonal labeling}.   

\medskip

One similarly defines \emph{anti-diagonal rec\-tan\-gu\-la\-ti\-ons} all of whose rectangles meet the
SW--NE diagonal (in the order determined by the SW--NE labeling).
In Figure~\ref{fig:diag} we show
the diagonal rec\-tan\-gu\-la\-ti\-on $\dc_1$ weakly equivalent to $\mathcal{R}_1$ 
along with its NW--SE labeling,
and
the anti-diagonal rec\-tan\-gu\-la\-ti\-on $\dc'_1$ weakly equivalent to $\mathcal{R}_1$ 
along with its SW--NE labeling.
\begin{figure}[!h]
\begin{center}
\includegraphics[scale=0.72]{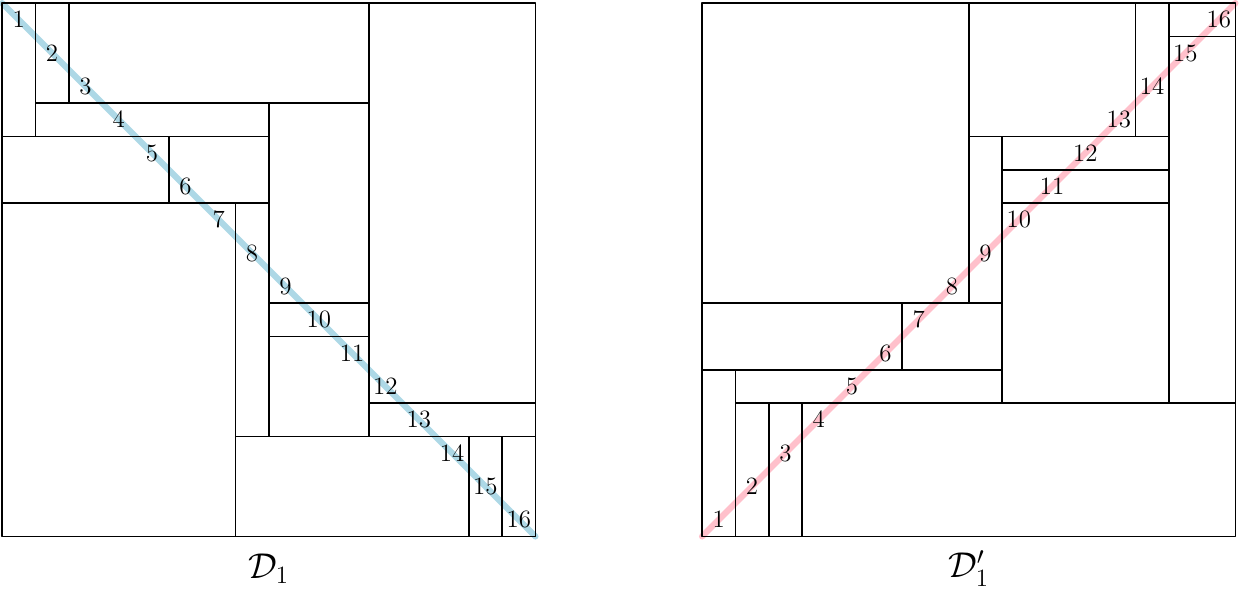} 
\end{center}
\caption{Left: The diagonal rec\-tan\-gu\-la\-ti\-on $\dc_1$ weakly equivalent to~$\mathcal{R}_1$. 
Right: The anti-diagonal rec\-tan\-gu\-la\-ti\-on $\dc'_1$ weakly equivalent to~$\mathcal{R}_1$.}
\label{fig:diag}
\end{figure}

\subsection{Guillotine rectangulations}
\label{sec:guil}

A~\emph{cut} of a rec\-tan\-gu\-la\-ti\-on $\rc$ is
a vertical segment that extends from the top side to the bottom side of~$\rr$,
or a horizontal segment that extends from the left side to the right side of~$\rr$.
If $\rc$ has several cuts, then they all have the same orientation.

A rec\-tan\-gu\-la\-ti\-on is \emph{guillotine} if it is either of size $1$, 
or it has a cut $s$ such that both sub-rec\-tan\-gu\-la\-ti\-ons separated by $s$ are guillotine.
In Figure~\ref{fig:ex0}, only rec\-tan\-gu\-la\-ti\-on $\mathcal{R}_4$ is guillotine.

A~\emph{windmill} in a rec\-tan\-gu\-la\-ti\-on is a quadruple of segments forming one of the following two shapes:
\wma\ or \wmb. (Windmills are also referred to as \emph{pin-wheels}~\cite{ABP06}.)
Note that segments that form a windmill can have arbitrarily positioned further neighbors,
also in the \textit{interior} --- the rectangular region that they bound.
Guillotine rec\-tan\-gu\-la\-ti\-ons have the following characterization (proven for instance in~\cite{ABP06}).

\begin{proposition}
  \label{prop:guillotinechar}
  A rec\-tan\-gu\-la\-ti\-on is guillotine if and only if it avoids the windmills \wma\ and \wmb.
\end{proposition}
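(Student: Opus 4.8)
The plan is to prove both directions by induction on the size $n$ of the rectangulation. For the forward direction, I would argue the contrapositive: a windmill, once present, can never be ``sliced away.'' Concretely, suppose $\rc$ contains a windmill $W$ (four segments bounding a rectangular interior region, arranged in one of the two cyclic patterns \wma, \wmb). If $\rc$ has size $1$ there is nothing to check, so assume it has a cut $s$. The key observation is that $s$, being a full-length segment, cannot cross any segment of $\rc$ transversally in its interior (genericity forbids $\cross$-joints, and a segment meeting $s$ in its interior would have an endpoint on $s$). From this I would deduce that $s$ cannot pass through the interior of the windmill in a way that separates two of its four segments: each of the four segments of $W$ lies entirely on one side of $s$ (or has an endpoint on $s$, which by a short case analysis still keeps all four segments on one side). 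Hence the entire windmill $W$ survives inside one of the two sub-rectangulations $\rc_1, \rc_2$ obtained by cutting along $s$. That sub-rectangulation has size strictly less than $n$, so by the induction hypothesis it is not guillotine, and therefore $\rc$ is not guillotine either. The delicate point here — and what I expect to be the main obstacle — is the careful verification that a cut cannot ``split'' a windmill: one must check all ways a full-length vertical or horizontal cut can interact with the four segments of each of the two windmill shapes, using that the windmill's interior is a genuine rectangular region and that neighbors attached in that interior do not help the cut get through.

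For the reverse direction, I would again use the contrapositive together with induction: assuming $\rc$ is windmill-free, I must exhibit a cut $s$ (then apply induction to the two windmill-free sub-rectangulations). Consider any maximal segment $s_0$ of $\rc$ that is not a cut; say it is vertical, with its top endpoint lying in the interior of some horizontal segment $t$ (it cannot reach the top side of $\rr$, else combined with not reaching the bottom it would contradict maximality or would already be a cut — I would phrase this carefully). Walk from $s_0$: its top endpoint sits on $t$; now $t$ itself, if not a cut, has an endpoint on some vertical segment $u$; and so on. The claim is that following this ``spiraling'' sequence of non-cut segments, each perpendicular to the previous and attached by an endpoint, must eventually close up into a windmill unless a cut is reached. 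This is the standard argument behind Proposition~\ref{prop:guillotinechar}; the crux is that the sequence of segments, being finite, cannot continue forever, and the only way it can fail to terminate at a cut is to revisit a configuration forming one of \wma, \wmb. I would formalize this by tracking, at each step, the ``side'' on which the current segment abuts the previous one, observing that a non-windmill, non-guillotine turn pattern is impossible, so the four-step cycle \wma\ or \wmb\ is forced.

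Putting the two directions together with the base case $n=1$ (vacuously guillotine and windmill-free) completes the induction. I would remark that the forward direction is the ``easy'' half modulo the combinatorial bookkeeping about cuts not crossing segments, whereas the reverse direction requires the genuine structural insight that obstruction to sliceability is \emph{local}, witnessed by a bounded configuration. Since the excerpt attributes the full proof to~\cite{ABP06}, I would keep the exposition brief and refer there for the detailed case analysis of how a cut meets a windmill, emphasizing only the two or three structural lemmas (no transversal crossings of a cut; finiteness of the spiral; the four-turn closure) that drive the argument.
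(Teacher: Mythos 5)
The paper itself does not prove this proposition --- it is quoted from~\cite{ABP06} --- so your sketch can only be measured against the standard argument rather than an in-paper one. Your forward direction is essentially right: a cut is a single segment spanning $\rr$, so it cannot transversally cross either of the two parallel windmill segments it would have to pass between without creating a forbidden \cross\ joint, and none of the four windmill segments can itself be a cut (each terminates in the interior of the next one), so the whole configuration survives intact in one of the two sub-rec\-tan\-gu\-la\-ti\-ons and induction applies.

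The reverse direction, however, has a genuine gap at exactly the step you yourself call the crux. Observing that every non-cut segment has at least one endpoint in the interior of a perpendicular segment gives a ``blocking'' relation with out-degree at least one on every segment when no cut exists, and finiteness then yields a directed cycle --- but nothing in your argument forces that cycle to have length four, nor to turn consistently in one rotational sense. A priori your walk could close up into a longer staircase-like cycle with a mix of left and right turns, which is not a windmill; and the walk itself is not well defined, since a non-cut segment may have both endpoints blocked and you give no rule for which one to follow. Your proposed resolution, ``observing that a non-windmill, non-guillotine turn pattern is impossible,'' is precisely the statement that has to be proved, not an observation. The standard repairs are either to pass to an extremal cycle (for instance one whose enclosed region is inclusion-minimal, and then show minimality forces four turns of the same sense) or to fix a canonical choice of blocked endpoint at each step and prove a nesting or monotonicity statement about the swept regions that forces closure after four steps. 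Without one of these ingredients the second half of the argument does not go through.
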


The enumeration of weak guillotine rec\-tan\-gu\-la\-ti\-ons is nearly elementary.
Denote their generating function with respect to the size by $G(x)$.
We say that a guillotine rec\-tan\-gu\-la\-ti\-on of size $>1$ is \emph{horizontal}
or \emph{vertical} in accordance with the orientation of its cut(s). 
The rec\-tan\-gu\-la\-ti\-on of size $1$ is considered neither horizontal nor vertical.
Then the generating function of horizontal guillotine rec\-tan\-gu\-la\-ti\-ons,
and that of vertical guillotine rec\-tan\-gu\-la\-ti\-ons, is $H(x)=V(x)=(G(x)-x)/2$.
Every vertical guillotine rec\-tan\-gu\-la\-ti\-on
is split by its leftmost cut such that the left part is either vertical guillotine or of size $1$,
and the right part is arbitrary guillotine.
This decomposition is unique, and, 
hence, the generating functions introduced above satisfy the equation
\begin{equation}\label{eq:HV}
V(x) = \big(x + H(x)\big)G(x),
\end{equation}
which yields 
\[
H(x)=V(x) = \frac{1-3x-\sqrt{1-6x+x^2}}{4}=x^2+3x^3+11x^4+45x^5+\ldots, 
\]
\begin{equation}\label{eq:schr}
G(x) = x+H(x)+V(x)  = \frac{1-x-\sqrt{1-6x+x^2}}{2}=x+2x^2+6x^3+22x^4+90x^5+\ldots.
\end{equation}
Therefore, we have the following result 
(proven, for example, in~\cite{Yao03} via a bijection to \textit{v-h-trees},
and in~\cite{Shen03} via a bijection to \textit{skewed slicing trees}).

\begin{proposition}\label{prop:guillotineenum}
The number of weak guillotine rec\-tan\-gu\-la\-ti\-ons of size $n$ is the $(n-1)$th \emph{Schröder number} (\href{https://oeis.org/A006318}{OEIS A006318}).
\end{proposition}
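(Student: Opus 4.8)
The plan is to read the answer directly off the generating function $G(x)$ computed just above, by recognizing it as a one‑step shift of the ordinary generating function of the large Schröder numbers. Let $s(x)=\sum_{k\ge 0}S_k x^k = 1+2x+6x^2+22x^3+\cdots$ be the OGF of the large Schröder numbers (\href{https://oeis.org/A006318}{OEIS A006318}); it is classical that $s$ is the unique formal power series satisfying $x\,s(x)^2-(1-x)\,s(x)+1=0$, equivalently $s(x)=\big(1-x-\sqrt{1-6x+x^2}\big)/(2x)$. Comparing with \eqref{eq:schr} we get $G(x)=x\,s(x)$, hence $[x^n]\,G(x)=S_{n-1}$. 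Since $G(x)$ is by definition the generating function of weak guillotine rec\-tan\-gu\-la\-ti\-ons counted by size, this is exactly the claim, with $S_{n-1}$ the $(n-1)$th Schröder number.

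The one point that deserves care is the validity of the functional equation \eqref{eq:HV}, on which \eqref{eq:schr} rests, and I would argue it as follows. By Proposition~\ref{prop:guillotinechar} the family of weak guillotine rec\-tan\-gu\-la\-ti\-ons is closed under the two sub‑rec\-tan\-gu\-la\-ti\-ons produced by any cut, and by definition a guillotine rec\-tan\-gu\-la\-ti\-on of size $>1$ has at least one cut, all its cuts being parallel; this justifies the partition into the size‑$1$ rec\-tan\-gu\-la\-ti\-on, the horizontal ones, and the vertical ones, while $H=V$ follows from reflection in a diagonal. For a vertical guillotine rec\-tan\-gu\-la\-ti\-on, cutting along its \emph{leftmost} cut yields guillotine parts $L$ (left) and $R$ (right): $R$ is arbitrary, while $L$ admits no vertical cut, since such a cut would be a cut of the whole rec\-tan\-gu\-la\-ti\-on strictly to the left of the chosen one, so $L$ is of size $1$ or horizontal guillotine. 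Conversely, any such pair $(L,R)$ glues back along a common vertical side to a vertical guillotine rec\-tan\-gu\-la\-ti\-on whose leftmost cut is the gluing line — here one uses that a rec\-tan\-gu\-la\-ti\-on cannot have cuts of both orientations, so the glued object has no horizontal cut and the gluing line really is a cut — and the correspondence is a size‑compatible bijection, which is precisely \eqref{eq:HV}.

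I do not expect a genuine obstacle: the whole argument reduces to the algebra of quadratic power series plus the short case analysis for the decomposition above. If one prefers a proof not passing through generating functions, one may instead invoke the bijection recalled in the introduction between weak guillotine rec\-tan\-gu\-la\-ti\-ons of size $n$ and separable permutations of size $n$ (the restriction of the Baxter bijection), together with the standard fact that separable permutations of size $n$ are counted by $S_{n-1}$; or one of the direct bijections to v–h-trees \cite{Yao03} or to skewed slicing trees \cite{Shen03}. Any of these routes closes the argument, but the generating‑function route is the shortest given what has already been established.
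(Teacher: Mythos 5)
Your proof is correct and follows the same route as the paper: the decomposition of a vertical guillotine rec\-tan\-gu\-la\-ti\-on along its leftmost cut yielding equation~\eqref{eq:HV}, the resulting algebraic expression~\eqref{eq:schr} for $G(x)$, and the identification of $G(x)$ as the shift $x\,s(x)$ of the Schr\"oder generating function. The extra care you take in justifying that the left part of the decomposition has no vertical cut, and that the gluing is a bijection, only makes explicit what the paper leaves implicit.
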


Multidimensional generalizations of guillotine rec\-tan\-gu\-la\-ti\-ons were considered in~\cite{ABPR06} and~\cite{AM10}.

\subsection{Permutation patterns}

In this section we briefly review the basic definitions and notation from the field of permutation patterns (see also the summary by David Bevan~\cite{bevan15}).  
To specify a permutation, we use the linear notation:
that is, $\pi=a_1 a_2 \ldots a_n$ is the permutation of $[n]$ that maps $i$ to $a_i$
for $i=1, 2, \ldots, n$.
It is convenient to describe such a permutation by a~\emph{plot} --- the point set $\{(i, a_i)\colon \, i \in [n]\}$.

\smallskip\noindent\textbf{Classical patterns.}
Let $\pi=\pi_1 \pi_2 \ldots \pi_n$ be a permutation of $[n]$, and let $\tau$ be a~``pattern'' --- a fixed permutation of $[k]$ .
An \emph{occurrence} of $\tau$ in $\pi$ is a~(not necessarily consecutive) subsequence $\pi_{s_1} \pi_{s_2} \ldots \pi_{s_k}$ of $\pi$, which is order-isomorphic to $\tau$.
If $\pi$ has an occurrence of $\tau$, we say that $\pi$ \emph{contains} $\tau$.
Otherwise, we say that $\pi$ \emph{avoids}~$\tau$.
For example, the permutation $\pi=32514$ contains the pattern $132$
(the subsequence $254$ of $\pi$ is an occurrence of $132$); and the permutation $\rho=43512$ avoids $132$.

\smallskip\noindent\textbf{Vincular patterns.}
A~\emph{vincular pattern} is a pair  $v=(\tau, \lambda)$, where $\tau$ is a permutation of $[k]$,
and $\lambda$ is a set of one or several pairwise disjoint strings in $\tau$, 
indicated by underlining (for example $ 36\underline{18}57\underline{942}$).
An occurrence of $v$ in $\pi$ is an occurrence of $\tau$ such that the letters that correspond to the same underlined string occur consecutively in $\pi$.
For example, the permutation $\pi=24513$ contains the pattern $2\underline{41}3$
(the subsequence $2513$ of $\pi$ is an occurrence of $2\underline{41}3$); 
and the permutation $\rho=25314$ avoids $2\underline{41}3$
(but contains the classical pattern $2413$).

\smallskip\noindent\textbf{Mesh patterns.}
A~\emph{mesh pattern} is a pair $m=(\tau, \mu)$, where $\tau$ is permutation of $[k]$, 
and $\mu$ is a subset of $\{0, 1, \ldots, k\}\times\{0, 1, \ldots, k\}$.
An occurrence of $m$ in $\pi \in [n]$ is an occurrence $a_{s_1} a_{s_2} \ldots a_{s_k}$ of $\tau$ that satisfies the condition:
for every $(i,j)\in \mu$, there is no $\ell$ such that $  s_i < \ell < s_{i+1}$ (with the convention $s_0=-\infty$ and $s_{k+1}=+\infty$) 
and 
$t_j<\pi_{\ell}<t_{j+1}$, where $t_1 < t_2 < \ldots < t_k$ are the (sorted) elements of $\{a_{s_1} a_{s_2} \ldots a_{s_k}\}$ (with the convention $t_0=-\infty$ and $t_{k+1}=+\infty$).

To illustrate this concept graphically, we draw the plot of $\tau$ and add the grid lines. 
They split the plane into $(k+1)^2$ regions,
which are naturally labeled by  $\{(i,j)\colon \, 0 \leq i \leq k, 0 \leq j \leq k\}$.
The regions $(i,j) \in \mu$ are then indicated by shading.
An occurrence of $m$ in the plot of $\pi$ is an occurrence of $\tau$ such 
that the interiors of shaded regions do not contain any points of $\pi$.
See~\cite{Branden11} for examples and basic results on mesh patterns.
In Section~\ref{sec:guillotine}, we will work with two mesh patterns, see Figure~\ref{fig:mesh}.

\smallskip
If $\tau_1, \tau_2, \ldots, \tau_p$ are some fixed patterns (of any kind), 
then we denote by $\mathsf{Av}(\tau_1, \tau_2, \ldots, \tau_p)$
the family of permutations that avoid all these patterns. 
A~\textit{permutation class} is any family of permutations that can be specified by avoidance of one or several patterns.

\subsection{Permutation classes}
\label{sec:families}

In this section we list some permutation classes which will play a role in our paper.

\medskip

\noindent \textbf{Separable permutations} are defined as the class $\mathsf{Av}(2413, 3142)$. 
Alternatively, they can be defined as permutations
that can be recursively constructed from the size-1 permutation
by taking direct and skew sums.
The equivalence of the two definitions was proven 
by Ehrenfeucht and Rozenberg~\cite{Ehrenfeucht93},
and the name ``separable permutations'' was coined 
by Bose, Buss, and Lubiw~\cite{BBL98}.
Separable permutations are enumerated by Schröder numbers,
as proven by West~\cite{West95} via generating trees.
For an alternative proof of this fact, note that
the definition of weak guillotine rec\-tan\-gu\-la\-ti\-ons 
and the (second) definition of separable permutations 
yield the same recurrence for their enumerating sequences.

Schröder numbers also enumerate various combinatorial structures, for example \emph{Schröder paths} ---
the lattice walks from $(0,0)$ to $(2n,0)$ that use steps $(1,1), (2,0), (1,-1)$ and stay (weakly) 
above the $x$-axis. Closely related to them are \emph{little Schröder numbers}
(\href{https://oeis.org/A001003}{OEIS A001003}): they were introduced by Ernst Schröder in the context of counting parenthesizations~\cite{schroeder}.
Remarkably, little Schröder numbers were supposedly
mentioned in Plutarch's \emph{Table Talk} (ca.~AD 100)
in the context of counting compound propositions~\cite{Stanley97}. 

The generating function of Schröder numbers is algebraic, it is given above in~\eqref{eq:schr}. Singularity analysis
readily implies their asymptotics
$S_n \sim \frac{(1 + \sqrt{2})^{2n  + 1 }}{2^{3/4} \, \sqrt{\pi n^3}}$
(see~\cite[note VII.19]{FlaSe} and~\cite[A001003, A006318]{oeis}).

\medskip

\noindent \textbf{Baxter permutations} are defined as the class $\mathsf{Av}(2\underline{41}3, 3\underline{14}2)$.
They were introduced by Baxter and Joichi \cite{Baxter64, BaxterJoichi63} in the context of commuting real functions.

Baxter permutations are enumerated by \emph{Baxter numbers} 
(\href{https://oeis.org/A001181}{OEIS A001181}) given by the explicit formula 
\[
B_n = \sum_{k=1}^{n} \frac{ \binom{n+1}{k-1} \binom{n+1}{k} \binom{n+1}{k+1}  }{\binom{n+1}{0}\binom{n+1}{1} \binom{n+1}{2}}.
\]
It was first obtained in 1978 by Chung, Graham, Hoggatt, and Kleiman~\cite{Chung78};
soon after that Mallows~\cite{mallows79} showed that the term corresponding to fixed $k$ is the number of Baxter permutations with precisely $k-1$ descents.
Another proof of this formula, via generating trees, was given by Bousquet-Mélou~\cite{MBM03}.
The generating function of Baxter numbers is D-finite but not algebraic, 
and their asymptotics is $B_n \sim$ 
$ \frac{2^{3n+5}}{n^4 \pi \sqrt{3}}$~\cite[``pointed out by A.~M.~Odlyzko'']{Chung78}.
We refer to Felsner, Fusy, Noy, and Orden~\cite{FFNO11} for a comprehensive survey on 
combinatorial families enumerated by Baxter numbers and bijections between them.

\medskip

\noindent \textbf{Twisted Baxter permutations} are defined as the class $\mathsf{Av}(2\underline{41}3, 3\underline{41}2)$, and \textbf{co-twisted Baxter permutations} as the class $\mathsf{Av}(2\underline{14}3, 3\underline{14}2)$. 
They are, respectively, the minimum and the maximum of the congruence classes associated to weak rec\-tan\-gu\-la\-ti\-ons~\cite{LR12}.

\medskip

\noindent\textit{Remark.} The four patterns $2\underline{41}3, 3\underline{41}2, 2\underline{14}3, 3\underline{14}2$ used in the definition of Baxter, twisted Baxter, and co-twisted Baxter permutations are known as \emph{Baxter-like patterns}. 
Bouvel, Guerrini, Rechnizter and Rinaldi~\cite{BouvelGRR18} and
Bouvel, Guerrini and Rinaldi~\cite{bouvel19} investigated the enumeration of permutation families
defined by avoidance of all possible combinations of these patterns, by means of generating trees.
Five (out of six possible) pairs of Baxter-like patterns yield
permutation classes enumerated by Baxter numbers;
the exceptional combination is $\{2\underline{14}3, 3\underline{41}2\}$.
Permutations that avoid this pair of patterns were studied by 
Asinowski, Barequet, Bousquet-M\'elou, Mansour, and Pinter~\cite{ABBMP13}:
they constitute the ``even part'' of the so-called ``complete Baxter permutations'',
and they are related to orders between \emph{segments} in rec\-tan\-gu\-la\-ti\-ons.

\medskip

\noindent \textbf{2-clumped permutations} are defined by $\mathsf{Av}(24\underline{51}3, 42\underline{51}3, 3\underline{51}24, 3\underline{51}42)$, and \textbf{co-2-clumped permutations} are $\mathsf{Av}(24\underline{15}3, 42\underline{15}3, 3\underline{15}24, 3\underline{15}42)$. 
They are, respectively, the minimum and the maximum of the congruence classes associated to strong rec\-tan\-gu\-la\-ti\-ons~\cite{R12}. 
The enumerating sequence of these classes is \href{https://oeis.org/A342141}{OEIS A342141}, 
and it was proven by Fusy, Narmanli, and Schaeffer~\cite{FNS21} that its generating function is not D-finite (via the enumeration of \emph{transversal structures}, which are dual to  strong rec\-tan\-gu\-la\-ti\-ons).

\medskip

In Sections~\ref{sec:weak} and~\ref{sec:strong}, 
we review and revisit the connection of these classes of permutations
with rec\-tan\-gu\-la\-ti\-ons, also providing a visual interpretation of the congruence classes associated to weak and strong rec\-tan\-gu\-la\-ti\-ons. 
Specifically, in Theorems~\ref{thm:weak_distinguished} and~\ref{thm:weak_bijections},
Baxter, twisted Baxter, and co-twisted Baxter permutations will be linked to weak rec\-tan\-gu\-la\-ti\-ons,
and separable permutations to weak guillotine rec\-tan\-gu\-la\-ti\-ons;
and, in Theorems~\ref{thm:strong_distinguished} and~\ref{thm:strong_bijections},
2-clumped and co-2-clumped permutations will be linked to strong rec\-tan\-gu\-la\-ti\-ons.

\section{Weak rec\-tan\-gu\-la\-ti\-ons}
\label{sec:weak}

In this section we deal with representation of weak rec\-tan\-gu\-la\-ti\-ons by posets and permutations.
It has an expository nature and does not contain any new results, therefore we will present the material, 
mainly from~\cite{ABP06, CSS18, LR12, M19a}, rather briefly and without proofs.
We include it in order to provide a~systematic summary of all relevant material from different contributions,
which makes the comparison with the case of strong rec\-tan\-gu\-la\-ti\-ons especially clear and transparent. 

As mentioned in Section~\ref{sec:diag}, diagonal rec\-tan\-gu\-la\-ti\-ons are considered as canonical representatives of weak rec\-tan\-gu\-la\-ti\-ons.
Therefore, posets and permutations associated with weak rec\-tan\-gu\-la\-ti\-ons will be defined via their diagonal representatives.

\subsection{The weak poset}\label{sec:weak_poset}

We first define the adjacency poset of a rec\-tan\-gu\-la\-ti\-on $\rc$.
Let the rectangles of $\rc$ be labeled with the NW--SE labeling. 
Then, for two rectangles $r_j$ and $r_k$ of $\rc$, 
we define $j \triangleleft k$ 
if $r_j$ and $r_k$ are adjacent, and~$r_j$ is on the left of \textit{or} below $r_k$.
In this case we also say that $r_j$ and $r_k$ \emph{block} each other:
$r_k$ blocks $r_j$ from the top or from the right,
and $r_j$ blocks $r_k$ from the bottom or from the left.
The \emph{adjacency poset} $P_a(\rc)$ is the poset on $[n]$ whose order relation is the 
transitive closure of $\triangleleft$.

Now, given a weak rec\-tan\-gu\-la\-ti\-on $\rc$, its \textit{weak poset} $P_w(\rc)$ 
is defined as the adjacency poset of its diagonal representative $\dc$.
This poset was introduced by Law and Reading in~\cite{LR12}
and thoroughly studied in~\cite{M19a} as a special case of \textit{Baxter posets}.

Note that the adjacency posets of distinct rec\-tan\-gu\-la\-ti\-ons weakly equivalent to $\rc$
may be different. However, all of them are extensions of the adjacency poset 
of the corresponding diagonal rec\-tan\-gu\-la\-ti\-on --- that is, extensions of $P_w(\rc)$.
Figure~\ref{fig:w_poset} shows the rec\-tan\-gu\-la\-ti\-on $\rc_1$ and its adjacency poset $P_a(\rc_1)$, 
as well as the diagonal representative $\dc_1$ and the weak poset $P_w(\rc_1) = P_a(\dc_1)$.
We draw Hasse diagrams of the weak poset via the natural embedding by duality, and,
therefore, the parents of every vertex occur in the increasing order from left to right. 
This representation also implies that the weak poset $P_w(\rc)$ is a planar two-dimensional lattice (compare with Proposition~\ref{prop:planar}).
Indeed, the planarity is inherited from the $\triangleleft$-relation which is an orientation of the dual map of $\rc$. 
The cover relations of $P_w(\rc)$ are a subset of the $\triangleleft$-relations. 
The bounded faces of the lattice correspond to segments of $\rc$ that have neighbors from both sides.

\begin{figure}[!h]
\begin{center}
\includegraphics[height=.243\textheight]{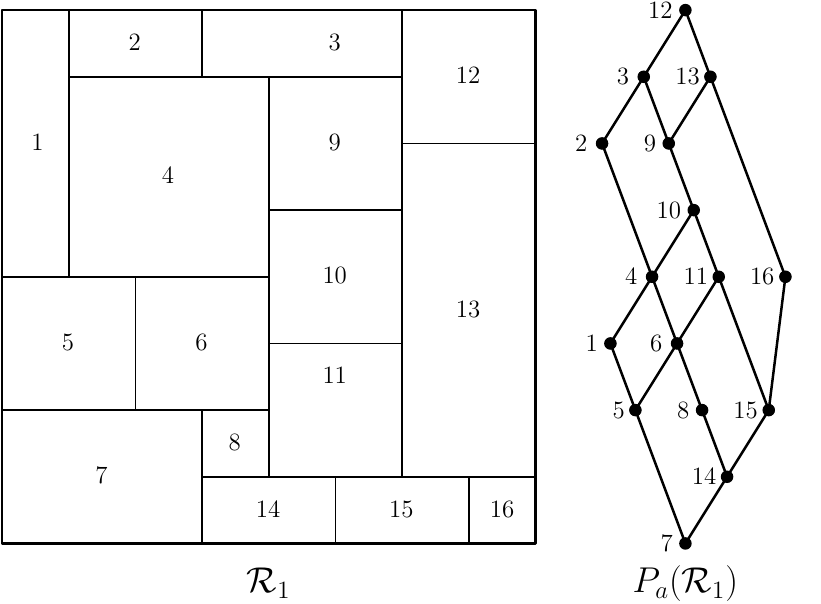}
\includegraphics[height=.243\textheight]{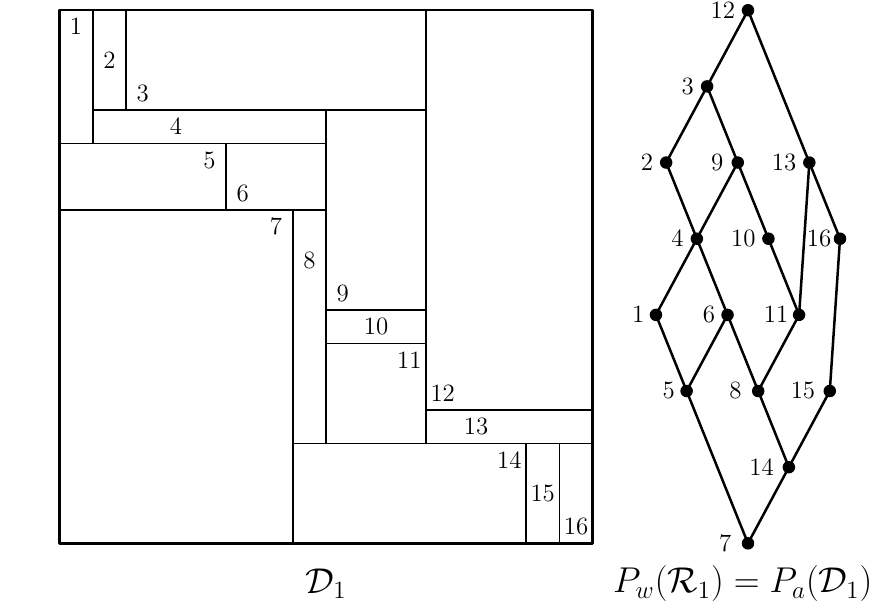} 
\end{center}
\caption{Left: Weak rec\-tan\-gu\-la\-ti\-on $\rc_1$ and its adjacency poset $P_a(\rc_1)$.
Right: The corresponding diagonal rec\-tan\-gu\-la\-ti\-on $\dc_1$ and its adjacency poset $P_a(\dc_1)$,
which is also, by definition, the weak poset $P_w(\rc_1)$.}
\label{fig:w_poset}
\end{figure}

\subsection{Mapping $\gamma_w$ from permutations to weak rectangulations}

Next we describe the fundamental mapping $\gamma_w$, introduced by Law and Reading~\cite{LR12}, 
from the set $S_n$ of permutations of size $n$ to the set $\mathsf{WR}_n$ of weak rec\-tan\-gu\-la\-ti\-ons of size $n$.

Let $\pi\in S_n$.
The corresponding weak rec\-tan\-gu\-la\-ti\-on $\gamma_w(\pi) \in \mathsf{WR}_n$ will be given by its diagonal representative.
It is constructed by the following \textit{forward algorithm} that takes an $n \times n$ grid square $\rr$
and inserts rectangles in the order prescribed by $\pi$ such that at the end 
a diagonal rec\-tan\-gu\-la\-ti\-on is obtained. 
At each step, a~\textit{partial rec\-tan\-gu\-la\-ti\-on} --- the union of already inserted rectangles ---
is bounded by a horizontal segment from the bottom, 
a vertical segment from the left, 
and a monotonically decreasing \emph{staircase} from the top-right. It is also convenient to adjoin two fictitious rectangles $r_0$ and $r_{n+1}$
that occupy respectively the column to the left of the grid, and the row below the grid. 
Accordingly the staircase is extended horizontally at its top-left end, and vertically at its bottom-right end.
The turning points of the staircase are referred to
as \emph{peaks} \pe\ and \emph{valleys}~\va.
Every peak is labeled according to the rectangle incident to it within the partial rec\-tan\-gu\-la\-ti\-on.

\begin{mdframed}

\textbf{Algorithm~WF (weak forward): Permutations to weak rec\-tan\-gu\-la\-ti\-ons.}\label{alg:wf}

\noindent Input: Permutation $\pi = \pi_1 \pi_2 \ldots \pi_n \in S_n$.

\noindent Output: Weak rec\-tan\-gu\-la\-ti\-on $\rc = \gamma_w(\pi)$.

\begin{enumerate}
\item Draw an $n \times n$ square grid $\rr$, and  label its diagonal cells by $1, 2, \ldots, n$ from the top-left to the bottom-right corner.
Amend them by an auxiliary rectangle $r_0$  
in the column to the left of the grid, and an auxiliary rectangle $r_{n+1}$  
in the row below the grid.
\item Initialize the \emph{staircase} to be the union of the left side and the bottom side of $\rr$, extended by a horizontal unit-segment at the beginning, and a vertical unit-segment at the end.
Initialize the set of its \emph{peaks} to be $P:=\{0, n+1\}$.
\item For $i$ from $1$ to $n$, with $j=\pi_i$:\\[1mm]
Insert rectangle $r_j$ according to the following rules. 
\begin{itemize}
\item The bottom-left corner of $r_{j}$ is the valley delimited by the two consecutive peaks of $P$ with labels $a$ and $b$ such that $a<{j}<b$.
\item If all rectangles 
$r_k$ with $a < k < {j}$
have already been inserted, 
then the top side of $r_{j}$ aligns with the top side of $r_{a}$. 
In this case, $a$ is deleted from $P$.
Otherwise, the top side of $r_{j}$ is contained in the horizontal grid line that
separates rows ${j}-1$ and~${j}$.
\item If all rectangles 
$r_k$ with ${j} < k < b$
have already been inserted, 
then the right side of $r_{j}$ aligns with the right side of $r_{b}$. 
In this case, $b$ is deleted from $P$.
Otherwise, the right side of $r_j$ is contained in the vertical grid line that
separates columns ${j}$ and ${j}+1$.

\item Update the staircase by replacing 
the union of left and bottom sides of $r_{j}$ 
with the union of its top and right sides.
Add ${j}$ to $P$.
\end{itemize}
\end{enumerate}
\end{mdframed}

An example of executing this algorithm is shown in Figure~\ref{fig:gamma_w}.

\begin{figure}[p]
\begin{center}
\includegraphics[width=\textwidth]{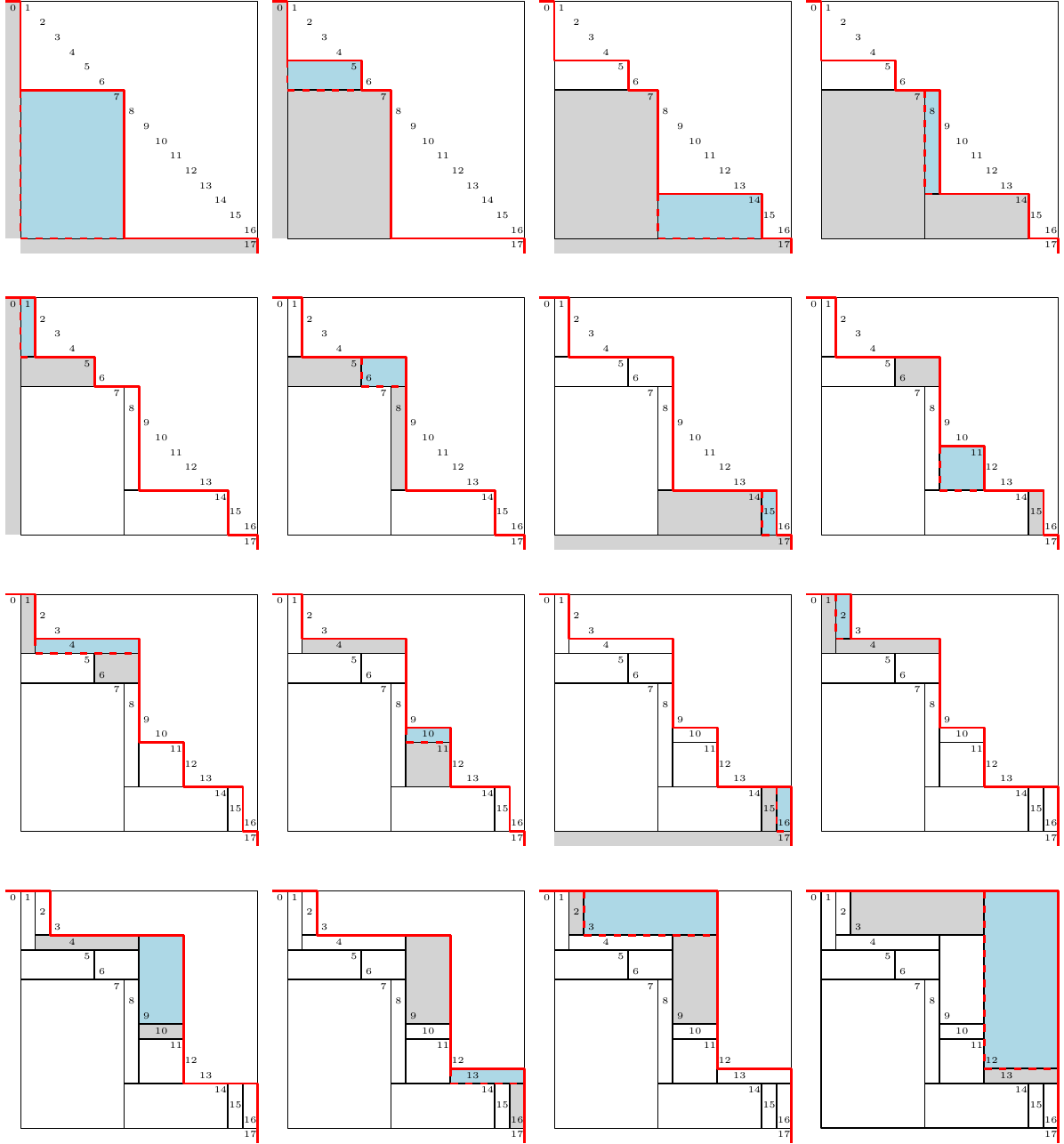} 
\end{center}
\caption{Constructing $\gamma_w(\pi)$ for $\pi =  7 \ \ 5 \ \ 14 \ \ 8 \ \ 1 \ \ 6 \ \ 15 \ \ 11 \ \ 4 \ \ 10 \ \ 16 \ \ 2 \ \ 9 \ \ 13 \ \ 3 \ \ 12$. 
At each step, the inserted rectangle is blue,
and the rectangles incident to the adjacent peaks are grey.}
\label{fig:gamma_w}
\end{figure}

\subsection{Fibers}\label{sec:fibers_weak}

The mapping $\gamma_w$ is surjective but not injective.
Given a weak rec\-tan\-gu\-la\-ti\-on $\rc$ of size $n$, one can recover all permutations $\pi\in S_n$ such that $\gamma_w(\pi)=\rc$ by applying 
the following \textit{backward algorithm} which in fact reverses Algorithm~WF.
Here, a rectangle $r_j$ of a partial rec\-tan\-gu\-la\-ti\-on $\tilde{R}$ is \textit{available}
if it is not blocked from top or from right by some other rectangle of $\tilde{R}$ ---
that is, if there is no rectangle $r_k$ of $\tilde{R}$ such that $j \triangleleft k$.

\begin{mdframed}\label{alg:wb}
\textbf{Algorithm WB (weak backward): Weak rec\-tan\-gu\-la\-ti\-ons to permutations.} 

\noindent Input: Weak rec\-tan\-gu\-la\-ti\-on $\rc \in \mathsf{WR}_n$.

\noindent Output: A permutation $\pi \in S_n$ such that $\pi \in \gamma_w^{-1}(\rc)$.

\begin{enumerate}
\item Consider $\dc$, the diagonal representative $\rc$. 
\item Label the rectangles of $\dc$ by the NW--SE labeling.
\item For $i$ from $n$ to $1$:\\[1mm]
   Remove an available rectangle $r_{j}$. Set $\pi_{i}={j}$.
\end{enumerate}
\end{mdframed}

Given a poset $P$, denote the set of its linear extensions by $\mathcal{L}(P)$. 
The following results are shown in~\cite[Section~6]{LR12}.
\begin{proposition}\label{prop:weakfiber}
Let $\rc$ be a weak rec\-tan\-gu\-la\-ti\-on.
\begin{enumerate}
\item At every step of Algorithm~WB there is at least one available rectangle.
\item The set of permutations that can be generated by  Algorithm~WB is precisely the fiber $\gamma_w^{-1}(\rc)$.
\item It is also the set of linear extensions of the weak poset of $\rc$:
\[\gamma_w^{-1}(\rc) = \mathcal{L}(P_w(\rc)).\]
\end{enumerate}
\end{proposition}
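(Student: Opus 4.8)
The plan is to prove the three assertions in the order (1), then the identity $\{\text{outputs of Algorithm WB}\}=\mathcal{L}(P_w(\rc))$, and finally (2) and (3). The engine behind everything except the last step is the following elementary fact about $P_w(\rc)=P_a(\dc)$ and its defining relation~$\triangleleft$: since $P_a(\dc)$ is by definition the transitive closure of $\triangleleft$, we have $j\triangleleft k\Rightarrow j<_{P_w(\rc)}k$; and conversely, if $D\subseteq[n]$ is a \emph{down-set} of $P_w(\rc)$ (that is, $x\in D$ and $y<_{P_w(\rc)}x$ imply $y\in D$) and $j\in D$, then $r_j$ fails to be blocked from above or from the right by any rectangle of $\{r_d\colon d\in D\}$ if and only if $j$ is a maximal element of $D$. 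The forward direction is clear ($\triangleleft\subseteq{<_{P_w(\rc)}}$); for the backward direction, if $j<_{P_w(\rc)}k$ with $k\in D$, choose a $\triangleleft$-chain $j=m_0\triangleleft m_1\triangleleft\cdots\triangleleft m_\ell=k$, note $m_1\le_{P_w(\rc)}k$ so $m_1\in D$, and observe that $m_1$ blocks $r_j$. Consequently, along Algorithm WB the set of not-yet-removed rectangles is always a down-set of $P_w(\rc)$: it starts as $[n]$, and deleting an available — hence, by the equivalence, maximal — element of a down-set leaves a down-set. Since every nonempty finite poset has a maximal element, there is always an available rectangle; this proves~(1), and it simultaneously shows that the available rectangles at each step are exactly the $P_w(\rc)$-maximal elements of the current down-set.

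Using only the previous paragraph (and no geometry) one obtains $\{\text{outputs of Algorithm WB}\}=\mathcal{L}(P_w(\rc))$. If $\pi$ is produced by WB and $a<_{P_w(\rc)}b$, then at the step at which $b$ is removed the remaining set is a down-set containing $b$, hence also contains $a$, so $a$ is removed earlier and therefore precedes $b$ in $\pi$; thus $\pi\in\mathcal{L}(P_w(\rc))$. Conversely, if $\pi\in\mathcal{L}(P_w(\rc))$, a one-line induction shows that after removing $\pi_n,\dots,\pi_{i+1}$ the remaining set is exactly $\{\pi_1,\dots,\pi_i\}$, a down-set in which $\pi_i$ is a maximal element; hence WB is allowed to remove $\pi_i$ at the corresponding step, so $\pi$ is a possible output.

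It remains to identify this set with $\gamma_w^{-1}(\rc)$; combined with the previous step this also yields~(3). One inclusion is easy. Run Algorithm WF on a permutation $\pi$ with $\gamma_w(\pi)=\rc$: whenever a rectangle is inserted, its left side and its bottom side lie on the current staircase and are never modified afterwards, so every left-neighbour and every below-neighbour of a rectangle of $\dc$ is inserted before it (recall that $\dc$ is the diagonal rectangulation produced by WF). Hence $a\triangleleft b$ forces $a$ to precede $b$ in $\pi$, so $\pi\in\mathcal{L}(P_a(\dc))=\mathcal{L}(P_w(\rc))$; that is, $\gamma_w^{-1}(\rc)\subseteq\mathcal{L}(P_w(\rc))=\{\text{outputs of WB}\}$.

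The reverse inclusion $\mathcal{L}(P_w(\rc))\subseteq\gamma_w^{-1}(\rc)$ is the substantial part, and I expect it to be the main obstacle. Here I would argue by induction on $i$ that, for $\pi\in\mathcal{L}(P_w(\rc))$, the partial rectangulation obtained after $i$ steps of Algorithm WF on $\pi$ is exactly the sub-configuration of $\dc$ carried by $\{r_{\pi_1},\dots,r_{\pi_i}\}$; taking $i=n$ then gives $\gamma_w(\pi)=\rc$. The inductive step rests on a geometric sub-lemma about diagonal rectangulations: for any down-set $D$ of $P_w(\rc)=P_a(\dc)$, the union of the rectangles $r_d$, $d\in D$, is a region of $\dc$ bounded by a horizontal segment, a vertical segment, and a monotone staircase whose peaks are labelled increasingly between consecutive valleys (so that ``filling the valley between peaks $a<j<b$ with $r_j$'' is meaningful and corresponds to adding the maximal element $j$ to $D$). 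Granting this, since $\{\pi_1,\dots,\pi_i\}$ is a down-set with maximal element $\pi_i$, in $\dc$ the below- and left-neighbours of $r_{\pi_i}$ are already-inserted rectangles while its above- and right-neighbours are not; one then checks, case by case, that the three placement rules of WF (the location of the bottom-left valley; whether the top side of $r_{\pi_i}$ aligns with $r_a$; whether its right side aligns with $r_b$) reconstruct exactly the position and shape of $r_{\pi_i}$ inside $\dc$. Proposition~\ref{prop:diag}(b) — on a horizontal segment of a diagonal rectangulation the above-neighbours precede the below-neighbours, and dually on vertical segments — is precisely what makes the two alignment rules correct. Setting up this sub-lemma cleanly and carrying out the accompanying case analysis is the only non-soft ingredient; everything else is poset bookkeeping.
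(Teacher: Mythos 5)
The paper never proves this proposition: it is stated in an explicitly expository section and attributed to Law and Reading~\cite[Section~6]{LR12}, so the only in-paper material to compare against is the analogous strong-case treatment in Section~\ref{sec:fibers_strong}. Measured against that, your poset bookkeeping is correct and complete: the equivalence ``$r_j$ available in a down-set $D$ $\Leftrightarrow$ $j$ maximal in $D$'' (your chain argument via $\triangleleft\subseteq{<_{P_w(\rc)}}$) is exactly the weak analogue of Lemma~\ref{lem:backward}, it yields part (1) and the identity $\{\text{outputs of WB}\}=\mathcal{L}(P_w(\rc))$ cleanly, and your argument that WF never touches the left and bottom sides of a rectangle after insertion correctly gives $\gamma_w^{-1}(\rc)\subseteq\mathcal{L}(P_w(\rc))$.

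The gap is where you say it is, and it is a genuine one: the inclusion $\mathcal{L}(P_w(\rc))\subseteq\gamma_w^{-1}(\rc)$ is not proved but only reduced to a ``geometric sub-lemma'' (every down-set of $P_a(\dc)$ carves out a staircase region of $\dc$ whose peaks are labelled increasingly) together with an unexecuted case analysis showing that WF's placement rules --- in particular the alignment conditions ``all $r_k$ with $a<k<j$ already inserted'' versus ``top side on the grid line between rows $j-1$ and $j$'' --- reproduce the actual geometry of $\dc$. That case analysis is where all the content of the proposition lives; without it you have not shown that an arbitrary linear extension, fed to WF, yields $\dc$ rather than some other diagonal rectangulation. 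The sub-lemma itself is true and not hard from Observation~\ref{obs:lrab}: a down-set of $P_a(\dc)$ is closed under ``left of'' and ``below'', hence its union is closed under moving left or down inside $\rr$, hence a staircase region; you should say this rather than merely ``granting'' it. Note also that the paper's strong-case proof takes a slightly different route that you might prefer: it first argues directly that backward executions mirror forward executions (an available rectangle is precisely one that could have been inserted last by the forward algorithm in the same partial configuration), which identifies the outputs of the backward algorithm with the fiber without ever proving $\mathcal{L}\subseteq\gamma^{-1}$ separately, and only then invokes the availability--maximality lemma. Adopting that order here would relocate, but not eliminate, the geometric verification you have deferred.
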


Figure~\ref{fig:gamma_w}, read backwards, demonstrates how $\pi$ is obtained by Algorithm~WB as one of the preimages of~$\dc_1$.
According to Proposition~\ref{prop:weakfiber}, the permutations that can be obtained in this way 
are precisely the linear extensions of the poset $P_w(\rc)$ from Figure~\ref{fig:w_poset}.

Finally, we remark that both algorithms can be performed from the opposite corner:
in the forward algorithm one can start inserting rectangles from the top-right corner,
and in the backward algorithm one can start removing rectangles from the bottom-left corner, with obvious adjustments of the rules. 
In both cases, the modified algorithms lead to the same results.

\subsection{Baxter, twisted Baxter, and co-twisted Baxter permutations}\label{sec:baxter}

By Proposition~\ref{prop:weakfiber}, there is a bijection between $\mathsf{WR}_n$ and a family of posets on $[n]$,
and the linear extensions of all these posets cover the entire $S_n$.
Then we have 
Theorem~\ref{thm:weak_distinguished} concerning distinguished elements of $\mathcal{L}(P_w(\rc))$,
and Theorem~\ref{thm:weak_bijections} concerning bijective restrictions of $\gamma_w$
to some permutation classes mentioned in Section~\ref{sec:families}.
These results were proven in several contributions, including~\cite{ABP06,CSS18,LR12,M19a}.

\begin{theorem}
\label{thm:weak_distinguished}
Let $\rc$ be a weak rec\-tan\-gu\-la\-ti\-on, with its rectangles labeled by the NW--SE labeling. 
Then:
\begin{enumerate}
\item $\mathcal{L}(P_w(\rc))$ contains a unique twisted Baxter permutation. It is the minimum element of $\mathcal{L}(P_w(\rc))$ with respect to the weak Bruhat order. 
\item $\mathcal{L}(P_w(\rc))$ contains a unique co-twisted Baxter permutation. It is the maximum element of $\mathcal{L}(P_w(\rc))$ with respect to the weak Bruhat order.
\item $\mathcal{L}(P_w(\rc))$ contains a unique Baxter permutation. It is obtained by reading the labels of the rectangles of $\rc$ in the SW--NE (anti-diagonal) order.
\end{enumerate}
\end{theorem}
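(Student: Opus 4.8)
The plan is to prove each of the three parts by characterizing the claimed distinguished linear extension concretely and then verifying (i) that it lies in $\mathcal{L}(P_w(\rc))$, (ii) that it has the stated pattern-avoidance property, and (iii) uniqueness. For parts 1 and 2, I would first recall that $\mathcal{L}(P_w(\rc))$ is an interval in the weak Bruhat order — this follows because $P_w(\rc)$ is a two-dimensional planar lattice (as noted after the definition of the weak poset), and the set of linear extensions of such a poset is known to be an interval in weak order; hence it has a unique minimum and a unique maximum. So the real content is to identify those extremal elements with the twisted and co-twisted Baxter permutations. The natural route is: (a) show that the minimum of $\mathcal{L}(P_w(\rc))$ avoids $2\underline{41}3$ and $3\underline{41}2$, by arguing that an occurrence of either vincular pattern in a linear extension $\pi$ would let us swap two adjacent values of $\pi$ while staying in $\mathcal{L}(P_w(\rc))$, contradicting minimality; and (b) show conversely that two distinct linear extensions cannot both avoid these patterns, so the twisted Baxter element — if it exists — is unique, and by (a) it must be the minimum. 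Part 2 is the mirror image (reverse-complement symmetry of the construction), swapping the roles of the two diagonals; I would state this symmetry once and deduce part 2 from part 1.

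For step (a) the key local fact I want is: if $\pi = \gamma_w^{-1}$-preimage of $\rc$ contains $\pi_a \pi_b \pi_{b+1} \pi_c$ order-isomorphic to $2413$ with $\pi_b,\pi_{b+1}$ consecutive in position (the pattern $2\underline{41}3$), then in the partial rectangulation built by Algorithm~WF just before inserting $r_{\pi_b}$ and $r_{\pi_{b+1}}$, the rectangles $r_{\pi_b}$ and $r_{\pi_{b+1}}$ are \emph{incomparable} in $P_w(\rc)$ — i.e. neither blocks the other — so transposing them gives another linear extension that is smaller in weak order. This is where the geometry of the staircase insertion does the work: the value inequalities $\pi_a < \pi_{b+1} < \pi_c < \pi_b$ force $r_{\pi_b}$ and $r_{\pi_{b+1}}$ to be inserted into the same valley region in a way that makes them non-adjacent in $\dc$. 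Symmetrically $3\underline{41}2$ handles the other diagonal configuration. For the converse uniqueness in (b), I would use the standard fact that within an interval of the weak Bruhat order the pattern-minimal element is unique because any two linear extensions related by an adjacent transposition differ precisely by creating/destroying such a vincular occurrence.

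For part 3, the claimed Baxter permutation is the SW--NE (anti-diagonal) reading word of $\rc$. I would first check it is a linear extension of $P_w(\rc)$: since $P_w(\rc)$ records "left of or below" adjacencies in the NW--SE labeling, and the SW--NE order refines "left of or below" (a rectangle that is to the left of \emph{and} below another gets the smaller SW--NE label too, while the two "mixed" cases are exactly the pairs that the NW--SE and SW--NE orders disagree on — and those are never adjacent in the poset in a way that obstructs this), the anti-diagonal word respects all cover relations of $P_w(\rc)$. Then I would show this word avoids $2\underline{41}3$ and $3\underline{14}2$ simultaneously — the two Baxter vincular patterns — by translating an occurrence into a forbidden "windmill-free but still pattern-producing" configuration of four rectangles whose NW--SE and SW--NE labels interleave in the forbidden way; genericity of $\rc$ rules this out. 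Uniqueness of the Baxter permutation in the fiber then follows because the Baxter class, intersected with any interval $\mathcal{L}(P_w(\rc))$, has exactly one element — this is essentially the statement that $\gamma_w$ restricted to Baxter permutations is a bijection, which one proves by a counting/injectivity argument: injectivity because distinct Baxter permutations in the same fiber would again be linked by adjacent transpositions creating an occurrence of a Baxter pattern, and surjectivity because every fiber is nonempty and an interval.

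The main obstacle I expect is step (a) for parts 1 and 2: making precise the claim that a $2\underline{41}3$ (resp. $3\underline{41}2$) occurrence with consecutive middle letters exactly witnesses a pair of $P_w(\rc)$-incomparable rectangles that are adjacent in position in $\pi$. One must carefully track, through Algorithm~WF, which valley of the staircase receives $r_{\pi_b}$ and $r_{\pi_{b+1}}$ and argue that the "41" being a positional descent forces them into the same valley without either becoming a top/right blocker of the other — and conversely that the absence of such a pattern means every adjacent transposition in $\pi$ would violate a cover relation, pinning $\pi$ as the minimum. This is a finite case analysis on the four possible joint shapes at the relevant corners, but it is the crux where the combinatorics of the forward algorithm must be matched exactly to the vincular patterns; everything else is either symmetry or the abstract fact that linear extensions of a planar two-dimensional poset form a weak-order interval.
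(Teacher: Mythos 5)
The paper does not actually prove Theorem~\ref{thm:weak_distinguished}: Section~\ref{sec:weak} is explicitly expository, the theorem is attributed to~\cite{ABP06,CSS18,LR12,M19a}, and the only in-paper content is the remark that $\mathcal{L}(P_w(\rc))$ is an interval of the weak Bruhat order whose minimum and maximum are the leftmost and rightmost linear extensions. So there is no in-paper proof to compare against line by line. That said, your plan for parts 1 and 2 is essentially the weak-order analogue of the proof the paper \emph{does} give for the strong case (Lemmas~\ref{lem:cross}, \ref{lem:2clumped}, \ref{lem:unique}): show that the leftmost extension avoids the two vincular patterns because a covering descent pair cannot carry the required witnesses, and show that any non-leftmost extension has an incomparable adjacent descent whose staircase neighbourhood supplies the witnesses for $2\underline{41}3$ or $3\underline{41}2$. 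That architecture is sound and you correctly identify the geometric case analysis as the crux, but you leave it unexecuted, so parts 1--2 are a correct plan rather than a proof. Note also that your ``standard fact'' that adjacent transpositions ``differ precisely by creating/destroying such a vincular occurrence'' is not true as stated; the usable statement is the one you give afterwards, namely that an incomparable adjacent descent always carries the witnesses, which is exactly what the paper's Lemma~\ref{lem:unique} establishes in the strong setting.

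Part 3 contains a genuine gap. You argue uniqueness of the Baxter permutation in the fiber by appealing to ``the statement that $\gamma_w$ restricted to Baxter permutations is a bijection'' --- but that is Theorem~\ref{thm:weak_bijections}(3), which the paper derives \emph{from} Theorem~\ref{thm:weak_distinguished}(3) via Proposition~\ref{prop:weakfiber}, so the argument is circular. Your fallback, that distinct Baxter permutations in the same fiber ``would be linked by adjacent transpositions creating an occurrence of a Baxter pattern,'' does not work either: the anti-diagonal reading word need not be the minimum or the maximum of the interval $\mathcal{L}(P_w(\rc))$ (see Figure~\ref{fig:w_permutations}, where it lies strictly between $\pi_L$ and $\pi_R$), so the local-to-global argument that rescues the twisted and co-twisted cases --- a local minimum of a weak-order interval is its global minimum --- has no analogue here; controlling what happens one adjacent transposition away from the anti-diagonal word says nothing about linear extensions at larger distance. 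What is needed is a direct argument that \emph{every} linear extension other than the anti-diagonal reading word contains $2\underline{41}3$ or $3\underline{14}2$, or an explicit reconstruction of $\rc$ from its Baxter word as in~\cite{ABP06}. Until that is supplied, part 3 is not proved.
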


\begin{theorem}
\label{thm:weak_bijections}
The mapping $\gamma_w$ restricts to three bijections between weak rec\-tan\-gu\-la\-ti\-ons and permutation classes: 
\begin{enumerate}
\item A bijection $\beta_\mathsf{TB}$ between weak rec\-tan\-gu\-la\-ti\-ons and twisted Baxter permutations;
\item A bijection $\beta_\mathsf{CTB}$ between weak rec\-tan\-gu\-la\-ti\-ons and co-twisted Baxter permutations;
\item A bijection $\beta_\mathsf{B}$ between weak rec\-tan\-gu\-la\-ti\-ons and Baxter permutations.
\end{enumerate}
Moreover, the bijection $\beta_\mathsf{B}$ restricts to a bijection $\beta_\mathsf{S}$ between weak guillotine rec\-tan\-gu\-la\-ti\-ons and separable permutations.

\end{theorem}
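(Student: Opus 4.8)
\medskip\noindent\textit{Proof plan.}
Items (1)--(3) are immediate from Proposition~\ref{prop:weakfiber} and Theorem~\ref{thm:weak_distinguished}. By Proposition~\ref{prop:weakfiber} the fibers of $\gamma_w$ are exactly the sets $\mathcal{L}(P_w(\rc))$, $\rc\in\mathsf{WR}_n$, and since $\gamma_w$ is surjective these fibers partition $S_n$ into nonempty classes. By Theorem~\ref{thm:weak_distinguished} every such class contains exactly one twisted Baxter permutation, exactly one co-twisted Baxter permutation, and exactly one Baxter permutation. Hence for each of these three permutation classes $\mathsf{C}$ the restriction of $\gamma_w$ to $\mathsf{C}$ is injective (no fiber contains two elements of $\mathsf{C}$) and surjective onto $\mathsf{WR}_n$ (every fiber contains one), i.e.\ a bijection; its inverse sends $\rc$ to, respectively, the minimum of $\mathcal{L}(P_w(\rc))$ in the weak Bruhat order, the maximum, and the SW--NE reading of the labels of $\rc$. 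This produces $\beta_\mathsf{TB}$, $\beta_\mathsf{CTB}$, $\beta_\mathsf{B}$.

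For the ``moreover'' statement, since separable permutations form a subclass of Baxter permutations, $\beta_\mathsf{B}$ already restricts to an injection from separable permutations into $\mathsf{WR}_n$, and it remains to identify its image with the weak guillotine rectangulations. The plan is to prove that Algorithm~WF is compatible with the direct sum $\oplus$ and skew sum $\ominus$ of permutations, in the sense that if $\pi=\sigma\oplus\tau$ then $\gamma_w(\pi)$ has a vertical cut splitting it into a copy of $\gamma_w(\sigma)$ (on the left) and a copy of $\gamma_w(\tau)$ (on the right), and symmetrically if $\pi=\sigma\ominus\tau$ then $\gamma_w(\pi)$ has a horizontal cut splitting it into copies of $\gamma_w(\sigma)$ and $\gamma_w(\tau)$. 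To verify this one runs Algorithm~WF on $\pi$: it inserts first all rectangles whose labels lie in one block and then all rectangles of the other block; following the evolution of the staircase and of its peaks, one checks that the first block never occupies the part of the grid reserved for the second, so that the interface between the two blocks is a single cut, and that restricted to each block the algorithm reproduces $\gamma_w(\sigma)$, resp.\ $\gamma_w(\tau)$, up to weak equivalence.

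Granting this compatibility lemma, the ``moreover'' statement follows by induction on $n$. A separable permutation of size at least $2$ is $\sigma\oplus\tau$ or $\sigma\ominus\tau$ for separable $\sigma,\tau$ of smaller size, so by the lemma and the induction hypothesis $\gamma_w(\pi)$ is obtained by gluing two guillotine rectangulations along a cut, hence is guillotine; thus $\beta_\mathsf{B}$ maps separable permutations into weak guillotine rectangulations. Conversely, a weak guillotine rectangulation $\rc$ of size at least $2$ has a cut splitting it into two weak guillotine rectangulations, which by induction are $\gamma_w(\sigma)$ and $\gamma_w(\tau)$ for separable $\sigma,\tau$; applying the lemma to $\sigma\oplus\tau$ or $\sigma\ominus\tau$ (according to the orientation of the cut) recovers $\rc$ as $\gamma_w$ of a separable permutation. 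Hence $\gamma_w$ restricted to separable permutations is a bijection onto the weak guillotine rectangulations, and this is $\beta_\mathsf{S}$. (One could shorten the argument: the inclusion into weak guillotine rectangulations, together with injectivity of $\beta_\mathsf{B}$ and the equality of cardinalities coming from West's enumeration of separable permutations by Schr\"oder numbers and Proposition~\ref{prop:guillotineenum}, already forces surjectivity; alternatively one could route the argument through the windmill characterization of Proposition~\ref{prop:guillotinechar}.)

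The main obstacle is the compatibility lemma, i.e.\ the careful bookkeeping of how the staircase, its peaks, and the top/right-alignment and peak-deletion rules of Algorithm~WF behave when the two blocks of $\sigma\oplus\tau$ (resp.\ $\sigma\ominus\tau$) are processed one after the other; in particular one must check that the sub-configuration produced for the first block --- whose rectangles do not reach the outer boundary on one side --- is nonetheless weakly equivalent to $\gamma_w(\sigma)$. The remaining steps --- the reduction of (1)--(3) to Proposition~\ref{prop:weakfiber} and Theorem~\ref{thm:weak_distinguished}, and the induction for the ``moreover'' part --- are routine.
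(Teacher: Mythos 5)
Your derivation of items (1)--(3) is exactly the paper's argument: Section~\ref{sec:weak} is explicitly expository, and the paper's entire ``proof'' of these items is the observation that Proposition~\ref{prop:weakfiber} (fibers of $\gamma_w$ are the sets $\mathcal{L}(P_w(\rc))$) combined with Theorem~\ref{thm:weak_distinguished} (each such set contains exactly one twisted Baxter, one co-twisted Baxter, and one Baxter permutation) yields the three bijections; the paper then cites the literature for Theorem~\ref{thm:weak_distinguished} itself, as you implicitly do. So that part matches.

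For the ``moreover'' statement your route genuinely differs from the paper's. The paper does not prove it in Section~\ref{sec:weak} either (it is attributed to~\cite{ABP06} et al.), but it is re-derived later: Proposition~\ref{prop:guill}(3) recovers $\beta_\mathsf{S}$ by showing, via the windmill characterization (Proposition~\ref{prop:guillotinechar}) and the mesh patterns $p_1,p_2$ of Lemma~\ref{lem:guil_main_lemma}, that $\mathsf{Av}(2\underline{41}3,3\underline{14}2,p_1,p_2)=\mathsf{Av}(2413,3142)$. You instead propose the classical recursive argument: $\gamma_w$ is compatible with $\oplus$ and $\ominus$, so separable permutations and guillotine rectangulations satisfy the same recursive decomposition. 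This is correct and more elementary/self-contained, and your observation that surjectivity can alternatively be forced by the Schr\"oder-number count (West's enumeration plus Proposition~\ref{prop:guillotineenum}) is a legitimate shortcut. What the paper's route buys is more information --- a pattern-by-pattern correspondence between the two windmills and the two mesh patterns, valid for $\gamma_s$ as well --- while yours buys a direct structural explanation of why the cut appears. The one place you should be careful is the compatibility lemma itself, which you leave as a sketch: the first block of $\sigma\oplus\tau$ does not stay inside the top-left $k\times k$ subgrid of the diagonal representative (its rectangles extend down to the bottom of $\rr$ but stay within the first $k$ columns), so the claim to verify is that the first block tiles exactly the left $k$ columns and the second block the right $n-k$ columns, the interface being a full-height segment, and that the left piece is weakly equivalent to $\gamma_w(\sigma)$ even though its proportions are $k\times n$ rather than $k\times k$. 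You flag exactly this, and it does go through, but it is the only step requiring real bookkeeping with Algorithm~WF.
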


Note that, given Proposition~\ref{prop:weakfiber}, the three items of Theorem~\ref{thm:weak_distinguished} imply
the corresponding items of Theorem~\ref{thm:weak_bijections}. Hence we only have to care of Theorem~\ref{thm:weak_distinguished}.
Since $P_w(\rc)$ forms an interval in the weak Bruhat order,
the minimum (respectively maximum) of this interval can be obtained by iteratively choosing and deleting the leaf with the smallest (respectively largest) label.    
Since the leaves (current minima) have increasing labels from left to right in the ``embedded'' Hasse diagram of $P_w(\rc)$,
this corresponds to pruning the leftmost (respectively rightmost) leaf at every step.
Hence, we also refer to the minimum and the maximum elements of $\mathcal{L}(P_w(\rc))$,
with respect to the weak Bruhat order,
as the \emph{leftmost} and the \emph{rightmost} linear extensions of $P_w(\rc)$.
We will denote them by $\pi_L$ and $\pi_R$.
Then Theorem~\ref{thm:weak_distinguished}(1,2) says
that the twisted Baxter and the co-twisted Baxter representatives of $P_w(\rc)$
are precisely $\pi_L$ and $\pi_R$.
These two linear extensions are a realizer of the 2-dimensional poset $P_w(\rc)$, 
which (as mentioned in Section~\ref{sec:weak_poset}) is a~planar lattice. 
Figure~\ref{fig:w_permutations} shows the twisted Baxter, co-twisted Baxter, and Baxter representatives of $P_w(\rc_1)$. 

\begin{figure}[!h]
\begin{center}
\includegraphics[width=\textwidth]{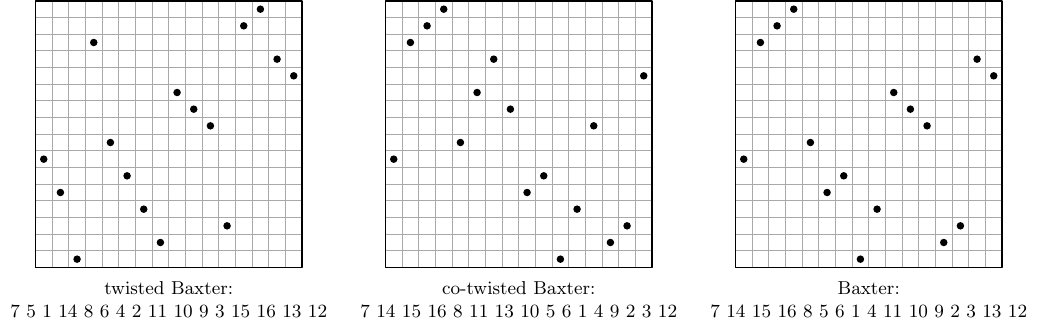} 
\end{center}
\caption{The twisted Baxter, co-twisted Baxter, and Baxter representatives of $P_w(\rc_1)$.}
\label{fig:w_permutations}
\end{figure}

\section{Strong rec\-tan\-gu\-la\-ti\-ons}
\label{sec:strong}

In this section, we consider strong rec\-tan\-gu\-la\-ti\-ons.
The set of strong rec\-tan\-gu\-la\-ti\-ons of size $n$ will be denoted by $\mathsf{SR}_n$. 
We first discuss their representation by posets and permutations, similarly to the weak case.
We define the strong poset of a rec\-tan\-gu\-la\-ti\-on, $P_s(\rc)$,
and a surjective mapping $\gamma_s$ from $S_n$ to~$\mathsf{SR}_n$.
The fibers of this mapping define equivalence classes of permutations, 
which are exactly the linear extensions of the strong poset.
In these fibers, we identify two particular representatives --- 
2-clumped and co-2-clumped permutations, both in bijection with strong rec\-tan\-gu\-la\-ti\-ons. 
 This part includes an alternative treatment of results from~\cite{R12}:
in particular, our descriptions of $P_s(\rc)$ and $\gamma_s$
lead to a simple geometric proof of the bijections.  The new proof makes the
correspondence between patterns in rectangulations and patterns in permutations
more transparent, additionally, it simplifies the identification of the flip graph
of strong rectangulations, and it is well suited to encode strong rectangulations by quadrant walks.

\subsection{The strong poset}

Let $\rc$ be a strong rec\-tan\-gu\-la\-ti\-on of size $n$.
Label the rectangles of $\rc$ with their NW--SE labeling. 
We set $a\btri b$ if one of the following four conditions hold:
\begin{enumerate}
  \item \emph{adjacency relations} (earlier denoted by $\triangleleft$ and also called blocking):
  \begin{enumerate}
\item \label{rel:b1} $r_a$ and $r_b$ are adjacent, and $r_a$ is on the left of $r_b$,
\item \label{rel:b2} $r_a$ and $r_b$ are adjacent, and $r_a$ is below $r_b$;
  \end{enumerate}
\item \emph{special relations} (see Figure~\ref{fig:special}):
  \begin{enumerate}
\item \label{rel:s1} the right side of $r_b$ lies on the same vertical segment as the left side of $r_a$, and the bottom-right corner of $r_b$ lies above the top-left corner of $r_a$ on this segment,
\item \label{rel:s2} the top side of $r_b$ lies on the same horizontal segment as the bottom side of $r_a$, and the top-left corner of $r_b$ lies on the right of the bottom-right corner of $r_a$ on this segment.
  \end{enumerate}
\end{enumerate}

\begin{figure}[!h]
  \begin{center}
    \includegraphics[page=4, scale=.5]{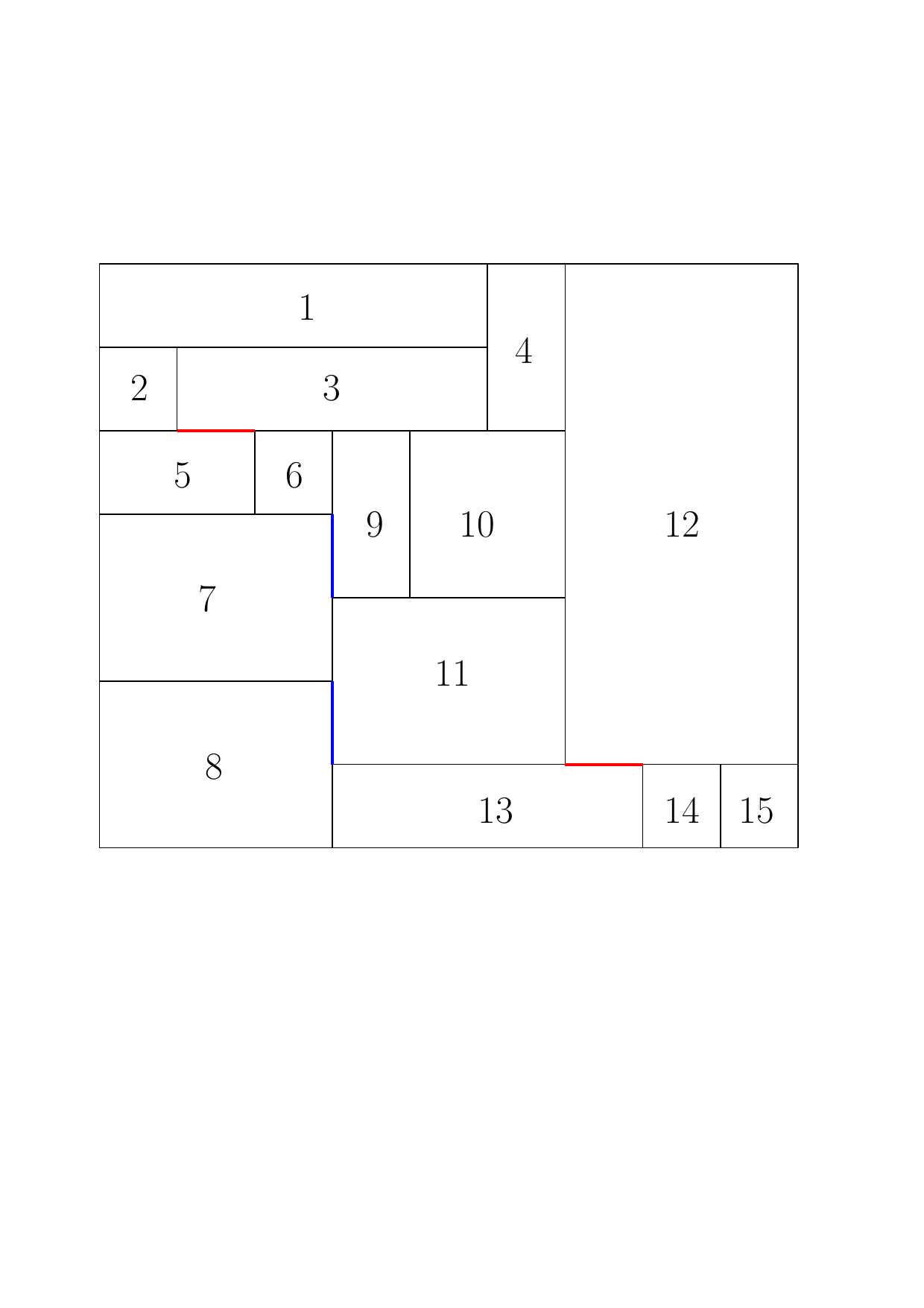}
  \end{center}
  \caption{The special relations in the definition of the strong poset. 
  In both cases, we have $a\btri b$.}
  \label{fig:special}
\end{figure}

As above, the adjacency poset $P_a(\rc)$ is the transitive closure of the adjacency relation $\triangleleft$.
Now, let $\prec_s$ be the transitive closure of $\btri$.
Note that the special relations \ref{rel:s1} and \ref{rel:s2} yield an extension of $P_a(\rc)$.
\begin{proposition}\label{prop:is_poset}
The relation  $\prec_s$ is a partial order on $[n]$.
\end{proposition}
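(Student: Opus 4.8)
The plan is to show that $\prec_s$ is antisymmetric; irreflexivity and transitivity are then immediate (transitivity holds by construction as $\prec_s$ is defined to be a transitive closure, and irreflexivity follows from antisymmetry). Since $\prec_s$ is the transitive closure of $\btri$, it suffices to show that there is no cyclic sequence $a_0 \btri a_1 \btri \cdots \btri a_k = a_0$ with $k \ge 1$. The natural strategy is to find a quantity, attached to each rectangle, that is \emph{strictly monotone} along every $\btri$-step; then no cycle can exist. First I would check whether the NW--SE label itself works: if $a \btri b$ implies $a < b$, we are done immediately. For the adjacency relations \ref{rel:b1} and \ref{rel:b2} this is exactly the definition of the NW--SE labeling (by Observation~\ref{obs:lrab} and the discussion in Section~\ref{sec:labels}, ``left of or below'' is the total order underlying the labeling), so $a \btri b$ via \ref{rel:b1} or \ref{rel:b2} gives $a < b$. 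The crux is therefore the two special relations.

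So the main step is: if $a \btri b$ via the special relation \ref{rel:s1} or \ref{rel:s2}, then $a < b$ in the NW--SE order. Consider case \ref{rel:s1}: the right side of $r_b$ and the left side of $r_a$ lie on a common vertical segment $t$, with the SE corner of $r_b$ strictly above the NW corner of $r_a$ on $t$. I want to exhibit a chain witnessing that $r_b$ is ``left of or above'' $r_a$. The idea is to walk along $t$ from the point just below the NW corner of $r_a$ upward: the rectangles immediately to the \emph{right} of $t$ along this stretch form a sequence $r_a = r_{c_1}, r_{c_2}, \dots$ in which consecutive ones are vertically adjacent (the top side of $r_{c_i}$ and the bottom side of $r_{c_{i+1}}$ lie on a horizontal segment), so $r_b$ lies above $r_{c_m}$ for some $m$ (in the ``below/above'' relation of Observation~\ref{obs:lrab}) once we reach the level of the SE corner of $r_b$; but $r_b$ is immediately to the \emph{left} of $t$ at that height. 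Combining ``$r_b$ is on the left of the rectangle directly across $t$'' with ``that rectangle is below $r_b$'s continuation'' must be done carefully, because left/above need to be chained consistently. The cleanest route is to invoke Observation~\ref{obs:lrab}(1): the four alternating paths from $r_a$ partition $\rr \setminus \{r_a\}$, and I claim that the configuration in \ref{rel:s1} forces $r_b$ to lie in the NW region (above-or-left of $r_a$), hence $b < a$ is \emph{false}, i.e.\ $a < b$. Concretely, $r_b$ sits on the left side of the vertical segment carrying the left side of $r_a$, and the constraint that the SE corner of $r_b$ is above the NW corner of $r_a$ places $r_b$ on the far side of the NW alternating path of $r_a$ (the path that leaves the NW corner of $r_a$ and travels up-and-left). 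Case \ref{rel:s2} is symmetric under the diagonal reflection swapping the two coordinates, which swaps ``left'' with ``below'' and NW with SW; I would just remark this rather than repeat the argument.

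I expect the geometric verification in case \ref{rel:s1} (and its mirror \ref{rel:s2}) to be the main obstacle: one must argue rigorously that the corner inequality forces $r_b$ into the correct region delimited by an alternating path of $r_a$, which requires a short case analysis on the joint shapes at the relevant corners. Everything else is formal: once we know $a \btri b \implies a < b$ for all four defining relations, the transitive closure $\prec_s$ satisfies $x \prec_s y \implies x < y$, so $\prec_s$ is contained in a strict total order; it is therefore irreflexive and antisymmetric, and it is transitive by construction. Hence $\prec_s$ is a strict partial order on $[n]$, proving Proposition~\ref{prop:is_poset}. I would also record, as it will be needed later, that this argument simultaneously shows $P_a(\rc) \subseteq {\prec_s}$ and that every linear extension of $\prec_s$ is a permutation compatible with the NW--SE order.
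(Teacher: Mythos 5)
Your reduction of the problem to acyclicity of the generating relation is fine in principle, but the invariant you propose does not exist: it is false that $a\btri b$ implies $a<b$ in the NW--SE labeling, and the failure already occurs for the basic adjacency generator~\ref{rel:b2}, before one even reaches the special relations. The NW--SE labeling is built from the total order ``left of, \emph{or above}'' (Section~\ref{sec:labels}: $i<j$ precisely when $r_i$ is above or to the left of $r_j$); you have substituted ``left of or below'', which is the \emph{SW--NE} labeling. Under relation~\ref{rel:b2} we have $a\btri b$ when $r_a$ is \emph{below} $r_b$, i.e.\ $r_b$ is above $r_a$, which gives $b<a$. Concretely, for the rectangulation consisting of $n$ stacked rows the NW--SE labels run from $1$ at the top to $n$ at the bottom, and the generators are $n\btri n-1\btri\cdots\btri 1$, all label-decreasing. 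The special relation~\ref{rel:s1} also decreases the label: there the right side of $r_b$ and the left side of $r_a$ lie on a common vertical segment, so $r_b$ is on the left of $r_a$ by the very definition of ``left of'' (no alternating-path analysis is needed), hence $b<a$ --- note that your own geometric conclusion ``$r_b$ lies above-or-left of $r_a$'' yields $b<a$ by the definition of the labeling, not $a<b$ as you wrote. Switching to the SW--NE labeling does not help: it makes \ref{rel:b1} and~\ref{rel:b2} increasing but both special relations decreasing. No such coordinate labeling can work, because the four generators deliberately mix the two ``directions''; this is exactly why $P_s(\rc)$ is genuinely two-dimensional and why acyclicity requires a real argument.

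The paper's proof supplies what your plan is missing: it exhibits a linear order $\lambda$ compatible with $\prec_s$ directly, by a greedy construction in which the union of the rectangles in every prefix is a staircase. The key step is that if the rectangle $r_{b_1}$ at the leftmost valley of the current staircase is not $\prec_s$-minimal among the remaining rectangles, then any obstruction (whether an adjacency or a special relation) forces $b_2\prec_s b_1$ for the rectangle $r_{b_2}$ at the next valley; so the search for a minimal element moves strictly rightward along the staircase and terminates at an addable $\prec_s$-minimal rectangle. If you wish to salvage your approach, observe that any strictly $\btri$-monotone integer quantity is in effect a linear extension of $\prec_s$, so producing one requires the staircase construction (or something equivalent) anyway. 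Your closing remark that every linear extension of $\prec_s$ is compatible with the NW--SE order fails for the same reason: the stacked-rows example has the reversal $n,n-1,\ldots,1$ as its unique linear extension.
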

\begin{proof}
To prove that $\prec_s$ is acyclic, we show that there is a linear
order~$\lambda$ on the rectangles of $\rc$ which respects the relations of $\prec_s$, and such that the union of
rectangles in any prefix of $\lambda$ is a staircase.  The order
$\lambda$ is constructed element by element. Consider the staircase
formed by the taken elements, and let $b_1,\ldots,b_k$ be the labels
of the rectangles of $\rc$ whose bottom-left corners correspond to 
a~valley of the staircase, listed in the left-to-right order.  Note that
the left side of $b_1$ is contained in the staircase.
If~$r_{b_1}$ is a
minimal non-taken element with respect to $\prec_s$ --- that is,
there is no other non-taken element $a$ such that $a \prec_s b_1$ ---
then we select $r_{b_1}$ as the next element for $\lambda$: after
that, the taken elements still form a staircase.  Otherwise, there is
an $a$ such that $a \triangleleft b_1$ or $a \btri b_1$ due to a special
relation.  If we have $a \triangleleft b_1$, then the bottom side of $r_{b_1}$ extends beyond the peak 
that separates the valleys for $r_{b_1}$ and $r_{b_2}$ in the staircase. 
If $a \btri b_1$ due to a special  relation, then the peak
that separates the valleys for $r_{b_1}$ and $r_{b_2}$ in the staircase  
is the bottom-right corner of $r_{b_1}$.
In both cases we see that $b_2 \prec_s b_1$, and the left side of $r_{b_2}$
  is contained in the staircase.  Iterating, we obtain a maximum
  length chain $b_\ell \prec_s b_{\ell-1} \prec_s \ldots \prec_s b_2
  \prec_s b_1$, and the left sides of all $r_{b_j}$ for $1\leq
  j\leq\ell$ are contained in the staircase. The bottom side of
  $r_{b_\ell}$ is contained in the staircase, since otherwise we have
  $b_{\ell +1} \prec_s b_\ell$. Hence, $r_{b_\ell}$ is a~minimal
  element which can be added to $\lambda$, such that the remaining
  elements form a staircase.
\end{proof}

 We refer to $\prec_s$ as the \emph{strong order}, and refer to the set $[n]$ partially
  ordered with respect to $\prec_s$ as the \emph{strong poset}
  $P_s(\rc)=([n],\prec_s)$ of~$\rc$. 
In Figure~\ref{fig:hasse} we show how the strong poset $P_s(\rc_1)$ is obtained in two
steps from the weak poset $P_w(\rc_1)$:
first, new adjacencies obtained by shuffling yield $P_a(\rc_1)$, the adjacency poset of $\rc_1$,
and then the special relations yield the strong poset $P_s(\rc_1)$.

\begin{figure}[!h]
  \begin{center}
 \includegraphics[scale=0.7]{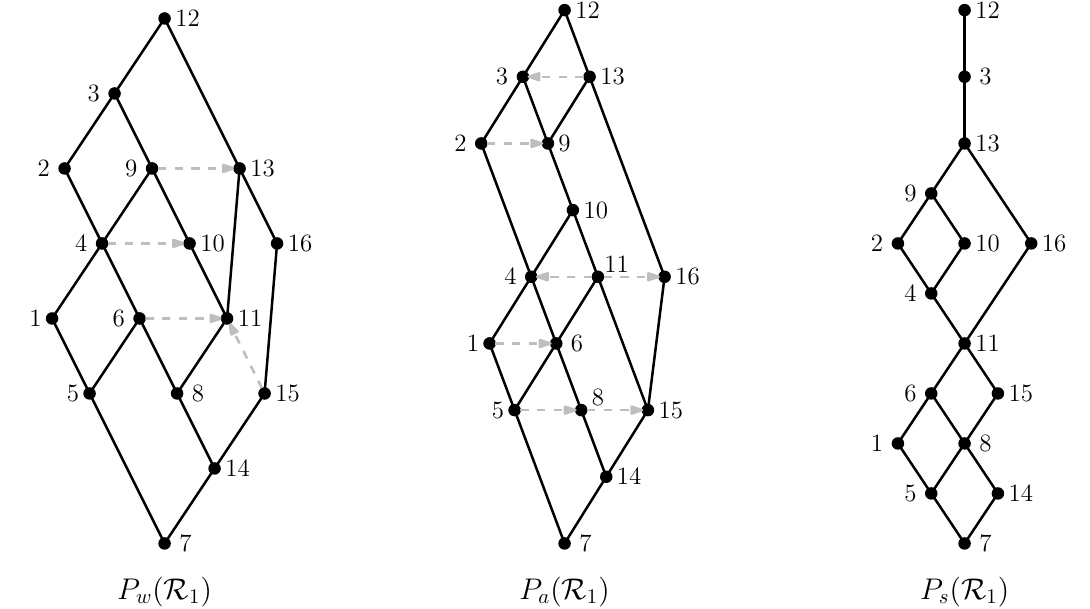} 
  \end{center}
  \caption{Hasse diagrams of three posets associated with $\rc_1$.
    Left: Solid black edges form $P_w(\rc_1)$, the weak poset of $\rc_1$
    (or: $P_a(\dc_1)$ the adjacency poset of $\dc_1$);
    dashed grey edges are new adjacencies contributed by shuffling $\dc_1$ into $\rc_1$.
    Middle: Solid black edges form $P_a(\rc_1)$, the adjacency poset of $\rc_1$;
    dashed grey edges are contributed by special relations.
    Right: $P_s(\rc_1)$, the strong poset of $\rc_1$.}
    \label{fig:hasse}
  \end{figure}

\begin{proposition}
  \label{prop:planar}
  The strong poset $P_s(\rc)$ is a planar two-dimensional lattice.
\end{proposition}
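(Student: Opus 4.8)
The plan is to prove the three properties separately: planarity, two-dimensionality (i.e., order dimension at most $2$, which for a lattice is equivalent to distributivity-free but really to being a sublattice of a product of two chains), and the lattice property. I would exploit the fact, already established in the text, that the weak poset $P_w(\dc)$ (the adjacency poset of the diagonal representative) is a planar two-dimensional lattice, and that $P_s(\rc)$ is obtained from $P_a(\rc)$ by adding the special relations, where $P_a(\rc)$ in turn is an extension of $P_w(\dc)$ by the shuffling adjacencies. The key geometric input is the partition of $\rr\setminus\{r\}$ into the four alternating-path regions from Observation~\ref{obs:lrab}: for any two distinct rectangles, exactly one of ``$a$ left/below $b$'' or ``$b$ left/below $a$'' holds, so $\prec_s$ is a suborder of the total ``left-of-or-below'' order, which is the NW--SE labeling order $1<2<\cdots<n$. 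This immediately gives that the NW--SE labeling is a linear extension of $P_s(\rc)$.

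First I would establish two-dimensionality by producing an explicit realizer. The NW--SE order is one linear extension; for the second, I would use the SW--NE order (reading rectangles along the anti-diagonal), and argue that $\prec_s$ is exactly the intersection of these two total orders. One inclusion is clear once we check that every relation $a\btri b$ is respected by the SW--NE order as well — for adjacency relations this is the content of Observation~\ref{obs:lrab} applied symmetrically, and for the two special relations it is a short case check on the configurations in Figure~\ref{fig:special}. For the reverse inclusion, suppose $a$ precedes $b$ in both the NW--SE and SW--NE orders; I would show geometrically that then $r_a$ lies in the ``lower-left'' closed region determined by $r_b$'s alternating paths (it is weakly left of and weakly below, in the alternating-path sense), and walk a chain of adjacency/special relations from $a$ up to $b$ along the boundary staircases, much as in the acyclicity argument in the proof of Proposition~\ref{prop:is_poset}. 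This is the step I expect to be the main obstacle: turning ``$a$ before $b$ in both diagonal orders'' into an explicit $\prec_s$-chain requires carefully tracking how the special relations plug the gaps left by pure blocking, and handling the shuffled neighbors correctly.

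Next, planarity. Since $P_s(\rc)$ has order dimension $2$ with realizer given by the NW--SE and SW--NE orders, its Hasse diagram embeds in the plane in the standard ``dominance drawing'': place rectangle $r_j$ at the point whose coordinates are its NW--SE rank and its SW--NE rank, and draw cover edges monotonically. To see that this drawing is planar I would use the geometric picture directly: the relation $\btri$ (before transitive closure) is an orientation of adjacencies across segments together with the special relations, and each segment of $\rc$ contributes a small ``gadget'' of edges around it whose faces are in bijection with the segments having neighbors on both sides — this is exactly the inheritance of planarity mentioned for $P_w$ in Section~\ref{sec:weak_poset}, now applied after shuffling and after adding special relations, which only subdivide or re-route edges around the same segment gadgets without introducing crossings.

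Finally, the lattice property. A finite poset that has dimension $2$ and has a minimum and a maximum element is a lattice iff it is a sublattice of the product of its two realizing chains — equivalently, the componentwise min and max of any two elements (in realizer coordinates) again correspond to elements of $P_s(\rc)$. Rather than argue this abstractly I would prefer a structural argument: $P_s(\rc)$ has a unique minimum ($r_1$, which contains the NW corner and is left-of-or-below everything, with $1\btri k$ realized by a blocking or special chain) and a unique maximum ($r_n$), and by Dilworth-type reasoning for dimension-$2$ posets it suffices to show any two elements have a meet. Given $a,b$, consider the set of common $\prec_s$-lower bounds; using the alternating-path regions I would show this set has a maximum by taking the ``innermost'' rectangle in the intersection of the lower-left regions of $r_a$ and $r_b$, and verify it is $\prec_s$-above every common lower bound via the same staircase-chain argument as before. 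The join is dual. This reuses the hard step from the dimension argument, so once that is in place the lattice property follows without new ideas.
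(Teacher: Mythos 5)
Your proposal is built on a structural claim that is false: neither the NW--SE nor the SW--NE order is a linear extension of $P_s(\rc)$, so the pair of diagonal orders cannot be a realizer, and $\prec_s$ is not their intersection. The culprit is exactly the ``short case check'' you defer: in special relation~\ref{rel:s1} (Figure~\ref{fig:special}) the right side of $r_b$ and the left side of $r_a$ lie on the same vertical segment, so $r_b$ is \emph{on the left of} $r_a$ in the sense of Observation~\ref{obs:lrab}, whence $b<a$ in the NW--SE labeling \emph{and} $b<a$ in the SW--NE labeling --- yet the definition decrees $a\btri b$, i.e.\ $a\prec_s b$. (Even before the special relations enter, the adjacency relation ``$r_a$ is below $r_b$'' gives $a\prec_s b$ with $b<a$ in the NW--SE labeling; your identification of the NW--SE order with ``left-of-or-below'' is also off, since that is the SW--NE order.) Because the \emph{forward} inclusion of your realizer claim already fails, the dominance drawing you propose for planarity is not even defined, and the meet/join argument, which by your own account ``reuses the hard step from the dimension argument,'' collapses with it. The only correct statement in this vicinity is the one the paper uses \emph{after} planarity is established: for \emph{incomparable} pairs, $a$ is left of $b$ in the planar diagram if and only if $a<b$ in the NW--SE labeling, and the actual realizer is $\{\pi_L,\pi_R\}$, the leftmost and rightmost linear extensions.

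The paper's proof runs in the opposite direction and sidesteps all of this: it proves planarity directly and then invokes the theorem of Baker, Fishburn, and Roberts that a bounded poset with a planar Hasse diagram is automatically a two-dimensional lattice, so the dimension bound and the lattice property come for free. Planarity is obtained by drawing the adjacency arcs inside the dual map of $\rc$ (oriented left-to-right and bottom-to-top), deleting transitively implied arcs, and then observing that each covering special relation is attached to a single edge $s$ of the rectangulation separating $r_a$ from $r_b$, and that the arc dual to $s$ (between the two rectangles sharing $s$) is itself implied by transitivity and hence absent from the diagram --- which frees the corridor through $s$ for the special-relation arc without creating crossings. If you want to salvage your plan, the first step would be to find a correct realizer, which essentially amounts to redoing Theorem~\ref{thm:strong_distinguished}; the planarity-first route is substantially shorter.
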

\begin{proof}
It is known that planar bounded posets are two-dimensional lattices (see for instance Baker, Fishburn, and Roberts~\cite{BFR72}). It therefore suffices to prove that $P_s(\rc)$ is planar. For this, we consider the planar drawing of the adjacency graph of the rectangles of $\rc$ obtained by choosing a point in each rectangle and connecting points in adjacent rectangles by an arc that intersect corresponding edges of the rec\-tan\-gu\-la\-ti\-on. 
When oriented from left to right and from bottom to top, these edges give all arcs 
that correspond to adjacency conditions~\ref{rel:b1} and~\ref{rel:b2}.
Next, we remove all the edges implied by transitivity and obtain the diagram of the adjacency poset.
It remains to show that the arcs corresponding to the special relations~\ref{rel:s1} and~\ref{rel:s2} can be added without creating crossings. 
  For this, we need two observations:
  \begin{itemize}
  \item Every covering special relation is associated with an edge of the rec\-tan\-gu\-la\-ti\-on:
a vertical edge that connects the top-left corner of $r_a$ and the bottom-right corner of $r_b$
or a~horizontal edge that connects the bottom-right corner of $r_a$ and the top-left corner  of $r_b$
(refer to Figure~\ref{fig:special}). 
Hence, if we need to draw an arc between two such rectangles $r_a$ and~$r_b$, 
then these two rectangles are separated by a single edge $s$. 
  \item The two rectangles $r_x$ and $r_y$ that share edge $s$ 
  are not in the covering relation of the adjacency poset: their adjacency order $x \prec y$ is implied by transitivity.
    Indeed, referring again to Figure~\ref{fig:special}, we have either that $r_x$ is below $r_b$ which is left of $r_y$ (special relation~\ref{rel:s1}), or $r_x$ is above $r_b$ which is right of $r_y$ (special relation~\ref{rel:s2}).
    Hence, we can draw the corresponding arc from $r_a$ to $r_b$ without crossing another arc.
  \end{itemize}
  These two observations allow us to draw the arcs corresponding to the special relations
  without creating crossing arcs.
  Together with the arcs corresponding to the adjacency conditions~\ref{rel:b1} and~\ref{rel:b2}, they yield a~planar drawing of the Hasse diagram of the strong poset.
See Figure~\ref{fig:strong_poset} for an example.
\end{proof}

\begin{figure}[!h]
  \begin{center}
\includegraphics[width=0.865\textwidth]{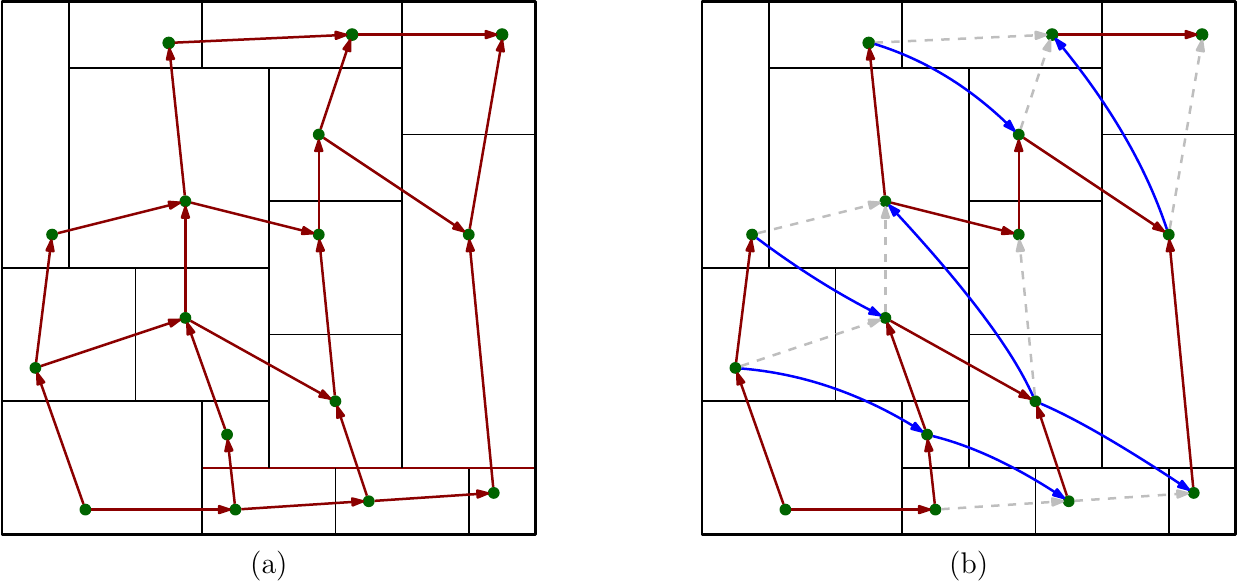} 
  \end{center}
\caption{Illustration to the proof of Proposition~\ref{prop:planar}.
(a) The Hasse diagram of $P_a(\rc_1)$, the adjacency poset of $\rc_1$.
(b) Solid arcs form the Hasse diagram of $\prec_s$
(the blue arcs are contributed by special relations).
The dashed grey arcs belong to $P_a(\rc_1)$,
but in $P_s(\rc_1)$ they are implied by transitivity.} 
\label{fig:strong_poset}  
\end{figure}

\subsection{Mapping $\gamma_s$ from permutations to strong rectangulations}

In~\cite{R12}, Reading defined a mapping $\gamma$ from $S_n$ to $\mathsf{SR}_n$, 
whose restriction yields a bijection between 2-clumped permutations and strong rec\-tan\-gu\-la\-ti\-ons. 
His construction of $\gamma(\pi)$ consists of two steps: first, one constructs the weak rec\-tan\-gu\-la\-ti\-on corresponding to $\pi$ --- what we denote by $\gamma_w(\pi)$. 
Then, one shuffles the neighbors of every segment $s$ according to the order in which 
the labels of rectangles adjacent to $s$ occur in $\pi$. 

In this section we offer an alternative description of $\gamma$ (which we denote by $\gamma_s$),
which consists of just one step and uses a modification of Algorithm~WF. 
Our description emphasizes both the parallelism and the difference between 
the weak and the strong cases, thus contributing to better understanding of both kinds of 
equivalence. It also leads to very transparent and descriptive proofs concerning the structure
of the strong posets and their linear extensions.

\smallskip

We define the mapping $\gamma_s\colon S_n\to \mathsf{SR}_n$
via a \textit{forward algorithm} that constructs the rec\-tan\-gu\-la\-ti\-on incrementally:
we read the permutation $\pi=\pi_1\pi_2\ldots\pi_n$ from left to right, 
and insert the rectangle with label $j=\pi_i$ successively, for $i=1,2,\ldots ,n$. 
The following invariants hold at every step.
\begin{enumerate}
\item The \textit{partial rec\-tan\-gu\-la\-ti\-on} 
is bounded by a horizontal segment from the bottom, 
a vertical segment from the left, 
and a monotonically decreasing \emph{staircase} from the top-right. Similarly to the weak case, we imagine fictitious thin rectangles $r_0$ and $r_{n+1}$ respectively to the left of the left boundary, and below the bottom boundary, and accordingly we extend the staircase horizontally at its 
top-left end and vertically at its bottom-right end.
We refer to the turning points of the staircase as \emph{peaks} \pe\ and \emph{valleys} \va.
\item The labels of the rectangles corresponding to the peaks are in increasing order from top-left to bottom-right, with labels of consecutive peaks differing by at least $2$. 
\end{enumerate}

\begin{mdframed}
\textbf{Algorithm SF (strong forward): Permutations to strong rec\-tan\-gu\-la\-ti\-ons.} 

\noindent Input: Permutation $\pi=\pi_1 \pi_2 \ldots \pi_n \in S_n$.

\noindent Output: Strong rec\-tan\-gu\-la\-ti\-on $\rc = \gamma_s(\pi)$.

\begin{enumerate}
\item Initialize the \emph{staircase} to be made of two peaks, of labels $0$ and $n+1$, and one valley. 
The set $P$ of peaks is thus $\{0, n+1\}$ initially.
\item For $i$ from $1$ to $n$, with $j=\pi_i$:\\[1mm]
Insert rectangle $r_{j}$ according to the following rules. 
\begin{itemize}
\item The bottom-left corner of $r_{j}$ is the valley delimited by the two consecutive peaks of $P$
 with labels $a$ and $b$ such that $a<{j}<b$.

\item If all rectangles $r_k$ with $a < k < {j}$ have already been inserted,
then the top side of $r_{j}$ aligns with the top side of $r_{a}$,
and $a$ is removed from $P$.
Otherwise, the top side of $r_{j}$ forms a~$\tr$ with the right side of $r_{a}$.

\item If all rectangles $r_k$ with ${j} < k < b$ have already been inserted, 
then the right side of $r_{j}$ aligns with the right side of $r_{b}$,
and $b$ is removed from $P$.
Otherwise, 
the right side of $r_{j}$ forms a~$\tu$ with the top side of $r_{b}$.

\item Update the staircase by replacing 
the union of left and bottom sides of $r_{j}$ 
with the union of its top and right sides.
Add ${j}$ to $P$.
\end{itemize}
\end{enumerate}
\end{mdframed}

It is straightforward to verify that the algorithm maintains the invariants above, 
that it produces a rec\-tan\-gu\-la\-ti\-on, and that the 
labeling of the rectangles is the NW--SE labeling. 
Algorithm~SF is schematically illustrated in Figure~\ref{fig:gamma},
and an example of its execution for a permutation of size $n=16$ is given in Figure~\ref{fig:map}.
The four cases of placements in the Algorithm correspond 
to the four cases considered 
by Takahashi, Fujimaki, and Inoue~\cite{TFI09} in their encoding procedure, 
and by Fran\c{c}on and Viennot~\cite{FV79} in the proof of their Theorem~2.2 (the current valleys of the partial rec\-tan\-gu\-la\-ti\-on correspond to the current gaps in their iterative encoding of $\pi^{-1}$).

\begin{figure}[!h]
\begin{center}
\includegraphics[scale=0.78]{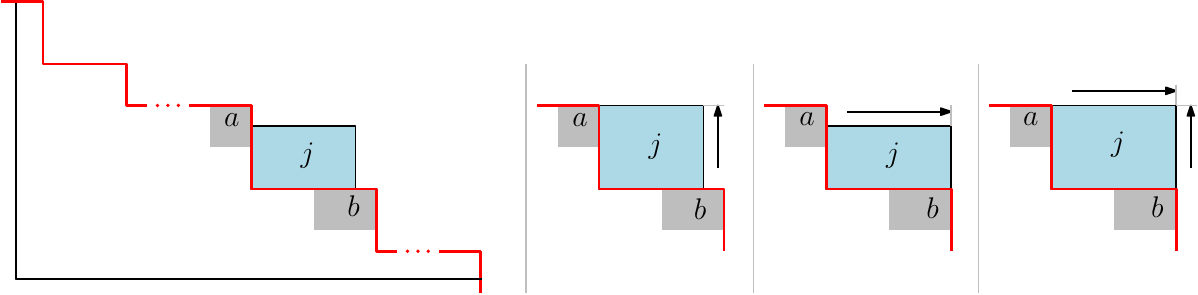}
\caption{Illustration of the four cases in Algorithm~SF.
The rectangle with label $j$ is inserted between the peaks of labels $a$ and $b$ such that $a<j<b$. 
The top (right) sides of $r_j$ are extended, respectively, 
upwards (to the right) to align with $r_a$ (with $r_b$) 
if all rectangles with intermediate labels have already been inserted. }
\label{fig:gamma}
\end{center}
\end{figure}

\begin{figure}[hp]
\begin{center}
\includegraphics[width=\textwidth]{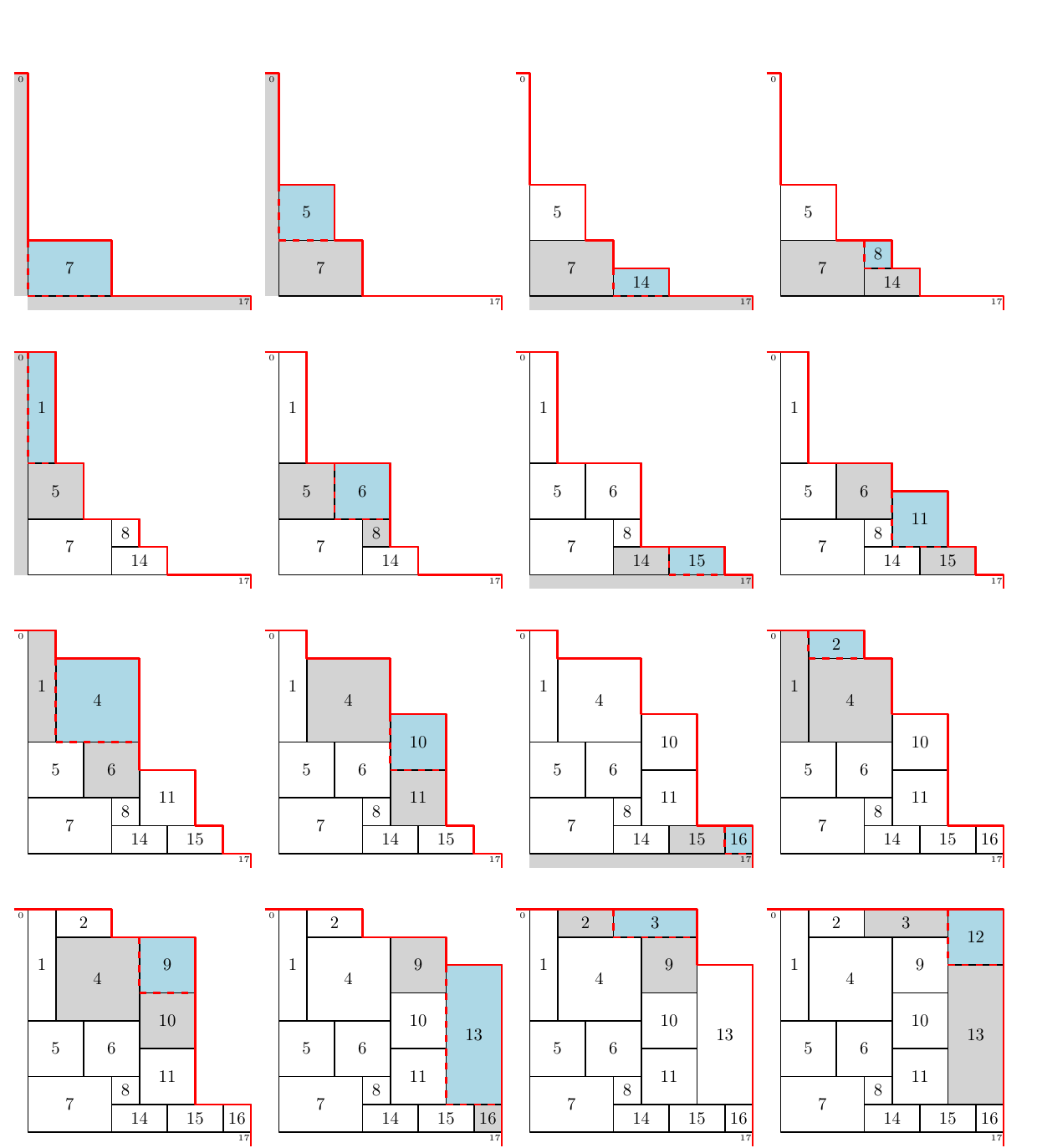} 
\end{center}
\caption{Constructing $\gamma_s(7 \ \ 5 \ \ 14 \ \ 8 \ \ 1 \ \ 6 \ \ 15 \ \ 11 \ \ 4 \ \ 10 \ \ 16 \ \ 2 \ \ 9 \ \ 13 \ \ 3 \ \ 12)$. 
At each step, the inserted rectangle is blue, and the rectangles incident to the adjacent peaks are grey.}
\label{fig:map}
\end{figure}

\subsection{Fibers}\label{sec:fibers_strong}

Given a strong rec\-tan\-gu\-la\-ti\-on $\rc$ of size $n$, we now describe a method to recover any permutation $\pi\in S_n$ such that $\gamma_s(\pi)=\rc$.
This method iteratively removes rectangles, starting from the top right rectangle, and ending with the bottom left rectangle, and 
constructs a permutation $\pi$ ``from right to left''.
As in Algorithm~WB, we first label the rectangles of $\rc$ by the NW--SE labeling.
However, in this case the definition of available rectangles is slightly more involved.
Let $\tilde{\rc}$ be the partial rec\-tan\-gu\-la\-ti\-on of the not yet taken rectangles, the choice of rectangles deleted at each step maintaining the property that its top-right boundary is a staircase. 
Precisely, a rectangle $r_\ell$ in $\tilde{\rc}$ is called \emph{available} if it satisfies the following conditions, where by convention the top-left corner of $\rc$ is a~$\td$, and the bottom-right corner is a~$\tl$:
\begin{itemize}
\item The top side and the right side of $r_\ell$ are entirely contained in the staircase.
\item The top-left corner of $r_{\ell}$ has the shape $\td$\hspace{1pt};
or it has the shape $\tr$, and the rectangle adjacent to this point from left contains the previous peak.
\item The bottom-right corner of $r_{\ell}$ has the shape $\tl$\hspace{1pt};
or it has the shape $\tu$, and the rectangle adjacent to this point from bottom contains the next peak.
\end{itemize}

\begin{mdframed}\label{alg:sb}
\textbf{Algorithm SB (strong backward): Strong rec\-tan\-gu\-la\-ti\-ons to permutations.} 

\noindent Input: Strong rec\-tan\-gu\-la\-ti\-on $\rc \in \mathsf{SR}_n$.

\noindent Output: A permutation $\pi \in S_n$ such that $\pi \in \gamma_s^{-1}(\rc)$.
\begin{enumerate}
\item Label the rectangles of $\rc$ by the NW--SE labeling.
\item For $i$ from $n$ to $1$:\\[1mm]
 Remove any available rectangle $r_{j}$, and set $\pi_{i}={j}$.
\end{enumerate}
\end{mdframed}

The next results show the validity of Algorithm~SB.

\begin{lemma}
  The set of permutations that can be constructed by Algorithm~SB is exactly $\gamma_s^{-1}(\rc)$.
\end{lemma}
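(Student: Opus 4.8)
The plan is to show the two inclusions separately. The easier direction is $\gamma_s^{-1}(\rc)\subseteq\{\pi$ producible by Algorithm~SB$\}$: I would argue that Algorithm~SB is a genuine inverse of Algorithm~SF. Concretely, suppose $\gamma_s(\pi)=\rc$ and consider running Algorithm~SF on $\pi$. At the end of step $i$ the partial rectangulation $\rc^{(i)}$ consisting of $r_{\pi_1},\dots,r_{\pi_i}$ has a staircase top-right boundary (this is the invariant established just after Algorithm~SF). The key claim is that the last-inserted rectangle $r_{\pi_n}$ — and more generally $r_{\pi_i}$ within $\rc^{(i)}$ — is ``available'' in the sense defined before Algorithm~SB. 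I would verify this by a direct inspection of the four insertion cases in Figure~\ref{fig:gamma}: when $r_j$ is inserted into the valley between peaks $a<j<b$, its top and right sides become part of the new staircase (so the first availability bullet holds); its top-left corner is a $\tr$ exactly when $r_a$ was not absorbed, and then $r_a$ still carries peak $a$, which is the previous peak (so the second bullet holds); symmetrically for the bottom-right corner and peak $b$. Hence removing rectangles in the reverse order $\pi_n,\pi_{n-1},\dots,\pi_1$ is a legal execution of Algorithm~SB that reconstructs $\pi$, so $\pi$ is producible.

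For the reverse inclusion $\{\pi$ producible by SB$\}\subseteq\gamma_s^{-1}(\rc)$, I would show that every legal choice made by Algorithm~SB can be ``undone'' by a step of Algorithm~SF, so that running SF on the output $\pi$ reproduces $\rc$. The crucial structural point is that if $r_\ell$ is available in the partial rectangulation $\tilde\rc$, then $\tilde\rc\setminus\{r_\ell\}$ again has a staircase top-right boundary (this is implicit in the description, but must be checked), and moreover the geometry of how $r_\ell$ sat in $\tilde\rc$ matches exactly one of the four SF insertion cases, with $a$ = label of the peak just left of $r_\ell$'s top-left corner and $b$ = label of the peak just below its bottom-right corner. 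The availability conditions are precisely what guarantee that the SF absorption rules (``align with $r_a$ iff all labels between $a$ and $\ell$ are already present'') produce $r_\ell$ in the position it actually occupies: the $\td$-versus-$\tr$ dichotomy at the top-left corner encodes whether $a$ gets deleted from $P$, and the referenced ``rectangle adjacent from the left contains the previous peak'' is what makes the peak labels of the reconstructed staircase agree with those of $\tilde\rc$. Running this argument from $i=1$ upward shows SF on $\pi$ rebuilds $\rc$ step by step.

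The main obstacle, I expect, is the bookkeeping around peak labels and the $\td/\tr/\tu/\tl$ corner shapes: one must be careful that the labels attached to peaks in Algorithm~SF (which are determined dynamically by which rectangles have been absorbed) coincide with the peak labels implicitly referenced in the availability conditions of Algorithm~SB, and that the two invariants — staircase shape and ``consecutive peak labels differ by at least $2$'' — are preserved under both insertion and deletion. A clean way to handle this is to maintain throughout a bijection between the valleys/peaks of $\rc^{(i)}$ (as produced by SF) and those of the corresponding $\tilde\rc$ (as seen by SB), and to check that availability in SB is equivalent to ``was the last rectangle inserted by SF'' relative to that correspondence. Once this correspondence is set up, both inclusions follow, and the remaining case analysis over the four insertion types is routine.
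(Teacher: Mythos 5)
Your proposal is correct and takes essentially the same approach as the paper: the paper's proof is a two-sentence argument that the last rectangle inserted by Algorithm~SF is by definition available, and conversely that an available rectangle removed by Algorithm~SB could have been inserted by Algorithm~SF in the same situation, so the two algorithms mirror each other step by step. Your plan simply fills in the case-checking and peak-label bookkeeping that the paper leaves implicit.
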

\begin{proof}
  At every step of the execution of the forward algorithm, 
  the last rectangle that has been inserted is by definition available among the rectangles that have already been inserted.
  Conversely, an available rectangle removed by the backward algorithm is one that could have been inserted by the forward algorithm in the same situation.
  Hence any execution of the forward algorithm can be mirrored to yield a sequence of rectangles removed by the backward algorithm, and vice versa.
\end{proof}

Then, the following  determines precisely how the structure of Algorithms SF and SB is connected with the strong poset.
Recall that a subset $S$ of the poset $P_s(\rc)$ is a~\emph{downset} if it is closed for the relation $\prec_s$,
hence if $x\in S$ and $y\prec_s x$, then $y\in S$.
\begin{lemma}
  \label{lem:backward}
  At every step of Algorithm~SB, the set of labels of the remaining rectangles is a downset of $P_s(\rc)$, and a rectangle $r_{j}$ is available if and only if $j$ is maximal with respect to $\prec_s$ in that set. 
\end{lemma}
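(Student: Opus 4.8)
The statement to prove is Lemma~\ref{lem:backward}: at every step of Algorithm~SB, the set of labels of the remaining rectangles is a downset of $P_s(\rc)$, and a rectangle $r_j$ is available exactly when $j$ is maximal in that downset with respect to $\prec_s$. The natural approach is induction on the number of already-removed rectangles, maintaining as an invariant that the remaining rectangles form a partial rectangulation whose top-right boundary is a staircase, and that their label set is a downset of $P_s(\rc)$. The base case is trivial: the full label set $[n]$ is a downset. For the inductive step, assuming the remaining set $S$ is a downset with staircase boundary, I would prove the equivalence ``$r_j$ available $\iff$ $j$ maximal in $S$'' and then observe that removing a maximal element of a downset leaves a downset, so the invariant propagates.

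**Key steps.** First I would show the forward implication: if $r_j$ is available (its top and right sides lie on the staircase, and the corner-shape conditions from the definition of availability hold), then $j$ is maximal in $S$ with respect to $\prec_s$. For this one must check that no remaining $r_k$ can satisfy $k \succ_s j$ via any of the four generating relations. The adjacency relations \ref{rel:b1}, \ref{rel:b2} are blocked because $r_j$'s top and right sides are on the staircase, so there is no remaining rectangle above or to the right of $r_j$ adjacent to it. The subtle part is the two special relations \ref{rel:s1}, \ref{rel:s2}: the corner-shape conditions in the definition of availability are exactly engineered to rule these out. If the top-left corner of $r_j$ has shape $\tr$, the condition that the rectangle to its left contains the previous peak guarantees that the rectangle playing the role of $r_b$ in relation~\ref{rel:s1} is not present in $S$ (it has been removed); similarly for the bottom-right corner and relation~\ref{rel:s2}. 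I would spell this out using Figure~\ref{fig:special}. Second, the reverse implication: if $j$ is maximal in $S$, then $r_j$ is available. Here I would argue contrapositively — if some availability condition fails, I exhibit a remaining rectangle $k$ with $j \prec_s k$. If the top side of $r_j$ is not entirely on the staircase, some rectangle above $r_j$ is still present, contradicting via an adjacency relation after tracing the staircase; if the corner-shape conditions fail, the offending neighbor provides a special relation $j \btri k$ with $r_k$ still present. Third, I would note that the set of maximal elements of $S$ is nonempty (so Algorithm~SB never gets stuck), which follows from finiteness, and that removing such an element from a downset yields a downset — completing the induction.

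**Main obstacle.** The delicate point is verifying that the three bulleted availability conditions in the definition preceding the algorithm are jointly equivalent to $\prec_s$-maximality, in particular matching the corner-shape clauses precisely to the special relations \ref{rel:s1} and \ref{rel:s2}. One must be careful that ``the rectangle adjacent to this point from left contains the previous peak'' is exactly the combinatorial trace of ``$r_b$ has already been removed'' in relation~\ref{rel:s1}, and to handle the boundary conventions (top-left corner of $\rc$ treated as $\td$, bottom-right as $\tl$) so that the first and last rectangles removed are handled uniformly. I expect the cleanest route is to phrase everything in terms of the staircase: an available rectangle is precisely one sitting in a valley of the staircase whose removal keeps the boundary a staircase and respects all four relations, and then read off from Figure~\ref{fig:gamma} that this reversed move is exactly an inverse step of Algorithm~SF, which by the acyclicity argument in the proof of Proposition~\ref{prop:is_poset} corresponds to peeling a $\prec_s$-maximal element.
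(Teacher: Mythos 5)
Your proposal is correct and follows essentially the same route as the paper: both rest on the equivalence ``$r_j$ is available $\iff$ no remaining $r_k$ satisfies $k \succ_s j$'', verified by matching the staircase/adjacency conditions to relations~\ref{rel:b1}--\ref{rel:b2} and the corner-shape clauses to the special relations~\ref{rel:s1}--\ref{rel:s2}, after which the downset property follows because each step removes a maximal element. Your explicit induction framing and the contrapositive treatment of the converse direction merely spell out details the paper leaves implicit.
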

\begin{proof}
We first observe that $r_{j}$ is available if and only if 
none of the remaining rectangles $r_k$ satisfies $k\succ_s {j}$.
Indeed, if $r_{j}$ is available, then the remaining rectangles $r_k$ can touch neither the top nor the right side of $r_{j}$, and from the definition of availability, cannot be located 
 as $a$ in the special relations shown in Figure~\ref{fig:special}.  Conversely, if there is no rectangle $r_k$ such that $k\succ_s {j}$, then $r_{j}$ is available.
  Since the backward algorithm removes an available rectangle at each step, the set of remaining rectangles is always a downset of $P_s(\rc)$.
\end{proof}

Lemma~\ref{lem:backward} implies the following analogue of Proposition~\ref{prop:weakfiber}.

\begin{proposition}
  \label{prop:strongfiber}
  Let $\rc$ be a strong rec\-tan\-gu\-la\-ti\-on of size $n$. 
  Then the fiber $\gamma_s^{-1}(\rc)$ is exactly the set of linear extensions of $P_s(\rc)$:
  \[
  \gamma_s^{-1}(\rc) = \mathcal{L}(P_s(\rc)).
  \]
\end{proposition}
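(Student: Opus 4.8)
The plan is to deduce Proposition~\ref{prop:strongfiber} directly from the preceding lemma and the observation that the forward and backward algorithms are mutually inverse constructions. By the lemma showing the validity of Algorithm~SB, the set of permutations producible by Algorithm~SB is exactly $\gamma_s^{-1}(\rc)$. So it suffices to prove that a permutation $\pi=\pi_1\pi_2\ldots\pi_n$ can be produced by Algorithm~SB if and only if $\pi$ is a linear extension of $P_s(\rc)$; that is, $\mathcal{L}(P_s(\rc))$ equals the set of orderings in which rectangles can be successively removed by repeatedly deleting an available rectangle.

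First I would establish the forward inclusion: any permutation produced by Algorithm~SB is a linear extension of $P_s(\rc)$. By Lemma~\ref{lem:backward}, at each step the set of labels of remaining rectangles is a downset of $P_s(\rc)$, and the rectangle $r_j$ removed at step $i$ (so $\pi_i=j$) is maximal with respect to $\prec_s$ in the current downset. Thus if $k\prec_s j$, then $k$ is still present when $j$ is removed, so $k$ is assigned a position $\pi_{i'}$ with $i'<i$; hence $k$ appears before $j$ in $\pi$. This is precisely the statement that $\pi\in\mathcal{L}(P_s(\rc))$.

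Conversely, I would show that every linear extension $\lambda$ of $P_s(\rc)$ is realizable as an execution of Algorithm~SB. Processing the positions from $n$ down to $1$, suppose inductively that the labels $\{\lambda_1,\ldots,\lambda_i\}$ form a downset $D$ of $P_s(\rc)$ (true for $i=n$ since the whole set is the unique maximal downset, and true vacuously at the start of the argument). Because $\lambda$ is a linear extension, $\lambda_i$ is a maximal element of $D$: any $k\succ_s\lambda_i$ would have to appear after $\lambda_i$ in $\lambda$, contradicting $k\in D=\{\lambda_1,\ldots,\lambda_i\}$. By Lemma~\ref{lem:backward}, maximality of $\lambda_i$ in the current downset is exactly the condition for $r_{\lambda_i}$ to be available, so Algorithm~SB is permitted to remove $r_{\lambda_i}$ at this step; after removal the remaining label set is $D\setminus\{\lambda_i\}=\{\lambda_1,\ldots,\lambda_{i-1}\}$, which is again a downset, completing the induction. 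Hence the whole of $\lambda$ is a valid run of Algorithm~SB, so $\lambda\in\gamma_s^{-1}(\rc)$.

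The two inclusions give $\gamma_s^{-1}(\rc)=\mathcal{L}(P_s(\rc))$. I expect the main technical content to reside entirely in Lemma~\ref{lem:backward} (the equivalence between geometric availability and $\prec_s$-maximality, together with the downset invariant), which is already proved; once that is in hand, the proposition is a short combinatorial unwinding of the definition of a linear extension. The one point requiring a sentence of care is the base case of the induction in the converse direction, namely that the full label set $[n]$ is a downset and that a linear extension really does list a maximal element at each step from the top — but this is immediate from the definition of $\prec_s$ being a partial order on $[n]$ (Proposition~\ref{prop:is_poset}) and of linear extensions.
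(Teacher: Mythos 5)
Your proposal is correct and follows exactly the route the paper intends: the paper derives Proposition~\ref{prop:strongfiber} as an immediate consequence of the lemma identifying Algorithm~SB's outputs with $\gamma_s^{-1}(\rc)$ together with Lemma~\ref{lem:backward}, and your two inclusions are precisely the unwinding of that implication (which the paper leaves implicit). The only substance is in Lemma~\ref{lem:backward}, as you note, and your induction for the converse direction is the standard and correct way to see that every linear extension is a valid run of the backward algorithm.
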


Recall that the skeleton graph ${\cal G}_n$ of the \emph{permutahedron} is  the graph on $S_n$ with edges corresponding to adjacent transpositions. This
  graph can also be viewed as the cover graph of the weak Bruhat order  on~$S_n$. From Proposition~\ref{prop:planar}, we know that $P_s(\rc)$ is a planar
  two-dimensional lattice. A realizer of size two of~$P_s(\rc)$ is given by  the pair $\{\pi_L,\pi_R\}$ 
  where $\pi_L$ is the leftmost and $\pi_R$ is the rightmost linear extension 
  (the definition of the leftmost and the rightmost linear extensions was given in Section~\ref{sec:baxter}). 
  This implies that the set
  $\mathcal{L}(P_s(\rc))$ of linear extensions is the convex set spanned by
  $\pi_L$ and $\pi_R$ in ${\cal G}_n$, i.e., the set of permutations that
  belong to shortest $\pi_L,\pi_R$ paths in ${\cal G}_n$ (see for instance
  Theorem 6.8 in Bj\"orner and Wachs~\cite{BW91}, or Felsner and
  Wernisch~\cite{FW97}). Due to the NW--SE labeling of $\rc$ we know that if
  $a$, $b$ is an incomparable pair then $a$ is left of $b$ in $P_s(\rc)$ if
  and only if $a < b$ in the labeling. Hence $\pi_L$ is the element of
  $\mathcal{L}(P_s(\rc))$ with minimal set of inversions and $\pi_R$ is the
  one with maximal set of inversions. This implies the following:

\begin{proposition}
  \label{prop:interval}
  Given a strong rec\-tan\-gu\-la\-ti\-on $\rc$, the set $\mathcal{L}(P_s(\rc))$ of linear extensions of its strong poset induces an interval in the weak Bruhat order on $S_n$.
\end{proposition}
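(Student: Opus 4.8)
The plan is to identify $\mathcal{L}(P_s(\rc))$ with the weak Bruhat interval $[\pi_L,\pi_R]$, where $\pi_L$ and $\pi_R$ are the leftmost and rightmost linear extensions of $P_s(\rc)$ introduced in Section~\ref{sec:baxter}. The only structural input I would use is Proposition~\ref{prop:planar}: since $P_s(\rc)$ is two-dimensional it has a realizer consisting of two of its linear extensions, and --- reading the planar embedding of the Hasse diagram from left to right, as already observed just before the statement --- one may take this realizer to be $\{\pi_L,\pi_R\}$. Everything else is the standard dictionary between two-dimensional posets and the weak order, which I would make explicit as follows.

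First I would record the inversion sets of $\pi_L$ and $\pi_R$ in terms of $P_s(\rc)$ and the NW--SE labeling. A comparable pair of labels is ordered the same way in every linear extension, so it contributes a (``forced'') inversion to all of $\pi_L$, $\pi_R$ and to every $\sigma\in\mathcal{L}(P_s(\rc))$, or to none of them. For an incomparable pair $\{a,b\}$ with $a<b$, the NW--SE labeling guarantees that $a$ lies ``to the left of'' $b$ in the two-dimensional poset, so $\pi_L$ places $a$ before $b$ while $\pi_R$ places $b$ before $a$ (and $\sigma$ may do either). Hence $\mathrm{Inv}(\pi_L)=\{(a,b)\colon a<b,\ b\prec_s a\}$ and $\mathrm{Inv}(\pi_R)=\mathrm{Inv}(\pi_L)\cup\{(a,b)\colon a<b,\ a \text{ and } b \text{ are incomparable in } P_s(\rc)\}$, and consequently $\mathrm{Inv}(\pi_L)\subseteq\mathrm{Inv}(\sigma)\subseteq\mathrm{Inv}(\pi_R)$ for every $\sigma\in\mathcal{L}(P_s(\rc))$; in particular $\pi_L\preceq\pi_R$ in the weak Bruhat order. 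Conversely, given $\sigma\in S_n$ with $\mathrm{Inv}(\pi_L)\subseteq\mathrm{Inv}(\sigma)\subseteq\mathrm{Inv}(\pi_R)$, I would check that $\sigma$ respects an arbitrary relation $x\prec_s y$ by splitting into the case $x<y$ (then $(x,y)\notin\mathrm{Inv}(\pi_R)$, so $(x,y)\notin\mathrm{Inv}(\sigma)$ and $x$ precedes $y$ in $\sigma$) and the case $x>y$ (then $(y,x)\in\mathrm{Inv}(\pi_L)$, so $(y,x)\in\mathrm{Inv}(\sigma)$ and again $x$ precedes $y$ in $\sigma$), using only the explicit descriptions of $\mathrm{Inv}(\pi_L)$ and $\mathrm{Inv}(\pi_R)$ and the fact that $P_s(\rc)$ is a poset. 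This gives $\mathcal{L}(P_s(\rc))=[\pi_L,\pi_R]$. Alternatively, one can bypass this case check entirely and quote that the linear extensions of a two-dimensional poset with realizer $\{\pi_L,\pi_R\}$ form the set of permutations lying on shortest $\pi_L$--$\pi_R$ paths in the permutahedron skeleton $\mathcal{G}_n$ (Bj\"orner--Wachs~\cite{BW91}, Felsner--Wernisch~\cite{FW97}); since $\pi_L\preceq\pi_R$ this shortest-path-convex set is exactly the interval $[\pi_L,\pi_R]$.

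The step I expect to be the actual obstacle --- and the reason Proposition~\ref{prop:planar} together with the NW--SE labeling is genuinely needed --- is the compatibility of the realizer $\{\pi_L,\pi_R\}$ with the labeling on incomparable pairs. Because the special relations~\ref{rel:s1} and~\ref{rel:s2} may reverse the numerical order of labels, $P_s(\rc)$ is \emph{not} a subposet of the natural order on $[n]$, so it is not a priori clear that the left realizer $\pi_L$ orients each incomparable pair according to $<$ --- equivalently, that $\pi_L$ has the minimum and $\pi_R$ the maximum inversion set among all linear extensions. Pinning this down is exactly what the NW--SE labeling supplies, through the statement ``$a$ is left of $b$ in $P_s(\rc)$ if and only if $a<b$'' recorded before the proposition; once it is in place, the identification $\mathcal{L}(P_s(\rc))=[\pi_L,\pi_R]$, and hence the proposition, follows.
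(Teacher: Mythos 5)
Your proposal is correct and follows essentially the same route as the paper: both arguments rest on Proposition~\ref{prop:planar} giving the two-element realizer $\{\pi_L,\pi_R\}$, on the NW--SE labeling forcing each incomparable pair to be oriented by $<$ in $\pi_L$ and reversed in $\pi_R$, and on the standard identification of the linear extensions of a two-dimensional poset with the weak-order interval between its realizer. The only difference is presentational --- you spell out the inversion-set verification that the paper delegates to the cited results of Bj\"orner--Wachs and Felsner--Wernisch, and your computation of $\mathrm{Inv}(\pi_L)$ and $\mathrm{Inv}(\pi_R)$ is accurate.
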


In the next section, we describe the maximum and the minimum of these intervals.

\subsection{2-clumped and co-2-clumped permutations}\label{sec:2clumped}

Recall that the class of 2-clumped permutations is defined as 
$\mathsf{Av}(24\underline{51}3, 42\underline{51}3, 3\underline{51}24, 3\underline{51}42)$, and 
the class of co-2-clumped permutations is defined as 
$\mathsf{Av}(24\underline{15}3, 42\underline{15}3, 3\underline{15}24, 3\underline{15}42)$.
The following two theorems were proven by Reading in~\cite{R12}.

\begin{theorem}
  \label{thm:strong_distinguished}
Let $\rc$ be a weak rec\-tan\-gu\-la\-ti\-on, with its rectangles labeled by the NW--SE labeling. 
Then:
\begin{enumerate}
\item $\mathcal{L}(P_s(\rc))$ contains a unique 2-clumped permutation. 
It is $\pi_L$ --- the minimum (the ``leftmost'') element of $\mathcal{L}(P_s(\rc))$ with respect to the weak Bruhat order. 
\item $\mathcal{L}(P_s(\rc))$ contains a unique co-2-clumped permutation. 
It is $\pi_R$ --- the maximum (the ``rightmost'') element of $\mathcal{L}(P_s(\rc))$ with respect to the weak Bruhat order.
\end{enumerate}
\end{theorem}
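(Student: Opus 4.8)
The plan is to characterize $\pi_L$ and $\pi_R$ directly via the backward algorithm and then match them with the forbidden-pattern definitions of 2-clumped and co-2-clumped permutations. By Proposition~\ref{prop:strongfiber} and Proposition~\ref{prop:interval}, $\mathcal{L}(P_s(\rc))$ is an interval in the weak Bruhat order, so it has a unique minimum $\pi_L$ and a unique maximum $\pi_R$; as recalled just before the statement, $\pi_L$ is obtained by iteratively removing (in Algorithm~SB, reading positions $n,n-1,\ldots,1$) the \emph{available} rectangle with the \emph{largest} NW--SE label, and $\pi_R$ by always removing the available rectangle with the \emph{smallest} label. Since the two statements are symmetric (co-2-clumped patterns are obtained from 2-clumped patterns by reversing the two middle underlined entries, which corresponds to swapping the roles of $\pi_L$ and $\pi_R$, i.e. to a suitable reversal/complement symmetry of rectangulations), it suffices to prove item~1 and then transfer it; I would state this symmetry explicitly and do the work only for $\pi_L$.

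For item~1 the argument has two halves. \emph{Uniqueness within the fiber.} I would show that no $\pi \in \mathcal{L}(P_s(\rc))$ other than $\pi_L$ is 2-clumped: if $\pi \neq \pi_L$, then somewhere in the backward reading the algorithm chose an available rectangle $r_j$ while another available rectangle $r_{j'}$ with $j' > j$ was present; by Lemma~\ref{lem:backward} both $j$ and $j'$ are maximal in the current downset, so they are incomparable in $P_s(\rc)$. I then want to extract from this configuration an occurrence of one of the four patterns $24\underline{51}3, 42\underline{51}3, 3\underline{51}24, 3\underline{51}42$. The two incomparable maximal elements play the role of the underlined adjacent pair ``$51$'' (the larger label, $j'$, sits to the \emph{left} of the smaller, $j$, in $\pi$), and the other three entries of the pattern are witnesses to why $j'$ and $j$ are both available yet incomparable: geometrically, the fact that neither special relation nor adjacency forces $j \prec_s j'$ forces the existence of rectangles separating them in a ``windmill-free'' but incomparable way, and these translate to the entries $2,4,3$ (resp. $4,2,3$, resp. $3,2,4$, resp. $3,4,2$) surrounding the pair. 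Conversely, \emph{$\pi_L$ itself avoids the patterns.} Suppose $\pi_L$ contained one of the four patterns; the underlined adjacent ``$51$'' would correspond to two consecutive-in-$\pi_L$ rectangles that are inserted (forward direction) with the larger label before the smaller, and I would argue from the Algorithm~SF invariants — the peaks have increasing labels differing by at least $2$, and the shape of each new rectangle's corners is dictated by which intermediate rectangles are already present — that such a pattern would have forced the algorithm to make an available rectangle of larger label present when one of smaller label was chosen, contradicting the greedy-largest rule defining $\pi_L$. This is essentially a translation of Reading's argument into the incremental picture, using the geometric meaning of the special relations in Figure~\ref{fig:special}.

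The main obstacle will be the careful bookkeeping in both directions of the pattern-to-configuration translation: there are four patterns, and each corresponds to a distinct local picture near the two incomparable maximal rectangles (which of their corners are $\tr$/$\tu$ vs.\ $\td$/$\tl$, and whether the ``obstruction'' to comparability lies above-left or below-right). I expect to organize this by a short case analysis on the shapes of the top-left corner of $r_j$ and the bottom-right corner of $r_{j'}$ at the moment of the backward step, showing that each of the four shape-combinations produces exactly one of the four listed patterns (and that the mesh/vincular constraint — adjacency of ``$5$'' and ``$1$'' in $\pi$ — is automatic because $j,j'$ are consecutive in the constructed permutation at that step). Once the dictionary ``incomparable maximal pair with larger-left $\;\longleftrightarrow\;$ one of the four $2$-clumped patterns'' is established, both the uniqueness of the 2-clumped element in each fiber and the fact that $\pi_L$ is that element follow simultaneously, and item~2 follows by the reversal symmetry noted above.
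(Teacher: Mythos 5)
Your high-level architecture coincides with the paper's: prove the 2-clumped statement and transfer the co-2-clumped one by symmetry (the paper uses the complement $\bar{\pi}_i=n+1-\pi_i$ together with Observation~\ref{obs:mirror}; note that the co-2-clumped patterns are the \emph{complements} of the 2-clumped ones, not merely the patterns with the underlined pair reversed), and split the 2-clumped statement into ``$\pi_L$ avoids the four patterns'' (Lemma~\ref{lem:2clumped}) and ``every other linear extension contains one of them'' (Lemma~\ref{lem:unique}). However, both halves have genuine gaps where you commit to specifics. In the uniqueness half, the adjacency of the underlined pair is not ``automatic'': if the backward algorithm removes $r_j$ while a larger-labelled $r_{j'}$ is available, then $j'$ occurs somewhere to the left of $j$ in $\pi$, but by no means consecutively. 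The paper must perform an explicit reduction --- from any such witness it extracts an index $\ell$ with $\pi_{\ell+1}<\pi_\ell$ such that $\pi_\ell$ and $\pi_{\ell+1}$ are both minimal in the poset induced by the suffix --- before the pattern extraction can begin; this step is missing and your stated justification for skipping it is false. The subsequent case analysis is also organized differently from what is needed: the relevant dichotomy is not the corner shapes of the two rectangles in the finished rec\-tan\-gu\-la\-ti\-on but the structure of the staircase at the moment both are insertable (whether or not there is a valley strictly between the two valleys they occupy), and the three auxiliary entries of the pattern are supplied partly by rectangles already in the staircase and partly by rectangles inserted \emph{later}, so one must track future insertions, not only the current configuration.

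More seriously, the half asserting that $\pi_L$ itself is 2-clumped cannot be closed by the contradiction you describe. An adjacent descent $da$ with $d>a$ in $\pi_L$ does not violate the greedy rule: it occurs precisely when $d\prec_s a$ is a cover relation, which is perfectly legal. The paper's actual obstruction is geometric and goes through Lemma~\ref{lem:cross}: an occurrence of $2\underline{41}3$ in $\pi_L$ forces the cover $d\prec_s a$ to be realized by the configuration \zwall\ (the alternative realization by a horizontal segment is excluded using the entries $b$ and $c$ together with Observation~\ref{obs:lrab}), and then the fifth entry $x$ of any of the four forbidden 5-patterns would have to label a rectangle lying in a region strictly above the staircase existing just before $r_d$ is inserted, even though $r_x$ is inserted earlier than $r_d$ --- a contradiction. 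Nothing in your sketch locates this obstruction, and without it the argument does not go through. In short: right skeleton, same route as the paper, but the two technical engines of the proof are absent, and the shortcuts proposed in their place would fail.
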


\begin{theorem}
  \label{thm:strong_bijections}
The mapping $\gamma_s$ restricts to two bijections between strong rec\-tan\-gu\-la\-ti\-ons and permutation classes: 
\begin{enumerate}
\item Bijection $\beta_\mathsf{2C}$ between strong rec\-tan\-gu\-la\-ti\-ons to 2-clumped permutations;
\item Bijection $\beta_\mathsf{C2C}$ between strong rec\-tan\-gu\-la\-ti\-ons to co-2-clumped permutations;
\end{enumerate}
\end{theorem}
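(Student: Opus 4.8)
The plan is to reduce Theorem~\ref{thm:strong_bijections} to Theorem~\ref{thm:strong_distinguished}, exactly as was done in the weak case after Theorem~\ref{thm:weak_bijections}. By Proposition~\ref{prop:strongfiber}, the fibers of $\gamma_s$ are precisely the linear-extension sets $\mathcal{L}(P_s(\rc))$, and as $\rc$ ranges over $\mathsf{SR}_n$ these sets partition $S_n$. Therefore, to show that $\gamma_s$ restricts to a bijection $\beta_{\mathsf{2C}}\colon \mathsf{SR}_n \to \{\text{2-clumped permutations of size } n\}$, it suffices to show that each fiber $\mathcal{L}(P_s(\rc))$ contains exactly one 2-clumped permutation: surjectivity of the restriction then follows because every 2-clumped permutation lies in some fiber, and injectivity follows because distinct rectangulations have disjoint fibers. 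The same argument with co-2-clumped in place of 2-clumped gives $\beta_{\mathsf{C2C}}$. This is precisely the content of Theorem~\ref{thm:strong_distinguished}, so the real work is to establish that theorem; the present statement is then a one-line corollary.

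For Theorem~\ref{thm:strong_distinguished} itself, the approach I would take mirrors the weak case and exploits the structure already set up in this section. By Proposition~\ref{prop:interval}, $\mathcal{L}(P_s(\rc))$ is an interval $[\pi_L,\pi_R]$ in the weak Bruhat order, with $\pi_L$ the linear extension having the minimal inversion set and $\pi_R$ the one with the maximal inversion set; and by Proposition~\ref{prop:planar} the pair $\{\pi_L,\pi_R\}$ is a realizer of the planar two-dimensional lattice $P_s(\rc)$. As in the weak case, $\pi_L$ is obtained by repeatedly pruning the \emph{leftmost} minimal element of $P_s(\rc)$ (equivalently, running Algorithm~SB but always removing the available rectangle of largest NW--SE label, reading into $\pi$ from the right), and $\pi_R$ by pruning the rightmost. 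I would then prove two things: (i) $\pi_L$ avoids the four vincular patterns $24\underline{51}3,\,42\underline{51}3,\,3\underline{51}24,\,3\underline{51}42$ defining 2-clumped permutations, and (ii) any 2-clumped permutation in $\mathcal{L}(P_s(\rc))$ must equal $\pi_L$. For (i), one argues that if $\pi_L$ contained an occurrence of one of these patterns, then the adjacent transposition swapping the ``$51$'' (which are consecutive in $\pi$) would stay inside the interval $[\pi_L,\pi_R]$, i.e. would still be a linear extension of $P_s(\rc)$, contradicting minimality — this uses the characterization of $P_s(\rc)$ via the adjacency relations~\ref{rel:b1}, \ref{rel:b2} and the special relations~\ref{rel:s1}, \ref{rel:s2}, and the translation of ``$5$ and $1$ are adjacent in position but not forced by $\prec_s$'' into the geometric configurations. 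For (ii), conversely, if $\sigma\in\mathcal{L}(P_s(\rc))$ is not $\pi_L$, then $\sigma$ has a descent at some position $i$ with $\sigma_i>\sigma_{i+1}$ that is \emph{not} forced by $\prec_s$ — meaning $\sigma_i$ and $\sigma_{i+1}$ are incomparable in $P_s(\rc)$ — and I would show that incomparability of two labels that are adjacent in a linear extension, together with the structure of $P_s(\rc)$, forces one of the four forbidden configurations in the partial rectangulation, hence a forbidden pattern in $\sigma$. The co-2-clumped statement is the mirror image (swap the roles of $\pi_L$ and $\pi_R$, reverse the relevant orientations).

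Concretely, the bridge between ``incomparable adjacent pair in a linear extension'' and ``forbidden pattern'' goes through the $\gamma_s$-construction: if $a$ and $b$ are incomparable in $P_s(\rc)$ with $a<b$ in the NW--SE labeling, then in $\gamma_s$ neither forces the other, so there is some third label $c$ inserted between them (in the rectangulation, $r_c$ is what keeps $r_a$ and $r_b$ from being in a blocking or special relation — it sits at the relevant peak/valley), and analyzing the possible shapes of the joints around $r_c$ relative to $r_a$ and $r_b$ yields exactly the position of the ``$2$'', ``$4$'', ``$3$'' in the patterns $24\underline{51}3$ etc. This is the same mechanism by which, in the weak case, incomparability translates into the Baxter-like patterns $2\underline{41}3,\,3\underline{41}2$; the ``clumped'' patterns are longer because the special relations~\ref{rel:s1}–\ref{rel:s2} involve a second witnessing rectangle. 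I would organize this as a case analysis on which of the two invariant-violating situations occurs (top-left joint of the witness vs. bottom-right joint), paralleling Figure~\ref{fig:special}.

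The main obstacle I anticipate is part (i) and the case analysis in (ii): one must carefully verify, for \emph{each} of the four patterns and in \emph{both} the ``adjacency-witnessed'' and ``special-relation-witnessed'' sub-cases, that the geometric picture is forced and that no configuration is missed — in particular, one must rule out that a purported forbidden pattern is actually \emph{not} an occurrence because some shaded region of the (implicit) structure is nonempty, or conversely that an unforced descent could somehow be realized without producing a pattern. Getting the orientation conventions consistent between the NW--SE labeling, the staircase/peaks-valleys picture of Algorithm~SF, and the plot conventions for vincular patterns is where the bookkeeping is delicate; once those conventions are pinned down, each individual case is a short geometric argument. Since the excerpt attributes Theorems~\ref{thm:strong_distinguished} and~\ref{thm:strong_bijections} to Reading~\cite{R12} and the stated goal of this section is to give a ``simple geometric proof'' via the new descriptions of $P_s(\rc)$ and $\gamma_s$, I expect the authors' proof to follow essentially this route, with the incremental Algorithm~SF doing the work that previously required the two-step (weak-then-shuffle) construction.
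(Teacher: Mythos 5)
Your reduction of Theorem~\ref{thm:strong_bijections} to Theorem~\ref{thm:strong_distinguished} via Proposition~\ref{prop:strongfiber} is exactly what the paper does, and your plan for the uniqueness half (your part (ii)) matches the paper's Lemma~\ref{lem:unique}: a linear extension $\sigma\neq\pi_L$ has an adjacent descent $\pi_\ell>\pi_{\ell+1}$ with the two labels incomparable in $P_s(\rc)$, and a case analysis on the staircase at that moment (one case when there is a valley strictly between the two occupied valleys, yielding $24\underline{51}3$ or $42\underline{51}3$; one case when the valleys are consecutive, yielding $3\underline{51}24$ or $3\underline{51}42$) produces a forbidden pattern. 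The symmetry argument for the co-2-clumped statement is also the paper's.

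The gap is in your part (i), the claim that $\pi_L$ itself is 2-clumped. You propose to derive a contradiction by arguing that an occurrence of one of the four patterns would allow the adjacent transposition of the consecutive ``$51$'' pair to stay inside $\mathcal{L}(P_s(\rc))$, contradicting minimality. This mechanism cannot work: since $\pi_L$ is the minimum of the interval, \emph{every} descent of $\pi_L$ is forced, i.e.\ if $\pi_\ell>\pi_{\ell+1}$ then $\pi_\ell\prec_s\pi_{\ell+1}$ (otherwise the swap would already shrink the inversion set). So in any putative occurrence the ``$5$'' is $\prec_s$-below the ``$1$'', the swap is never legal, and no contradiction arises from that direction. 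The actual content of part (i) is carried by a lemma your plan does not contain: Lemma~\ref{lem:cross} shows that an occurrence of $2\underline{41}3$ in $\pi_L$ forces a covering pair $d\prec_s a$ with $a<d$ realized by the \emph{vertical} special relation (the configuration of Figure~\ref{fig:szwall}(b); the horizontal alternative is excluded by the positions of the ``$2$'' and the ``$3$''). Lemma~\ref{lem:2clumped} then shows that the extra witness $r_x$ with $c<x<d$ needed to extend this $2\underline{41}3$ to $24\underline{51}3$ or $42\underline{51}3$ is confined by Observation~\ref{obs:lrab} to a region lying strictly above the staircase present just before $r_d$ is inserted, whereas $x$ precedes $d$ in $\pi_L$ and so $r_x$ must lie below that staircase --- a contradiction; the other two patterns are handled by the symmetric argument. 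Without this forced-configuration lemma and the staircase argument, your part (i) does not go through, and it is precisely the part that makes the paper's proof ``geometric''.
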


As in the weak case, given Proposition~\ref{prop:strongfiber}, the two items of Theorem~\ref{thm:strong_distinguished}
imply the corresponding items of Theorem~\ref{thm:strong_bijections}.
Also similarly to the weak case, one can easily obtain $\pi_L$ and  $\pi_R$ by 
repeated pruning the leftmost (respectively, the rightmost) leaf of the 
Hasse diagram of $P_s(\rc)$.
Figure~\ref{fig:s_permutations} shows $\pi_L$ --- the 2-clumped representative,
and $\pi_R$ --- the co-2-clumped representative of $P_s(\rc_1)$. 
\begin{figure}[!h]
\begin{center}
\includegraphics[scale=0.95]{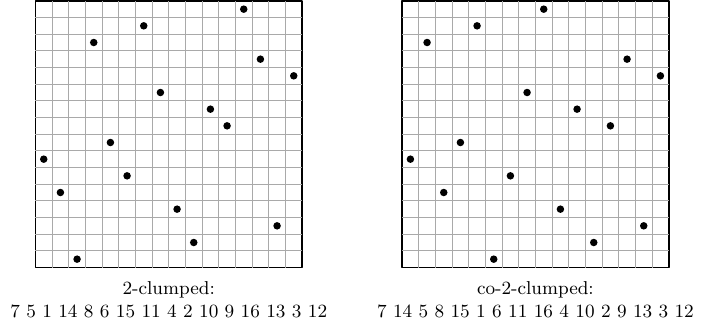} 
\end{center}
\caption{The 2-clumped and the co-2-clumped representatives of $P_s(\rc_1)$.}
\label{fig:s_permutations}
\end{figure}

Below, we provide an alternative proof of Theorems~\ref{thm:strong_distinguished}
and ~\ref{thm:strong_bijections}.
Our proof consists of a sequence of lemmas (\ref{lem:cross}, \ref{lem:2clumped}, \ref{lem:unique}), 
 and emphasizes the correspondence between patterns in rec\-tan\-gu\-la\-ti\-ons and patterns in permutations. 
In both cases we prove the part about 2-clumpled permutations;
the part about co-2-clumpled permutations then follows 
by symmetry (Observation~\ref{obs:mirror}) ---
which can also be seen from the characterization of the congruence classes 
associated to strong rec\-tan\-gu\-la\-ti\-ons in~\cite[Prop 2.2(2)]{R12}.

\begin{lemma}
  \label{lem:cross}
  Let $\pi_L$ be the leftmost linear extension of $P_s(\rc)$.
  Then the pattern $2\underline{41}3$ occurs in $\pi_L$ if and only if the pattern \zwall\ occurs in $\rc$.
\end{lemma}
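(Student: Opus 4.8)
The plan is to translate ``$2\underline{41}3$ occurs in $\pi_L$'' into a statement about a cover relation of $P_s(\rc)$ and then recognise that cover as the \zwall\ configuration. The starting point is the explicit description of $\pi_L$: by Proposition~\ref{prop:planar} and the discussion preceding Proposition~\ref{prop:interval}, $P_s(\rc)$ is a two-dimensional poset whose ``left-to-right'' realizer coordinate is the NW--SE label, so $\pi_L$ puts $x$ before $y$ precisely when $x\prec_s y$, or $x$ and $y$ are incomparable and $x<y$. In particular, if in $\pi_L$ an entry $v_2$ of larger label immediately precedes an entry $v_3$ of smaller label, then $v_2\prec_s v_3$ (were they incomparable, transposing them would remove an inversion, contradicting minimality of $\pi_L$), and since they are at consecutive positions no element lies $\prec_s$-between them, so $v_2\lessdot_s v_3$ is a cover. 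A cover is a single $\btri$ step, and checking the four defining conditions of $\btri$ against the NW--SE labeling shows that $a\btri b$ with $a>b$ can happen only for the special relation~\ref{rel:s1} (which is exactly an occurrence of \zwall) or for the adjacency relation~\ref{rel:b2} ($r_a$ directly below $r_b$).

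For the implication from $\pi_L$ to $\rc$, take an occurrence with entries $v_1,v_2,v_3,v_4$ (values $v_3<v_1<v_4<v_2$, the ``$v_2v_3$'' consecutive). From the description of $\pi_L$ we also get $v_1\prec_s v_3$ and $v_2\prec_s v_4$. If the cover $v_2\lessdot_s v_3$ is an instance of~\ref{rel:s1}, we are done. If it is an instance of~\ref{rel:b2}, i.e.\ $r_{v_3}$ sits directly on top of $r_{v_2}$, I would rule this case out: by Observation~\ref{obs:lrab} the rectangles of label $k$ with $v_3<k<v_2$ all lie between $r_{v_3}$ and $r_{v_2}$ in the NW--SE order, which, together with $r_{v_3}$ being directly above $r_{v_2}$ and the constraints on $v_1$ (left of, or below, $r_{v_2}$) and $v_4$ (with $v_2\prec_s v_4$ and $v_3<v_4<v_2$), forces $v_4<v_3$ --- a contradiction --- so the \ref{rel:b2}-case does not in fact yield a genuine $2\underline{41}3$. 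Conversely, from an occurrence of \zwall\ --- a vertical segment $v$ carrying the right side of $r_b$ above the left side of $r_a$, with the bottom-right corner of $r_b$ strictly above the top-left corner of $r_a$ --- set $v_2:=a$, $v_3:=b$; then $a\btri b$ via~\ref{rel:s1}, and this is a cover because the two rectangles meeting along the piece of $v$ between the two spurs are comparable only through transitivity, exactly as in the proof of Proposition~\ref{prop:planar}. Reading off $v_1$ as the rectangle bounding the left spur from below and $v_4$ as the one bounding the right spur from above (using the fictitious $r_0,r_{n+1}$ when a spur meets the boundary of $\rr$) and invoking the description of $\pi_L$, one checks $v_3<v_1<v_4<v_2$, that $v_1$ precedes $v_2$, that $v_4$ follows $v_3$, and that $v_2,v_3$ are consecutive.

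The main obstacle is the bookkeeping in the converse: verifying that $v_2$ and $v_3$ really land at consecutive positions of $\pi_L$ --- i.e.\ that no element can be $\prec_s$-squeezed between them nor ordered between them by the ``smaller label first'' rule --- and that suitable $v_1,v_4$ always exist (after passing to $r_0,r_{n+1}$ if necessary). I expect this to come out of a purely local analysis of Algorithm~SF at the two steps inserting $r_{v_2}$ and then $r_{v_3}$, where the four placement cases pin down every rectangle around the $\tr$- and $\tl$-joints of $v$; the parallel delicate point is the exclusion of the adjacency case~\ref{rel:b2} in the forward direction.
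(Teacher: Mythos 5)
Your forward direction follows the same route as the paper: an adjacent inversion $v_2v_3$ of $\pi_L$ forces the cover $v_2\prec_s v_3$, and among the four types of $\btri$ the only label-decreasing ones are the adjacency relation~\ref{rel:b2} and the special relation~\ref{rel:s1}, the latter being exactly a \zwall. But excluding case~\ref{rel:b2} is the entire content of that implication, and you only assert it: you claim it ``forces $v_4<v_3$'' without an argument, and the contradiction one actually obtains (via Observation~\ref{obs:lrab}: $r_{v_1}$ must lie in the region of rectangles inserted before $r_{v_2}$, $r_{v_4}$ in the region inserted after $r_{v_3}$, and the SW alternating path of $r_{v_4}$ places the first region entirely below it) is $v_4<v_1$, not $v_4<v_3$; I do not see how to derive the inequality you state. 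So this half has the right skeleton with its key step missing.

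The converse is where the genuine gap lies. Your plan needs the two rectangles of the special relation to occupy \emph{consecutive} positions of $\pi_L$, and you correctly flag this as the main obstacle --- but it is not a bookkeeping issue that a ``local analysis of Algorithm~SF'' will settle. A cover $x\lessdot_s y$ with $x>y$ need not appear as an adjacent pair in the leftmost linear extension: already for the poset on $\{1,2,3\}$ with $3\prec 1$ and $3\prec 2$ one has $\pi_L=312$, and the cover $3\lessdot 2$ is split by $1$. Ruling this out for a \zwall means showing no $z$ exists with $v_2\prec_s z$, $z\parallel v_3$, $z<v_3$ (nor the symmetric kind), a global statement about $P_s(\rc)$ that you neither prove nor reduce to anything local; it is not even clear to me that it is true. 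The paper's proof sidesteps the question entirely: it takes the \emph{lowest} left neighbor and the \emph{highest} right neighbor of the vertical segment, observes that their labels are consecutive integers $b$ and $b+1$, concludes that every linear extension contains the classical pattern $2413$ whose ``2'' and ``3'' have consecutive values, and then extracts an occurrence of $2\underline{41}3$ by scanning from the ``4'' to the ``1'' for the first descent from a value above $b+1$ to a value below $b$ --- the adjacent middle pair of the vincular pattern is found there, and is in general \emph{not} the pair of rectangles realizing the special relation. That argument works for any preimage of $\rc$ (whence the paper's remark after the lemma) and requires no claim about positions in $\pi_L$. As written, your backward direction does not go through without proving the adjacency claim, and I would recommend replacing it by the value-consecutiveness argument.
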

\begin{proof}
  $(\Rightarrow)$ Suppose that the pattern appears in $\pi_L$ in the
  form $b\underline{da}c$, where $a<b<c<d$ and $d$ and $a$ appear
  consecutively. Since $\pi_L$ is the leftmost linear extension, we
  necessarily have $d\prec_s a$, otherwise~$a$ would occur in $\pi_L$
  earlier than $d$. Just after taking $r_d$, the rectangle $r_a$ is available:
  hence, there is a~segment~$s$ which contains 
  either the bottom side of $r_a$ and the top side of $r_d$ (a possible configuration is shown in Figure~\ref{fig:szwall}(a)),
  or the right side of $r_a$ and the left side of $r_d$,
  in which case the bottom-right corner of $r_a$ is higher then the top-left corner of $r_d$,
as shown in Figure~\ref{fig:szwall}(b). 

\begin{figure}[!h]
\begin{center}
  \includegraphics[scale=1, page=2]{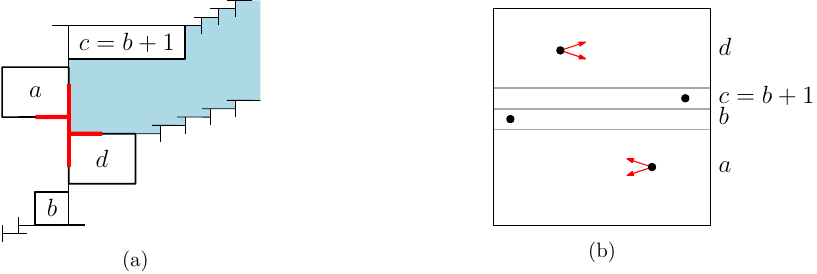}
  \caption{The two possibilities for a covering pair $d \prec_s a$ with $a < d$ in $\pi_L$.} 
  \label{fig:szwall}
\end{center}
\end{figure}

If $s$ is horizontal (case (a)),
then, by Observation~\ref{obs:lrab}(1) (refer also to Figure~\ref{fig:ablr_diag}),
all the rectangles $r_x$ with $a<x<d$ lie 
in the union of two regions (shown by green and blue in Figure~\ref{fig:szwall}(a)) delimited by the NE and SW alternating paths
of $r_a$ and $r_d$.
Since $\pi_L$ contains the pattern $bdac$,
the rectangle~$r_b$ lies in the green region, 
and the rectangle~$r_c$ in the blue region. 
However, considering the SW alternating path of such~$r_c$, 
we see that the entire green region is below $r_c$, 
and, hence, $c<b$, which is a contradiction.
Therefore, we necessarily have case (b), and this configuration contains the pattern \zwall.

$(\Leftarrow)$
Suppose that $\rc$ contains the pattern \zwall.
Denote by 
$s$ the vertical segment,
and by $r_a$ and $r_d$ two rectangles contributing to the pattern
as in Figure~\ref{fig:szwall} (b). 
Let $b$ be the lowest rectangle touching~$s$ from the left,
and $c$ the highest rectangle touching $s$ from the right, 
see Figure~\ref{fig:cross}(a).
We have $a<b<c<d$, and also $c=b+1$.
Then, in any linear extension $\pi$ of $P_s(\rc)$, we have the pattern $2413$ realized as $bdac$ with $c=b+1$.
It is well known that such a pattern implies an occurrence of $2\underline{41}3$:
to see that, note that~$\pi$ has two consecutive letters $d'a'$, with 
$d'$ weakly on the right of $d$, and $a'$ weakly on the left of $a$, 
such that $d'>c$ and $a'<b$, see Figure~\ref{fig:cross}(b).
Then $bd'a'c$ is an occurrence of $2\underline{41}3$.
\end{proof}
 
\noindent\textit{Remark.}
In the proof of $(\Leftarrow)$ we did not use the assumption that $\pi_L$ is the leftmost linear extension.
Therefore, in fact, a stronger result holds: If $\rc$ contains \zwall, then \textit{any} preimage of $\rc$ contains $2\underline{41}3$.
 
\begin{figure}[!h]
\begin{center}
  \includegraphics[scale=0.9]{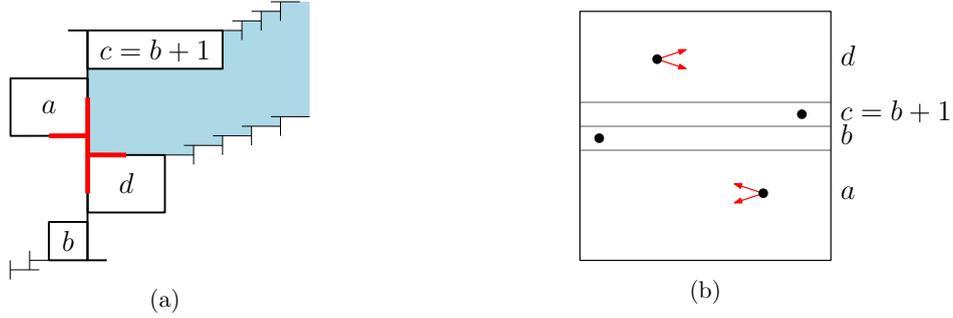}
  \caption{(a) The relative position of the rectangles $r_a, r_b, r_{b+1}, r_d$ in the proofs of Lemmas~\ref{lem:cross} and~\ref{lem:2clumped}. 
(b) An occurrence $bdac$ of $2{41}3$, where $c=b+1$, implies an occurrence of $2\underline{41}3$.}
  \label{fig:cross}
\end{center}
\end{figure}

\begin{lemma}
  \label{lem:2clumped}
  The leftmost linear extension $\pi_L$ of $P_s(\rc)$ is 2-clumped.
\end{lemma}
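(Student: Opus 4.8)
The plan is to show that $\pi_L$ avoids all four 2-clumped patterns by reducing each one to the presence of a forbidden rectangulation configuration, much as Lemma~\ref{lem:cross} did for $2\underline{41}3$ and \zwall. First I would recall that the four patterns $24\underline{51}3$, $42\underline{51}3$, $3\underline{51}24$, $3\underline{51}42$ all contain $2\underline{41}3$ as the sub-pattern formed by their ``$2$'', ``$5$'', ``$1$'', ``$4$'' entries (reading the roles: the $5$ plays the role of the $4$ in $2413$, the $1$ the role of the $1$, so the consecutive pair $\underline{51}$ becomes the consecutive pair $\underline{41}$; the outer small entry and the entry just above the $1$ complete the $2$ and the $3$). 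Hence by Lemma~\ref{lem:cross}, an occurrence of any of these patterns in $\pi_L$ forces the configuration \zwall\ in $\rc$: there is a vertical segment $s$ with a rectangle $r_a$ on its left whose bottom-right corner lies above the top-left corner of a rectangle $r_d$ on its right, where in the pattern $a$ is the ``$1$''-entry and $d$ is the ``$5$''-entry. Using the notation of Figure~\ref{fig:cross}(a), let $r_b$ be the lowest rectangle touching $s$ from the left and $r_c=r_{b+1}$ the highest touching it from the right.

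The heart of the argument is then a case analysis on which of the four patterns occurs, tracking where the two ``extra'' entries of the pattern (the ones beyond the $2413$-core) must sit geometrically relative to $s$, $r_a$, $r_b$, $r_{b+1}$, $r_d$, and deriving a contradiction with $\pi_L$ being the \emph{leftmost} linear extension. For instance, for $24\underline{51}3$ the extra entry is the ``$4$'' sitting between the ``$2$'' and the ``$5$'' in position and between the ``$1$'' and the ``$3$'' in value; I would locate the corresponding rectangle using Observation~\ref{obs:lrab}(1) applied to $r_d$ (it must lie in the region of $\rr\setminus\{r_d\}$ below-or-left of $r_d$ but, being larger than $a$, above-or-right of $r_a$), show its label is forced to be $<b$ or $>b+1$, and then exhibit a reordering of $\pi_L$ that moves $a$ (or the extra entry) strictly earlier while still respecting $\prec_s$ --- contradicting leftmostness. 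The symmetric patterns $42\underline{51}3$, $3\underline{51}24$, $3\underline{51}42$ are handled by reflecting this argument (left-right versus top-bottom, and swapping the roles of the two ``small'' or two ``large'' entries), invoking Observation~\ref{obs:mirror} where convenient to cut the number of genuinely distinct cases to one or two.

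The main obstacle I expect is the bookkeeping in this geometric case analysis: precisely pinning down, for each pattern, which region of the alternating-path decomposition the extra entry's rectangle falls into, and then converting ``this rectangle has a label on the wrong side of $b$'' into a valid $\prec_s$-respecting transposition that contradicts minimality of $\pi_L$ in the weak Bruhat order. The key technical tool is that $\pi_L$ is obtained by always pruning the leftmost leaf of the Hasse diagram of $P_s(\rc)$ (as noted before Theorem~\ref{thm:strong_distinguished}), so any occurrence of a forbidden pattern must create a $\prec_s$-relation that \emph{forces} an otherwise-avoidable inversion; identifying that forced relation --- which in every case comes from a special relation (\ref{rel:s1}) or (\ref{rel:s2}) attached to the segment $s$ witnessing \zwall\ --- is what makes the contradiction go through. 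Once the core case (say $24\underline{51}3$) is fully written out, the remaining three should follow by routine symmetry, so I would concentrate the exposition there.
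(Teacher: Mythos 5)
Your setup is right and matches the paper's: each of the four forbidden 5-letter patterns contains $2\underline{41}3$ (on its ``2'', ``5'', ``1'', ``3'' entries), so Lemma~\ref{lem:cross} produces an occurrence of \zwall\ in $\rc$, and one may normalize to the configuration of Figure~\ref{fig:cross}(a) with $c=b+1$. The gap is in what comes next, which is precisely the new content of this lemma: you must show that this occurrence of $2\underline{41}3$ cannot be \emph{completed} by a fifth entry to any of $24\underline{51}3$, $42\underline{51}3$, $3\underline{51}24$, $3\underline{51}42$, and your plan for that step is both vague and aimed at the wrong mechanism. You propose to ``exhibit a reordering of $\pi_L$ that moves $a$ (or the extra entry) strictly earlier while still respecting $\prec_s$, contradicting leftmostness,'' but you never construct such a reordering, and it is not clear one exists in every case; you also misstate the constraint on the extra entry (``its label is forced to be $<b$ or $>b+1$''), whereas for $24\underline{51}3$ and $42\underline{51}3$ the pattern itself already forces the extra label $x$ to satisfy $c<x<d$ --- there is nothing to derive there.

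The argument that actually closes the case is a direct geometric contradiction with the forward algorithm, not a Bruhat-order exchange. For a completion to $24\underline{51}3$ or $42\underline{51}3$, the extra rectangle $r_x$ with $c<x<d$ is confined, by Observation~\ref{obs:lrab}, to the region bounded by $r_c$, $r_d$, the segment $s$ of the \zwall, and the NE alternating paths of $r_c$ and $r_d$. That region lies entirely \emph{above} the staircase present just before $r_d$ is inserted by Algorithm~SF; yet in the pattern $x$ precedes $d$ in $\pi_L$, so $r_x$ would have to be inserted before $r_d$ and hence lie \emph{below} that staircase. This is an immediate contradiction, with no need to perturb $\pi_L$; the cases $3\underline{51}24$ and $3\underline{51}42$ follow by the symmetric argument. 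Until you either carry out this confinement-versus-insertion-order argument or actually produce the $\prec_s$-respecting transposition you allude to, the proof is incomplete at its central step.
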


\begin{proof}
  Assume for the sake of contradiction that $\pi_L$ contains one of the four patterns 
  $24\underline{51}3$, $42\underline{51}3$, $3\underline{51}24$, $3\underline{51}42$
  forbidden in 2-clumped permutations.  
  Then, clearly, $\pi_L$ contains $2\underline{41}3$. 
  Then, similarly to the proof of $(\Rightarrow)$ in Lemma~\ref{lem:cross},  
  the pattern \zwall\ appears in~$\rc$, realized by four rectangles
  $r_a, r_b, r_c, r_d$ with labels $a<b<c<d$. 
  Using the argument symmetric to that shown in Figure~\ref{fig:cross}(b),
  we can assume that $c=b+1$, and the four rectangles are in the relative position
  as shown in Figure~\ref{fig:cross}(a).

  Consider the possible completions of this occurrence of
  $2\underline{41}3$ to one of the two patterns $24\underline{51}3$
  or $42\underline{51}3$. 
  It follows that there is a rectangle $r_x$ that yields an occurrence
  of $bx\underline{da}c$ or $xb\underline{da}c$ in $\pi_L$, with
  $a,b,c,d$ fixed as above and $c<x<d$.  
  Then, the condition $c<x<d$ implies that $r_x$ lies in the region (shown by blue)
  delimited by the rectangles $r_c$, $r_d$, the segment $s$, and the NE alternating paths of $r_c$ and $r_d$
  (note that the SW alternating path of $r_d$ is included in that of $r_c$).
However, $r_x$ is inserted earlier than $r_d$, and must lie
below the staircase obtained just before inserting $r_d$.
Since the blue region is above such a staircase,
it is not possible to place $r_x$ so that 
an occurrence of $42\underline{51}3$ or $24\underline{51}3$ will be created.
One can show with a~symmetric argument that the occurrence of the
pattern $2\underline{41}3$ can neither be completed to an occurrence
of $3\underline{51}42$ nor $3\underline{51}24$. Therefore,~$\pi_L$
must be a 2-clumped permutation. 
\end{proof}

In order to show that the fibers of $\gamma_s$, hence the strong rec\-tan\-gu\-la\-ti\-ons, 
are in bijection with 2-clumped permutations, 
we need to prove that the leftmost linear extension is the unique 2-clumped one.

\begin{figure}[!h]
\begin{center}
  \includegraphics[scale=.4, page=3]{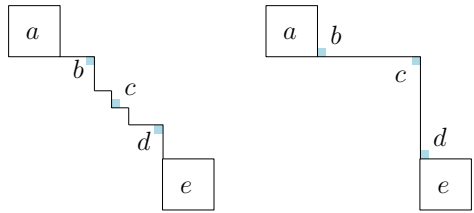}
  \caption{The two cases in the proof of Lemma~\ref{lem:unique}.}
  \label{fig:unique}
\end{center}
\end{figure}

\begin{lemma}
  \label{lem:unique}
  If a linear extension $\pi$ of $P_s(\rc)$ is not the leftmost linear extension, then it is not 2-clumped.
\end{lemma}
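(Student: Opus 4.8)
The goal is to show that if $\pi \in \mathcal{L}(P_s(\rc))$ is not the leftmost linear extension, then $\pi$ contains one of the four forbidden 2-clumped patterns. Since $\pi \neq \pi_L$ and both are linear extensions of the same poset $P_s(\rc)$, the set of inversions of $\pi$ strictly contains that of $\pi_L$; equivalently, there is a pair $a < b$ with $a,b$ incomparable in $P_s(\rc)$ (so $a$ lies to the left of $b$ in the planar embedding, by the NW--SE labeling) such that $b$ appears before $a$ in $\pi$ — i.e.\ $(b,a)$ is an inversion of $\pi$ but not of $\pi_L$. First I would pick such a ``wrong'' inversion and, using the fact that covers in $G_n$ correspond to adjacent transpositions while $\mathcal{L}(P_s(\rc))$ is a convex interval (Proposition~\ref{prop:interval}), reduce to the case where $b$ and $a$ occur \emph{consecutively} in $\pi$ as $\ldots \underline{ba} \ldots$, with $a,b$ incomparable in $P_s(\rc)$: one can slide along a shortest $\pi_L, \pi$-path to find an adjacent transposition that swaps an incomparable pair in the ``wrong'' direction.

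Next I would translate this consecutive incomparable pair $ba$ into geometry. Since $a$ and $b$ are incomparable in $P_s(\rc)$ and $a < b$, the rectangles $r_a$ and $r_b$ are neither adjacent (in the blocking sense) nor related by a special relation; yet in the forward algorithm run on $\pi$, the rectangle $r_b$ is inserted and immediately followed by $r_a$. The plan is to analyze the staircase just before $r_b$ is inserted and observe that, because $r_b$ and $r_a$ are non-adjacent but $r_a$ becomes insertable right after $r_b$, there must be a ``witness'' segment $s$ and intermediate rectangles between $a$ and $b$ forcing a windmill-type configuration — concretely, exactly the \zwall\ (or its mirror) pattern of Figure~\ref{fig:cross}(a), together with a fifth rectangle $r_x$ with an intermediate label that was inserted strictly between $r_b$ and $r_a$ in $\pi$ but not yet in $\pi_L$ at the corresponding stage. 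This is the step where I expect to mirror the case analysis already used in Lemmas~\ref{lem:cross} and~\ref{lem:2clumped}, but run \emph{in the opposite direction}: there it was shown that a windmill forces the pattern into every linear extension; here we want that a superfluous inversion forces the appearance of the full size-5 clumped pattern (the extra ``$51$'' block coming from the non-covering nature of the swapped pair).

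Concretely, I would split into the two symmetric cases sketched in Figure~\ref{fig:unique}: either the incomparable pair $a,b$ straddles a vertical witness segment $s$ (yielding a $2\underline{51}3$-type core) or a horizontal one (the mirror, by Observation~\ref{obs:mirror}). In the vertical case, let $r_b$ be a rectangle left of $s$ and $r_a$ right of it (or rather the four rectangles $r_a, r_b, r_c, r_d$ realizing \zwall\ with $a<b<c<d$, $c = b+1$, as in Figure~\ref{fig:cross}(a)); the fact that in $\pi$ the letters $b$ and $a$ are \emph{consecutive} but $b > a$ incomparable means the algorithm inserted some rectangle $r_x$, with $c < x < d$ say, in between $d$ and $a$ when producing the superfluous inversion, and tracking which side of the staircase $r_x$ sits on yields either $bx\underline{ba}c$ or $xb\underline{ba}c$-shaped occurrences, i.e.\ $24\underline{51}3$ or $42\underline{51}3$. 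The symmetric sub-analysis gives $3\underline{51}24$ or $3\underline{51}42$.

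\textbf{Main obstacle.} The delicate part is the reduction to a \emph{consecutive} wrong pair together with pinning down the fifth rectangle $r_x$ with the correct intermediate label and correct side of the staircase: one must argue that whenever $\pi$ has a surplus inversion over $\pi_L$, the minimal such surplus (closest to $\pi_L$ in $G_n$) already exhibits the non-covering obstruction, i.e.\ that no single adjacent transposition away from $\pi_L$ can swap a \emph{covering} incomparable pair — equivalently, that $\pi_L$ is characterized by being a linear extension with no adjacent ``non-cover'' descent relative to $P_s(\rc)$. Establishing this cleanly, and then verifying that the geometric witness really produces a genuine occurrence of one of the four size-5 vincular patterns (and not merely the size-4 pattern $2\underline{41}3$, which is allowed) — that is, checking the ``$51$'' consecutiveness and the relative positions of the two outer letters — is where most of the care will go. The windmill/alternating-path bookkeeping from Figures~\ref{fig:szwall}--\ref{fig:cross} should carry over essentially verbatim.
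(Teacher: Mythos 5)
Your reduction step is fine and essentially matches the paper: since $\pi\neq\pi_L$, the paper directly locates an index $\ell$ such that $\pi_\ell>\pi_{\ell+1}$ are consecutive in $\pi$ and both minimal in the poset induced by the suffix $\pi_\ell,\ldots,\pi_n$ (equivalently, an adjacent descent of an incomparable pair). This is the easy part, not the main obstacle, and your convexity/permutahedron phrasing of it would work.

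The genuine gap is in the geometric core. You assert that a consecutive incomparable wrong pair forces a \zwall\ (windmill-type) configuration as in Figure~\ref{fig:cross}(a) and then try to recycle the bookkeeping of Lemmas~\ref{lem:cross} and~\ref{lem:2clumped}. This is false: incomparability of $a=\pi_{\ell+1}$ and $e=\pi_\ell$ means precisely that there is \emph{no} covering relation $e\prec_s a$ of the kind (adjacency or special relation) that produces a \zwall; the two rectangles simply sit in two different valleys of the staircase formed by the prefix $\pi_1\ldots\pi_{\ell-1}$, and no windmill need be present anywhere in $\rc$. The correct argument instead splits on whether these two valleys are consecutive or not. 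If they are not, the two peaks bounding them give already-placed rectangles $r_b,r_d$ with $a<b<d<e$, and the remaining letter of the size-5 pattern is a \emph{future} rectangle $r_c$ inserted later into an intermediate valley, yielding $24\underline{51}3$ or $42\underline{51}3$. If they are consecutive, one uses the special relations to show that neither $r_a$ nor $r_e$ can extend over the peak rectangle $r_c$ (this is exactly where incomparability is used), so that inserting them creates two new valleys whose \emph{future} fillers $r_b,r_d$ supply the outer letters of $3\underline{51}24$ or $3\underline{51}42$. Your sketch never identifies this valley case distinction, never uses rectangles inserted after position $\ell+1$ (your fifth rectangle $r_x$ is placed "between $d$ and $a$," which contradicts the consecutiveness you just arranged), and the proposed occurrences "$bx\underline{ba}c$" are not well-formed. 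As written, the plan would not produce the required size-5 patterns.
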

\begin{proof}
  Since $\pi\not=\pi_L$,
  there are two indices $i$ and $j$ with $i<j$ such that $\pi_i$ and $\pi_j$
  are both minima of the poset induced by $\pi_i,\ldots,\pi_n$ and $\pi_j<\pi_i$.
  By looking at the elements between $\pi_i$ and $\pi_j$ we
  find an index~${\ell}$ with $i\leq {\ell} <j$ such that $\pi_{{\ell}+1}<\pi_{\ell}$
  and such that $\pi_{\ell}$ and $\pi_{{\ell}+1}$ are both minima of the poset induced by $\pi_{\ell},\ldots,\pi_n$. 
  
Let $a=\pi_{{\ell}+1}$ and $e=\pi_{\ell}$. Note that $a$ and $e$ are incomparable in $P_s(\rc)$. 
  Consider the rectangles of $\rc$ with labels in the prefix
  $\pi_1\ldots \pi_{{\ell}-1}$ of $\pi$: 
  They form a staircase such that the rectangles $r_a$ and $r_e$ are in two valleys, the one for $r_a$ before the one for~$r_e$ along the staircase.
  We consider this staircase and distinguish two cases, see Figure~\ref{fig:unique}.

  First, if there is a valley between the valleys occupied by $r_a$ and $r_e$,
  then the bottom side of $r_a$ and the left side of $r_e$ belong to two
  segments forming two peaks belonging to two different rectangles $r_b$ and $r_d$
  with $a<b<d<e$. Consider the next rectangle $r_c$ to be inserted in a valley between
  those occupied by $r_a$ and $r_e$. Due to the NW--SE labeling, we have
  $b<c<d$, and in $\pi$ we first have $b$ and $d$ in any order, then
  consecutively $ea$, and finally $c$. This yields an occurrence of one
  of the two forbidden patterns $24\underline{51}3$ and $42\underline{51}3$.

  Now suppose that the rectangles $r_a$ and $r_e$ are inserted in consecutive valleys.
  Let $r_c$ be the rectangle forming the peak between the valleys of $r_a$ and
  $r_e$. We claim that neither $r_a$ nor $r_e$ extend to the top-right corner of $r_c$.
  Indeed, if $r_e$ extends to the top-right corner of $c$, then we have
  $a\prec_s e$ due to the special relations;  
  and similarly, if $r_a$ extends to the top-right corner of $r_c$, then $e\prec_s a$. Both are impossible since $a$ and $e$ are incomparable  in $P_s(\rc)$.   
  Hence, adding $r_a$
  and $r_e$ to the staircase makes two valleys, one on each side of $r_c$.
  Let $r_b$ and $r_d$ be the rectangles filling these valleys. From the order along the
  staircase we obtain $a<b<c<d<e$, while in $\pi$ we first have $c$ then
  consecutively $ea$, and finally $b$ and $d$ in any order. This
  yields one of the other forbidden patterns
  $3\underline{51}24$ and $3\underline{51}42$.
\end{proof}

Lemmas~\ref{lem:cross}, \ref{lem:2clumped} and \ref{lem:unique} together imply Theorems~\ref{thm:strong_distinguished}(1) and~\ref{thm:strong_bijections}(1).

\medskip

Given a permutation $\pi$ in $S_n$, 
its \emph{complement} $\bar{\pi}$ is
the permutation whose $i$th component is $\bar{\pi}_i = n+1-\pi_i$.
Note that the forbidden patterns of co-2-clumped permutations are the
complements of the forbidden patterns of 2-clumped permutations.

The following fact is a direct consequence of the 
symmetry of the forward algorithm.

\begin{observation}\label{obs:mirror}
The rec\-tan\-gu\-la\-ti\-on $\bar{\rc}=\gamma_s(\bar{\pi})$ is symmetric to $\rc=\gamma_s(\pi)$ with respect to the SW--NE diagonal.
\end{observation}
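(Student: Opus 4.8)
The plan is to make the "direct consequence of symmetry" precise by exhibiting the exact correspondence between runs of Algorithm~SF on $\pi$ and on $\bar\pi$. First I would set up the relevant symmetry of the plane: let $\sigma$ be the reflection of the drawing region across the SW--NE diagonal. This reflection swaps the roles of "left" and "below", of "top" and "right", of the corner shapes $\td\leftrightarrow\tl$ and $\tr\leftrightarrow\tu$, and it maps a staircase bounding a partial rec\-tan\-gu\-la\-ti\-on from the top-right to another such staircase (peaks go to peaks, valleys to valleys). Crucially, under $\sigma$ the NW--SE labeling by $1,\dots,n$ is turned into the labeling that reads $n,n-1,\dots,1$; equivalently, a rectangle that receives label $k$ in $\rc$ corresponds under $\sigma$ to the rectangle receiving label $n+1-k$ in $\sigma(\rc)$. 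This is exactly why complementation $k\mapsto n+1-k$ enters.

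Next I would run the two forward algorithms in parallel and prove by induction on $i$ that, after inserting the first $i$ rectangles, the partial rec\-tan\-gu\-la\-ti\-on built from $\pi$ and the one built from $\bar\pi$ are images of one another under $\sigma$, with labels related by $k\leftrightarrow n+1-k$. The base case is immediate: the initial two-peak staircases with peak labels $\{0,n+1\}$ are exchanged by $\sigma$ together with the relabeling $0\leftrightarrow n+1$. For the inductive step, suppose $\pi_i=j$, so $\bar\pi_i=n+1-j$. In the $\pi$-run, $r_j$ is placed in the valley between peaks $a<j<b$; under $\sigma$ with the relabeling, these become peaks $n+1-b<n+1-j<n+1-a$ in the $\bar\pi$-partial-rec\-tan\-gu\-la\-ti\-on, i.e. exactly the peaks bracketing the value $n+1-j$, so the $\bar\pi$-run places its rectangle in the corresponding valley. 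The alignment conditions match too: "all rectangles $r_k$ with $a<k<j$ already inserted" becomes, after relabeling, "all rectangles with labels between $n+1-j$ and $n+1-a$ already inserted", which is precisely the condition governing the right-extension of the rectangle inserted in the $\bar\pi$-run; and the $\tr$-versus-align dichotomy is carried to the $\tu$-versus-align dichotomy by $\sigma$. Hence the new partial rec\-tan\-gu\-la\-ti\-ons remain $\sigma$-images of each other, completing the induction; taking $i=n$ gives $\sigma(\gamma_s(\pi))=\gamma_s(\bar\pi)$ as labeled rec\-tan\-gu\-la\-ti\-ons, and forgetting labels gives the stated symmetry.

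The main obstacle — really the only place that needs care rather than bookkeeping — is verifying that $\sigma$ genuinely intertwines the two branches of the alignment rule, i.e. that the combinatorial condition "all intermediate-labelled rectangles already inserted" on the \emph{top} side in one run corresponds under the relabeling to the analogous condition on the \emph{right} side in the other run. This hinges on the fact (used implicitly throughout Section~\ref{sec:strong}) that $\sigma$ sends the NW--SE labeling to the reverse of the NW--SE labeling of the reflected rec\-tan\-gu\-la\-ti\-on; once that is established, everything else is a symmetric matching of cases. I would therefore spend the bulk of a full write-up on that relabeling lemma and treat the rest as a routine parallel induction, concluding with the one-line observation that the forbidden patterns defining co-2-clumped permutations are exactly the complements of those defining 2-clumped permutations, so that this symmetry is the mechanism transferring Theorems~\ref{thm:strong_distinguished}(1) and~\ref{thm:strong_bijections}(1) to their co-versions.
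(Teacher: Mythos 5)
Your proposal is correct and is exactly the argument the paper has in mind: the paper gives no written proof beyond the remark that the observation is ``a direct consequence of the symmetry of the forward algorithm,'' and your parallel induction (reflection $\sigma$ across the SW--NE diagonal, the relabeling $k\leftrightarrow n+1-k$ of the NW--SE labeling, and the matching of the top-side/right-side alignment dichotomies of Algorithm~SF) is precisely the symmetry being invoked, spelled out in full.
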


Since the rightmost linear extension of $P_s(\rc)$ is the complement of the leftmost linear extension of $P_s(\bar{\rc})$, it must forbid the complements of the forbidden patterns for the 2-clumped permutations.
Therefore, the rightmost linear extension $\pi_R$ of $P_s(\rc)$ is co-2-clumped, and if a linear extension $\pi$ of $P_s(\rc)$ is not the rightmost linear extension, then it is not co-2-clumped.
Hence co-2-clumped permutations are exactly the maxima, in the weak Bruhat order, 
of the intervals $\gamma_s^{-1}(\rc)$, and they are bijective to strong rec\-tan\-gu\-la\-ti\-ons as well.
This completes the proof of Theorem~\ref{thm:strong_distinguished} and~\ref{thm:strong_bijections}.

\medskip

\noindent\textit{Remark.}
Combining Theorem~\ref{thm:strong_bijections} with Lemma~\ref{lem:cross}, we observe that $\gamma_s$ specializes into a bijection 
from permutations that avoid $2\underline{41}3$ --- the \textit{semi-Baxter permutations} --- to rec\-tan\-gu\-la\-ti\-ons that avoid $\zwall$.
By duality~\cite[Sec.2.4]{FNS21}, rec\-tan\-gu\-la\-ti\-ons of size $n$ avoiding $\zwall$ are in bijection with plane bipolar posets with $n+2$ vertices, which are in a simple bijection~\cite[Sec.5]{FNS21} with permutations of size $n$ that avoid $2\underline{14}3$ (\textit{plane permutations}).
Plane permutations are shown in~\cite{BouvelGRR18} to be in bijection with semi-Baxter permutations, but the bijection is recursive (it proceeds via generating trees).
Via rec\-tan\-gu\-la\-ti\-ons avoiding $\zwall$ we have a more geometric bijection between these two permutation classes.

Moreover, by symmetry, $\gamma_s$ specializes into a bijection from permutations avoiding $3\underline{14}2$ to rec\-tan\-gu\-la\-ti\-ons avoiding $\iswall$.
So $\gamma_s$ specializes into a bijection from Baxter permutations to rec\-tan\-gu\-la\-ti\-ons avoiding $\zwall$ and $\iswall$, which identify with weak rec\-tan\-gu\-la\-ti\-ons  (and are realized by the anti-diagonal representation), and we recover the bijection from~\cite{ABP06} between Baxter permutations and weak rec\-tan\-gu\-la\-ti\-ons.

\subsection{The flip graph on strong rectangulations}\label{sec:flip}

We briefly recall the notion of lattice congruence, and refer to Reading~\cite{R04} for a specific treatment of congruences of the weak Bruhat order.
An equivalence relation $\equiv$ on the set of elements of a lattice $(L,\wedge,\vee)$ is said to be a~\emph{lattice congruence} if it behaves consistently with respects to joins and meets, hence if $x\equiv x'$ and $y\equiv y'$, then $x\wedge y\equiv x'\wedge y'$, and $x\vee y\equiv x'\vee y'$.
In that case, one can define the quotient of the lattice on the congruence classes, such that the order as well as the meet and the join of two classes is defined respectively by the order, the meet, and the join in $L$ of any two representatives of the classes. The lattice quotient can also be shown to be isomorphic to the lattice induced in $L$ by the minimal element of each congruence class.
It is known that the equivalence classes of permutations defined by the fibers of $\gamma_s$ form a lattice congruence.

\begin{theorem}[Reading~\cite{R12}]\label{thm:reading}
  Consider the equivalence relation $\equiv$ on $S_n$ defined by
  \[
  \pi\equiv\sigma\Leftrightarrow \gamma_s(\pi)=\gamma_s(\sigma).
  \]
  Then $\equiv$ is a lattice congruence of the weak Bruhat order $\prec$ on $S_n$.
\end{theorem}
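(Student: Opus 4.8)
The plan is to prove that $\equiv$ is a lattice congruence of the weak Bruhat order by appealing to a standard sufficient condition for congruences: an equivalence relation $\equiv$ on a finite lattice $L$ is a lattice congruence if and only if (i) every equivalence class is an interval of $L$, (ii) the map sending each element to the minimum of its class is order-preserving, and (iii) the map sending each element to the maximum of its class is order-preserving. (See Reading~\cite{R04}.) We already have ingredient (i) for free: Proposition~\ref{prop:interval} states that each fiber $\gamma_s^{-1}(\rc) = \mathcal{L}(P_s(\rc))$ is an interval in the weak Bruhat order on $S_n$, with minimum $\pi_L$ (the leftmost linear extension, which is $2$-clumped by Theorems~\ref{thm:strong_distinguished} and~\ref{thm:strong_bijections}) and maximum $\pi_R$ (co-$2$-clumped). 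So the remaining work is to verify (ii), and then (iii) follows by the complementation symmetry of Observation~\ref{obs:mirror}.

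To establish (ii), I would argue as follows. Fix $\pi$ and let $\sigma$ cover $\pi$ in the weak Bruhat order, so $\sigma$ is obtained from $\pi$ by swapping two adjacent entries $\pi_k < \pi_{k+1}$ into descending order. Let $\rc = \gamma_s(\pi)$ and $\rc' = \gamma_s(\sigma)$, with minima $\pi_L = \beta_{\mathsf{2C}}^{-1}(\rc)$ and $\pi'_L = \beta_{\mathsf{2C}}^{-1}(\rc')$. I want $\pi_L \preceq \pi'_L$. The key observation is that the swap $\pi \to \sigma$ either leaves the fiber unchanged (then $\pi_L = \pi'_L$ and there is nothing to prove), or it changes the fiber, in which case I claim the two posets $P_s(\rc)$ and $P_s(\rc')$ differ by exactly one covering relation being reversed among the pair $\{\pi_k, \pi_{k+1}\}$: since a single adjacent transposition that crosses fibers must flip an incomparability in $P_s(\rc)$ into a comparability (or vice versa), $P_s(\rc')$ is obtained from $P_s(\rc)$ by adding a single relation of the form $\pi_{k+1} \prec_s \pi_k$ (using that $\pi_k < \pi_{k+1}$ as labels but $\sigma$ puts $\pi_{k+1}$ first). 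Adding a relation shrinks the set of linear extensions and can only raise — never lower — the minimum in the weak Bruhat order, because every linear extension of the larger poset $P_s(\rc')$ is already a linear extension of $P_s(\rc)$, so the minimum of the smaller set dominates the minimum of the larger one. This gives $\pi_L \preceq \pi'_L$, proving order-preservation of the down-map on cover relations, hence on all of $\prec$.

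Finally, for (iii): by Observation~\ref{obs:mirror}, complementation $\pi \mapsto \bar\pi$ conjugates $\gamma_s$ with the diagonal reflection of rectangulations, hence conjugates $\equiv$ with itself, and it is an anti-automorphism of the weak Bruhat order. Under this symmetry the minimum of a class maps to the maximum of the complemented class, so order-preservation of the up-map follows formally from order-preservation of the down-map, which we have just shown. Combining (i), (ii), (iii) with Reading's criterion yields that $\equiv$ is a lattice congruence of $(\mathcal{G}_n, \wedge, \vee)$, completing the proof.

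\medskip

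The main obstacle I anticipate is the middle step — specifically, the precise claim that crossing a single fiber boundary via an adjacent transposition corresponds to adding (rather than, say, restructuring) exactly one relation in the strong poset, so that the fibers are nested as posets. This requires knowing that the fibers $\mathcal{L}(P_s(\rc))$, as $\rc$ ranges over $\mathsf{SR}_n$, partition $S_n$ into intervals whose ``boundaries'' are governed by comparability/incomparability of a single pair in $P_s(\rc)$. One clean way to secure this is to use the forward algorithm directly: when we swap $\pi_k \leftrightarrow \pi_{k+1}$, the only step of Algorithm~SF that can behave differently is step $k$ or $k+1$, and one checks geometrically (via the four-case insertion picture of Figure~\ref{fig:gamma}) exactly when the resulting partial rectangulation — and hence the final one — changes; if it changes, tracking which peaks/valleys are affected shows the induced poset change is the reversal of the single relation between the labels $\pi_k$ and $\pi_{k+1}$. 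Alternatively, one can cite that $P_s(\rc)$ is a two-dimensional lattice (Proposition~\ref{prop:planar}) realized by $\{\pi_L,\pi_R\}$ and invoke general facts about $2$-dimensional posets partitioning intervals in the permutahedron skeleton, but the direct combinatorial check on the algorithm is likely the more transparent route and fits the paper's stated goal of giving ``simpler and more direct proofs.''
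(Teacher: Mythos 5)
The paper does not actually prove Theorem~\ref{thm:reading}: it is stated as a known result and attributed to Reading~\cite{R12}, so there is no internal proof to compare against. Your overall strategy is nevertheless a legitimate one --- the criterion that an equivalence relation on a finite lattice is a congruence if and only if every class is an interval and the down- and up-projection maps are order-preserving is standard, item (i) is exactly Proposition~\ref{prop:interval}, and the reduction of (iii) to (ii) via the complementation symmetry of Observation~\ref{obs:mirror} is sound.

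The gap is in (ii), and it is fatal as written. You claim that when an adjacent transposition $\pi\to\sigma$ crosses a fiber boundary, $P_s(\rc')$ is obtained from $P_s(\rc)$ by adding a single relation, so that $\mathcal{L}(P_s(\rc'))\subseteq\mathcal{L}(P_s(\rc))$. This cannot happen: by Proposition~\ref{prop:strongfiber} the sets $\mathcal{L}(P_s(\rc))$ are exactly the fibers of $\gamma_s$, so two distinct fibers are disjoint nonempty sets and neither can contain the other. Moreover, for the fiber to change at all, $\pi_k$ and $\pi_{k+1}$ must already be comparable in $P_s(\rc)$ --- if they were incomparable, $\sigma$ would be another linear extension of the same poset and hence in the same fiber --- so ``adding'' the reversed relation to $P_s(\rc)$ would create a cycle, not a poset. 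What actually happens when the fiber changes is a flip (Section~\ref{sec:flip}): the rectangulation, and with it the strong poset, is restructured locally, and the new poset is in general neither an extension nor a restriction of the old one. A direct proof of (ii) therefore requires analyzing how the leftmost linear extension moves across each of the flip types of Figure~\ref{fig:flips} --- precisely the verification you defer to in your closing paragraph but do not carry out. As it stands, the central step of the argument is not established.
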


In particular, the partial order induced by the weak Bruhat order on these equivalence classes is a~lattice.
The cover graph of this lattice is a graph with vertex set $\mathsf{SR}_n$.
Meehan~\cite{M19a} described the edges of this graph as local operations on the rec\-tan\-gu\-la\-ti\-ons, so that two rec\-tan\-gu\-la\-ti\-ons are adjacent
if and only if they differ by such an operation. These operations are called \emph{flips}, and the cover graph of the lattice on
$\mathsf{SR}_n$ is called the \emph{flip graph} on $\mathsf{SR}_n$. This is in perfect analogy with the well-known flip graph on triangulations of a convex polygon
defined by the Sylvester congruence~\cite{T51}, and the flip graph on diagonal rec\-tan\-gu\-la\-ti\-ons defined by the Baxter congruence~\cite{LR12}.
These flip graphs happen to be skeletons of polytopes: Flip graphs on triangulations, for instance, are skeletons of associahedra~\cite{STT88,P14}.
A~remarkable result by Pilaud and Santos allow us to make the same statement for the flip graph on strong rec\-tan\-gu\-la\-ti\-ons.

\begin{theorem}[Pilaud and Santos~\cite{PS19}]
  For any lattice congruence $\equiv$ of the weak Bruhat order on $S_n$,
the cover graph of the quotient of the weak Bruhat order by $\equiv$ is the skeleton of a polytope.
\end{theorem}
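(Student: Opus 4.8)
Since this is the theorem of Pilaud and Santos~\cite{PS19}, the plan is to reconstruct their argument. First I would pass from lattices to fans. By Reading's theory of congruences of the weak order, the congruence classes of $\equiv$ are the maximal cones of a complete polyhedral fan $\mathcal{F}_\equiv$ in $\mathbb{R}^n/\mathbb{R}\mathbf{1}$, obtained from the braid fan $\mathcal{F}_{\mathrm{br}}$ (the normal fan of the permutahedron, whose maximal cones are indexed by $S_n$) by erasing some of its walls and merging the cones they separated; moreover two classes cover one another in the quotient lattice if and only if the corresponding maximal cones of $\mathcal{F}_\equiv$ are separated by a single wall. Hence the cover graph of the quotient is the dual graph of $\mathcal{F}_\equiv$, and by the standard duality between polytopes and normal fans it suffices to show that $\mathcal{F}_\equiv$ is \emph{polytopal}: if $h\colon\mathbb{R}^n\to\mathbb{R}$ is a convex piecewise-linear function whose domains of linearity are exactly the maximal cones of $\mathcal{F}_\equiv$, then the polytope with support function $h$ has normal fan $\mathcal{F}_\equiv$, and its $1$-skeleton is the desired graph. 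This polytopality is not automatic, since in dimension at least three a coarsening of a polytopal fan need not itself be polytopal.

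The second ingredient is Reading's combinatorial encoding of congruences: the walls of $\mathcal{F}_{\mathrm{br}}$ are subdivided into \emph{shards} (the pieces into which the hyperplanes $x_i=x_j$ are cut by the rest of the braid arrangement), the shards carry a \emph{forcing} partial order, and a congruence $\equiv$ is the same datum as a down-set $\mathcal{I}$ in the forcing order, namely the set of shards that survive as walls of $\mathcal{F}_\equiv$. One then builds $h$, equivalently a polytope, from elementary pieces indexed by $\mathcal{I}$: to each surviving shard $\Sigma$ one attaches a small polytope $P_\Sigma$ whose support function is piecewise linear, bends exactly across $\Sigma$ (and across the shards $\Sigma$ forces, which again lie in $\mathcal{I}$), and is linear elsewhere; one then sets $Q_\equiv=\sum_{\Sigma\in\mathcal{I}}c_\Sigma P_\Sigma$ for suitable positive weights $c_\Sigma$, with support function $h=\sum_{\Sigma\in\mathcal{I}}c_\Sigma\,h_{P_\Sigma}$. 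By construction $h$ is linear on every maximal cone of $\mathcal{F}_\equiv$, so the normal fan of $Q_\equiv$ is at least as coarse as $\mathcal{F}_\equiv$.

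The heart of the matter, and the step I expect to be the main obstacle, is to prove that the normal fan of $Q_\equiv$ is not strictly coarser than $\mathcal{F}_\equiv$, i.e.\ that $h$ genuinely bends across every surviving shard, so that $Q_\equiv$ has a distinct vertex for each maximal cone of $\mathcal{F}_\equiv$. This is precisely where the forcing order does the work: one enumerates the shards of $\mathcal{I}$ in an order compatible with forcing and fixes the weights $c_\Sigma$ inductively, each so large that the bend contributed by $P_\Sigma$ along $\Sigma$ dominates and cannot be cancelled by the other, already-fixed and bounded, summands, while the fact that $\mathcal{I}$ is a down-set guarantees that every spurious bend the $P_\Sigma$ create across a \emph{removed} shard cancels out. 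Carrying out this quantitative interaction analysis of shard support functions is the technical core of Pilaud and Santos's proof; granting it, $\mathcal{F}_\equiv$ is realized as the normal fan of the \emph{quotientope} $Q_\equiv$, and the skeleton of $Q_\equiv$ is the cover graph of the quotient of the weak Bruhat order by $\equiv$, as claimed.
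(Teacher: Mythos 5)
This theorem is imported verbatim from Pilaud and Santos~\cite{PS19}; the paper gives no proof of it, only the citation, so there is no internal argument to compare yours against. Judged on its own terms, your outline faithfully tracks the known proof strategy: Reading's results reduce the claim to showing that the quotient fan $\mathcal{F}_\equiv$ (obtained by coarsening the braid fan along the surviving shards) is polytopal, and you correctly flag that this is the genuine content, since coarsenings of polytopal fans need not be polytopal. Two remarks. First, your packaging of the construction as a weighted Minkowski sum of ``shard polytopes'' $P_\Sigma$ is really the later refinement of Padrol, Pilaud, and Ritter; the original~\cite{PS19} argument instead writes down the quotientope directly by half-space inequalities $\langle \chi_R, x\rangle \le h_f(R)$, where $h_f(R)$ sums a ``forcing dominant'' weight $f(\Sigma)$ over the surviving shards crossed by $R$, and then verifies that the normal fan of the resulting deformed permutahedron is exactly $\mathcal{F}_\equiv$. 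The two formulations are close in spirit, but the single-shard summands are not automatically support functions of polytopes, which is precisely why the Minkowski-sum version needed separate work; as stated, your second paragraph asserts the existence of the $P_\Sigma$ without justification. Second, you explicitly defer the crux --- the verification that the dominantly-weighted sum bends across every surviving shard and that all bends across removed shards cancel --- with the phrase ``granting it.'' That step is the entire technical content of the theorem, so what you have is an accurate roadmap of the Pilaud--Santos proof rather than a proof; for the purposes of this paper, which only uses the statement as a black box, the citation is the appropriate resolution.
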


Our algorithm describing the mapping $\gamma_s$ allows us to identify the flip operations defining the graph of this strong rec\-tan\-gu\-la\-ti\-on polytope.

\begin{figure}[!h]
\begin{center}
  \includegraphics[scale=.4, page=2]{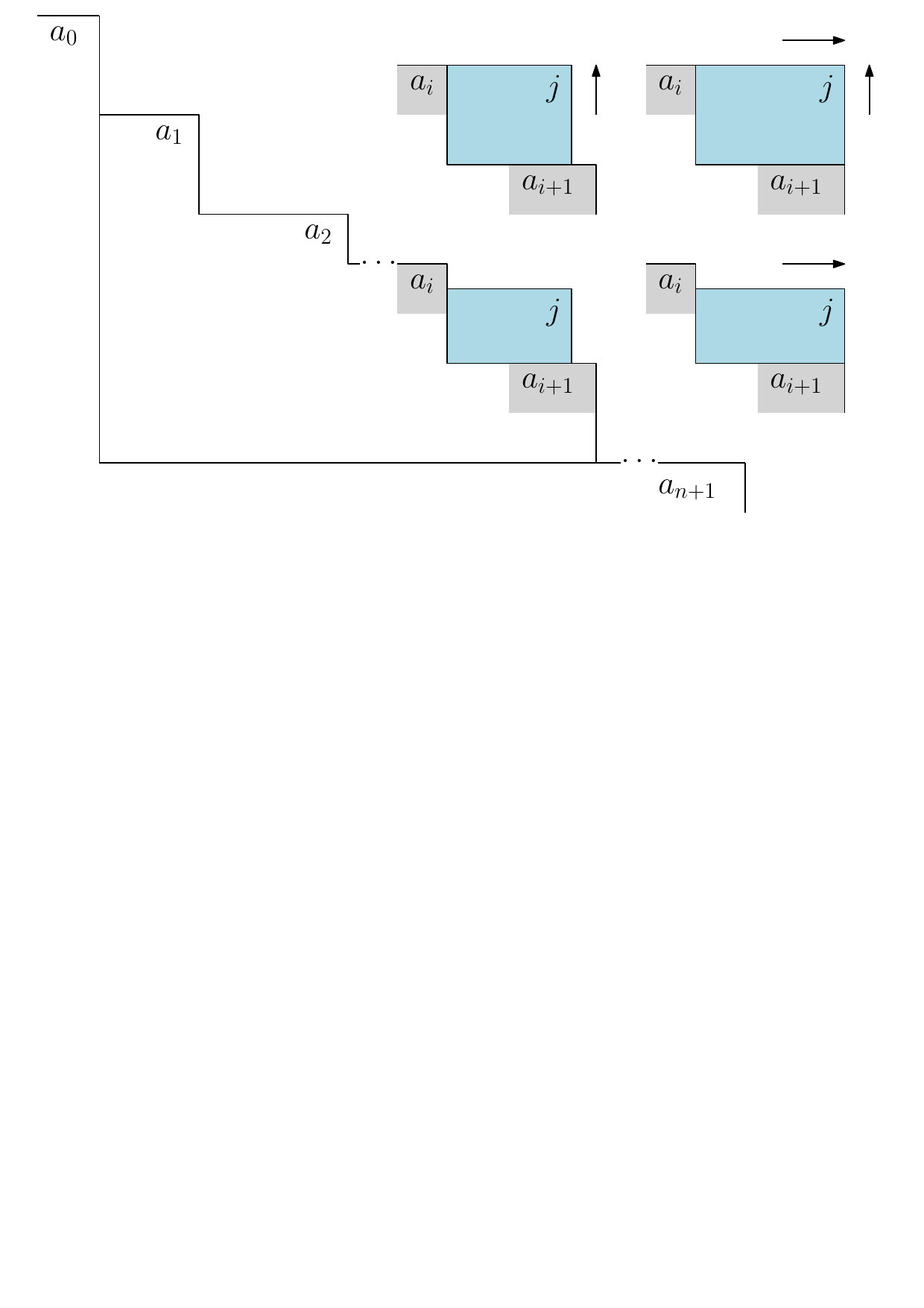}
  \caption{Flips in strong rec\-tan\-gu\-la\-ti\-ons that correspond to cover relation in the lattice of strong rec\-tan\-gu\-la\-ti\-ons. These are obtained by considering the changes that occur in the rec\-tan\-gu\-la\-ti\-ons when the forward algorithm is applied on two permutations that differ by the adjacent transposition of $i$ and $j$. In all five situations, the shaded regions cannot intersect any edge of the rec\-tan\-gu\-la\-ti\-on, by the definition of the mapping $\gamma_s$.}
  \label{fig:flips}
\end{center}
\end{figure}

\begin{theorem}
  The flip graph on the set of strong rec\-tan\-gu\-la\-ti\-ons $\mathsf{SR}_n$ is described by the flip operations of Figure~\ref{fig:flips}.
\end{theorem}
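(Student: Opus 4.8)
The plan is to read the flip graph off the forward Algorithm~SF. By Theorem~\ref{thm:reading} the fibers of $\gamma_s$ are the classes of a lattice congruence $\equiv$ of the weak Bruhat order, and the flip graph on $\mathsf{SR}_n$ is by definition the cover graph of the quotient lattice of the weak Bruhat order by $\equiv$. For lattice congruences of the weak Bruhat order, the cover graph of the quotient is obtained from the cover graph of $S_n$ (whose edges are the adjacent transpositions) by contracting every edge whose endpoints lie in the same congruence class (see Reading~\cite{R04}); in particular, two strong rec\-tan\-gu\-la\-ti\-ons $\rc,\rc'$ are joined by a flip if and only if there is a permutation $\pi$ with $\gamma_s(\pi)=\rc$ and two consecutive positions $k,k+1$ with $i:=\pi_k<\pi_{k+1}=:j$ such that, letting $\sigma$ be obtained from $\pi$ by transposing the entries $i$ and $j$, we have $\gamma_s(\sigma)=\rc'\neq\rc$. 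So it suffices to run Algorithm~SF on $\pi$ and on $\sigma$ in parallel and compare the outputs.

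Since $\pi$ and $\sigma$ agree on the first $k-1$ positions, Algorithm~SF builds the same partial rec\-tan\-gu\-la\-ti\-on, with the same staircase $S$, after step $k-1$. In the $\pi$-execution, step $k$ inserts $r_i$ and then step $k+1$ inserts $r_j$; in the $\sigma$-execution the order is reversed. The first point to establish is that the two partial rec\-tan\-gu\-la\-ti\-ons obtained after step $k+1$ have the same staircase, with the same peak labels: this forces the two executions to agree again for all steps $k+2,\dots,n$, so that $\gamma_s(\pi)$ and $\gamma_s(\sigma)$ can differ only inside the portion of $S$ spanned by the valley(s) into which $r_i$ and $r_j$ are inserted, together with the peaks that get deleted there. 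The comparison then becomes a purely local one.

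For this local comparison, let $r_i$ be inserted in the valley of $S$ between the consecutive peaks of labels $a<i<b$, and $r_j$ between consecutive peaks $a'<j<b'$. If inserting $r_i$ and inserting $r_j$ act on disjoint stretches of the staircase, the two insertions commute, $\gamma_s(\pi)=\gamma_s(\sigma)$, and there is no flip, which is consistent with the statement. Otherwise the insertions interfere, and I would enumerate the interfering configurations according to (i) whether $r_i$ and $r_j$ fall in the same valley of $S$ or in two valleys separated by a single peak, and (ii) which of the alignment conditions of Algorithm~SF fire in each of the two executions, i.e.\ which of $a,b,a',b'$ (and the separating peak, if any) get deleted from the peak set --- here the invariant that consecutive peak labels differ by at least $2$ keeps the bookkeeping finite. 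The claim to verify is that, up to the SW--NE symmetry of Algorithm~SF (Observation~\ref{obs:mirror}), these fall into exactly five cases, and each produces precisely the pair of local pictures shown in Figure~\ref{fig:flips}. In each case the shaded regions of Figure~\ref{fig:flips} are forced to be edge-free directly by the rules of Algorithm~SF: when all rectangles with intermediate labels have been inserted the relevant side is extended to align (so the region between is empty), and when they have not, a $\tr$ or $\tu$ joint is created and the not-yet-inserted rectangles are confined to the other side. Conversely, each of the five operations is realized as a flip: given a left-hand picture of Figure~\ref{fig:flips}, one builds a permutation whose prefix produces the displayed staircase and then transposes the two relevant labels, landing on the right-hand picture.

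I expect the main obstacle to be precisely this last case analysis: tracking, for each of the two parallel executions, which of the (up to four) alignment flags fire, checking the ``same staircase after step $k+1$'' assertion on which the re-merging of the executions rests, and then matching each interfering configuration to the correct sub-picture of Figure~\ref{fig:flips} while exploiting the symmetry to halve the number of cases. Everything else --- the reduction to adjacent transpositions via Theorem~\ref{thm:reading} and the structure theory of congruences of the weak Bruhat order, and the observation that non-interfering transpositions leave $\gamma_s$ unchanged --- is routine.
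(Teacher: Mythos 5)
Your proposal is correct and follows essentially the same route as the paper's proof: reduce to adjacent transpositions via the lattice congruence of Theorem~\ref{thm:reading}, compare the two parallel executions of Algorithm~SF locally around the two consecutive insertions (with the shaded regions being edge-free by the insertion rules), and realize the converse by constructing a permutation whose prefix produces the relevant staircase (the paper does this slightly more explicitly via a linear extension of the downset $S_{i,j}$ and Proposition~\ref{prop:strongfiber}). The five-case analysis you flag as the main obstacle is likewise delegated to Figure~\ref{fig:flips} in the paper itself.
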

\begin{proof}
  Observe that if we consider two permutations $\pi$ and $\pi'$ differing by one adjacent transposition, then either $\pi\equiv\pi'$ or, by definition of a lattice congruence, $\gamma(\pi)$ and $\gamma(\pi')$ are in a cover relation in the lattice quotient, hence $\gamma(\pi)$ and $\gamma(\pi')$ differ by a flip.
  It therefore suffices to inspect all changes in the rec\-tan\-gu\-la\-ti\-on $\gamma(\pi)$ that can occur when two adjacent entries of $\pi$ are transposed.
  
  Recall that the forward algorithm reads the input permutation $\pi=\pi_1\pi_2\ldots \pi_n$ from
  left to right, and at each step $t$, inserts the rectangle $r_{\pi_t}$, with label $\pi_t$. Consider two successive steps of the algorithm, involving $\pi_t=i$ and $\pi_{t+1}=j$. Suppose, without loss of generality, that $i<j$. There are five possible ways that the rectangles change placement after the transposition of $i$ and $j$, which are illustrated in Figure~\ref{fig:flips}. Note that in this figure, the grey regions cannot intersect any edge of the rec\-tan\-gu\-la\-ti\-on. This follows from the way that the rectangles $r_i$ and $r_j$, as well as any rectangle processed later, are inserted by the forward algorithm. 
  In all five cases, the transformation in $\rc$ is of one of three types of flips: pivoting flips, simple flips, and wall slides.

  Conversely, if such an operation is possible in a rec\-tan\-gu\-la\-ti\-on $\rc$, then there is an execution of the forward algorithm such that the two rectangles involved are inserted consecutively. One can, for instance, consider the downset $S_{i,j}\subset [n]$ of labels $\ell$ such that either $\ell\prec_s i$, or $\ell\prec_s j$, and consider any linear extension of $(S_{i,j}, \prec_s)$. By Proposition~\ref{prop:strongfiber}, running the forward algorithm on this prefix of a permutation leads to a situation in which we can insert either $r_i$ or $r_j$, and the flip can be implemented.   
\end{proof}

\subsection{Quadrant walk encoding and enumeration}

From the definition of the forward algorithm, we can now establish bijections between families of strong rec\-tan\-gu\-la\-ti\-ons and families of quadrant walks (we also discuss
how the method adapts in the weak case).

For a point $p=(x,y)$ in the quadrant $\mathbb{N}^2$, the \emph{level} of $p$ is $h(p):=x+y$. 
We define a~\emph{history quadrant walk} as a sequence of points $(x,y)$ in the quadrant $\mathbb{N}^2$, each point having a color in $\{$black, red, green, white$\}$, such that for any two consecutive points $p,p'$ of the sequence:
\begin{itemize}
\item
if $p$ is black, then $h(p')=h(p)+1$,
\item
if $p$ is red or green, then $h(p')=h(p)$,
\item
if $p$ is white, then $h(p')=h(p)-1$.
\end{itemize}
Such a walk is called \emph{closed} if the final point is at the origin and is white; it is called an \emph{excursion} if it is closed and starts at the origin.  

\begin{figure}[!h]
\begin{center}
\includegraphics[width=0.8\linewidth]{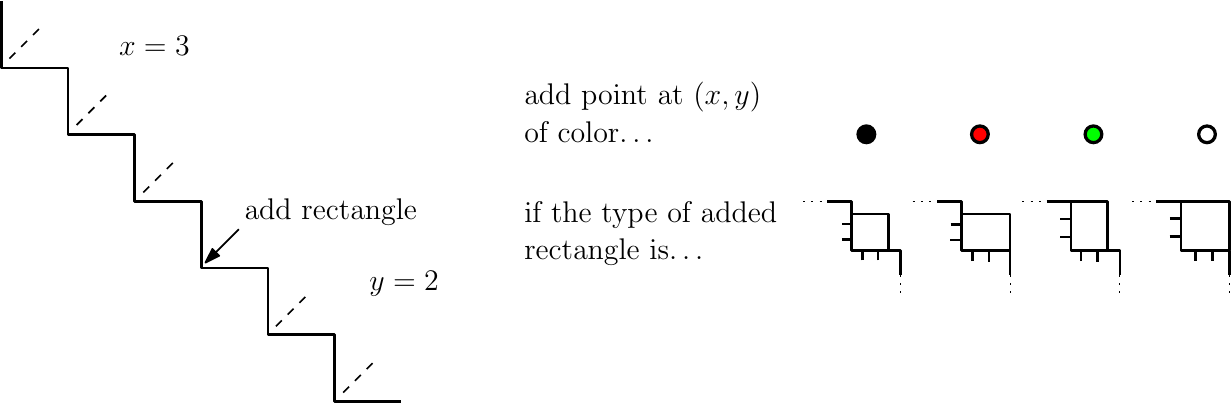}
\end{center}
\caption{Rule for inserting a colored point in the quadrant, corresponding to inserting a rectangle by the forward algorithm.}
\label{fig:add_rectangle_strong}
\end{figure}

\begin{figure}[!h]
\begin{center}
\includegraphics[width=\linewidth]{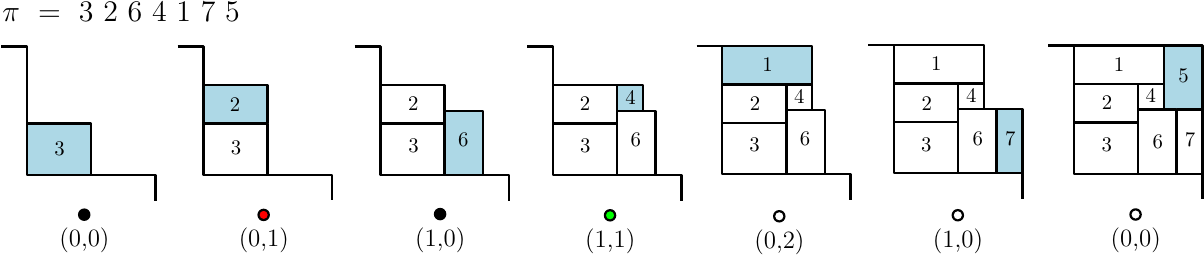}
\end{center}
\caption{A permutation $\pi$, and the associated quadrant history $\sigma$, which is built jointly with the rec\-tan\-gu\-la\-ti\-on $\rc$ produced by the forward algorithm (note that $\pi$ is not needed to build $\rc$ from~$\sigma$).}
\label{fig:example_history}
\end{figure}

For $\pi$ a permutation of size $n$, with $\rc=\gamma_s(\pi)$ the rec\-tan\-gu\-la\-ti\-on produced from $\pi$ by the forward algorithm, the corresponding \emph{quadrant history} is the history quadrant excursion (with $n$ points) where each rectangle addition yields a point as shown in Figure~\ref{fig:add_rectangle_strong}.
See also Figure~\ref{fig:example_history} for a complete example.  

\medskip

\noindent\textit{Remark.}
A~\emph{bicolored Motzkin excursion} is a Motzkin excursion (walk with steps in $\{(1,1),(1,0),(1,-1)\}$, starting at the origin, staying in $\{y\geq 0\}$, and ending on the line $\{y=0\}$)  where each horizontal step is colored either red or green, it is \emph{decorated} if 
each point of the excursion is assigned an integer $x$ between~$0$ and its height. 
For $\pi$ a permutation of size $n$, the quadrant history $\sigma$ of $(\pi,\rc=\gamma(\pi))$ can be encoded by a decorated bicolored Motzkin excursion (of length $n-1$), 
where the successive heights in the Motzkin excursion are given by the sequence of levels of points in $\sigma$, the horizontal steps are colored as the initial point of the corresponding step in $\sigma$, and the assigned integers are given by the abscissas of points in $\sigma$.  
One can check that this decorated bicolored Motzkin excursion is the one associated to the 
permutation~$\pi^{-1}$ by the Fran\c{c}on-Viennot bijection~\cite{FV79}.

\medskip
\medskip

A history quadrant walk is called \emph{leftmost} if, for any two consecutive points $p,p'$:
\begin{itemize}
\item
if the color of $p$ is in $\{$black,red$\}$ and the color of $p'$ is in $\{$black,green$\}$, then $x(p')\geq x(p$),
\item
otherwise, $x(p')\geq x(p)-1$. 
\end{itemize}

\begin{figure}[!h]
\begin{center}
\includegraphics[width=\linewidth]{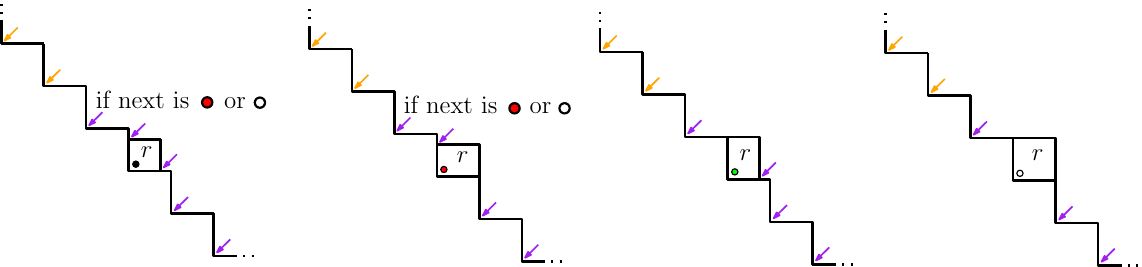}
\end{center}
\caption{If the last inserted rectangle $r$ is the current rightmost available rectangle, the figure indicates for each valley whether the insertion of a rectangle $r'$ at that valley makes $r'$ the new rightmost available rectangle (purple) or not (orange). As shown, when $r$ is of black or red type, there is a mixed valley to the left of $r$, where $r'$ is allowed to be inserted only if it is red or white (indeed, in that case, $r$ is not available anymore after inserting $r'$).}
\label{fig:leftmost_staircase}
\end{figure}

As shown in~\cite{ITF09} (in different but equivalent terms) and illustrated in Figure~\ref{fig:leftmost_staircase}, the quadrant history of a pair $(\pi,\rc=\gamma(\pi))$
is leftmost if and only if $\pi$ is the leftmost linear extension of $P_s(\rc)$, that is, at any step, the last added rectangle is the rightmost available rectangle. We therefore obtain the following.

\begin{proposition}
  \label{prop:qw}
  Leftmost history quadrant excursions of length $n-1$ (hence having $n$ points) are in bijection with rec\-tan\-gu\-la\-ti\-ons of size $n$, and with 2-clumped permutations of size $n$. 
  \end{proposition}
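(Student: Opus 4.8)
The plan is to establish Proposition~\ref{prop:qw} by combining the three bijections that have just been set up: the forward algorithm bijection between $\mathsf{SR}_n$ and $2$-clumped permutations (Theorem~\ref{thm:strong_bijections}(1)), the encoding of a pair $(\pi,\rc=\gamma_s(\pi))$ by its quadrant history $\sigma$ (the map described around Figure~\ref{fig:add_rectangle_strong}), and the leftmost characterization of $\pi$ among the linear extensions of $P_s(\rc)$ in terms of $\sigma$ being leftmost. Concretely, one first checks that $\pi\mapsto\sigma$ is well defined and produces a leftmost history quadrant excursion with $n$ points exactly when $\pi$ is the leftmost linear extension of $P_s(\gamma_s(\pi))$, and then one shows that the map $\sigma\mapsto\rc$ obtained by ``replaying'' the insertions encoded by $\sigma$ (ignoring $\pi$ altogether, as emphasized in the caption of Figure~\ref{fig:example_history}) is a bijection from leftmost history quadrant excursions of length $n-1$ to $\mathsf{SR}_n$.

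\textbf{First I would} spell out the correspondence between a step of Algorithm~SF and a colored point. When rectangle $r_j$ is inserted at the valley between peaks labeled $a$ and $b$, let the current staircase have $x$ peaks strictly to the left of that valley and $y$ peaks strictly to the right; after insertion, the left count changes by $0$ or $-1$ according to whether $r_j$'s top side does or does not align with $r_a$ (equivalently, whether $a$ is removed from $P$), and symmetrically for the right count and $b$. The four cases of Figure~\ref{fig:gamma} thus correspond to the point $(x,y)$ receiving color black (both neighbors kept: level $+1$), red or green (exactly one removed: level unchanged — red if it is the right neighbor that is kept and the left removed, green for the mirror case, matching Figure~\ref{fig:add_rectangle_strong}), or white (both removed: level $-1$). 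Since the staircase starts with the single valley between peaks $0$ and $n+1$, the walk starts at the origin, and since after $n$ insertions only the two fictitious peaks remain and the last real rectangle must have removed both its neighbors, the walk is a closed excursion with $n$ points. Conversely, from such an excursion one can recover, step by step, at which valley to insert and whether to align with the left and/or right neighbor, so the $n$ colored points determine the sequence of insertions — hence the strong rectangulation — without reference to any underlying permutation. This is precisely the content already asserted in the text, so the verification here is a routine unwinding of the definitions of Algorithm~SF and of a history quadrant walk.

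\textbf{Then I would} assemble the bijection. The map $(\pi,\gamma_s(\pi))\mapsto\sigma$ followed by the forgetful projection to $\sigma$ sends $S_n$ onto the set of history quadrant excursions with $n$ points; two permutations $\pi,\pi'$ have the same image iff they induce the same sequence of insertions, i.e.\ iff $\gamma_s(\pi)=\gamma_s(\pi')$ \emph{and}, at every step, the inserted rectangle sits at the same valley with the same alignment behavior — but given $\rc$, the sequence of valleys and alignments is exactly what a linear extension of $P_s(\rc)$ records, so $\sigma$ depends on $\pi$ only through $\gamma_s(\pi)$ when $\pi$ is chosen to be a distinguished extension. The cited fact (from~\cite{ITF09}, illustrated in Figure~\ref{fig:leftmost_staircase}) that $\sigma$ is leftmost iff $\pi=\pi_L$ then gives a well-defined map from leftmost history quadrant excursions to $\mathsf{SR}_n$, namely $\sigma\mapsto\rc$; it is surjective because every $\rc$ has a leftmost linear extension $\pi_L$ whose history is leftmost, and injective because $\rc$ is reconstructed from $\sigma$ by replaying the insertions. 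Composing with the bijection $\beta_\mathsf{2C}$ of Theorem~\ref{thm:strong_bijections}(1), which identifies $\mathsf{SR}_n$ with $2$-clumped permutations of size $n$, yields the stated bijection with $2$-clumped permutations as well. \textbf{The main obstacle} I expect is not conceptual but one of careful bookkeeping: getting the level/abscissa dictionary exactly right (which count is the abscissa, which is the ordinate, how black/red/green/white map to the four cases of Figure~\ref{fig:gamma}, and why the last point is forced to be white at the origin), and checking that the ``leftmost walk'' inequalities in the definition match the combinatorics of Figure~\ref{fig:leftmost_staircase} — in particular the mixed-valley subtlety that, after inserting a black or red rectangle, a rectangle placed at the valley immediately to its left is allowed as the new rightmost available one only if it is of red or white type. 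Everything else follows from Proposition~\ref{prop:strongfiber} and Theorem~\ref{thm:strong_bijections}.
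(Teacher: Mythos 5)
Your proposal is correct and follows essentially the same route as the paper: the paper obtains Proposition~\ref{prop:qw} directly from the observation (credited to~\cite{ITF09} and Figure~\ref{fig:leftmost_staircase}) that the quadrant history of $(\pi,\gamma_s(\pi))$ is leftmost exactly when $\pi$ is the leftmost linear extension of $P_s(\gamma_s(\pi))$, combined with the fact that the history determines the rectangulation by replaying the insertions and with the bijection of Theorem~\ref{thm:strong_bijections}(1). Your write-up merely unpacks these same ingredients in more detail than the paper does.
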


The above characterization can be turned  into a recurrence for counting these walks, and gives an efficient procedure for counting rec\-tan\-gu\-la\-ti\-ons~\cite{ITF09} 
(other polynomial-time counting methods have been given respectively in~\cite{conant2014number} via inclusion-exclusion, and in~\cite{FNS21} by a different quadrant walk encoding, via some decorated  plane bipolar orientations). The sequence starts with $1, 2, 6, 24, 116, 642, 3938, 26194, 186042, 1395008,\ldots$ (\href{https://oeis.org/A342141}{OEIS A342141}).  
As shown in~\cite{FNS21} (and in~\cite{DBLP:conf/iscas/FIT09,TFI09} for the upper bound), 
its exponential growth rate is $27/2$. 

Symmetrically, a history quadrant walk is called \emph{rightmost} if, for any two consecutive points $p,p'$:
\begin{itemize}
\item
if the color of $p$ is in $\{$black,green$\}$ and the color of $p'$ is in $\{$black,red$\}$, then $y(p')\geq y(p$),
\item
otherwise, $y(p')\geq y(p)-1$. 
\end{itemize}
These correspond to pairs $(\pi,\rc=\gamma(\pi))$ such that $\pi$ is the rightmost linear extension of $P_s(\rc)$ 
(at any step, the last added rectangle is the leftmost available one), which occurs if and only if $\pi$ is co-2-clumped. 

\begin{figure}[!h]
\begin{center}
\includegraphics[width=0.8\linewidth]{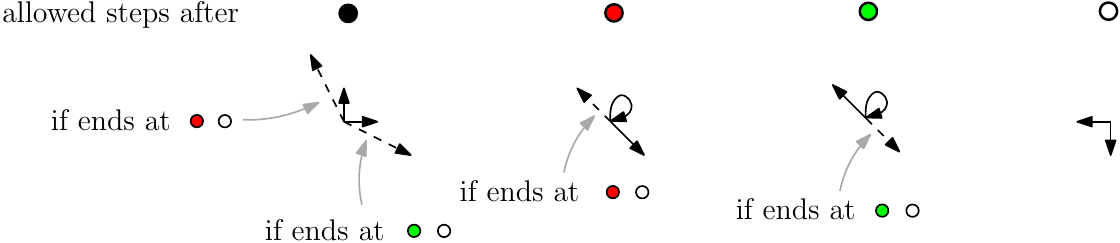}
\end{center}
\caption{The allowed steps in leftright history walks (special steps are shown dashed).}
\label{fig:leftright}
\end{figure}

A history quadrant walk is called \emph{leftright} if it is both leftmost and rightmost. Equivalently, it is a~history walk (here better formulated 
in terms of allowed steps) such that,  for any two consecutive points $p,p'$ (see Figure~\ref{fig:leftright}):
\begin{itemize}
\item
If $p$ is black, then from $p$ to $p'$ the steps $(0,1)$ and $(1,0)$ are allowed. Furthermore, if the color of $p'$ is in $\{$red,white$\}$ then
the step $(-1,2)$ is allowed, and if the color of $p'$ is in $\{$green,white$\}$ then
the step $(2,-1)$ is allowed, such steps being called special. 
\item
If $p$ is red, then from $p$ to $p'$ the steps $(0,0)$ and $(1,-1)$ are allowed, and furthermore the step $(-1,1)$, called a special step, is 
allowed if the color of  $p'$ is in $\{$red,white$\}$. 
\item
If $p$ is green, then from $p$ to $p'$ the steps $(0,0)$ and $(-1,1)$ are allowed, and furthermore the step $(1,-1)$, called a special step, is 
allowed if the color of  $p'$ is in $\{$green,white$\}$.
\item
If $p$ is white, then from $p$ to $p'$ the allowed steps are $(-1,0)$ and $(0,-1)$.
\end{itemize}

Leftright history quadrant excursions thus correspond to rec\-tan\-gu\-la\-ti\-ons $\rc$ such that $P_s(\rc)$ is a total order, i.e., the fiber has size $1$ (indeed, at any step, the last
added rectangle is both the leftmost and rightmost available rectangle, hence is the unique available rectangle), a superfamily of rec\-tan\-gu\-la\-ti\-ons avoiding $\zwall$ and $\iswall$ 
(those represented by anti-diagonal rec\-tan\-gu\-la\-ti\-ons). 
These also correspond to permutations that are 2-clumped and co-2-clumped (and to equivalence classes of size $1$ for the congruence in Theorem~\ref{thm:reading}), 
a superfamily of Baxter permutations. 

The above characterization of leftright walks can be turned into a recurrence as follows. For $\cG$ a set of history quadrant walks, and for $n\geq 1,i,j\geq 0$, 
we let $G_{n,i,j}$ be the number of closed walks of length $n$ in $\cG$ and starting at $(i,j)$. Then, with $\cA$ the set of leftright history quadrant walks, and with 
$\cB$ (resp. $\cR,\cG,\cW$) the subset of those starting at a black (resp. red,green,white) point, 
a classical decomposition by first-step removal yields, for $n\geq 1$ and $i,j\geq 0$,

\begin{equation}\label{eq:Un_rec}
\left\{
\begin{array}{rcl}
B_{n,i,j}&=& A_{\nm;i+1,j} + A_{\nm,i,j+1} + R_{\nm,i-1,j+2}+ W_{\nm,i-1,j+2}\\
&&+\ G_{\nm,i+2,j-1}+ W_{\nm,i+2,j-1}\\
R_{n,i,j}&=& A_{\nm,i+1,j-1}+A_{\nm,i,j}+R_{\nm,i-1,j+1}+W_{\nm,i-1,j+1},\\
G_{n,i,j}&=& A_{\nm,i-1,j+1}+A_{\nm,i,j}+G_{\nm,i+1,j-1}+W_{\nm,i+1,j-1},\\
W_{n,i,j}&=& A_{\nm,i-1,j} + A_{\nm,i,j-1},\\
A_{n,i,j}&=&B_{n,i,j}+R_{n,i,j}+G_{n,i,j}+W_{n,i,j},\\
\end{array}
\right.
\end{equation} 
with boundary conditions $L_{n;i,j}=0$ for $n\leq 0$ or $i<0$ or $j<0$ (for $L\in\{A,B,R,G,W\}$), except for $W_{0,0,0}=A_{0,0,0}=1$. 
 
Note that, by $\{x,y\}$-symmetry of the walk specification, we have $R_{n,i,j}=G_{n,j,i}$ (and the coefficients 
$A_{n,i,j},B_{n,i,j},W_{n,i,j}$ are symmetric in $i$ and $j$).
The sequence $U_n=A_{n-1,0,0}$ gives the number of permutations of size $n$ that are 2-clumped and co-2-clumped\footnote{With the strong poset characterization it is not difficult to show that it also counts weak rectangulations of size $n$ where every 2-sided segment (segment with at least one neighbor on each side) is given weight 2.}, it starts with 
$1, 2, 6, 24, 112, 582, 3272, 19550, 122628, 800392,\ldots$,
and, to our knowledge, it has not been considered before. 

\begin{proposition}\label{prop:upper_bound_Un}
The exponential growth rate of $U_n$ is 
bounded from above
by $\Gamma:=\frac1{2}(9+\sqrt{113})\approx 9.815$. 
\end{proposition}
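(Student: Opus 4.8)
The plan is to bound $U_n$ by relaxing the positivity and return‑to‑origin constraints hidden in the recurrence~\eqref{eq:Un_rec}, thereby reducing the estimate to the growth of the powers of a single fixed $4\times 4$ nonnegative matrix whose spectral radius turns out to be exactly $\Gamma$. In other words, summing the coefficients $L_{n,i,j}$ over all positions $(i,j)$ decouples the walk from the quadrant, and what remains is a finite linear recursion in the colour statistics alone.

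Concretely, for $L\in\{A,B,R,G,W\}$ and $n\ge 0$ set $\Sigma_L(n):=\sum_{i,j\ge 0}L_{n,i,j}$ (a finite sum, since for fixed $n$ only finitely many coefficients are nonzero), and let $v_n:=(\Sigma_B(n),\Sigma_R(n),\Sigma_G(n),\Sigma_W(n))^{\top}$. Summing each line of~\eqref{eq:Un_rec} over $(i,j)\in\mathbb{N}^2$, using that a shifted‑and‑restricted sum such as $\sum_{i,j\ge 0}A_{n-1,i+1,j}$ is at most $\Sigma_A(n-1)$ (each such shift of the index set can only delete terms), and substituting $\Sigma_A=\Sigma_B+\Sigma_R+\Sigma_G+\Sigma_W$, one obtains the componentwise inequality $v_n\le M\,v_{n-1}$ with
\[
M=\begin{pmatrix}2&3&3&4\\ 2&3&2&3\\ 2&2&3&3\\ 2&2&2&2\end{pmatrix}.
\]
Since $M$ has nonnegative entries, iterating gives $v_n\le M^{n}v_0$ componentwise; the boundary conditions of~\eqref{eq:Un_rec} give $v_0=(0,0,0,1)^{\top}$. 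Finally $U_n=A_{n-1,0,0}\le\Sigma_A(n-1)=\|v_{n-1}\|_1\le\|M^{\,n-1}v_0\|_1$, so it suffices to prove that $\|M^{\,n}v_0\|_1=O(\rho(M)^{n})$ and that $\rho(M)=\Gamma$. The first claim is standard: $M$ is strictly positive, hence primitive, so for a positive left Perron eigenvector $w$ one has $w^{\top}M^{n}v_0=\rho(M)^{n}\,w^{\top}v_0$, and since the entries of $M^{n}v_0$ are nonnegative this yields $\|M^{n}v_0\|_1\le(\min_k w_k)^{-1}\rho(M)^{n}\,w^{\top}v_0$, whence $U_n\le C\,\Gamma^{\,n}$ for a constant $C$ and $\limsup_n U_n^{1/n}\le\rho(M)$.

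To compute $\rho(M)$ I would use the $x\leftrightarrow y$ symmetry of the walk, which exchanges the red and green colours, so that $M$ commutes with the involution swapping the second and third coordinates. Its Perron eigenvector therefore lies in the symmetric subspace $\{x_R=x_G\}$, on which $M$ acts, in coordinates $(x_B,x_R,x_W)$, as $\left(\begin{smallmatrix}2&6&4\\ 2&5&3\\ 2&4&2\end{smallmatrix}\right)$, with characteristic polynomial $-x\,(x^{2}-9x-8)$; on the complementary line $\{x_B=x_W=0,\ x_R=-x_G\}$ the matrix $M$ acts as the scalar $1$. Hence the eigenvalues of $M$ are $0$, $1$ and the two roots $\tfrac12(9\pm\sqrt{113})$ of $x^{2}-9x-8$ (a consistency check: $\operatorname{tr}M=10=0+1+9$), so $\rho(M)=\tfrac12(9+\sqrt{113})=\Gamma$, which completes the argument.

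The individual computations here are routine; the only genuinely delicate point is reading off $M$ correctly from~\eqref{eq:Un_rec} and verifying that passing to the sums $\Sigma_L$ only ever produces a favourable inequality. It is worth observing that this seemingly crude relaxation is in fact tight for the purpose of the statement: the constant $\Gamma$ in the proposition \emph{equals} $\rho(M)$, which is why it comes out in closed form, and any finer analysis retaining the position information or the return‑to‑origin constraint would be needed only to push the bound strictly below $\Gamma$.
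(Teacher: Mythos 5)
Your proof is correct and follows essentially the same route as the paper: both bound $U_n$ by dropping the quadrant and endpoint constraints and reduce the estimate to the powers of the same $4\times 4$ colour-transition matrix (your $M$ is exactly the paper's $\mathbb{A}$), whose spectral radius is $\Gamma=\tfrac12(9+\sqrt{113})$. Your write-up is merely more explicit about the summation of the recurrence, the Perron--Frobenius step, and the symmetry-based computation of the spectrum, all of which the paper leaves implicit.
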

\begin{proof}
Let 
\[
\mathbb{A}=\left(
\begin{matrix}
2&3&3&4\\
2&3&2&3\\
2&2&3&3\\
2&2&2&2
\end{matrix}
\right)
\]
and let $\mathbb{I}=(1,1,1,1)$. Then obviously the number of leftright walks of length $n$ (starting at the origin) with no constraint on domain nor on endpoint  
 is equal to $\mathbb{I}\cdot \mathbb{A}^n\cdot \mathbb{I}^T$; and $\Gamma$ is the spectral radius of $\mathbb{A}$. 
 \end{proof}
 
\noindent\textit{Remark.}
From the table of initial coefficients, 
the ratio $U_n/U_{n-1}$ seems to converge to $\Gamma$ (this is even more visible when applying acceleration of convergence techniques, see e.g.~\cite[Sec.6]{guitter1999hamiltonian}).  By similar calculations as~\cite[Conjecture~25]{FNS23} (details omitted), letting $\xi=(-93+9\sqrt{113})/4$, one can conjecture (up to a plausible extension of~\cite{denisov2015random}) the asymptotic estimate $U_n\sim c\, \Gamma^n n^{-\alpha}$, with $c>0$ and $\alpha=1+\pi/\arccos(\xi)\approx 4.742$. By a criterion in~\cite{bostan2014non} (ensuring that $\alpha\notin\mathbb{Q}$), this would imply that the generating 
function of $U_n$ is not D-finite.

\bigskip

We now discuss the specialization to anti-diagonal rec\-tan\-gu\-la\-ti\-ons and Baxter permutations. We refer here to an \emph{anti-diagonal rec\-tan\-gu\-la\-ti\-on} as a rec\-tan\-gu\-la\-ti\-on avoiding 
the patterns $\zwall$ and $\iswall$. Each weak class of rec\-tan\-gu\-la\-ti\-ons has a unique such representative, we see them here as a subclass of strong rec\-tan\-gu\-la\-ti\-ons and  
do not insist on considering the specific anti-diagonal representation on the $n\times n$ grid. Any anti-diagonal rec\-tan\-gu\-la\-ti\-on has fiber of size $1$, so that the corresponding history quadrant excursion is leftright. We also recall from the remark 
at the end of Section~\ref{sec:2clumped} 
that the mapping $\gamma_s$ specializes into a bijection between Baxter permutations and anti-diagonal rec\-tan\-gu\-la\-ti\-ons.

\begin{proposition}
The history quadrant excursions of length $n-1$ that encode Baxter permutations and anti-diagonal rec\-tan\-gu\-la\-ti\-ons of size $n$ are in bijection with the set $\mathsf{NIT}_n$ of non-intersecting triples of lattice walks (with steps up or right), starting respectively at $(-1,1),(0,0),(1,-1)$, and 
ending at $(n-k-1,k),(n-k,k-1),(n-k+1,k-2)$ for some $1\leq k\leq n$. 
\end{proposition}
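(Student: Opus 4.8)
The plan is to build the bijection directly from the forward algorithm of Section~\ref{sec:strong}, in three movements, and I expect the genuine work to lie in matching a non-crossing condition on triples of paths with the windmill-avoidance that cuts out anti-diagonal rectangulations. First I would pin down, by a local condition on consecutive coloured points, which leftright history quadrant excursions encode Baxter permutations. Being leftright already forces the preimage to be the unique linear extension $\pi=\pi_L=\pi_R$ of a fibre of size~$1$, so the only extra requirement is that $\pi$ avoid $2\underline{41}3$ and $3\underline{14}2$, i.e.\ that $\rc=\gamma_s(\pi)$ avoid $\zwall$ and $\iswall$ (Lemma~\ref{lem:cross} together with its mirror image under Observation~\ref{obs:mirror}, and the Remark at the end of Section~\ref{sec:2clumped}). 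Tracking how a $\zwall$ is born during the forward algorithm --- a one-sided extension, recorded by a red or green point in the sense of Figure~\ref{fig:leftright}, that is later completed --- one sees that the forbidden configuration is always witnessed by a short local pattern on the walk; I claim these patterns are exactly the ones that use a special step (and symmetrically for $\iswall$). Thus the excursions in question should be precisely the leftright excursions \emph{with no special steps}: from a black point one steps $(0,1)$ or $(1,0)$, from a red point $(0,0)$ or $(1,-1)$, from a green point $(0,0)$ or $(-1,1)$, from a white point $(-1,0)$ or $(0,-1)$, the colour of the next point being free.

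Then, to such an excursion $\sigma=p_0p_1\cdots p_{n-1}$ with $p_m$ of colour $c_m$, I would attach a column of three lattice points $P^{(1)}_m,P^{(2)}_m,P^{(3)}_m$, all lying on the successive anti-diagonals $x+y=m$ (this is why all three paths have $n-1$ steps, each up or right, and are synchronised), the $m$-th column being read off from $p_m$ and its two incident steps by a purely local rule --- the abscissa of path $i$ at time $m$ encoding a running count that blends the current position of $\sigma$ with the colours seen so far, in the spirit of the path triples obtained from plane bipolar orientations. The rule should be arranged so that: (i) each step of $\sigma$ translates, colour by colour, into a legal pair of up/right moves for the three paths (a finite check, at most the $16$ colour pairs left once special steps are barred); (ii) the initial column is $\big((-1,1),(0,0),(1,-1)\big)$, coming from the initialization of the staircase with the two fictitious rectangles $r_0,r_{n+1}$; and (iii) the final column sits on $x+y=n-1$ with the three points pairwise two apart, their common number $n-k$ of right-steps recording a descent statistic of $\pi$ (so $k-1=\mathrm{des}(\pi)$), which yields exactly the endpoint triple and the parameter $k$ of $\mathsf{NIT}_n$ and recovers Mallows' refinement of the Baxter formula~\cite{mallows79}. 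Everything then hinges on one point: the three paths are non-crossing if and only if $\sigma$ actually comes from an anti-diagonal rectangulation --- a crossing of path $i$ past path $i+1$ at some time corresponds to a valley of the staircase being absorbed in a way that is precisely a $\zwall$ or an $\iswall$, so ``no special steps'' and ``non-crossing'' say the same thing.

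Finally I would write down the inverse: from a non-crossing triple, the two running gaps $x(P^{(2)}_m)-x(P^{(1)}_m)\geq1$ and $x(P^{(3)}_m)-x(P^{(2)}_m)\geq1$ together with their increments determine $c_m$ and the coordinates of $p_m$ uniquely, and one checks the resulting sequence is a leftright excursion without special steps; with the first movement this gives the claimed bijection. As an independent cross-check, both sides have the same cardinality: the left side counts Baxter permutations by Theorem~\ref{thm:strong_bijections} and the Remark after it, while by the Lindström--Gessel--Viennot lemma $|\mathsf{NIT}_n|$ is the sum over $1\leq k\leq n$ of the $3\times3$ binomial determinants $\det\big[\binom{n-1}{n-k+j-i}\big]_{1\leq i,j\leq3}$, which evaluates to the Baxter number $B_n=\sum_{k}\binom{n+1}{k-1}\binom{n+1}{k}\binom{n+1}{k+1}\big/\big(\binom{n+1}{0}\binom{n+1}{1}\binom{n+1}{2}\big)$; so any map of the above shape that is injective is automatically the desired bijection. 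One can also see the statement conceptually by composing the bijection $\gamma_s$ between Baxter permutations and anti-diagonal rectangulations with the classical bijection from plane bipolar orientations to non-intersecting path triples, the walk encoding being just a time-ordered reading of the latter.

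The delicate part, in my view, is not the column-encoding or its inverse --- those are routine once the encoding is guessed --- but the equivalence ``the three synchronised paths never touch $\iff$ $\sigma$ encodes an anti-diagonal rectangulation''. Establishing it means exhibiting, for every crossing of two of the paths, the exact windmill ($\zwall$ or $\iswall$) it certifies in the growing rectangulation, and conversely reading off a crossing from each windmill. This forces a precise understanding of how the staircase, its valleys, and the segments that eventually appear in $\rc$ are mirrored in the three coordinates over the whole execution of the forward algorithm, and it is where I expect the proof to demand the most care.
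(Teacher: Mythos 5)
Your first movement --- showing that the excursions in question are exactly the leftright history quadrant excursions with no special steps, by tracking how each $\zwall$ (resp.\ $\iswall$) is created during the forward algorithm as a transition recorded by a special step --- is precisely the first half of the paper's proof, and it is correct. The divergence, and the gap, is in the second half. The paper does not construct the correspondence with $\mathsf{NIT}_n$ from scratch: it observes that a leftright excursion with no special step is nothing but an uncolored quadrant walk with step-multiset $\{2\times(0,0),(0,1),(0,-1),(1,0),(-1,0),(-1,1),(1,-1)\}$ (the two stay-steps remembering the red/green color), and then invokes the known simple bijection between such quadrant walks of length $n-1$ and $\mathsf{NIT}_n$. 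Your proposal instead promises a column-by-column encoding into three synchronized up/right paths but never writes the local rule down, and --- more importantly --- it misidentifies where the content lies. You assert that ``the three paths are non-crossing if and only if $\sigma$ encodes an anti-diagonal rectangulation,'' i.e.\ that non-crossing and the absence of special steps ``say the same thing.'' They do not. In the standard walk-to-triple dictionary the two quadrant coordinates are the gaps between consecutive paths, so the non-crossing condition corresponds to the walk staying in $\mathbb{N}^2$ --- which is \emph{automatic} for a history quadrant excursion and has nothing to do with windmills. What the absence of special steps buys is something else entirely: it restricts the step-multiset to exactly the eight steps realizable as joint moves of a synchronized triple of up/right paths (each gap changes by at most $1$ per step). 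A special step such as $(-1,2)$ or $(2,-1)$ changes a gap by $2$ and therefore does not correspond to \emph{any} move of the triple, crossing or not; so the dichotomy you plan to prove is not available, and the ``delicate equivalence'' you flag as the heart of the argument is a non-issue once the windmill-avoidance has been spent (as you already spent it) on eliminating special steps.

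Concretely: to repair the proposal you should drop the claimed equivalence between non-crossing and anti-diagonality, note that every no-special-step leftright excursion is automatically a quadrant walk on the eight-step multiset above, and then either cite the known bijection with $\mathsf{NIT}_n$ (as the paper does, via \cite{BurrillCFMM16}) or actually write out the gap encoding $x=$ (gap between paths $1$ and $2$) $-1$, $y=$ (gap between paths $2$ and $3$) $-1$ and check the eight cases. Your LGV cardinality cross-check is a useful sanity check but, as you note, it only upgrades an injection to a bijection once the map is actually defined --- which your text does not do.
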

\begin{proof}
Let $\sigma$ be a history quadrant excursion, with $\rc$ the rec\-tan\-gu\-la\-ti\-on built from $\sigma$. The following properties are easy to check:
\begin{itemize}
\item
If $\sigma$ is leftmost, then each occurrence of $\zwall$ in $\rc$ 
corresponds to a transition from a black or red point $p=(x,y)$ to a red or white point $p'=(x',y')$ such that $x'=x-1$ (this corresponds to an insertion in a mixed valley in Figure~\ref{fig:leftmost_staircase}).  
\item
Symmetrically, if $\sigma$ is rightmost, then each occurrence 
of $\iswall$ in $\rc$ corresponds to a transition from a black or green point $p=(x,y)$ to a green or white point $p'=(x',y')$ such that $y'=y-1$. 
\end{itemize}  
Hence, if $\sigma$ is leftright, each occurrence of $\zwall$ in $\rc$ 
corresponds to an occurrence of a special step $(-1,2)$ or $(-1,1)$, while each occurrence of $\iswall$ in $R$  corresponds to an occurrence of a special step $(2,-1)$ or $(1,-1)$, so that $\rc$ is anti-diagonal if and only if $\sigma$ has no special step. 

Note that a leftright quadrant excursion $\sigma$ with no special step identifies to a quadrant walk of same length and with no colors on points, starting and ending at the origin, whose step-set is $\{2\times (0,0), (0,1),(0,-1),(1,0),(-1,0),(-1,1),(1,-1)\}$, with two kinds of stay-steps to account for the color of the initial point of each stay-step in $\sigma$. 
There is a simple bijection~\cite[Prop.20]{BurrillCFMM16} between such walks of length $n-1$ and $\mathsf{NIT}_n$. 
\end{proof}
Thus, we recover --- via rec\-tan\-gu\-la\-ti\-ons --- the fact that the Fran\c{c}on-Viennot encoding specialized to Baxter permutations yields a bijection
with non-intersecting triples of lattice walks~\cite{Vi81}. By the Gessel-Viennot Lemma, these are counted by the Baxter numbers $B_n$ (whose exponential 
growth rate is $8$). 

\bigskip

\begin{figure}[!h]
\begin{center}
\includegraphics[width=0.8\linewidth]{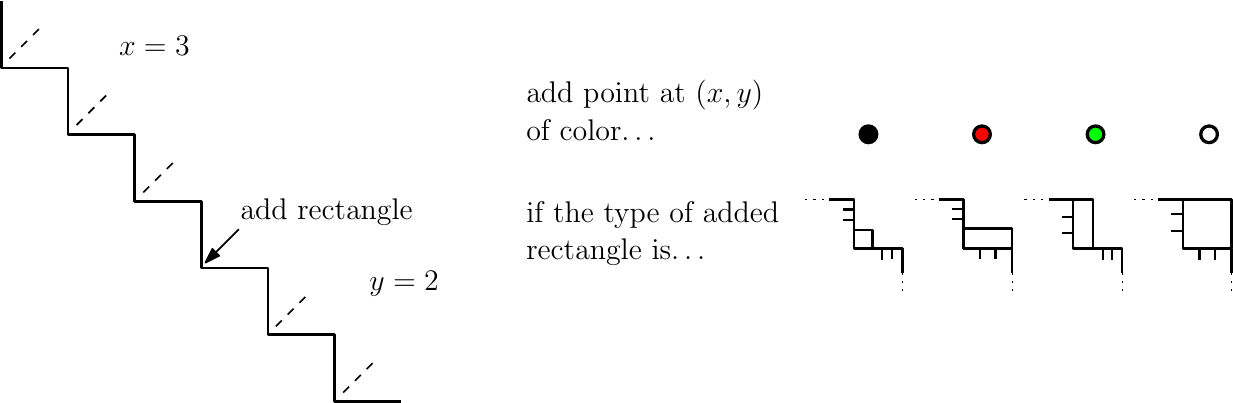}
\end{center}
\caption{Correspondence between the insertion of a colored point in the quadrant and the insertion of a rectangle for weak rec\-tan\-gu\-la\-ti\-ons}
\label{fig:add_rectangle_weak}
\end{figure}

To conclude the section, we briefly explain that a very similar study can be performed in the context of weak rec\-tan\-gu\-la\-ti\-ons.
A weak rec\-tan\-gu\-la\-ti\-on $\rc$ endowed with a linear extension of its weak poset $P_w(\rc)$
can again be bijectively encoded by a history quadrant excursion, 
where the addition of a rectangle is now done in the ``innermost'' way in a valley for the situations without alignment of sides,  
see Figure~\ref{fig:add_rectangle_weak} compared to Figure~\ref{fig:add_rectangle_strong}. Using the innermost convention yields the weak rec\-tan\-gu\-la\-ti\-on in the 
form of its strong representative with no $\izwall$ nor $\swall$, the one for which the diagonal representation exists. 

For the backward direction, in the current staircase shape, a rectangle is \emph{available} if and only if all its adjacent rectangles are to its left or below. 
It is then easy to characterize the leftmost (resp. rightmost) history quadrant excursions in this context, i.e., those corresponding to weak rec\-tan\-gu\-la\-ti\-ons endowed with the 
leftmost (resp. rightmost) linear extension of their weak poset, equivalently at each step the last added rectangle is the rightmost (resp. leftmost) available one.
Quite nicely, these have the same specification as the leftmost (resp. rightmost) walks in the strong case, upon replacing ``and'' by ``or'' in the first item.
These quadrant walks of length $n-1$ also encode twisted (resp. co-twisted) Baxter permutations of size $n$. They are thus counted by $B_n$, even if a direct bijection to $\mathsf{NIT}_n$ does not seem easy to find.

As in the strong case, we can then consider the history quadrant walks that are leftmost and rightmost, called \emph{leftright}. 
Leftright history quadrant excursions encode weak rec\-tan\-gu\-la\-ti\-ons whose weak poset is totally ordered.  
These are known to be the \emph{one-sided} rec\-tan\-gu\-la\-ti\-ons, i.e., weak rec\-tan\-gu\-la\-ti\-ons such that for each segment at least one side has no contact, which are also the weak rec\-tan\-gu\-la\-ti\-ons with a unique strong representative. And they correspond via $\gamma_w$ to the  permutations in  $\mathsf{Av}(2\underline{41}3,2\underline{14}3,3\underline{41}2,3\underline{14}2)$, i.e.,
twisted and co-twisted Baxter permutations.
By intersecting the step-sets for leftmost and rightmost walks, the specification of the step-set for leftright walks is as shown in Figure~\ref{fig:leftright_weak}. 

\begin{figure}[!h]
\begin{center}
\includegraphics[width=0.85\linewidth]{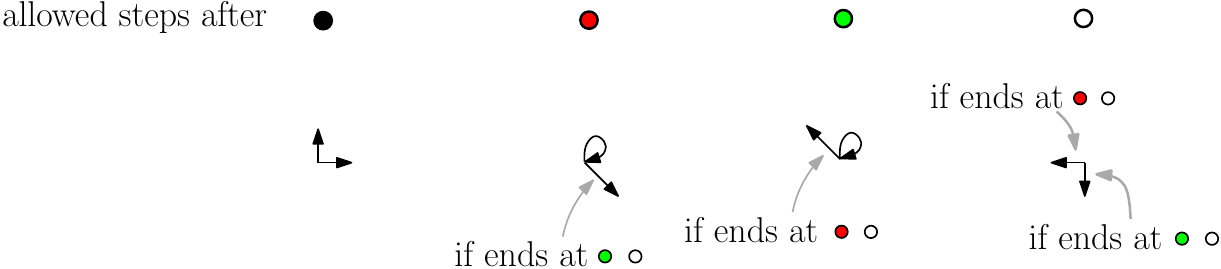}
\end{center}
\caption{The allowed steps in leftright history quadrant walks, in the context of weak rec\-tan\-gu\-la\-ti\-ons.}
\label{fig:leftright_weak}
\end{figure}

Letting $O_n$ be the number of one-sided rec\-tan\-gu\-la\-ti\-ons of size $n$, and number of leftright history quadrant excursions of length $n-1$, a recurrence for $O_n$ analogue to the recurrence~\eqref{eq:Un_rec} for $U_n$ can then be obtained, by first-step removal in closed leftright history quadrant walks. 
Another counting method for $O_n$, also in polynomial time, has been given in~\cite{bouvel19} (pages 162-175) by describing a generating tree for the permutation class, 
the sequence starts with $1, 2, 6, 20, 72, 274, 1088, 4470, 18884, 81652,\ldots$ and is \href{https://oeis.org/A348351}{OEIS A348351}.

\begin{proposition}\label{prop:up_bound_one_sided}
The exponential growth rate of $O_n$ is 
bounded from above
by $\Gamma':=\frac1{2}(7+\sqrt{17})\approx 5.562$. 
\end{proposition}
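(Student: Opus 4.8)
The plan is to transcribe, almost verbatim, the proof of Proposition~\ref{prop:upper_bound_Un}, with the leftright history quadrant walks of Figure~\ref{fig:leftright} replaced by those of Figure~\ref{fig:leftright_weak}. Recall that $O_n$ counts the leftright history quadrant \emph{excursions} of length $n-1$; dropping the two conditions that turn a walk into an excursion (staying inside the quadrant $\mathbb{N}^2$, and finishing at the origin on a white point) can only enlarge the count, and the resulting unconstrained leftright walks are governed by a finite transfer matrix, because --- exactly as in the strong case --- whether a displacement $p\to p'$ is admissible in a leftright walk depends only on the colours of $p$ and $p'$, not on their positions.

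First I would write down the $4\times 4$ matrix $\mathbb{A}'$ whose $(c,c')$ entry is the number of admissible steps from a point of colour $c$ to a point of colour $c'$, with colours ordered black, red, green, white, by reading Figure~\ref{fig:leftright_weak}. Equivalently, for each ordered pair of colours one intersects the base set of displacements attached to the colour of $p$ --- namely $\{(1,0),(0,1),(2,-1),(-1,2)\}$ for black, $\{(0,0),(1,-1),(-1,1)\}$ for red and for green, and $\{(-1,0),(0,-1)\}$ for white --- with the weak leftmost condition (``$\Delta x\ge 0$ whenever the colour of $p$ is in $\{\text{black},\text{red}\}$ or the colour of $p'$ is in $\{\text{black},\text{green}\}$, and $\Delta x\ge -1$ otherwise'') and the symmetric weak rightmost condition on $\Delta y$. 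A case check over the sixteen pairs gives
\[
\mathbb{A}'=\begin{pmatrix} 2&2&2&2\\ 1&1&2&2\\ 1&2&1&2\\ 0&1&1&2 \end{pmatrix}.
\]
Then, with $\mathbb{I}=(1,1,1,1)$, the number of leftright history quadrant walks of length $n-1$ starting at the origin with no domain or endpoint constraint is $\mathbb{I}\cdot(\mathbb{A}')^{n-1}\cdot\mathbb{I}^T$ (the left vector sums over the colour of the first point, the right one over the colour of the last), hence $O_n\le \mathbb{I}\cdot(\mathbb{A}')^{n-1}\cdot\mathbb{I}^T$. Finally I would factor the characteristic polynomial of $\mathbb{A}'$ as $x(x+1)(x^2-7x+8)$, so that its eigenvalues are $0$, $-1$ and $\frac{1}{2}(7\pm\sqrt{17})$ and its spectral radius is $\Gamma'=\frac{1}{2}(7+\sqrt{17})$; this yields $\limsup_n O_n^{1/n}\le\Gamma'$, as claimed.

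The only part that requires care is pinning down $\mathbb{A}'$, that is, faithfully reading off from Figure~\ref{fig:leftright_weak} (or from the above intersection of the leftmost and rightmost step-sets) the admissible displacement vectors for each of the sixteen colour pairs; everything after that is a routine eigenvalue computation. As a sanity check, a direct enumeration of short leftright excursions recovers the first values $1,2,6,\ldots$ of $O_n$, and the spectral radius obtained coincides with the stated value $\Gamma'$.
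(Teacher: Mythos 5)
Your proposal is correct and follows essentially the same route as the paper: relax the quadrant and endpoint constraints, bound the unconstrained leftright walks by the transfer matrix $\mathbb{A}'$ (which you write down identically), and take its spectral radius, which is indeed $\frac{1}{2}(7+\sqrt{17})$ since the characteristic polynomial factors as $x(x+1)(x^2-7x+8)$. The extra detail you supply (the explicit eigenvalue computation and the derivation of $\mathbb{A}'$ by intersecting the leftmost and rightmost step conditions) only elaborates what the paper leaves implicit.
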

\begin{proof}
Let 
\[
\mathbb{A}'=\left(
\begin{matrix}
2&2&2&2\\
1&1&2&2\\
1&2&1&2\\
0&1&1&2
\end{matrix}
\right)
\]
and let $\mathbb{I}=(1,1,1,1)$. The number of leftright walks of length $n$ (starting at the origin) with no constraint on domain nor on endpoint  
 is equal to $\mathbb{I}\cdot \mathbb{A}'\,^n\cdot \mathbb{I}^T$; and $\Gamma'$ is the spectral radius of $\mathbb{A}'$.
\end{proof}

Again, by similar calculations as~\cite[Conjecture~25]{FNS23}, letting $\xi'=(-29+7\sqrt{17})/4$, one can conjecture the asymptotic estimate $O_n\sim c'\, \Gamma'\,^n n^{-\alpha'}$, with $c'>0$ and $\alpha'=1+\pi/\arccos(\xi')\approx 2.957$, which would imply that the generating function of $O_n$ is not D-finite.

An interesting consequence of Proposition~\ref{prop:up_bound_one_sided} is that the growth rate of one-sided rec\-tan\-gu\-la\-ti\-ons, which are also the rec\-tan\-gu\-la\-ti\-ons that are \emph{area-universal}~\cite{EppsteinMSV12}, is smaller than the known~\cite{Tu62,Fusy09}  growth rate $27/4=6.75$ of  triangulations of the 4-gon that are irreducible (no separating triangle).  Thus the irreducible triangulations of the 4-gon admitting a dual representation as an area-universal rec\-tan\-gu\-la\-ti\-on are exponentially rare, their growth rate being at most  $\Gamma'$. 

\section{Guillotine rectangulations}
\label{sec:guillotine}

In this section we deal with guillotine rec\-tan\-gu\-la\-ti\-ons, introduced in Section~\ref{sec:guil}.
While weak guillotine rec\-tan\-gu\-la\-ti\-ons are well understood
(see Propositions~\ref{prop:guillotinechar} and~\ref{prop:guillotineenum}),
we are not aware of any results concerning strong guillotine rec\-tan\-gu\-la\-ti\-ons.
In this section we provide a uniform treatment of guillotine rec\-tan\-gu\-la\-ti\-ons
by characterizing those permutations that correspond to guillotine partitions under both permutation-to-rec\-tan\-gu\-la\-ti\-on mappings $\gamma_w$ and $\gamma_s$, by means of pattern avoidance.
As a result, we can restrict all the bijections between (both weak and strong)
rec\-tan\-gu\-la\-ti\-ons that were mentioned above, to the guillotine case. In particular, we find
a permutation class bijective to \emph{strong guillotine rec\-tan\-gu\-la\-ti\-ons}. 

\subsection{Characterization by mesh patterns}\label{sec:mesh}

Consider the following mesh patterns\footnote{\ These mesh patterns were 
proposed by Merino and Mütze~\cite{MMpc}, see remark after Corollary~\ref{prop:guil_cor_S}.} (depicted in Figure~\ref{fig:mesh}):
\[
\begin{array}{l}
p_1 = (25314, \ \ \{(0,3), \, (0,4), \, (1,3), \, (4,2), \, (5,1), \, (5,2)\}), \\ 
p_2 = (41352, \ \ \{(0,1), \, (0,2), \, (1,2), \, (4,3), \, (5,3), \, (5,4)\}).
\end{array}
\]

\begin{figure}[!h]
\begin{center}
\includegraphics[scale=0.9]{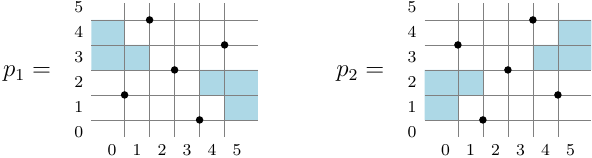}
\end{center}
\caption{Two mesh patterns whose avoidance characterizes guillotine permutations.}
\label{fig:mesh}
\end{figure}

\begin{theorem}\label{thm:guil_main} 
Let $\pi \in S_n$. Then the following conditions are equivalent:
\begin{enumerate}
\item[(1)] The weak rec\-tan\-gu\-la\-ti\-on $\gamma_w(\pi)$ is guillotine,
\item[(2)] The strong rec\-tan\-gu\-la\-ti\-on $\gamma_s(\pi)$ is guillotine,
\item[(3)] $\pi$ avoids both mesh patterns $p_1$ and $p_2$.
\end{enumerate}
\end{theorem}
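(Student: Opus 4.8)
The plan is to establish the cycle of implications by connecting windmills in rectangulations to mesh pattern occurrences in permutations, in both the weak and strong settings simultaneously. First I would recall from Proposition~\ref{prop:guillotinechar} that a rectangulation is guillotine if and only if it contains no windmill \wma\ or \wmb, and observe that $\gamma_w(\pi)$ and $\gamma_s(\pi)$ are weakly equivalent (the strong rectangulation $\gamma_s(\pi)$ is a shuffle of the segments of $\gamma_w(\pi)$, per Section~\ref{sec:diag}). A crucial first reduction is that \emph{the presence of a windmill depends only on the weak equivalence class}: a windmill is a configuration of four segments, and weak equivalence preserves all segment--segment adjacencies. Hence $\gamma_w(\pi)$ is guillotine if and only if $\gamma_s(\pi)$ is guillotine, which already gives the equivalence of (1) and (2). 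The real content is the equivalence of either geometric condition with the combinatorial condition (3).

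For the equivalence (2)$\Leftrightarrow$(3) I would argue through $\gamma_s$, using the incremental structure of Algorithm~SF and the strong poset. The key step is a local-to-global dictionary: I would show that a windmill of type \wma\ in $\gamma_s(\pi)$ forces (and is forced by) an occurrence of the mesh pattern $p_1$ in $\pi$, and symmetrically \wmb\ corresponds to $p_2$. Concretely, a windmill \wma\ is built from a horizontal segment $h_1$, a vertical segment $v_1$ ending on $h_1$, a horizontal segment $h_2$ ending on $v_1$, and a vertical segment $v_2$ ending on $h_2$ and on $h_1$ again, arranged cyclically. Each of these four segments is created, in Algorithm~SF, by the insertion of a particular rectangle (the one whose top-and-right boundary closes off that segment), giving five relevant rectangles — the four "spokes" plus the central rectangle in the interior — whose NW--SE labels $a_1 < a_2 < a_3 < a_4 < a_5$ and whose insertion order are constrained exactly so as to produce the relative positions and shadings of $p_1$. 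The shaded cells of $p_1$ encode the geometric facts that between consecutive spoke-insertions no rectangle may be inserted that would block the windmill from forming (this is precisely the statement that the shaded regions in Figure~\ref{fig:mesh} cannot contain points of $\pi$, mirroring the "grey regions" arguments used in the proofs of Lemmas~\ref{lem:cross}, \ref{lem:2clumped}, and~\ref{lem:unique}). I would carry out the $(\Rightarrow)$ direction by reading off the five rectangles of a windmill, computing their labels via the NW--SE order and Observation~\ref{obs:lrab}, and verifying the mesh constraints from the invariants of Algorithm~SF (peaks increasing, nothing inserted below the current staircase); then $(\Leftarrow)$ by running Algorithm~SF on an occurrence of $p_1$ and checking that the five corresponding rectangles must assemble into a windmill, the shading guaranteeing that no intervening insertion destroys the configuration.

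Alternatively — and this is likely the cleaner route to also cover the weak case uniformly — I would prove (1)$\Leftrightarrow$(3) directly via $\gamma_w$ and Algorithm~WF, using that the weak and strong forward algorithms differ only in the alignment conventions, which do not affect which segments are adjacent to which. Then (1)$\Leftrightarrow$(2) follows from the weak-invariance of windmills noted above, closing the cycle. The main obstacle I anticipate is the bookkeeping in the $(\Leftarrow)$ direction: an occurrence of $p_1$ in $\pi$ gives five values with prescribed relative order and prescribed emptiness of certain regions, but one must check that \emph{the rectangles these values label in $\gamma_s(\pi)$} (respectively $\gamma_w(\pi)$) actually realize a windmill — i.e., that the emptiness conditions translate into the four segments being pairwise adjacent in the windmill pattern rather than in some degenerate way. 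This requires a careful case analysis of the shape of the joints ($\tu, \tr, \td, \tl$) at the relevant corners during the insertions, analogous to (but more involved than) the two-case analyses in Lemmas~\ref{lem:cross} and~\ref{lem:unique}. I would structure this as a sequence of lemmas: one identifying, for each windmill, its five "defining" rectangles and their labels; one showing the induced pattern is $p_1$ or $p_2$ with the correct shading; and one converse lemma reconstructing the windmill from the pattern. The symmetry $\rc \mapsto \bar{\rc}$ (Observation~\ref{obs:mirror}), together with the evident symmetry exchanging \wma\ with \wmb\ and $p_1$ with $p_2$, lets me treat only one windmill type explicitly.
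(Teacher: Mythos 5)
Your overall architecture matches the paper's: the equivalence of (1) and (2) is immediate because a windmill is a configuration of segments and is therefore invariant under shuffling, and the substance is a lemma pairing \wma\ with $p_1$ and \wmb\ with $p_2$ (the paper's Lemma~\ref{lem:guil_main_lemma}), with one windmill type treated explicitly and the other obtained from the complement symmetry of Observation~\ref{obs:mirror}. Your $(\Rightarrow)$ direction is also essentially the paper's: four rectangles flanking the windmill's segments plus one interior rectangle yield an occurrence of $25314$, and the emptiness of the shaded cells follows from Observation~\ref{obs:lrab} and the order relations.

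The gap is in your $(\Leftarrow)$ direction. You propose to show that the five rectangles labelled by an occurrence of $p_1$ ``assemble into a windmill,'' with the shading ``guaranteeing that no intervening insertion destroys the configuration.'' This is not true, and no local case analysis of joint shapes will rescue it. Even after saturating $p_1$ to a stronger mesh pattern with larger shaded regions (a reduction step the paper carries out and that you omit but need, since the raw shaded set of $p_1$ is too small to control all insertions with labels in $(b,e)$ or $(a,d)$ directly), the occurrence only forces a $\tr$ joint between a horizontal segment $s_e$ above $r_e$ and a vertical segment $s_b$ right of $r_b$, and a $\tl$ joint between a horizontal segment $s_a$ below $r_a$ and a vertical segment $s_d$ left of $r_d$, in a certain relative position. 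Rectangles with labels outside the constrained value ranges may be inserted at any time, so $s_b$ need not reach $s_a$, nor $s_a$ reach $s_d$, and the four segments need not bound a windmill themselves. The paper closes this with a global trapping argument: traverse $s_b$ upward until it is blocked by some horizontal segment $s_{a'}$ (possibly, but not necessarily, $s_a$), turn right along $s_{a'}$ until blocked by some vertical segment $s_{d'}$, and so on; the four segments $s_a,s_b,s_d,s_e$ guarantee that this traversal can never escape to the boundary of $\rr$, so by finiteness it must eventually close up into a windmill --- which may be located far from the five rectangles of the pattern occurrence. Without this (or an equivalent global existence argument), your $(\Leftarrow)$ direction does not go through.
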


The equivalence of (1) and (2) is clear, since being guillotine is invariant under shuffling.    
Hence, it suffices to prove the equivalence of (1) and (3). 
Recall from Proposition~\ref{prop:guillotinechar} that a rec\-tan\-gu\-la\-ti\-ons is guillotine if and only if
it avoids two ``windmills'' \wma\ and \wmb.
Theorem~\ref{thm:guil_main} follows directly from the following lemma, which also precisely points out the correspondence between both mesh patterns and both kinds of windmills.

\begin{lemma}\label{lem:guil_main_lemma} 
Let $\pi \in S_n$. 
\begin{itemize}
\item[(a)] $\gamma_w(\pi)$ contains \wma\ if and only if $\pi$ contains $p_1$,
\item[(b)] $\gamma_w(\pi)$ contains \wmb\ if and only if $\pi$ contains $p_2$.
\end{itemize}
\end{lemma}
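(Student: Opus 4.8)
The plan is to prove (a) directly from Algorithm~WF and to obtain (b) from (a) by a reflection symmetry, so I first record why (a) implies (b). Observe that $p_2$ is the \emph{complement} of $p_1$: the underlying permutation $41352$ is the complement of $25314$, and under the mesh-cell map $(i,j)\mapsto(i,5-j)$ the shading of $p_1$ is carried exactly onto the shading of $p_2$. On the geometric side, replacing $\pi$ by its complement $\bar\pi$ reflects $\gamma_w(\pi)$ across the SW--NE diagonal of the grid; this is the weak analogue of Observation~\ref{obs:mirror}, and holds because reflection across that diagonal fixes the SW and NE corners of $\rr$ and carries the WF-invariant (bottom segment, left segment, NE staircase) to itself with the first two items swapped, so that Algorithm~WF on $\bar\pi$ produces the reflection of $\gamma_w(\pi)$. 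Since this reflection reverses orientation, it exchanges the two windmill chiralities \wma\ and \wmb. Hence $\gamma_w(\pi)$ contains \wmb\ iff $\gamma_w(\bar\pi)$ contains \wma\ iff, by (a), $\bar\pi$ contains $p_1$ iff $\pi$ contains $p_2$.

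For (a), I would run Algorithm~WF on $\pi$ and track the staircase. Recall that when $r_j$ (with $j=\pi_i$) is inserted in the valley between the consecutive peaks labelled $a<j<b$, its top side becomes flush with the top of $r_a$ exactly when all of $r_{a+1},\dots,r_{j-1}$ have already been inserted (and then $a$ leaves $P$), and otherwise a fresh horizontal sub-segment is created above $r_j$; the rule on the right side, with $r_b$, is symmetric. From an occurrence of the windmill \wma\ in $\gamma_w(\pi)$ I would read off five rectangles $r_{\ell_1},\dots,r_{\ell_5}$ with $\ell_1<\dots<\ell_5$ in the NW--SE order, all incident to segments of the windmill (the four bordering its four T-junctions on the ``blade'' sides, together with the rectangle around which the four segments pinwheel), so that they are well defined regardless of how the interior of the windmill is subdivided. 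The core of the proof is a three-part dictionary: (i) these five labels are in relative order $25314$; (ii) the order in which WF inserts them is forced by the windmill geometry to be the one read off from the plot of $25314$ --- intuitively, each windmill segment must already carry its blade by the time the perpendicular segment stops on it; and (iii) each of the six shaded cells of $p_1$ encodes one non-degeneracy of the windmill --- a windmill segment not being prolonged past a corner of the interior region, or a T-junction pointing the \wma-way rather than the opposite way --- which, through the two alignment rules of WF and the monotonicity of the staircase, is equivalent to the absence of a point of $\pi$ in that cell. Running the dictionary forward yields ``$\gamma_w(\pi)$ contains \wma\ $\Rightarrow$ $\pi$ contains $p_1$''; running it backward --- starting from an occurrence of the mesh pattern $p_1$ and using the insertion rules to force the placement of the five corresponding rectangles, with the empty shaded cells preventing any of their relevant sides from being aligned away and preventing any T-junction from ``opening'' --- yields the converse.

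The main obstacle is item (iii): identifying the six shaded cells of $p_1$ one-for-one with the six local non-degeneracies that distinguish a genuine \wma\ from a neighbouring non-windmill configuration, while keeping the orientations of the corner shapes $\td$, $\tu$, $\tr$, $\tl$ consistent throughout --- this is where the exact pictures of \wma\ and of the mesh pattern must be matched carefully. Items (i) and (ii) are essentially immediate once the roles of $r_{\ell_1},\dots,r_{\ell_5}$ inside the windmill are fixed, and the complement symmetry of the first paragraph then delivers part (b), hence the full lemma and Theorem~\ref{thm:guil_main}, with no further work.
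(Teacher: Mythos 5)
Your overall architecture matches the paper's: part (b) is deduced from part (a) via complementation and the SW--NE reflection (the weak analogue of Observation~\ref{obs:mirror}), and your forward direction for (a) --- extracting five rectangles from a windmill \wma, one per T-junction plus one interior rectangle, showing their NW--SE labels realize $25314$ and that the shaded cells are empty via the region decomposition of Observation~\ref{obs:lrab} --- is essentially the paper's $(\Rightarrow)$ argument.

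The genuine gap is in your backward direction. Your ``dictionary'' presumes that an occurrence of $p_1$ forces the five corresponding rectangles to themselves border a windmill, with each shaded cell ruling out one local degeneracy at those rectangles. That is not true: the occurrence only pins down four segments (a horizontal one above $r_e$ meeting a vertical one right of $r_b$ in a $\tr$, and a horizontal one below $r_a$ meeting a vertical one left of $r_d$ in a $\tl$, with $r_c$ forcing the two vertical segments to be distinct). These four segments need not form a windmill; they only create a ``trapped spiral''. The paper closes this gap with a separate traversal argument: follow the vertical segment upward until it is blocked by a horizontal segment, follow that rightward until blocked by a vertical one, and so on; the four segments guarantee the traversal never reaches the boundary of $\rr$, so by finiteness some windmill \wma\ must occur --- possibly involving entirely different segments, nested strictly inside the region your five rectangles delimit. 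Without this (or an equivalent) argument the implication ``$p_1$ occurs $\Rightarrow$ \wma\ occurs'' is not established. A secondary issue: the six shaded cells of $p_1$ alone do not directly yield the clean statements ``no $r_x$ with $b<x<e$ is inserted before $r_e$'' and ``no $r_x$ with $a<x<d$ is inserted after $r_a$'' that the segment configuration requires; the paper first replaces $p_1$ by the equivalent pattern $q_1$ with the full shaded blocks $\{0,1\}\times\{2,3,4\}\cup\{4,5\}\times\{1,2,3\}$, proved equivalent by an extremal-point replacement. You would need to either incorporate that reduction or choose the occurrence of $p_1$ extremally; a cell-by-cell matching of the six cells of $p_1$ to six local non-degeneracies will not go through as stated.
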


\begin{proof}
We provide the proof for (a) (then (b) follows from (a) by reflection via Observation~\ref{obs:mirror}). 

At the first step we modify the pattern $p_1$ in a way that simplifies some technical details.
Consider the mesh pattern
\[q_1=(25314, \ \{0,1\}\times\{2,3,4\} \  \cup \  \{4,5\}\times\{1, 2,3\}).\]
We show that a permutation $\pi$ contains $p_1$ if and only if it contains $q_1$, refer to Figure~\ref{fig:p2q}. 
Assume that $\pi$ contains $p_1$, and the pattern $25314$ of $p_1$ is realized as $becad$ where $a<b<c<d<e$.
Then, in the plot, we can replace the point $e$ by a left-minimum point in the cell $(1,4)$,
then $b$ by the top-most point in $(0,2)\cup(1,2)$,
then $a$ by a right-maximum point in $(4,1)$,
and finally $d$ by the bottom-most point in $(4,3)\cup(5,3)$. 
This shows that if a permutation contains $p_1$ then it contains $q_1$. 
The converse implication is trivial.

\begin{figure}[!h]
\begin{center}
\includegraphics[scale=0.9]{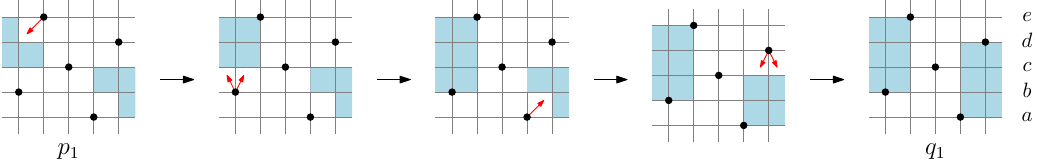}
\end{center}
\caption{An occurrence of $p_1$ implies an occurrence of $q_1$.}
\label{fig:p2q}
\end{figure}

We now prove that $\gamma_w(\pi)$ contains \wma\ if and only if $\pi$ contains $q_1$.

$(\Leftarrow)$ Let $\pi$ be a permutation that contains $q_1$, 
and consider the diagonal representative of $\gamma_w(\pi)$. 
Assume, as above, that the pattern $25314$ of $p_1$ is realized as $becad$.
The shaded regions of $q_1$ imply that no rectangle~$r_x$ with label $x$ such that $b < x < e$ is inserted earlier than~$e$, 
and that no rectangle~$r_x$ with $a < x < d$ is inserted later than~$a$. 
It follows that just after inserting the rectangle $r_e$, the staircase contains 
a horizontal segment $s_e$ just above~$r_e$ and a vertical segment~$s_b$ just to the right from~$r_b$, and these segments (shown by red in Figure~\ref{fig:guil_proof_d1}) meet in a~$\tr$ joint. 
Similarly, just before inserting the rectangle~$r_a$, the staircase contains 
a horizontal segment~$s_a$ just under~$r_a$ and a vertical segment~$s_d$ just to the left from~$r_d$,
and these segments (shown by green in Figure~\ref{fig:guil_proof_d1}) meet in a~$\tl$ joint. 
Due to the presence of~$r_c$, we know that~$s_d$ does not coincide with~$s_b$.

Now we show that $\gamma_w(\pi)$ contains a windmill \wma.
Traverse the segment $s_b$ from below to above. 
Due to $s_a$, the segment $s_b$ can not reach the upper side of $\rr$, as it is blocked by a horizontal segment $s_{a'}$ 
(which is possibly $s_a$). Now we traverse $s_{a'}$ to the right. Due to the existence of $s_d$, the segment $s_{a'}$ cannot reach the right side of $\rr$, as it is be blocked by a vertical segment $s_{d'}$ (which is possibly $s_d$). We continue traversing segments in this way 
and due to $s_a$, $s_d$, $s_e$, $s_b$ we never reach the boundary of $\rr$.
Since the process is finite, a windmill \wma\ will eventually be obtained.

\begin{figure}[!h]
\begin{center}
\includegraphics[scale=1]{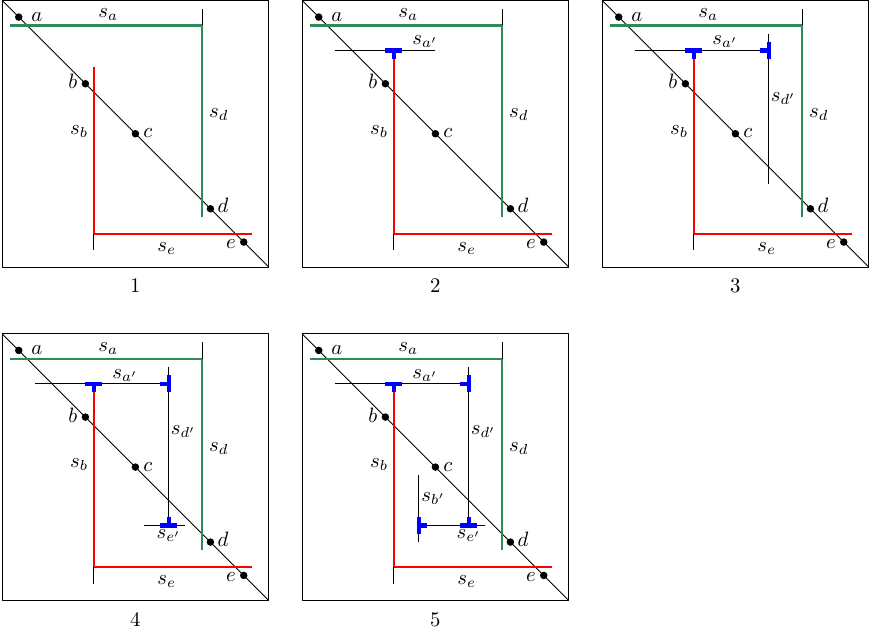} 
\end{center}
\caption{Illustration for $(\Leftarrow)$ in the proof of Theorem~\ref{thm:guil_main}: $q_1$ implies $\wma$.}
\label{fig:guil_proof_d1}
\end{figure}

$(\Rightarrow)$ Let $\rc$ be a rec\-tan\-gu\-la\-ti\-on containing \wma.
Label by $r_a, \, r_b, \, r_d, \, r_e$ the rectangles as shown in Figure~\ref{fig:guil_proof_d2}:
$r_a$ is the rectangle whose bottom-right corner is the top-right corner of the windmill,
$r_d$ is the upmost rectangle whose left side is included in the right vertical segment of the windmill, and similarly for $r_e$ and $r_b$.
Finally, let $r_c$ be any rectangle in the region bounded by the windmill. Then we have
$a<b<c<d<e$ in the diagonal ordering. On the other hand, in $P_w(\rc)$ we have
$b \prec_w e \prec_w c\prec_w a\prec_w d$, which gives the pattern $25314$ in any linear extension 
$\pi$ of this poset.
It remains to show that there are no points in the shaded regions from the plot of $q_1$.
Suppose there is a point in the region $\{0,1\} \times \{2,3,4\}$.
Then there exists a rectangle $r_x$ such that $b<x<e$, which is inserted earlier than $e$.
By Observation~\ref{obs:lrab}(1),
all rectangles $r_x$ such that $b<x<e$ are contained in the region shown in grey in Figure~\ref{fig:guil_proof_d2}. 
However, all rectangles $r_x$ included in this region satisfy $e \prec_w x$, 
hence $r_x$ can not be inserted earlier than $r_e$. 
\end{proof}

\begin{figure}[!h]
\begin{center}
\includegraphics[scale=1]{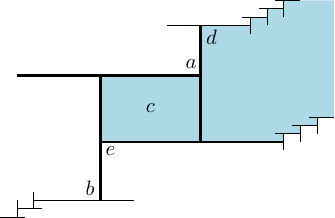}
\end{center}
\caption{Illustration for $(\Rightarrow)$ in the proof of Theorem~\ref{thm:guil_main}: $\wma$ implies $q_1$.}
\label{fig:guil_proof_d2}
\end{figure}

\subsection{New bijections and permutation classes for guillotine rectangulations}\label{sec:windmill_bij}

In this section we discuss specializations of the bijections 
$\beta_{\mathsf{TB}}$, $\beta_{\mathsf{CTB}}$, $\beta_{\mathsf{B}}$, 
$\beta_{\mathsf{2C}}$, $\beta_{\mathsf{C2C}}$ 
from Theorems~\ref{thm:weak_bijections} and~\ref{thm:strong_bijections} to the case of guillotine rec\-tan\-gu\-la\-ti\-ons.

\medskip

\noindent\textbf{Weak guillotine rec\-tan\-gu\-la\-ti\-ons.} Basically, the respective permutation classes are obtained by adding $p_1$ and $p_2$ to the forbidden patterns. 
The next lemma makes it possible to describe some of them by fewer patterns.

\begin{lemma}\label{lem:less_pattern}
The following identities between permutation classes hold:

\begin{center}
\begin{tabular}{lll}
\textit{1.} $\mathsf{Av}(2\underline{41}3,  \, 3\underline{41}2, \,  p_1) = \mathsf{Av}(2413,\, 3\underline{41}2)$,
&  \ \ \ \ \
&
\textit{2.} $\mathsf{Av}(2\underline{41}3,  \, 3\underline{14}2, \,  p_1) = \mathsf{Av}(2413,\, 3\underline{14}2)$, 
\vspace{6pt} \\ 
\textit{3.} $\mathsf{Av}(3\underline{14}2,  \, 2\underline{14}3, \,  p_2) = \mathsf{Av}(3142,\, 2\underline{14}3)$,
& \ \ \ \ \
&
\textit{4.} $\mathsf{Av}(3\underline{14}2,  \, 2\underline{41}3, \,  p_2) = \mathsf{Av}(3142,\, 2\underline{41}3)$.
\end{tabular}
\end{center}
\end{lemma}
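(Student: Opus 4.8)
The plan is to reduce all four identities to a single claim about the classical pattern $2413$, and to obtain identities~3 and~4 from identities~1 and~2 by complementation. The inclusion $\supseteq$ is immediate in each case: a permutation avoiding the classical pattern $2413$ (resp.\ $3142$) automatically avoids the vincular pattern $2\underline{41}3$ (resp.\ $3\underline{14}2$), and it avoids $p_1$ (resp.\ $p_2$), because the underlying classical pattern of $p_1$ is $25314$, which contains $2413$ (namely on its 1st, 2nd, 4th, 5th entries), while the underlying pattern of $p_2$ is $41352$, which contains $3142$. So only the reverse inclusions require work, and for these it suffices to prove: $(\star_1)$ every $\pi\in\mathsf{Av}(2\underline{41}3,3\underline{41}2)$ containing the classical pattern $2413$ contains $p_1$; and $(\star_2)$ every $\pi\in\mathsf{Av}(2\underline{41}3,3\underline{14}2)$ containing the classical pattern $2413$ contains $p_1$. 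Indeed, $(\star_1)$ (resp.\ $(\star_2)$) together with the trivial inclusion gives identity~1 (resp.\ identity~2). Finally, $p_2$ is the complement of $p_1$, the patterns $3\underline{14}2$ and $2\underline{14}3$ are the complements of $2\underline{41}3$ and $3\underline{41}2$, and $3142$ is the complement of $2413$; since the complement involution $\pi\mapsto\bar\pi$ carries $\mathsf{Av}(\tau_1,\dots,\tau_p)$ bijectively onto $\mathsf{Av}(\bar\tau_1,\dots,\bar\tau_p)$, it transforms identity~1 into identity~3 and identity~2 into identity~4.

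\textbf{First step: from $2413$ to $25314$.} This step is common to $(\star_1)$ and $(\star_2)$ and uses only avoidance of $2\underline{41}3$. Among all occurrences of $2413$ in $\pi$, say realised by $x_1 x_2 x_3 x_4$ at positions $i_1<i_2<i_3<i_4$ with $x_3<x_1<x_4<x_2$ (so $x_2$ is the ``$4$'' and $x_3$ the ``$1$''), choose one minimising $i_3-i_2$. Avoidance of $2\underline{41}3$ forces $i_3-i_2\ge 2$, and minimality forces every entry at a position in $(i_2,i_3)$ to have value strictly between $x_1$ and $x_4$ — otherwise that entry could replace the ``$4$'' or the ``$1$'' of the occurrence and strictly decrease $i_3-i_2$. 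Picking any such intermediate entry $v$ (value in $(x_1,x_4)$, at a position $m\in(i_2,i_3)$) to play the ``$3$'', the subsequence $x_1 x_2 v x_3 x_4$ is an occurrence of the classical pattern $25314$ at positions $i_1<i_2<m<i_3<i_4$.

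\textbf{Second step: emptying the shaded cells.} It remains to upgrade this to an occurrence of $p_1$, i.e.\ to empty its six shaded cells; by Lemma~\ref{lem:guil_main_lemma} one may equivalently work with the auxiliary pattern $q_1$ introduced in its proof. One fixes the $2413$-occurrence and the intermediate entry extremally in the remaining parameters (smallest ``$3$''-value $x_4$, then leftmost ``$4$''-position $i_2$, and the mirror-image choices governing the left side of the $25314$-occurrence), and checks that an entry lying in a would-be shaded cell yields a contradiction: depending on the cell, it produces a $2413$-occurrence strictly more compressed than the chosen one, or a $2\underline{41}3$-occurrence, or — in the ``small entry to the right of the whole pattern'' case and its mirror on the left — an occurrence of $3\underline{41}2$ when proving $(\star_1)$ and of $3\underline{14}2$ when proving $(\star_2)$. (Avoidance of this second pattern is genuinely needed: $(\star_1)$ and $(\star_2)$ fail if one drops it.) The configuration that makes this case work is the adjacent pair $(\pi_{i_3-1},\pi_{i_3})$: all entries in $(i_2,i_3)$ have value above $x_1$ while $\pi_{i_3}=x_3<x_1$, so this pair straddles $x_1$, and together with $x_1$ (lying to its left) and the offending entry it forms the required adjacent occurrence of $3\underline{41}2$ or $3\underline{14}2$.

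\textbf{Main obstacle, and a shortcut for identities 1 and 3.} The hard part is precisely this last bookkeeping: handling all six shaded cells (three per side), checking that the forbidden pattern invoked for each cell is genuinely present, and verifying that the several extremal choices are mutually consistent. For identities~1 and~3 there is a less computational alternative via rectangulations: a permutation in $\mathsf{Av}(2\underline{41}3,3\underline{41}2)$ is the leftmost linear extension $\pi_L$ of a unique weak rectangulation $\mathcal R=\gamma_w(\pi)$ by Theorem~\ref{thm:weak_distinguished}, and $\pi$ contains $p_1$ if and only if $\mathcal R$ contains the windmill $\wma$ by Lemma~\ref{lem:guil_main_lemma}; feeding the $25314$-occurrence and the squeezing property of the first step into Algorithm~WF then exhibits a windmill $\wma$ in $\mathcal R$. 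This shortcut does not apply directly to identities~2 and~4, since $\mathsf{Av}(2\underline{41}3,3\underline{14}2)$ is not a class of distinguished linear extensions of weak rectangulations, so there the direct argument with $\sigma=3\underline{14}2$ seems unavoidable.
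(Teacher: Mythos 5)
Your overall architecture coincides with the paper's: the inclusion $\supseteq$ is immediate, identities 3 and 4 follow from 1 and 2 by complementation, and the substance is to show that a permutation avoiding the two vincular patterns but containing a classical $2413$ must contain $p_1$, by extracting a $25314$ from an extremal occurrence of $2413$ and then emptying the shaded cells. Your first step is correct and is a legitimate variant of the paper's: you minimise the positional gap $i_3-i_2$ between the ``4'' and the ``1'', whereas the paper minimises the value span of the occurrence; either choice yields the $25314$.

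The genuine gap is in your second step, which you describe rather than carry out, and whose one concrete ingredient fails for identities 2 and 4. (Incidentally, four of the six shaded cells of $p_1$, namely $(0,3),(1,3),(4,2),(5,2)$, are already emptied by gap-minimality alone --- a point there, combined with the intermediate entry $v$ or with $x_3$, gives a $2413$ with strictly smaller gap --- but you neither verify this nor show that your proposed tie-breaking choices are consistent or sufficient.) The real problem is the two remaining cells $(0,4)$ and $(5,1)$: the configuration you invoke, the adjacent pair $(\pi_{i_3-1},\pi_{i_3})$, is a \emph{descent} (its first entry exceeds $x_1$, its second is $x_3<x_1$). A descent can only realise the adjacent block of $2\underline{41}3$ or $3\underline{41}2$; it can never realise the adjacent ascent $\underline{14}$ of $3\underline{14}2$. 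So while your configuration does settle identity 1 (for $z$ in cell $(5,1)$, the points $x_1,\pi_{i_3-1},x_3,z$ form $3\underline{41}2$, and the mirror pair $(x_2,\pi_{i_2+1})$ handles cell $(0,4)$), it produces nothing forbidden in identity 2, where only $2\underline{41}3$ and $3\underline{14}2$ are available; the claim that the same pair ``forms the required adjacent occurrence of $3\underline{41}2$ or $3\underline{14}2$'' is false, and you would need a genuinely different configuration (an adjacent ascent) that you have not exhibited. For comparison, the paper proves identity 1 in full with the value-span-minimal occurrence (ten regions are empty by minimality, and a point in $(0,4)$ or $(5,1)$ is converted into a $3\underline{41}2$ by a rightmost-high-point argument) and only asserts that identity 2 is ``similar''; your write-up does not supply the missing argument for that case, and the rectangulation ``shortcut'' for identities 1 and 3 is likewise only a sketch.
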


\begin{proof} We provide a detailed proof of (1). The proof of (2) is similar, and the proofs of~(3) and~(4) are obtained by taking complements.

In~(1), the implication $\supseteq$ is obvious. 
To prove $\subseteq$, we need to show
that if $\pi$ contains $2413$, then it contains $2\underline{41}3$, $3\underline{41}2$, or $p_1$.

Let $bead$, where $a<b<d<e$, be a~\emph{(vertically) shortest} occurrence of $2413$, that is,
an occurrence with the smallest possible $e - a$. 
If it is not a part of $25314$, then we 
can replace the point~$e$ by the rightmost point in the region
$(2,3) \cup (2,4)$; this yields an
occurrence of $2\underline{41}3$
(refer to the left part of Figure~\ref{fig:ttp2ts}).

Now assume that our occurrence of $2413$ is a part of $25314$ (refer to the right part of Figure~\ref{fig:ttp2ts}).
The regions $(1,4)$, $(2,4)$, $(3,4)$, $(0,3)$, 
$(1,3)$, $(4,2)$, $(5,2)$, $(2,1)$, $(3,1)$, $(4,1)$ are empty because otherwise we have a shorter occurrence of the pattern $2413$.
If the region $(0,4)$ is not empty, then --- applying the same argument as above and using the fact that the region $(2,4)$ is empty --- we obtain an occurrence of $3\underline{41}2$. 
Similarly, we obtain $3\underline{41}2$ if $(5,1)$ is not empty.
And if both regions $(0,4)$ and $(5,1)$ are empty,
then our assumed $25314$ is an occurrence of~$p_1$.
\end{proof}

\begin{figure}[!h]
\begin{center}
\includegraphics[scale=0.9]{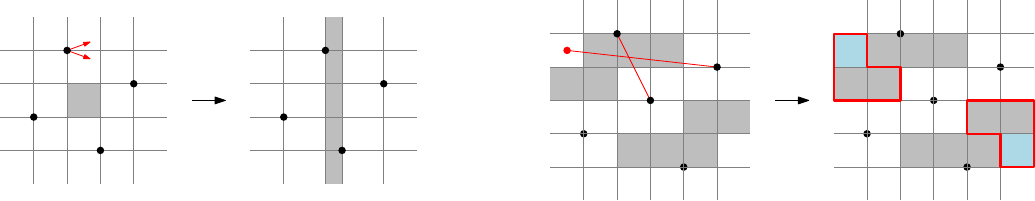} 
\end{center}
\caption{Illustration to the proof of Lemma~\ref{lem:less_pattern}.}
\label{fig:ttp2ts}
\end{figure}

\noindent\textit{Remark.} 
Note that it is not possible to ``cancel'' the patterns that occur on both sides of these identities. 
For example, $\mathsf{Av}(2\underline{41}3, \,  p_1) = \mathsf{Av}(2413)$ is false: $526314$ is a counterexample.

\medskip

Combining Lemma~\ref{lem:less_pattern} with Lemma~\ref{lem:guil_main_lemma} we obtain:
\begin{proposition}\label{prop:nonsep}
  The following families of permutations are in   bijection   
  (respectively via $\beta_{\mathsf{TB}}$, $\beta_{\mathsf{CTB}}$, and~$\beta_{\mathsf{B}}$) with weak rec\-tan\-gu\-la\-ti\-ons that avoid $\wma$:
\begin{enumerate}
\item
$\mathsf{Av}(2413,\,3\underline{41}2)$,
\item
$\mathsf{Av}(2\underline{14}3,\, 3\underline{14}2,\, p_1)$,
\item
$\mathsf{Av}(2413,\,3\underline{14}2)$.
\end{enumerate}
\end{proposition}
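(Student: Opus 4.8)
The plan is to deduce Proposition~\ref{prop:nonsep} by combining Theorem~\ref{thm:weak_bijections} with Lemma~\ref{lem:guil_main_lemma}(a) and Lemma~\ref{lem:less_pattern}, essentially by transporting the single pattern-avoidance condition $p_1$ across each of the bijections $\beta_{\mathsf{TB}}$, $\beta_{\mathsf{CTB}}$, $\beta_{\mathsf{B}}$. First I would record the abstract observation underlying all three items: if $\mathcal{C}$ is a permutation class on which $\gamma_w$ restricts (size by size) to a bijection onto $\mathsf{WR}_n$ --- which by Theorem~\ref{thm:weak_bijections} is the case for twisted Baxter, co-twisted Baxter, and Baxter permutations --- then $\gamma_w$ restricts to a bijection between $\mathcal{C}\cap\mathsf{Av}(p_1)$ and the set of weak rec\-tan\-gu\-la\-ti\-ons avoiding $\wma$. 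This is immediate from Lemma~\ref{lem:guil_main_lemma}(a): for $\pi\in\mathcal{C}$ the rec\-tan\-gu\-la\-ti\-on $\gamma_w(\pi)$ avoids $\wma$ exactly when $\pi$ avoids $p_1$, so injectivity is inherited from $\gamma_w|_{\mathcal{C}}$, and surjectivity follows because the unique $\mathcal{C}$-preimage of a $\wma$-avoiding weak rec\-tan\-gu\-la\-ti\-on must itself avoid $p_1$. (I would also note in passing that ``weak rec\-tan\-gu\-la\-ti\-on avoiding $\wma$'' is well defined, since the presence of a windmill depends only on the adjacencies among segments and hence is a weak-equivalence invariant.)

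Next I would specialize this observation to the three base classes and simplify the resulting lists of forbidden patterns using Section~\ref{sec:families} and Lemma~\ref{lem:less_pattern}. For twisted Baxter permutations $\mathcal{C}=\mathsf{Av}(2\underline{41}3,3\underline{41}2)$, this gives $\mathcal{C}\cap\mathsf{Av}(p_1)=\mathsf{Av}(2\underline{41}3,3\underline{41}2,p_1)$, which equals $\mathsf{Av}(2413,3\underline{41}2)$ by Lemma~\ref{lem:less_pattern}(1); hence item~(1) holds via $\beta_{\mathsf{TB}}$. For co-twisted Baxter permutations $\mathcal{C}=\mathsf{Av}(2\underline{14}3,3\underline{14}2)$, no simplification is available (nor needed), so we obtain item~(2), namely $\mathsf{Av}(2\underline{14}3,3\underline{14}2,p_1)$, via $\beta_{\mathsf{CTB}}$. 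For Baxter permutations $\mathcal{C}=\mathsf{Av}(2\underline{41}3,3\underline{14}2)$, we get $\mathsf{Av}(2\underline{41}3,3\underline{14}2,p_1)=\mathsf{Av}(2413,3\underline{14}2)$ by Lemma~\ref{lem:less_pattern}(2), giving item~(3) via $\beta_{\mathsf{B}}$.

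There is essentially no genuine obstacle here: the statement is an assembly of results already established in Sections~\ref{sec:weak},~\ref{sec:mesh}, and~\ref{sec:windmill_bij}, so the only care required is bookkeeping --- matching each named bijection to its correct base class, checking that it is $p_1$ (not $p_2$) that governs $\wma$ in Lemma~\ref{lem:guil_main_lemma}, and invoking the right identity of Lemma~\ref{lem:less_pattern}. If any point deserves a sentence of justification, it is the (routine) verification that intersecting the base class with $\mathsf{Av}(p_1)$ corresponds under the bijection precisely to imposing $\wma$-avoidance on the rec\-tan\-gu\-la\-ti\-on side, which is exactly the content of the abstract observation in the first step.
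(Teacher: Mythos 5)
Your proposal is correct and follows exactly the paper's route: the paper derives Proposition~\ref{prop:nonsep} in one line by combining Lemma~\ref{lem:guil_main_lemma}(a) (transporting $\wma$-avoidance to $p_1$-avoidance across $\gamma_w$) with the bijections of Theorem~\ref{thm:weak_bijections} and the pattern simplifications of Lemma~\ref{lem:less_pattern}(1) and (2). Your bookkeeping of base classes, which identity of Lemma~\ref{lem:less_pattern} applies to which item, and the fact that item~(2) admits no simplification, all match the paper.
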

Similarly, via $\beta_{\mathsf{TB}}$, $\beta_{\mathsf{CTB}}$, and $\beta_{\mathsf{B}}$,  weak rec\-tan\-gu\-la\-ti\-ons that avoid $\wmb$ correspond respectively to 
the families $\mathsf{Av}(2\underline{41}3,\,3\underline{41}2,\, p_2)$, 
$\mathsf{Av}(2\underline{14}3,3142)$, 
and $\mathsf{Av}(2\underline{41}3,\,3142)$.  

\medskip

Proposition~\ref{prop:nonsep} sheds new light on some previously known results. 
Weak rec\-tan\-gu\-la\-ti\-ons of size $n$ are in a simple bijection with 2-orientations~\cite{FOR95}  
 on rooted simple quadrangulations (embedded on the sphere) with $n+1$ faces, i.e., orientations of the edges not incident to the root-face such that vertices not incident (resp. incident) to the 
 root-face have outdegree $2$ (resp. $0$).  
Moreover, a weak rec\-tan\-gu\-la\-ti\-on has a~$\wma$ if and only if the 2-orientation has a clockwise cycle (more precisely, the occurrence of a~$\wma$ corresponds
to the occurrence of a clockwise 4-cycle, and the presence of a clockwise cycle implies the presence of a clockwise 4-cycle). 
Since any rooted simple quadrangulation has a unique 2-orientation with no clockwise cycle~\cite{Pr93,Fe04}, weak rec\-tan\-gu\-la\-ti\-ons of size $n$ with no $\wma$ are thus in bijection
with rooted simple quadrangulations with $n+1$ faces, which are themselves in bijection with rooted non-separable maps with $n+1$ edges, whose
 counting coefficients are $a_n=\frac{2 \, (3n)!}{(n+1)! \, (2n+1)!}$ as shown in~\cite{Tu63,Sc98}. 
  
The fact that $\mathsf{Av}(2413,\, 3\underline{14}2)$ is in  bijection with rooted non-separable maps was already proved in~\cite{DGW96} (via isomorphic
generating trees) and in~\cite{BBF10}, by specializing a bijection between Baxter permutations and plane bipolar orientations: this bijection has the property that Baxter
permutations with no $2413$ correspond to 
plane bipolar orientations with no ROP (right-oriented piece), and moreover any  rooted non-separable map has a unique plane bipolar orientation with no ROP. Our bijection
 between $\mathsf{Av}(2413,\, 3\underline{14}2)$ and weak rec\-tan\-gu\-la\-ti\-ons with no $\wma$  is very analogous, since plane bipolar orientations are in a simple bijection~\cite{FOR95} with 2-orientations, such that the occurrence of a ROP corresponds to the occurrence of a~counterclockwise 4-cycle (or the occurrence of a~clockwise 4-cycle, upon reflection). 

Let us also mention that the coefficients $a_n$ are well-known to count 2-stack sortable permutations~\cite{Z92}. 
In~\cite{DGG98}, a correspondence with $\mathsf{Av}(2413,\, 3\underline{14}2)$ has been obtained 
via a chain of several bijections 
relating permutation classes (and relying on isomorphic generating trees).  Along that chain after $\mathsf{Av}(2413,\, 3\underline{14}2)$ is the class $\mathsf{Av}(2413,\, 3\underline{41}2)$ (see~\cite[Fig.3]{DGG98});  this 
corresponds to the link between the first and second item in Proposition~\ref{prop:nonsep}.  Recently a more direct bijective link between 2-stack sortable permutations and rooted non-separable maps has been established via \textit{fighting fish}~\cite{Fa18,DuHe22}. 

\bigskip

We now further specialize the bijections (for weak classes) to the guillotine case:

\begin{proposition}\label{prop:guill}
The following families of permutations are in  
bijection (respectively via $\beta_{\mathsf{TB}}$, $\beta_{\mathsf{CTB}}$, and $\beta_{\mathsf{B}}$) with weak guillotine rec\-tan\-gu\-la\-ti\-ons:
\begin{enumerate}
\item
$\mathsf{Av}(2413,\,3\underline{41}2,\,p_2)$,
\item
$\mathsf{Av}(2\underline{14}3,  \, 3142,\, p_1)$,
\item
$\mathsf{Av}(2413,\,3142)$.
\end{enumerate}
\end{proposition}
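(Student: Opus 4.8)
The plan is to bootstrap the three bijections from the single–windmill specializations already in hand. Recall from Proposition~\ref{prop:guillotinechar} that a rectangulation is guillotine if and only if it contains neither windmill $\wma$ nor $\wmb$; since a windmill is a cyclic configuration of four segments and weak equivalence preserves the incidence–sidedness structure of segments, this property is constant on each weak equivalence class, so the weak guillotine rectangulations are exactly the weak rectangulations that avoid both $\wma$ and $\wmb$. The maps $\beta_{\mathsf{TB}}$, $\beta_{\mathsf{CTB}}$, $\beta_{\mathsf{B}}$ of Theorem~\ref{thm:weak_bijections} are restrictions of $\gamma_w$, each a bijection between a permutation class and $\mathsf{WR}_n$; since the preimage of an intersection is the intersection of the preimages, the class of permutations that $\beta_\bullet$ maps onto the weak guillotine rectangulations is the intersection of the class mapped onto $\{\rc\in\mathsf{WR}_n:\rc\text{ avoids }\wma\}$ with the class mapped onto $\{\rc\in\mathsf{WR}_n:\rc\text{ avoids }\wmb\}$.

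It then remains to read these two classes off Proposition~\ref{prop:nonsep} and the remark following it, intersect, and simplify. For $\beta_{\mathsf{TB}}$ this gives $\mathsf{Av}(2413,3\underline{41}2)\cap\mathsf{Av}(2\underline{41}3,3\underline{41}2,p_2)=\mathsf{Av}(2413,3\underline{41}2,2\underline{41}3,p_2)$; for $\beta_{\mathsf{CTB}}$ it gives $\mathsf{Av}(2\underline{14}3,3\underline{14}2,p_1)\cap\mathsf{Av}(2\underline{14}3,3142)=\mathsf{Av}(2\underline{14}3,3\underline{14}2,3142,p_1)$; and for $\beta_{\mathsf{B}}$ it gives $\mathsf{Av}(2413,3\underline{14}2)\cap\mathsf{Av}(2\underline{41}3,3142)=\mathsf{Av}(2413,3\underline{14}2,2\underline{41}3,3142)$. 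Each of these collapses to the class claimed in items~1, 2 and~3 by the elementary fact that avoidance of a classical pattern implies avoidance of every vincular pattern obtained from it by underlining, i.e.\ $\mathsf{Av}(2413)\subseteq\mathsf{Av}(2\underline{41}3)$ and $\mathsf{Av}(3142)\subseteq\mathsf{Av}(3\underline{14}2)$: in the first intersection $2\underline{41}3$ is then redundant, in the second $3\underline{14}2$ is redundant, and in the third both $2\underline{41}3$ and $3\underline{14}2$ are redundant.

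An alternative route, closer to Theorem~\ref{thm:guil_main}, would adjoin $p_1$ and $p_2$ directly to the three full forbidden-pattern sets of Theorem~\ref{thm:weak_bijections} (legitimate because each $\beta_\bullet$ is a restriction of $\gamma_w$ and, by Theorem~\ref{thm:guil_main}, $\gamma_w(\pi)$ is guillotine exactly when $\pi$ avoids $p_1$ and $p_2$), and then cancel vincular decorations using Lemma~\ref{lem:less_pattern}; for the Baxter class this additionally uses that the underlying permutation $41352$ of $p_2$ contains the classical pattern $3142$ and that $25314$ contains $2413$, so that avoidance of $3142$ (resp.\ $2413$) already forces avoidance of $p_2$ (resp.\ $p_1$). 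Either way the substantive content lies entirely in Propositions~\ref{prop:guillotinechar} and~\ref{prop:nonsep} and Lemmas~\ref{lem:guil_main_lemma} and~\ref{lem:less_pattern}; the only point needing care is to check, at each cancellation, that the pattern being deleted is genuinely implied by one that is kept and that the retained set is not accidentally weakened — which is precisely what the containments of $2413$ in $25314$, of $3142$ in $41352$, and the inclusions $\mathsf{Av}(\tau)\subseteq\mathsf{Av}(\underline{\tau})$ guarantee. There is no deeper obstacle: this proposition is bookkeeping layered on top of the already established results.
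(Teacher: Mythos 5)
Your argument is correct and rests on the same ingredients as the paper's own proof: the paper adjoins $p_1,p_2$ to the three pattern sets of Theorem~\ref{thm:weak_bijections} via Theorem~\ref{thm:guil_main} and then simplifies with Lemma~\ref{lem:less_pattern}, which is exactly your ``alternative route,'' while your primary route merely repackages this by intersecting Proposition~\ref{prop:nonsep} with its remark and discarding the redundant vincular patterns via $\mathsf{Av}(2413)\subseteq\mathsf{Av}(2\underline{41}3)$ and $\mathsf{Av}(3142)\subseteq\mathsf{Av}(3\underline{14}2)$. Both reductions are valid and yield the stated classes, so there is nothing to add.
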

\begin{proof}
It follows from Theorems~\ref{thm:weak_bijections} and~\ref{thm:guil_main} that weak guillotine rec\-tan\-gu\-la\-ti\-ons are in bijection (respectively via $\beta_{\mathsf{TB}}$, $\beta_{\mathsf{CTB}}$, and $\beta_{\mathsf{B}}$) with
$\mathsf{Av}(2\underline{41}3, 3\underline{41}2, p_1, p_2)$, 
$\mathsf{Av}(2\underline{14}3, 3\underline{14}2, p_1, p_2)$, and
$\mathsf{Av}(2\underline{41}3, 3\underline{14}2, p_1, p_2)$. 
By Lemma~\ref{lem:less_pattern} we have 

\begin{center}
\begin{tabular}{lll}
$\mathsf{Av}(2\underline{41}3, 3\underline{41}2, p_1, p_2)=\mathsf{Av}(2413, 3\underline{41}2, p_2)$,
& \ \ \ &
$\mathsf{Av}(2\underline{41}3, 3\underline{41}2, p_1, p_2)=\mathsf{Av}(2413, 3\underline{41}2, p_2)$, 
\vspace{6pt} \\
$\mathsf{Av}(2\underline{41}3, 3\underline{14}2, p_1, p_2)=\mathsf{Av}(2\underline{41}3, 3142, p_1)$,
& \ \ \ & 
$\mathsf{Av}(2\underline{14}3, 3\underline{14}2, p_1, p_2)=\mathsf{Av}(2\underline{14}3, 3142, p_1)$.
\end{tabular}
\end{center} 
The combination of the two identities for $\mathsf{Av}(2\underline{41}3, 3\underline{14}2, p_1, p_2)$ implies that this class is equal to $\mathsf{Av}(2413,\,3142)$.
\end{proof}

Part 3 of Proposition~\ref{prop:guill} recovers the bijection $\beta_{\mathsf{S}}$ from Theorem~\ref{thm:weak_bijections}(4);
parts 1 and 2 are new results.  

\medskip

\noindent\textbf{Strong guillotine rec\-tan\-gu\-la\-ti\-ons.} 
Here we just add $p_1$ and $p_2$ to the patterns that define 
2-clumped permutations and co-2-clumped permutations, and we did not 
find a way to describe these classes with fewer patterns.
However, \textbf{this is the first known representation of strong guillotine rec\-tan\-gu\-la\-ti\-ons by permutation classes}.

\begin{proposition}\label{prop:guil_cor_S}
The following families of permutations are in  
bijection (respectively via $\beta_{\mathsf{2C}}$ and $\beta_{\mathsf{C2C}}$) with strong guillotine rec\-tan\-gu\-la\-ti\-ons:
\begin{enumerate}
\item the $\{p_1, p_2\}$-avoiding 2-clumped permutations,
\item the $\{p_1, p_2\}$-avoiding co-2-clumped permutations.
\end{enumerate}
\end{proposition}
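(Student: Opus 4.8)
The plan is to obtain Proposition~\ref{prop:guil_cor_S} as a direct corollary of Theorems~\ref{thm:strong_bijections} and~\ref{thm:guil_main}, with no new geometric argument required. First I would recall that, by Theorem~\ref{thm:strong_bijections}, the mapping $\gamma_s$ restricts to a bijection $\beta_{\mathsf{2C}}$ from the set of 2-clumped permutations of size $n$ onto $\mathsf{SR}_n$, and to a bijection $\beta_{\mathsf{C2C}}$ from the set of co-2-clumped permutations of size $n$ onto $\mathsf{SR}_n$; by Theorem~\ref{thm:strong_distinguished}, in each fiber $\gamma_s^{-1}(\rc)$ there is a unique 2-clumped (respectively co-2-clumped) permutation, so $\beta_{\mathsf{2C}}$ and $\beta_{\mathsf{C2C}}$ are genuinely $\gamma_s$ restricted to a well-defined set of canonical fiber representatives.

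Next I would invoke Theorem~\ref{thm:guil_main}: for every $\pi\in S_n$, the strong rectangulation $\gamma_s(\pi)$ is guillotine if and only if $\pi$ avoids both mesh patterns $p_1$ and $p_2$. Restricting a bijection to the preimage of a subset yields a bijection onto that subset, so $\beta_{\mathsf{2C}}$ restricts to a bijection from
\[
\{\pi : \pi \text{ is 2-clumped and } \gamma_s(\pi) \text{ is guillotine}\}
=\{\pi : \pi \text{ is 2-clumped and avoids } p_1,p_2\}
\]
onto the set of strong guillotine rectangulations, the displayed equality of sets being exactly Theorem~\ref{thm:guil_main}. This proves item~(1). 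Item~(2) follows verbatim with $\beta_{\mathsf{C2C}}$ in place of $\beta_{\mathsf{2C}}$, since Theorem~\ref{thm:guil_main} characterizes the guillotine fibers of $\gamma_s$ regardless of which representative of the fiber is chosen; alternatively, one may note that $p_2$ is the complement of $p_1$, that the complement of a 2-clumped permutation is co-2-clumped, and that $\gamma_s(\bar\pi)$ is the reflection of $\gamma_s(\pi)$ across the SW--NE diagonal (Observation~\ref{obs:mirror}), a symmetry that preserves the guillotine property, so item~(2) is simply the complement of item~(1).

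Since all the content resides in Theorems~\ref{thm:strong_bijections} and~\ref{thm:guil_main}, there is essentially no obstacle to overcome here. The only point worth a sentence of care is that $\beta_{\mathsf{2C}}$ and $\beta_{\mathsf{C2C}}$ are the \emph{full} restrictions of $\gamma_s$ to their respective permutation classes, so that cutting down to $\{p_1,p_2\}$-avoiders on the permutation side corresponds \emph{exactly}, and not merely inclusion-wise, to cutting down to guillotine rectangulations on the geometric side; this is guaranteed by the uniqueness statements in Theorem~\ref{thm:strong_distinguished} together with the equivalence of conditions (2) and (3) in Theorem~\ref{thm:guil_main}.
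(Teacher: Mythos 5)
Your proposal is correct and follows exactly the route the paper takes: the paper presents Proposition~\ref{prop:guil_cor_S} as an immediate consequence of Theorem~\ref{thm:strong_bijections} (the bijections $\beta_{\mathsf{2C}}$, $\beta_{\mathsf{C2C}}$ with $\mathsf{SR}_n$) combined with Theorem~\ref{thm:guil_main} (guillotine $\Leftrightarrow$ avoidance of $p_1,p_2$ under $\gamma_s$), offering no further argument beyond ``adding $p_1$ and $p_2$ to the forbidden patterns.'' Your write-up simply makes explicit the restriction-of-a-bijection-to-a-preimage step that the paper leaves implicit.
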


\medskip

\noindent\textbf{Summary.} Proposition~\ref{prop:guil_cor_S}(1) was conjectured and communicated to us by Merino and Mütze~\cite{MMpc}.
They found the mesh patterns $p_1$ and $p_2$ experimentally, as a part of their study of patterns in rec\-tan\-gu\-la\-ti\-ons.
This conjecture was our starting point for the study presented in this section.
As our results --- mainly Theorem~\ref{thm:guil_main} and Lemma~\ref{lem:guil_main_lemma} --- show, these two patterns 
not just define a permutation class in bijection with with strong guillotine rec\-tan\-gu\-la\-ti\-ons,
but they generally ``encode the windmills in the language of permutations''. As such, 
these results belong to the study of representing patterns in rec\-tan\-gu\-la\-ti\-ons by patterns in permutations, 
which was suggested in~\cite[Section~11]{MM23} as an open question.
Our Lemma~\ref{lem:cross} is another instance of correspondence between these kinds of patterns,
see also~\cite{AsinowskiBanderier23} for more results of this kind.

\subsection{Enumeration of strong guillotine rectangulations}

\noindent\textbf{{Generating the enumerating sequence.}}
A straightforward way to generate the enumerating sequence
for strong guillotine rec\-tan\-gu\-la\-ti\-ons is counting multiplicities.
A~\emph{multiplicity} of a weak rec\-tan\-gu\-la\-ti\-on $\rc$
is the number of strong rec\-tan\-gu\-la\-ti\-ons whose union constitutes $\rc$.
Every segment $s$  contributes $\binom{a+b}{a}$ to the multiplicity of $\rc$,  
where $a$~and~$b$ are the numbers of neighbors of $s$ from both sides.
The total multiplicity of a rec\-tan\-gu\-la\-ti\-on $\rc$ is the product of such binomial coefficients taken over all its segments. 

For strong guillotine rec\-tan\-gu\-la\-ti\-ons, we can use the same argument as in our proof of 
Proposition~\ref{prop:guillotineenum}, but taking into account the multiplicities.
Let $\rc$ be a vertical guillotine rec\-tan\-gu\-la\-ti\-on of size $>1$, and let~$s$ be its leftmost cut.
Denote by $\rc_L$ and $\rc_R$ the left and the right subrec\-tan\-gu\-la\-ti\-ons
separated by~$s$.
If the multiplicity of $\rc_L$ is $m_1$,
and that of $\rc_R$ is $m_2$,
then the multiplicity of $\rc$ is $m_1 m_2 \binom{a+b}{a}$,
where~$a$ and~$b$ are the numbers of left and right neighbors of $s$. 

Therefore, we have to keep track of the numbers of segments that have an endpoint
on the sides of~$R$. This leads to a recurrence in five variables.
Denote by $S(n,\ell,t,r,b)$ the number of strong guillotine rec\-tan\-gu\-la\-ti\-ons of size $n$
with $\ell$, $t$, $r$, $b$ endpoints of segments on the left, top, right, bottom side.
Further, denote by $S_V(n,\ell,t,r,b)$ and $S_H(n,\ell,t,r,b)$
the numbers of vertical and, respectively, horizontal  strong guillotine rec\-tan\-gu\-la\-ti\-ons with these parameters.
(To keep the expressions more compact, here we regard the rec\-tan\-gu\-la\-ti\-on of size $1$ as both vertical and horizontal.)
Then we have the following recurrence.

For $n=1$:
\[S(1,\ell,t,r,b) = S_V(1,0,0,0,0)=S_H(1,0,0,0,0)= 
\left\{
\begin{array}{lll}
1, & & (\ell,t,r,b) = (0,0,0,0), \\
0, & & (\ell,t,r,b) \neq (0,0,0,0).
\end{array}
\right.\]

For $n>1$:
\[
S_V(n,\ell,t,r,b) = \displaystyle{\sum S_H(n',\ell,t',r',b') \cdot S(n-n',\ell',t-1-t',r,b-1-b') \cdot \binom{r'+\ell'}{r'}},
\]
where the sum is taken over $1 \leq n' \leq n-1$
and $0 \leq t', r', b', \ell' \leq  n$.

This rule is illustrated in Figure~\ref{fig:str}.

\begin{figure}[!h]
\begin{center}
\includegraphics[height=.15\textheight]{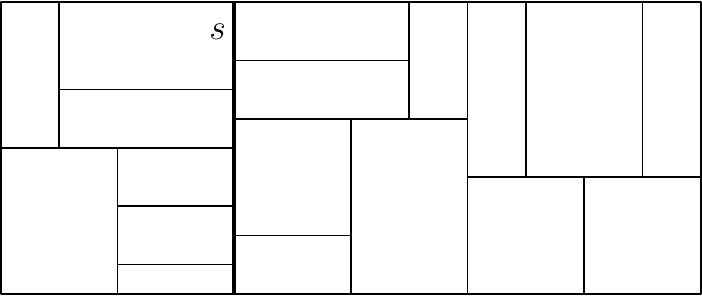}
\caption{Illustration of the recurrence for counting strong guillotine rec\-tan\-gu\-la\-ti\-ons.
The multiplicity of this rec\-tan\-gu\-la\-ti\-on 
is the product of 
the multiplicity of the left part,
the multiplicity of the right part,
and the binomial coefficient $\binom{7}{4}$.}
\label{fig:str}
\end{center}
\end{figure}

For $S_H(n,\ell,t,r,b)$ we have a similar expression, but for computations we can use 
\[
\begin{array}{c}
S_V(n,\ell,t,r,b) = S_V(n,r, t, \ell,b)= S_V(n,\ell,b,r,t) = S_V(n,r,b,\ell,t)= \\
=S_H(n,t, \ell,b,r) = S_H(n,b, \ell,t,r) = S_H(n,t, r,b,\ell) = S_H(n,b, r,t,\ell).
\end{array}
\]

Finally, for $n>1$ we have
\[S(n,\ell,t,r,b) = S_H(n,\ell,t,r,b)+S_V(n,\ell,t,r,b)\]

We implemented the recurrence in Maple, 
and obtained the first numbers in the enumerating sequence 
of strong guillotine rec\-tan\-gu\-la\-ti\-ons of size $n$:
\begin{center}
{\begin{tabular}{|r||r||r||r|}
\hline
$n=1\ldots 8$ & $n=9\ldots 16$ & $n=17\ldots 24$& $n=25\ldots 32$\\
\hline
1 & 138100 & 1143606856808 & 23673987861077379184 \\
2 & 926008 & 9072734766636 & 201493429381831155064 \\
6 & 6418576 & 72827462660824 & 1725380127954612191928 \\
24 & 45755516 & 590852491725920 & 14858311852609658166276 \\
114 & 334117246 & 4840436813758832 & 128634723318443875261706  \\
606 & 2491317430 & 40009072880216344 & 1119203662581349129800254 \\
3494 & 18919957430 & 333419662183186932 & 9783477314800654941937182  \\
21434 & 146034939362 & 2799687668599080296 & 85899976772035554402923170 \\ \hline
\end{tabular}
}
\end{center}

This sequence has no OEIS entry at the time of writing.
 
\medskip

\noindent\textbf{{Asymptotic bounds.}} We now would like to show that guillotine rec\-tan\-gu\-la\-ti\-ons are rare among strong rec\-tan\-gu\-la\-ti\-ons of size $n$, as $n$ gets large. Precisely, we will show that the exponential growth rate of strong guillotine rec\-tan\-gu\-la\-ti\-ons is bounded from above by a constant $\approx 13.081$, hence is strictly smaller than the exponential
growth rate of all strong rec\-tan\-gu\-la\-ti\-ons, which, as mentioned above, is known to be~$27/2$~\cite{FNS21}. 

We will make use of asymptotic results in~\cite{FNS21} (to be slightly extended below in Lemma~\ref{lem:rho_extended}) on so-called \emph{arbitrary rec\-tan\-gu\-la\-ti\-ons}, which are rec\-tan\-gu\-la\-ti\-ons allowing for points where 4 rectangles meet, called \emph{special points}. These are considered in the strong equivalence sense.  
Let $a_{n,k}$ be the number of arbitrary rec\-tan\-gu\-la\-ti\-ons of size $n$ with $k$ special points, let $a_n(v)=\sum_ka_{n,k}v^k$, and let $A(z,v)=\sum_n a_n(v)z^n$ be the
associated counting series. For fixed $v$, let $\rho(v)$ be the radius of convergence of $z\to A(z,v)$, i.e., $1/\rho(v)$ is the exponential growth rate of $a_n(v)$. 
It has been shown in~\cite[Thm.~4.3]{FNS21} that, for $v\geq 0$, 
\begin{equation}\label{eq:rho}
\rho(v)=\frac{2(2+v)}{2v^2+18v+27+(9+4v)^{3/2}}.
\end{equation}
\begin{lemma}\label{lem:rho_extended}
There exist non-negative coefficients $\tilde{a}_{n,k}$ such that, with $\tilde{a}_n(v):=\sum_k\tilde{a}_{n,k}v^k$, we have $a_n(v)=\tilde{a}_n(v+2)$, so that 
$a_n(v)\geq 0$ for $v\geq -2$. Moreover, for $v\in(-2,0)$, $\rho(v)$ is still given by~\eqref{eq:rho}.
\end{lemma}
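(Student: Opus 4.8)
We settle the two assertions in turn: the first is a statement about the polynomials $a_n(v)$ alone, and we prove it by giving the coefficients $\tilde a_{n,k}$ a positive combinatorial meaning; the ``moreover'' part then follows by standard analytic combinatorics.

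For the non-negativity, the plan is to revisit the combinatorial derivation of~\eqref{eq:rho} in~\cite{FNS21} and trace the role of~$v$, which marks, at each \emph{special point} of an arbitrary rectangulation (a point where four rectangles meet), the local crossing configuration. The geometric input is the \emph{resolution/merge} dictionary: a crossing can be \emph{resolved} --- pushing the two crossing segments slightly apart so that one of them becomes the through-segment --- into a pair of T-junctions, and a suitable pair of T-junctions can be \emph{merged} back; both operations preserve the number of rectangles, while resolving decreases the number of special points by exactly one. Setting $\tilde a_n(w):=a_n(w-2)=\sum_{\rho}(w-2)^{\mathrm{sp}(\rho)}$, where $\rho$ runs over arbitrary rectangulations of size~$n$ and $\mathrm{sp}(\rho)$ counts special points, we would expand each $(w-2)^{\mathrm{sp}(\rho)}$ as a sum over subsets $S$ of special points (marked by~$w$) together with, at each special point outside~$S$, a choice among its (two, in the normalization of~\cite{FNS21}) resolutions, each contributing~$-1$. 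A sign-reversing involution --- acting on the first ``unprotected'' site in a fixed local order by resolving it if it is still a crossing and merging it otherwise --- cancels all terms where some special point is left unresolved. What survives are terms of weight $w^{k}$ with a plus sign, counting arbitrary rectangulations of size~$n$ with exactly $k$ special points and no available merge; hence $\tilde a_{n,k}\in\mathbb{Z}_{\geq0}$, and $a_n(v)=\tilde a_n(v+2)$ by construction. (Alternatively, if the recursive system for $A(z,v)$ used in~\cite{FNS21} becomes a non-negative system after the substitution $v\mapsto v+2$, the claim follows by a direct induction on~$n$.)

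The point we expect to require most care is the bookkeeping behind the involution: one must fix the notion of ``site'' (a crossing, or a merge-able pair of T-junctions), choose a local order on sites that is stable under resolving or merging away from the site acted on, and verify that such a local move creates no new site, destroys no other site, and leaves the order of the remaining sites unchanged --- together with matching the exact normalization of~$v$ in~\cite{FNS21} so that the shift is by~$+2$.

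For the ``moreover'' part: by the first assertion, for $v\in(-2,0)$ we have $a_n(v)=\tilde a_n(v+2)\geq0$ since $v+2\in(0,2)$, so $z\mapsto A(z,v)$ is a power series with non-negative coefficients and, by Pringsheim's theorem (see~\cite{FlaSe}), its radius of convergence is a singularity of $A(\cdot,v)$. As $A(z,v)$ is algebraic in~$z$ (from the functional equation of~\cite{FNS21}) and the singularity analysis of~\cite{FNS21} applies verbatim with the non-negative parameter $v+2\in(0,2)$ in place of~$v$, the dominant singularity is still the square-root branch point given by the right-hand side of~\eqref{eq:rho}, which is analytic and positive whenever $9+4v>0$, in particular on $(-2,0)$. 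As a sanity check, writing $u=\sqrt{9+4v}$ one gets $\rho(v)=4(1+u)/(3+u)^{3}$, continuous and strictly decreasing on $(-2,\infty)$, with $\rho(v)\to1/8$ as $v\to-2^{+}$, so $1/\rho(v)\to8$ --- the growth rate of the Baxter numbers --- in agreement with $a_n(-2)=\tilde a_{n,0}$.
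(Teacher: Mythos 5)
Your first assertion rests on the same combinatorial dictionary as the paper's proof: a special point $\cross$ can be expanded into one of the two configurations $\izwall$ or $\iswall$, and such a configuration can be contracted back into a $\cross$. The paper exploits this directly rather than via an involution: it calls an arbitrary rectangulation \emph{reduced} if it avoids $\izwall$ and $\iswall$, contracts every such occurrence to get a reduced rectangulation, and observes that the fiber over a reduced rectangulation with $k$ special points contributes exactly $(v+2)^k$ to $a_n(v)$, since each special point independently either stays a $\cross$ (weight $v$) or is expanded in one of two ways (weight $1$ each). Hence $a_n(v)=\tilde a_n(v+2)$ with $\tilde a_{n,k}$ the manifestly non-negative number of reduced rectangulations of size $n$ with $k$ special points --- precisely the fixed points of your sign-reversing involution. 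Your involution therefore reaches the same identification by a more laborious route, and the delicate bookkeeping you flag (stability of the site order, no creation or destruction of other sites under a local move) is exactly what the direct fiber computation lets you avoid; I would replace the involution by that two-line bijective argument.

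The ``moreover'' part is where your argument has a genuine gap. First, $A(z,v)$ is \emph{not} algebraic in $z$: \cite{FNS21} proves that already the specialization $v=0$ is not D-finite, and the conjectured correction $n^{-\alpha}$ with irrational $\alpha$ is incompatible with a square-root branch point, so both of those claims must go. Second, and more importantly, Pringsheim only locates the dominant singularity on the positive real axis; it does not compute it, and ``applying the singularity analysis of \cite{FNS21} verbatim with $v+2$ in place of $v$'' conflates two different series: $A(z,v+2)$, whose radius of convergence is $\rho(v+2)$ and not $\rho(v)$, and $\tilde A(z,v+2):=\sum_n\tilde a_n(v+2)z^n=A(z,v)$, the series of \emph{reduced} rectangulations weighted by $v+2>0$ per special point. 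Evaluating the latter requires a quadrant-walk (or equivalent) model for reduced rectangulations, which is not the model analyzed in \cite{FNS21}; the paper builds it by noting that forbidding $\iswall$ forbids consecutive face-steps and that forbidding $\izwall$ can be absorbed into the face-step weights, and then reruns the computation of \cite[Sec.4]{FNS21} for this modified model to find that its radius equals the right-hand side of~\eqref{eq:rho} with $v$ replaced by $v-2$. That computation --- not analytic continuation of the formula --- is what proves the claim on $(-2,0)$. (Your closed form $\rho(v)=4(1+u)/(3+u)^{3}$ with $u=\sqrt{9+4v}$ and the limit $1/\rho(v)\to 8$ as $v\to-2^{+}$ are correct and make a nice consistency check, but they are not a proof.)
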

\begin{proof}
An arbitrary rec\-tan\-gu\-la\-ti\-on is called \emph{reduced} if it 
avoids both $\izwall$ and $\iswall$.   
If we let $\tilde{a}_{n,k}$ be the number of reduced arbitrary rec\-tan\-gu\-la\-ti\-ons of size $n$ with $k$ special points, and let $\tilde{a}_n(v)=\sum_k\tilde{a}_{n,k}v^k$, then we have 
\[a_n(v)=\tilde{a}_n(v+2).\]
Indeed, an arbitrary rec\-tan\-gu\-la\-ti\-on yields a reduced one by contracting the inner segment of each $\izwall$ or $\iswall$, turning it into a~$\cross$. 
Conversely, a reduced arbitrary rec\-tan\-gu\-la\-ti\-on lifts to a set of arbitrary rec\-tan\-gu\-la\-ti\-ons by choosing, for each special point $\cross$, whether
it stays unchanged, is expanded into~$\izwall$, or is expanded into $\iswall$.   

The encoding of arbitrary rec\-tan\-gu\-la\-ti\-ons by weighted quadrant walks that is obtained in~\cite[Sec.2.4]{FNS21} (relying on a bijection in~\cite{kenyon2019bipolar}) can be specialized to reduced arbitrary rec\-tan\-gu\-la\-ti\-ons: with the ter\-mi\-nol\-o\-gy of~\cite{FNS21} (where the study is done in the dual setting of transversal structures), forbidding $\iswall$ amounts to forbidding consecutive face-steps, and forbidding $\izwall$ can easily be encoded in the weight affected to face-steps. All calculations done as in~\cite[Sec.4]{FNS21} 
(details omitted) one finds that, 
if $\tilde{\rho}(v)$ denotes the radius of convergence of $v\to\sum_n\tilde{a}_n(v)z^n$ for $v> 0$ (which equals $\rho(v-2)$ since $\tilde{a}_n(v)=a_n(v-2)$), then the obtained expression of $\tilde{\rho}(v)$ matches the right-hand side of~\eqref{eq:rho} where $v$ is substituted by $v-2$. 
\end{proof}

In a~strong rec\-tan\-gu\-la\-ti\-on the \emph{enclosing 4-gon} of a windmill is the 4-gon extracted from the union of the 4 constituting segments. The windmill is called \emph{simple} 
if there is no segment leaving a point on a~side of the enclosing 4-gon towards the exterior of the 4-gon. 
A~\emph{small windmill} is a~simple windmill with a~single rectangular region
inside the enclosing 4-gon. 
Let $b_n$ be the number of rec\-tan\-gu\-la\-ti\-ons of size $n$ with no small windmill. Obviously, $b_n$ is an upper bound on the number of guillotine rec\-tan\-gu\-la\-ti\-ons of size $n$.

\begin{proposition}\label{prop:simple_windmill}
The exponential growth rate of $b_n$ is bounded from above
by the unique positive root $x_0\approx 13.155$ of the polynomial $2x^5 - 29x^4 + 36x^3 - 8x^2 - 8$. 
\end{proposition}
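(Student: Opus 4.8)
The plan is to relate $b_n$ to the enumeration of \emph{arbitrary} rec\-tan\-gu\-la\-ti\-ons counted with a weight per special point, and then to apply Lemma~\ref{lem:rho_extended}. Concretely, I would first prove the generating function identity $B(z)=A(z,-2z)$, where $B(z)=\sum_nb_nz^n$ and $A(z,v)=\sum_{n,k}a_{n,k}v^kz^n$, and then read off the radius of convergence of $B$ from the known formula~\eqref{eq:rho} for $\rho(v)$, which Lemma~\ref{lem:rho_extended} allows us to use at (slightly) negative arguments.

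The identity $B(z)=A(z,-2z)$ would come from an inclusion–exclusion over small windmills. For $j\ge0$ let $M_n^{(j)}$ be the number of pairs $(\rc,S)$ with $\rc$ a rec\-tan\-gu\-la\-ti\-on of size $n$ and $S$ a set of $j$ small windmills of~$\rc$; then $\sum_jM_n^{(j)}v^j=\sum_{\rc}(1+v)^{\#\{\text{small windmills of }\rc\}}$, so evaluating at $v=-1$ gives $b_n=\sum_j(-1)^jM_n^{(j)}$. The geometric core is the identity $M_n^{(j)}=2^j\,a_{n-j,j}$, proved by a contraction bijection: given $(\rc,S)$, contract each windmill in $S$ by collapsing its central rectangle — that is, its enclosing $4$-gon, which by definition of a small windmill consists of a single rectangle — to a point. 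Because the windmill is \emph{simple}, the four sides of that rectangle carry no $T$-joint except at the corners, so the collapse merges the four windmill segments into one horizontal and one vertical segment crossing at a $\cross$; the size drops by one and exactly one new special point appears. Performing all $j$ collapses yields an arbitrary rec\-tan\-gu\-la\-ti\-on of size $n-j$ with exactly $j$ special points, and conversely each special point expands into a simple small windmill in exactly two ways (one per chirality $\wma$, $\wmb$), so expanding at all $j$ special points recovers $(\rc,S)$. The main obstacle I anticipate is precisely the verification that distinct marked simple small windmills never obstruct one another: their central rectangles are pairwise distinct, and a local case analysis driven by the simplicity hypothesis is needed to show that one central rectangle cannot sit inside, or serve as a blade of, another simple small windmill in a way that would block the successive collapses or break invertibility.

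Granting $M_n^{(j)}=2^j a_{n-j,j}$, the sieve gives $b_n=\sum_j(-2)^ja_{n-j,j}=[z^n]A(z,-2z)$, i.e.\ $B(z)=A(z,-2z)$ as formal power series. (As a sanity check, since there is no small windmill below size $5$ one has $b_n=a_{n,0}$ for $n\le4$, and $b_5=a_{5,0}-2a_{4,1}=116-2=114$, matching the guillotine count at $n=5$.) Now Lemma~\ref{lem:rho_extended} lets us write $A(z,v)=\tilde A(z,v+2)$ with $\tilde A(z,w)=\sum_{m,k}\tilde a_{m,k}w^kz^m$ and all $\tilde a_{m,k}\ge0$, hence $B(z)=\tilde A(z,2-2z)=\sum_{m,k}\tilde a_{m,k}(2-2z)^kz^m$, a sum whose terms are all non-negative for real $z\in[0,1)$. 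Let $\zeta_0$ be the smallest positive solution of $\zeta=\rho(-2\zeta)$; along the way $-2\zeta\in(-2,0)$, the range in which Lemma~\ref{lem:rho_extended} asserts that~\eqref{eq:rho} still gives the radius of convergence $\rho(v)$ of $z\mapsto A(z,v)$. For $0<z<\zeta_0$ one has $z<\rho(-2z)$, so $\sum_m a_m(-2z)\,z^m$ converges, hence so does the non-negative double sum $B(z)$; since the $b_n$ are non-negative this forces the radius of convergence of $B$ to be $\zeta_0$ (and at least to be $\ge\zeta_0$, which is all we need). Therefore $\limsup_n b_n^{1/n}\le 1/\zeta_0$.

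It remains to compute $\zeta_0$. From~\eqref{eq:rho}, with $u=\sqrt{9+4v}$, the numerator $2(2+v)$ equals $(u^2-1)/2$ and the denominator equals $(u-1)(u+3)^3/8$, so $\rho(v)=4(u+1)/(u+3)^3$. Setting $v=-2\zeta$, i.e.\ $u=\sqrt{9-8\zeta}$ and $\zeta=(9-u^2)/8$, the equation $\zeta=\rho(-2\zeta)$ becomes $(3-u)(u+3)^4=32(u+1)$, equivalently $u^5+9u^4+18u^3-54u^2-211u-211=0$. Eliminating $u$ via $u^2=9-8/x$ with $x=1/\zeta$ (and discarding the spurious factor introduced by the squaring) yields exactly $2x^5-29x^4+36x^3-8x^2-8=0$, whose unique positive root is $x_0\approx13.155$. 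Hence the exponential growth rate of $b_n$ is at most $x_0$, as claimed; and since $x_0<27/2$, guillotine strong rec\-tan\-gu\-la\-ti\-ons — being a subfamily of the rec\-tan\-gu\-la\-ti\-ons with no small windmill — are exponentially rare.
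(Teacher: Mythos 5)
Your proposal is correct and follows essentially the same route as the paper: the identity $B(z)=A(z,-2z)$ via the two-to-one expansion/contraction correspondence between special points and simple small windmills (the paper phrases this as $A(z,2vz)=\hat{A}(z,1+v)$ with $\hat{A}$ counting rectangulations by small windmills, then sets $v=-1$, which is exactly your sieve $b_n=\sum_j(-2)^ja_{n-j,j}$), followed by the analyticity/Pringsheim argument based on Lemma~\ref{lem:rho_extended} and the smallest positive root of $z=\rho(-2z)$. The only differences are presentational: you make the inclusion--exclusion and the algebraic elimination leading to $2x^5-29x^4+36x^3-8x^2-8$ explicit, and you flag (without fully carrying out) the check that marked windmills do not interfere, a point the paper also leaves implicit.
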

\begin{proof} 
Let $\hat{a}_{n,k}$ be the number of rec\-tan\-gu\-la\-ti\-ons of size $n$ with $k$ small windmills; and let $\hat{A}(z,v)=\sum_{n,k}\hat{a}_{n,k}z^nv^k$ be the associated
counting series. Then we have $A(z,2vz)=\hat{A}(z,1+v)$. Indeed, starting from an arbitrary rec\-tan\-gu\-la\-ti\-on, each special point $\cross$ can be expanded into either $\wma$ or $\wmb$ (here these
symbols are to be understood as small windmills);  then 
 these form an arbitrary subset of all small windmills in the obtained rec\-tan\-gu\-la\-ti\-on.  
 Hence, letting $B(z)=\sum_nb_nz^n$, we have
\[
B(z)=\hat{A}(z,0)=A(z,-2z).
\]
For $0\leq z\leq 1$ such that $z<\rho(-2z)$, the function $A(.,.)$ is analytic at $(z,-2z)$ (this follows from the fact that, by continuity of $\rho(.)$, 
there exist $\epsilon,\eta>0$ such that $\sum_{n,k}\tilde{a}_{n,k}(z+\epsilon)^n(2-2z+\eta)^k<+\infty$). 
Hence $B(.)$ is analytic at $z$. Thus, letting $z_0$ be the 
smallest positive root of the equation $z=\rho(-2z)$, the function~$B(.)$ is analytic at $z$ for $0\leq z<z_0$. By Pringsheim's theorem, the radius 
of convergence of $B(z)$ is at least~$z_0$, hence the exponential growth rate of $b_n$ is at most $1/z_0$.  From the above expression of $\rho(v)$, 
 we find that $1/z_0\approx 13.155$ is the unique positive root of the polynomial $2x^5 - 29x^4 + 36x^3 - 8x^2 - 8$. 
 \end{proof}

 Now let $\ob_n$ be the number of rec\-tan\-gu\-la\-ti\-ons of size $n$ with no simple windmill, which again is an upper bound on the number of guillotine rec\-tan\-gu\-la\-ti\-ons of size $n$. 
 \begin{proposition}
 The exponential growth rate of $\ob_n$ is bounded from above
 by $13.081$. 
 \end{proposition}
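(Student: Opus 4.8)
The plan is to adapt the proof of Proposition~\ref{prop:simple_windmill}, replacing the expansion of a special point into a \emph{small} windmill by its expansion into an \emph{arbitrary} simple windmill. In a rectangulation, a simple windmill consists of its four constituting segments together with a rectangulation of its enclosing $4$-gon of some size $m\ge 1$; contracting this enclosing $4$-gon together with everything inside it to a point turns the simple windmill into a special point $\cross$, and conversely a special point can be expanded, in $2$ ways ($\wma$ or $\wmb$), into a simple windmill with any prescribed interior. Carrying out the same signed sieve as for $b_n$ in Proposition~\ref{prop:simple_windmill} --- mark each expanded special point with $v$ and specialise $v=-1$ --- but now letting the interior range over the class of rectangulations \emph{with no simple windmill}, I expect to obtain a functional equation of the shape
\begin{equation*}
\ob(z)=A\bigl(z,\,-2\,\ob(z)\bigr),\qquad \ob(z):=\textstyle\sum_n\ob_n z^n .
\end{equation*}
Taking the interiors windmill-free is precisely what forces the marked simple windmills to be pairwise non-nested, so that the contraction is well defined; since the lowest-order term of $\ob(z)$ is $z$, this reduces to the explicit equation $B(z)=A(z,-2z)$ of Proposition~\ref{prop:simple_windmill} when only size-one interiors are retained, but is now self-referential. (For the upper bound, even an inequality $\ob(z)\le A(z,-2\,\ob(z))$ would be enough.)

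With the functional equation at hand, the asymptotic estimate follows as in Proposition~\ref{prop:simple_windmill}. Since the coefficients $\tilde a_{n,k}$ of Lemma~\ref{lem:rho_extended} are non-negative, $A(z,v)$ is analytic at $(z,v)$ whenever $v\in(-2,0)$ and $z<\rho(v)$, with $\rho$ given by~\eqref{eq:rho}; moreover $\partial_v A\ge 0$ on $v\in(-2,0)$, so $\partial_w[\,w-A(z,-2w)\,]=1+2\,\partial_v A(z,-2w)>0$ and the implicit equation carries no branch-point singularity for $w\in(0,1)$. An implicit-function argument then shows that $\ob(z)$ is analytic on $[0,z^*)$, where $z^*$ is the smallest positive solution of the system
\begin{equation*}
z^*=\rho(-2\,w^*),\qquad w^*=A(z^*,-2\,w^*).
\end{equation*}
By Pringsheim's theorem the radius of convergence of $\ob(z)$ is at least $z^*$, so the exponential growth rate of $\ob_n$ is at most $1/z^*$; substituting the closed form~\eqref{eq:rho} of $\rho$ together with the singular value of $A(\cdot,v)$ at $z=\rho(v)$ (available from~\cite{FNS21}, extended via Lemma~\ref{lem:rho_extended}) and solving the resulting algebraic system numerically gives $1/z^*\approx 13.081$, strictly below the bound $\approx 13.155$ of Proposition~\ref{prop:simple_windmill} and well below $27/2$.

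I expect the main obstacle to be establishing the functional equation rigorously: one has to verify that contracting the marked simple windmills of a rectangulation is a genuine (signed) bijection onto ``arbitrary rectangulations with expandable special points'', which requires a careful treatment of the nesting of simple windmills (handled by insisting on windmill-free interiors, so that distinct marked windmills are disjoint and can be contracted independently) and of the possibility that contractions create or destroy windmills elsewhere --- in the spirit of, but more delicate than, the identity $A(z,2vz)=\hat A(z,1+v)$ used in the small-windmill case. A secondary obstacle is the singularity analysis: one must confirm that the dominant singularity of the implicit system is the one inherited from $A$, so that the Pringsheim argument indeed yields the constant $13.081$ and not a smaller radius, which is where the precise singular behaviour of $A(z,v)$ from~\cite{FNS21} enters.
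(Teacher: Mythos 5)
Your route is genuinely different from the paper's, and the difference matters. The paper never writes a self-referential equation: for each fixed $k$ it forbids only the \emph{$k$-small} windmills, i.e.\ simple windmills whose interior is a \emph{guillotine} rectangulation of size at most $k$. This choice makes the non-nesting of the marked windmills immediate (a windmill nested inside another would live in a guillotine interior, which is impossible), and it keeps the substituted series an explicit polynomial, so one gets $B^{(k)}(z)=A\big(z,-2\sum_{i=1}^k g_iz^i\big)$ and the analytic step is verbatim the one of Proposition~\ref{prop:simple_windmill}. Since $\ob_n\le b_n^{(k)}$ for every $k$, each $1/z_0^{(k)}$ is a valid upper bound, and the paper simply observes that these decrease to $\approx 13.081$. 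Your scheme instead marks all simple windmills with simple-windmill-free interiors in one shot; this is a legitimate detecting family (a simple windmill of minimal area has such an interior, so the family is nonempty whenever a simple windmill exists) and also forces non-nesting, and if the identity $\ob(z)=A(z,-2\ob(z))$ were established it would pin down the radius of convergence of $\ob(z)$ exactly rather than merely bound it. That is more ambitious than what the statement requires.

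Two concrete gaps remain. First, the functional equation is only conjectured: you still have to verify that simplicity and windmill-freeness of interiors transfer between the ambient rectangulation and the interior rectangulation, and that distinct marked windmills can be contracted independently; this is strictly more delicate than the bounded-guillotine-interior version, and it is the entire content of the argument (the parenthetical fallback ``an inequality $\ob(z)\le A(z,-2\ob(z))$ would suffice'' is not obviously usable, since $A(z,\cdot)$ is being evaluated at a negative argument and a pointwise inequality does not by itself control the radius of convergence). Second, the numerical conclusion is asserted, not derived, and is almost certainly not what your system returns: since $\ob_n\ge g_n$ coefficient-wise and $\rho$ given by~\eqref{eq:rho} is decreasing on $(-2,0)$, the curve $z\mapsto\rho(-2\ob(z))$ lies weakly above $z\mapsto\rho\big(-2\sum_i g_iz^i\big)$, hence your $z^*$ is at least the limit of the paper's $z_0^{(k)}$ and $1/z^*\le 13.081$ --- with strict inequality if, as the paper's data indicate, $\ob(z)$ exceeds the guillotine series near the singularity. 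It is this monotonicity comparison, which you should state explicitly, that rescues the bound $13.081$ from your setup; the claimed numerical coincidence $1/z^*\approx 13.081$ should not be relied upon, and actually solving your system would in any case require the singular value of $A(z,v)$ at $z=\rho(v)$, i.e.\ more information from~\cite{FNS21} than the formula~\eqref{eq:rho} and Lemma~\ref{lem:rho_extended} provide.
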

 \begin{proof}
 For any fixed $k\geq 1$, a simple windmill is called \emph{$k$-small} if the rec\-tan\-gu\-la\-ti\-on inside the enclosing 4-gon is a~guillotine rec\-tan\-gu\-la\-ti\-on of size at most $k$. 
 Let $b_n^{(k)}$ be the number of rec\-tan\-gu\-la\-ti\-ons of size $n$ with no $k$-small windmill (in particular $b_n=b_n^{(1)}$, and $\ob_n\leq b_n{(k)}$ for all $k\geq 1$); let $B^{(k)}(z)=\sum_nb_n^{(k)}z^n$. With $g_n:=S(n)$ the number of guillotine rec\-tan\-gu\-la\-ti\-ons of size $n$, the 
 argument in Proposition~\ref{prop:simple_windmill}  extends to give the equation
  \[
B^{(k)}(z)=A\Big(z,-2\sum_{i=1}^kg_iz^i\Big).
\]
 Letting $z_0^{(k)}$ be the smallest positive solution of the equation $z=\rho(-2\sum_{i=1}^kg_iz^i)$, by the same argument as in Proposition~\ref{prop:simple_windmill}, 
 the radius of convergence of $B^{(k)}(z)$ is at least $z_0^{(k)}$, hence $1/z_0^{(k)}$ is an upper bound on the exponential growth rate of $b_n^{(k)}$, and also of $\ob_n\leq b_n^{(k)}$.  We find 
 that as $k$ increases, $1/z_0^{(k)}$ (which decreases) rapidly approaches a constant $\approx 13.081$ (upper approximation).
 \end{proof}

 \medskip
 
\noindent\textit{Remark.}
For fixed $n$, $b_n^{(k)}$ weakly decreases with $k$ and stabilizes to $\ob_n$.  
We expect that (for any fixed $k\geq 1$) $1/z_0^{(k)}$ is the exponential growth rate of $b_n^{(k)}$, and that, as $k\to\infty$, it converges to the exponential growth rate of $\ob_n$, which thus should be $\approx 13.081$.   
However, forbidding only simple windmills does not seem to give a close upper bound on the exponential growth rate of guillotine rec\-tan\-gu\-la\-ti\-ons. 
 Indeed, from the table of the initial counting coefficients $g_n$, and applying acceleration of convergence (see, e.g.,~\cite[Sec.6]{guitter1999hamiltonian}) to the ratio $g_{n+1}/g_n$, the exponential growth rate of $g_n$ seems to be $ \approx 10.24$. 
 
\medskip

Finally, we discuss lower bounds. 
By Proposition~\ref{prop:guillotineenum}, the number of weak guillotine rec\-tan\-gu\-la\-ti\-ons of size $n$ is the $(n-1)$th Schr\"oder number.
Therefore, the exponential growth rate of Schröder numbers, $3+2\sqrt{2}\approx 5.828$,
is a~``trivial'' lower bound on the exponential growth rate of strong guillotine rec\-tan\-gu\-la\-ti\-ons. In order to give a better bound,
we consider weak guillotine rec\-tan\-gu\-la\-ti\-ons 
where every 2-sided segment (that is, a segment with at least one neighbor on each side)
has weight $2$.
This will give a lower bound, since the neighbors of every 2-sided segment
can be shuffled in at least two ways. We
adapt the decomposition
from our proof of Proposition~\ref{prop:guillotineenum} as follows.
Let $G=G(x,y)$ be the generating function for weak guillotine rec\-tan\-gu\-la\-ti\-ons,
where the variable $x$ is for the size, and $y$ for the number of 2-sided segments.
Further, let $G_0=G_0(x,y)$ be the generating function for weak guillotine rec\-tan\-gu\-la\-ti\-ons
that have \textit{no segment} with an endpoint on the left side of $\rr$,
and let $G_1=G_1(x,y)$ be the generating function for weak guillotine rec\-tan\-gu\-la\-ti\-ons
that have \textit{at least one segment} with an endpoint on the left side of~$\rr$.
Finally, let $H=H(x,y)$ and $V=V(x,y)$ be the generating functions for horizontal and, respectively, vertical weak guillotine rec\-tan\-gu\-la\-ti\-ons.
Then we have $H=V$, $G=x+H+V$, $G_0 = xG+x$, and $G_1=(1-x)G-x$, and the decomposition 
of a vertical guillotine rec\-tan\-gu\-la\-ti\-on by its leftmost cut
leads to the following weighted version of equation~\eqref{eq:HV}:
\[
V = xG + H\Big(G_0 + yG_1\Big).
\]
The solution of this system yields
\[
G(x,2)=\frac{1+x-x^2-\sqrt{1-6x-5x^2+2x^3+x^4}}{2(2-x)},
\]
and its dominant singularity gives us the following lower bound.

\begin{proposition}
The exponential growth rate of the number of strong guillotine rec\-tan\-gu\-la\-ti\-ons
is bounded from below by $\frac{1}{2}\Big(1+\sqrt{13-8\sqrt{2}}\Big)\Big(3+2\sqrt{2}\Big) \approx 6.699$.
\end{proposition}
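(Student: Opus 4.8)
The plan is to exhibit a generating function that is, coefficient by coefficient, a lower bound for the enumerating sequence of strong guillotine rectangulations, and then extract its exponential growth rate by singularity analysis. The lower bound comes from shuffling: being guillotine is preserved under shuffling the neighbors of any segment (Proposition~\ref{prop:guillotinechar}), and a 2-sided segment --- one with $a\ge 1$ neighbors on one side and $b\ge 1$ on the other --- has $\binom{a+b}{a}\ge 2$ inequivalent shuffles, so a weak guillotine rectangulation with $m$ two-sided segments is the union of at least $2^m$ distinct strong guillotine rectangulations. Hence, writing $G(x,y)=\sum x^{\text{size}}y^{\#\{\text{2-sided segments}\}}$ over weak guillotine rectangulations, the number of strong guillotine rectangulations of size $n$ is at least $[x^n]G(x,2)$, so it suffices to pin down the growth rate of the coefficients of $G(x,2)$.

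Second, I would set up and solve the functional-equation system for $G(x,y)$. The identities $H=V$ and $G=x+H+V$ are immediate; $G_0=xG+x$ holds because a weak guillotine rectangulation with no segment endpoint on the left side of $R$ is either a single rectangle or a full-height leftmost rectangle (whose right side is a cut with no left neighbor, hence not 2-sided) followed by an arbitrary weak guillotine rectangulation to its right, whence $G_1=G-G_0=(1-x)G-x$. The main equation $V=xG+H(G_0+yG_1)$ is the weighted analogue of~\eqref{eq:HV}: split a vertical guillotine rectangulation of size $>1$ at its leftmost cut, whose left part is a single rectangle (term $xG$, cut not 2-sided) or --- having no vertical cut of its own --- a horizontal guillotine rectangulation of size $>1$ (term with $H$); in the latter case the cut acquires a left neighbor, so it is 2-sided exactly when the right part has a segment endpoint on the shared side, which explains the factor $G_0+yG_1$. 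Eliminating $H=(G-x)/2$ and setting $y=2$, so that $G_0+2G_1=(2-x)G-x$, the system collapses to $(2-x)G^2+(x^2-x-1)G+(x^2+x)=0$, and the branch vanishing at $x=0$ is
\[
G(x,2)=\frac{1+x-x^2-\sqrt{1-6x-5x^2+2x^3+x^4}}{2(2-x)}.
\]

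Third, I would run singularity analysis on $G(x,2)$. Its only candidate singularities are $x=2$ (from the denominator) and the zeros of $Q(x):=1-6x-5x^2+2x^3+x^4$. Since $Q(0)=1>0$ and $Q(1)=-7<0$, there is a root $\rho\in(0,1)$; I would then check that $\rho$ is simple, that $Q>0$ on $[0,\rho)$, that the numerator $1+x-x^2$ does not vanish on $(0,1)$, and that all other zeros of $Q$ (two negative reals and one positive real near $1.96$) as well as $x=2$ have modulus $>\rho$. Thus $x=\rho$ is the unique dominant singularity of $G(x,2)$, of square-root type, so by the Cauchy--Hadamard formula together with Pringsheim's theorem (or the transfer theorem) the exponential growth rate of $[x^n]G(x,2)$ equals $1/\rho$. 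Solving $Q(\rho)=0$ explicitly ($Q$ splits into two quadratic factors over $\mathbb{Q}(\sqrt2)$) gives $1/\rho=\tfrac12\big(1+\sqrt{13-8\sqrt2}\big)\big(3+2\sqrt2\big)\approx 6.699$, and since the counts of strong guillotine rectangulations dominate $[x^n]G(x,2)$ coefficientwise, this is the claimed lower bound.

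The main obstacle is not a deep step but careful bookkeeping. The delicate part is getting the $y$-weighted decomposition exactly right: verifying that the separating segment of a vertical guillotine rectangulation is 2-sided precisely when its left part is nontrivial \emph{and} its right part carries an endpoint on the shared side, and that no 2-sided segment is over- or under-counted as the recursion descends into the two parts. The remaining routine but slightly finicky point is confirming that $\rho$ is a genuine simple square-root branch point of $G(x,2)$ --- neither cancelled by the numerator nor dominated by the pole at $x=2$ --- since this is exactly what licenses identifying the growth rate with $1/\rho$ and hence with the stated radical expression.
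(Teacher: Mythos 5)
Your proposal is correct and follows essentially the same route as the paper: weight each 2-sided segment by $2$ as a lower bound on its number of shuffles, set up exactly the system $H=V$, $G=x+H+V$, $G_0=xG+x$, $G_1=(1-x)G-x$, $V=xG+H(G_0+yG_1)$, solve at $y=2$, and read off the growth rate from the dominant square-root singularity. The only difference is that you spell out the justification of the decomposition and the singularity analysis in more detail than the paper does.
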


This strategy can be pushed further: for a fixed threshold value $t\geq 1$, every segment with $i$ neighbors on one side and $j$ neighbors on the other side is weighted by $\binom{i'+j'}{i'}$, where $i'=\mathrm{min}(i,t)$ and $j'=\mathrm{min}(j,t)$. 
The exponential growth rate for any fixed $t$ should be computable (by the approach of~\cite[VII.6.3]{FlaSe}), 
and grow with $t$, giving better and better lower bounds. 
The complexity of the decomposition and of computations, however, will rapidly explode as $t$ grows.

\section*{Acknowledgments}
The work on this paper began during the \textit{Order \& Geometry Workshop}, Ciążeń, Poland, 13--18 September 2022.
The research of Andrei Asinowski is partially supported by FWF --- The Austrian Science Fund, grant P32731. 
The research of Stefan Felsner is partially supported by grant DFG FE 340/13-1. 
The research of \'Eric Fusy is partially supported by the projects ANR-20-CE48-0018 (3DMaps) and ANR-19-CE48-0011 (COMBIN\'E).

\bibliographystyle{plain}
\bibliography{rectangulations}

\end{document}